\newtheorem{theorem}{Theorem}[section]
\newtheorem{lemma}[theorem]{Lemma}
\newtheorem{proposition}[theorem]{Proposition}
\newtheorem{corollary}[theorem]{Corollary}
\newtheorem{lemma-definition}[theorem]{Lemma-Definition}
\newtheorem{Claim}[theorem]{Claim}
\theoremstyle{definition}
\newtheorem{definition}[theorem]{Definition}
\newtheorem{example}[theorem]{Example}
\newtheorem{remark}[theorem]{Remark}
\numberwithin{theorem}{section}
\newcommand{\Z}{\mathbb{Z}}
\newcommand{\LL}{\mathbb{L}}
\newcommand{\calC}{\mathcal{C}}
\newcommand{\calD}{\mathcal{D}}
\DeclareMathOperator{\HOM}{\mathscr{H}\text{\kern -3pt {\calligra\large om}}\,}
\newcommand{\DD}{\EuScript D}
\newcommand{\KK}{\EuScript K}
\newcommand{\ac}{\mathop{ac}\nolimits}
\newcommand{\A}{\mathop{\overline{A}}\nolimits}
\newcommand{\sC}{\mathop{s^{-1}\overline{C}}\nolimits}
\newcommand{\sA}{\mathop{s\overline{A}}\nolimits}
\newcommand{\proj}{\mathop{\mathrm{proj}}\nolimits}
\newcommand{\Hom}{\mathop{\mathrm{Hom}}\nolimits}
\newcommand{\Barr}{\mathop{\mathrm{Bar}}\nolimits}
\newcommand{\modu}{\mathop{\mathrm{mod}}\nolimits}
\newcommand{\Modu}{\mathop{\mathrm{Mod}}\nolimits}
\newcommand{\Map}{\mathop{\mathrm{Map}}\nolimits}
\newcommand{\id}{\mathop{\mathrm{id}}\nolimits}
\newcommand{\inj}{\mathop{\mathrm{inj}}\nolimits}
\newcommand{\op}{\mathop{\mathrm{op}}\nolimits}
\newcommand{\Ker}{\mathop{\mathrm{Ker}}\nolimits}
\newcommand{\Tot}{\mathop{\mathrm{Tot}}\nolimits}
\newcommand{\sg}{\mathop{\mathrm{sg}}\nolimits}
\newcommand{\HH}{\mathop{\mathrm{HH}}\nolimits}
\newcommand{\THH}{\mathop{\mathrm{TH}}\nolimits}
\newcommand{\Perf}{\mathop{\mathrm{Perf}}\nolimits}
\begin{document}

\title{Singular Hochschild cohomology and algebraic string operations}
\author{Manuel Rivera}
\author{Zhengfang Wang}
\address{Manuel Rivera\\ Department of Mathematics \\ University of Miami, 1365 Memorial Drive, Coral Gables, FL 33146}
\address{Departamento de Matem\'aticas, Cinvestav, Av. Instituto Polit\'ecnico Nacional 2508, Col. San Pedro Zacatenco, M\'exico, D.F. CP 07360, M\'exico}

\email{manuelr@math.miami.edu}

\address{Zhengfang Wang\footnote{Corresponding author}\\ Beijing International Center for Mathematical Research (BICMR), Peking University, No. 5 yiheyuan Road Haidian District, 100871 Beijing, China}

\email{wangzhengfang@bicmr.pku.edu.cn}

\maketitle

\begin{abstract}
Given a differential graded (dg) symmetric Frobenius algebra $A$ we construct an unbounded complex $\mathcal{D}^{*}(A,A)$, called the Tate-Hochschild complex, which arises as a totalization of a double complex having Hochschild chains as negative columns and Hochschild cochains as non-negative columns. We prove that the complex $\mathcal{D}^*(A,A)$ computes the singular Hochschild cohomology of $A$. We construct a cyclic (or Calabi-Yau) $A$-infinity algebra structure, which extends the classical Hochschild cup and cap products, and an $L$-infinity algebra structure extending the classical Gerstenhaber bracket, on $\mathcal{D}^*(A,A)$. Moreover, we prove that the cohomology algebra $H^*(\mathcal{D}^*(A,A))$ is a Batalin-Vilkovisky (BV) algebra with  BV operator extending Connes' boundary operator. Finally, we show that if two dg algebras are quasi-isomorphic then their singular Hochschild cohomologies are isomorphic and we use this invariance result to relate the Tate-Hochschild complex to string topology.  \\
\\
{\it Mathematics Subject Classification} (2010). 16E40, 18G10; 55P50, 16S38.  \\
\emph{Keywords.} Frobenius algebras, Tate-Hochschild complex, A-infinity algebras, L-infinity algebras, 
string topology.

\end{abstract}

\section{Introduction}

For any differential graded (dg) associative algebra $A$ over a ring $\mathbb{K}$ such that $A$ is projective as a $\mathbb{K}$-module,  the Hochschild $i$-th cohomology group $\HH^i(A,A)$ is defined as the group of morphisms from $A$ to $s^iA$ in $\DD(A \otimes_{\mathbb{K}} A^{\op})$, the derived category of dg $A$-$A$-bimodules, where $s^iA$ is the dg $A$-$A$-bimodule defined by $(s^iA)^j=A^{i+j}$ with left and right actions induced by the multiplication of $A$. $\HH^*(A,A)$ is a graded algebra with the Yoneda product. The \textit{singular} Hochschild $i$-th cohomology group $\HH_{\sg}^i(A, A)$ is defined as the group of morphisms from $A$ to $s^iA$ in the singularity category $\DD_{\sg}(A\otimes_{\mathbb{K}} A^{\op})$. The singularity category $\DD_{\sg}(A\otimes_{\mathbb{K}}A^{\op})$ is the Verdier quotient  (of triangulated categories) of $\DD^b(A \otimes A^{\op}) $, the bounded derived category of finitely presented dg $A$-$A$-bimodules, by the full sub-category $\Perf(A\otimes A^{\op})$ of $\DD^b(A \otimes A^{\op})$ whose objects are perfect dg $A$-$A$-bimodules \cite{KoSo}. The singularity category was defined by Buchweitz \cite{Buc} and later rediscovered by Orlov \cite{Orl}. $\HH_{\sg}^*(A, A)$ is a graded algebra as well. In this article we discuss how the algebraic structure of the singular Hochschild cohomology of a dg symmetric Frobenius algebra relates to algebraic operations on Hochschild complexes which model constructions originated in string topology.
\\

Let $A$ be a dg associative algebra over a field $\mathbb{K}$ together with a symmetric Frobenius pairing $\langle\cdot,\cdot\rangle: A \otimes A \to \mathbb{K}$ of degree $k\geq 0$, meaning a symmetric pairing that induces a degree $k$ isomorphism $A \to A^{\vee}:=\text{Hom}_{\mathbb{K}}(A,\mathbb{K})$ of dg $A$-$A$-bimodules. For any such $A$ we may construct a cochain complex, called the \textit{Tate-Hochschild complex}, whose cohomology is isomorphic to the singular Hochschild cohomology of $A$. The Tate-Hochschild complex of $A$ is a totalization of an unbounded double cochain complex denoted by $\mathcal{D}^{*,*}(A,A)$ which may be constructed as follows: place the Hochschild chain complex of $A$ on the side of the columns with negative degree (i.e. $\mathcal{D}^{i,*}(A,A):= C_{-i-1,*}(A,A)$ for $i <0$), the Hochschild cochain complex of $A$ on the side of the columns with non-negative degree (i.e. $\mathcal{D}^{i,*}(A,A):= C^{i,*}(A,A)$ for $i\geq 0$), and use the Frobenius structure on $A$ to define a linear map $\gamma: \mathcal{D}^{-1,*} (A,A) \to \mathcal{D}^{0,*}(A,A)$, connecting the Hochschild chain complex and the Hochschild cochain complex, which extends the differentials defined on each side. Here the $C_{-m,p}(A,A)$ denotes the $\mathbb{K}$-module of Hochschild chains with monomial length $m$ and total (homological) degree $p$, similarly for $C^{m,p}(A,A)$ for Hochschild cochains. More precisely, the connecting map $\gamma:
\calD^{-1, *}(A, A)\rightarrow \mathcal{D}^{0, *}(A, A)$ is the degree $-k$ map  given by the composition 
\begin{equation}\label{equation-cardy}
\gamma: A\xrightarrow{\Delta} A\otimes A \xrightarrow{T} A\otimes A \xrightarrow{\mu} A,
\end{equation}
where $\mu: A \otimes A \to A$ is the product of the algebra $A$, $\Delta$ is the coproduct of $A$ associated to the Frobenius pairing, namely
$$\Delta: A \xrightarrow{\cong} A^{\vee} \xrightarrow{ \mu^{\vee}} A^{\vee}\otimes A^{\vee} \xrightarrow{\cong} A\otimes A,$$
and $T$ is the braiding isomorphism
$$T(x\otimes y):=(-1)^{|x||y|}y\otimes x.$$
In order to get a total differential on $\mathcal{D}^{*}(A,A):= \text{Tot}(\calD^{*,*}(A,A))$ of degree $+1$ we shift each of the negative columns by $1-k$, where the totalization here means the direct sum totalization in the Hochschild chains direction and direct product totalization in the Hochschild cochains direction. A short calculation yields that $\calD^{*,*} (A,A)$ is in fact a double complex. 
\\

We describe explicitly a chain level lift of the graded associative algebra structure of $\HH_{\sg}^*(A, A)$ to an $A_{\infty}$-algebra structure on $\mathcal{D}^*(A,A)$. The $m_2$ operation of this $A_{\infty}$-algebra is a product $\star: \calD^*(A, A)\otimes \calD^*(A, A)\rightarrow \calD^*(A, A)$ which extends the Hochschild cup product, defined for a pair of Hochschild cochains, and the Hochschild cap product, defined for a Hochschild cochain and a Hochschild chain satisfying certain degree constraints. We construct the $A_{\infty}$-algebra structure by first lifting the associative algebra structure of $\HH_{\sg}^*(A, A)$  to a dg associative algebra on a chain complex $\calC^*_{\sg}(A,A)$, chain homotopy equivalent to $\mathcal{D}^{*}(A,A)$ through a homotopy retraction, and then transferring such structure to a quasi-isomorphic $A_{\infty}$-algebra structure on $\mathcal{D}^{*}(A,A)$. Furthermore, we show the resulting $A_{\infty}$-algebra structure is cyclically compatible with a pairing on $\mathcal{D}^{*,*}(A,A)$ induced by the Frobenius pairing of $A$. In other words, we construct a cyclic (or Calabi-Yau) $A_{\infty}$-algebra structure on the totalization of the double complex $\mathcal{D}^{*,*}(A,A)= s^{1-k}C_{*,*}(A,A) \oplus C^{*,*}(A,A)$ which lifts the graded associative algebra structure of $\HH_{\sg}^*(A, A)$, where $s^{1-k}$ denotes the degree shift by $1-k$. We also describe an $L_{\infty}$-algebra structure on $\mathcal{D}^{*,*}(A,A)$ extending the dg Lie algebra structure on Hochschild cochains with the Gerstenhaber bracket. Moreover, we prove that the product $\star$ on $\HH_{\sg}^*(A, A) \cong H^*(\calD^*(A,A))$ is part of a BV-algebra structure. The BV operator may be defined on $\calD^*(A, A)$ and it extends Connes' boundary operator $B$ on $C_{*, *}(A, A)$. We pose the following Deligne type conjecture: this BV-algebra can be lifted to an action of a dg operad, weakly equivalent to chains on the framed little disks operad, on $\mathcal{D}^{*}(A,A)$. In particular, the construction of such an action would extend the solution of the cyclic Deligne conjecture described by Kaufmann \cite{Kau} and independently by Tradler-Zeinalian \cite{TrZe}.  \\

It follows from a well known theorem of Jones that for any simply-connected space $X$ there is an isomorphism $\HH_*(C^*(X;\mathbb{Q}), C^*(X;\mathbb{Q})) \cong H^*(LX;\mathbb{Q})$ where $LX$ is the free loop space on $X$. Moreover, for a $k$-dimensional simply-connected closed manifold $M$ we have an isomorphism of graded algebras $\HH^*(C^*(M;\mathbb{Q}), C^*(M;\mathbb{Q})) \cong s^kH_*(LM;\mathbb{Q})$, where the left hand side is the Hochschild cohomology of the rational singular cochains on $M$ with cup product and the right hand side is the singular homology of the free loop space of $M$ with rational coefficients (shifted by $k$) with the Chas-Sullivan loop product \cite{FeThVi}. The Chas-Sullivan loop product is an intersection type operation which combines the intersection product of the manifold $M$ and concatenation of loops. The loop product led to the construction of more general \textit{string topology operations}  on chains of loops in a manifold by using the Poincar\'e duality of the underlying manifold and concatenating or deconcatenating loops according to different compatible patterns. String topology operations have counterparts in the Hochschild chain complex of a dg symmetric Frobenius algebra \cite{TrZe, WaWe}. 
\\

Since Hochschild homology is invariant under quasi-isomorphisms,  it follows there is an isomorphism of graded vector spaces $\HH_*(A,A) \cong H^*(LM)$ for any differential graded algebra $A$ quasi-isomorphic to the rational singular cochains $C^*(M;\mathbb{Q})$. The singular cohomology with rational coefficients $H^*(M;\mathbb{Q})$ is a graded symmetric Frobenius  algebra equipped with the Poincar\'e duality pairing and the cup product, however, this pairing does not give a Frobenius structure at the level of singular cochains $C^*(M;\mathbb{Q})$. By the main result in \cite{LaSt}, for any simply-connected manifold $M$, one may construct a commutative differential graded symmetric Frobenius algebra which is quasi-isomorphic, as a dg associative algebra, to $C^*(M;\mathbb{Q})$ with the property that the induced isomorphism $H^*(A) \cong H^*(C^*(M;\mathbb{Q}))$ preserves the Frobenius algebra structures. Using a Frobenius structure on a differential graded algebra $A$ one may construct operations on the Hochschild chain complex $C_{*,*}(A,A)$ analogue to string topology operations \cite{WaWe, TrZe, Abb}. One of the main points we would like to stress in this paper is the following surprising observation: when the product $\star: \calD^*(A, A)\otimes \calD^*(A, A)\rightarrow \calD^*(A, A)$ is applied to a pair of Hochschild chains we obtain an operation which was previously described in \cite{Abb} and \cite{Kla},  in the case when $A$ is commutative, and is believed to be intimately related to the Goresky-Hingston product on $H^*(LM,M)$, a string topology operation of degree $k-1$ constructed on the cohomology of the free loop space of a $k$-dimensional manifold relative to constant loops. 
\\

More precisely, for any symmetric Frobenius differential graded algebra $A$ with pairing of degree $k$, Abbaspour describes in \cite{Abb} an associative product $*: C_{*,*}(A,A) ^{\otimes 2}  \to C_{*,*}(A,A)$ of degree $k-1$ which arises as a chain homotopy between two chain maps $*_0, *_1: C_{*,*}(A,A)^{\otimes 2} \to C_{*,*}(A,A)$ each of degree $k$. The operation $*$  is sometimes called a \textit{secondary} operation since it is a chain homotopy between two \textit{primary} operations; note, in particular, that $*$ is not a chain map. The construction of $*$ resembles a chain level version of a geometric string topology product on $C^*(LM;\mathbb{Q})$ (described in \cite{GoHi} at the level of relative cohomology $H^*(LM,M;\mathbb{Q})$) which is dual to an operation which, at the level of chains, considers self intersections in a chain of loops and splits loops at these intersection points. There are several ways of obtaining a chain map from $*$. In the case $A$ is commutative Abbaspour suggests taking a subcomplex of $C^{*,*}(A,A)$ and Klamt suggests modifying the product $*$ in order to obtain a chain map.  We do not assume commutativity for $A$ and propose extending the chain complex $C_{*,*}(A,A)$ by extending the differential through the map $\gamma$ defined in (\ref{equation-cardy}).
We may keep extending the resulting complex by the Hochschild cochains differential to obtain an unbounded double complex and we may extend the product $*$ as well by Hochschild cup and cap products. The resulting complex, after appropriate degree shifts, is precisely $\calD^{*,*}(A,A)$ and the new product is $\star$. In conclusion, one may interpret $\star: \calD^{*,*}(A,A) \otimes \calD^{*,*}(A,A) \to \calD^{*,*}(A,A) $ as combination of the cup product in Hochschild cochains, which is an algebraic analogue of the Chas-Sullivan loop product, and the $*$-product in Hochschild chains, an algebraic analogue of the Goresky-Hingston product. We finish this article by describing the singular Hochschild cohomology of the dga of singular cochains on a simply-connected manifold $M$ in terms of the homology and cohomology of $LM$. We do this by first proving an invariance result for singular Hochschild cohomology under dga quasi-isomorphisms and then using a dg symmetric Frobenius algebra model for $M$ as provided by the main result of \cite{LaSt}. We use these results to calculate explicitly the singular Hochschild cohomology and its BV-algebra structure for the differential graded algebra $C^*(S^n, \mathbb{Q})$ of singular cochains on spheres $S^n$ for $n>1$. 
\\

There is a close relationship between the singular Hochschild cohomology of the dga of singular cochains on a closed simply-connected manifold $M$ and the \textit{Rabinowitz-Floer homology} of the unit cotangent bundle of $M$ with its canonical symplectic structure as introduced in \cite{CiFrOa}. The relationship comes from the isomorphism between the symplectic (co)homology of the unit cotangent bundle of $M$ and the singular (co)homology of the free loop space of $M$. In fact, one should compare Theorem 1.10 in \cite{CiFrOa} with Theorem 7.1 of this article. Moreover, in \cite{CiOa} an algebra structure on the Rabinowitz-Floer side is constructed and we conjecture that such structure agrees with the algebra structure of the singular Hochschild cohomology of the dga of singular cochains of $M$. Based on the constructions of \cite{CiOa}, it is also evident that the singular Hochschild cohomology is closely related to the symplectic homology of the boundary of a Liouville domain. These topics will be explored in future research.

\section*{Acknowledgment}
The first author acknowledges support by the ERC via the grant StG-259118-STEIN and the excellent working conditions at \textit{Institut de Math\'ematiques de Jussieu-Paris Rive Gauche} (IMJ-PRG) where the first half of this project was completed and the support of University of Miami and Fordecyt grant 265667 during the second half of the project. We would like to thank Hossein Abbaspour, Ragnar Buchweitz, Tobias Dyckerhoff, Yizhi Huang, Dmitry Kaledin, Liang Kong, Alexandru Oancea, Boris Tsygan, Bruno Vallette, Guodong Zhou and Alexander Zimmermann for stimulating discussions and comments. We would also like to thank Natalia Rodriguez for helping out with the diagrams and figures in this article.

\section{Preliminaries}
In this section we recall some basic notions regarding differential graded (dg) algebras. For more details we refer the reader to \cite{Abb, Kel1, Kel2}. We fix a commutative ring $\mathbb{K}$ throughout this section. 
\subsection{Differential graded algebras}

\begin{definition}
Let $\mathbb{K}$ be a commutative ring. A {\it differential graded (dg) algebra over $\mathbb{K}$} is a cochain complex $(A^{\bullet}, d^{\bullet})$
of $\mathbb{K}$-modules endowed with $\mathbb{K}$-linear maps $\mu: A^n \otimes_{\mathbb{K}} A^m \rightarrow A^{n+m}, a\otimes_{\mathbb K} b \mapsto ab:=\mu(a\otimes_{\mathbb K}b)$ such 
that 
$d^{n+m}(ab)=d^n(a)b+(-1)^n a d^m(b)$ 
and such that $\bigoplus_{n\in \Z} A^n$ becomes an associative  $\mathbb{K}$-algebra with unit. Moreover, we say that a dg algebra $A$ is {\it commutative} if $$xy=(-1)^{\deg(x)\deg(y)}yx$$ for 
any homogeneous elements $x, y \in A$.
\end{definition}

\begin{definition}
Let $(A, d)$ be a dg  $\mathbb{K}$-algebra. The {\it opposite dg algebra} is the dg  $\mathbb{K}$-algebra  $(A^{\op}, d^{\op})$   where $A^{\op}:=A$ as an $\mathbb{K}$-module, $d^{\op}=d$, and multiplication is given by $$a\cdot_{\op} b=(-1)^{\deg(a)\deg(b)} ba$$ for homogeneous elements $a, b\in A$.
\end{definition}

\begin{definition}
 Let $(A, d)$ and $(B, d)$ be two dg  $\mathbb{K}$-algebras. The {\it tensor product differential graded algebra} of $A$ and $B$ is the algebra $A\otimes B$ with multiplication defined by $$(a\otimes b)(a'\otimes b'):=(-1)^{\deg(a')\deg(b)}aa'\otimes bb'$$ endowed with differential $d_{A\otimes_{\mathbb{K}}B}$ defined by the rule $$ d_{A\otimes B}(a\otimes b)=d(a)\otimes b+(-1)^{\deg(a)} a\otimes d(b).$$
\end{definition}

\begin{definition}
 Let $(A, d)$ be a dg $\mathbb{K}$-algebra. A  {\it (left) differential graded module} $M$ over $A$ is a left $A$-module $M$ which has a grading $M=\bigoplus_{n\in \Z}M^n$ and a differential $d$ such that $A^mM^n\subset M^{m+n}$,  such that $d(M^n)\subset M^{n+1}$, and such that $$d(am)=d(a)m +(-1)^ {\deg(a)} ad(m)$$for any homogenous element $a\in A$.
\end{definition}

\begin{definition}
Let $(A, d)$ be a dg $\mathbb{K}$-algebra. Let $(M, d)$ and $(N,d)$ be two differential graded $A$-modules. We say that  $f: (M, d)\rightarrow (N, d)$ is a $(A, d)$-module morphism of degree $n\in \Z$ if the following two conditions are satisfied, 
\begin{enumerate}
\item $f$ is a $\mathbb{K}$-linear map such that  $f(M^k)\subset N^{k+n}$ for any $k\in \Z$,
\item $f\circ d=(-1)^n d\circ f$ and $f(ax)=(-1)^{\deg(a) n} a f(x)$, where $a\in A$ is any homogenous element and $x\in M$. 
\end{enumerate}
\end{definition}

\begin{remark}
We denote by $(A, d)$-$\Modu$ (or also by $A$-$\Modu$, for short) the abelian category whose objects are left differential graded $A$-modules and morphisms are the $(A, d)$-module morphisms of degree zero. In particular, when $A$ is an (ordinary) associative algebra, the category $(A, d)$-$\Modu$ is equivalent to the category of cochain complexes of $A$-modules. In the same manner, we may define the abelian category of {\it right differential graded $(A, d)$-modules}. If $(M, d)$ is a left $(A, d)$-module, then we may think of $(M, d)$ as a right $(A^{\op}, d^{\op})$-module with the action $$m\cdot_{\op} a=(-1)^{\deg(a)\deg(m)} am$$ for any homogenous elements $a\in A$ and $m\in M$.  
For simplicity, a morphism of $(A, d)$-modules of degree zero is called a morphism of $(A, d)$-modules. 
 
For any differential graded $\mathbb{K}$-module $(M, d_M)$, define a new differential graded $\mathbb{K}$-module $(sM, d_{sM})$, as a graded $\mathbb{K}$-module $(sM)^n:=M^{n+1}$ and the differential $d_{sM}:=-d_M.$ Thus we have a \textit{shift functor} $s: \mbox{$(A, d)$-$\Modu$}  \rightarrow \mbox{$(A, d)$-$\Modu$}$  sending a dg $A$-module  $M$ to   $sM$.  It is clear that the shift functor $s$ is an equivalence and, moreover, it induces the shift functor  of  the homotopy category $\KK(\mbox{$A$-$\Modu$})$ and derived category $\DD(\mbox{$A$-$\Modu$})$  as triangulated categories (cf. \cite{Kel1}).
\end{remark}

 Given two differential graded $\mathbb{K}$-modules $(M, d)$ and $(N, d)$, we denote the {\it braiding morphism} $T: M\otimes_{\mathbb{K}}N \rightarrow N\otimes_{\mathbb{K}}M$ by $$T(x\otimes_{\mathbb{K}}y):=(-1)^{\deg(x)\deg(y)} y \otimes_{\mathbb{K}} x,$$ for any homogeneous elements $x\in M$ and $y\in N$. 

\begin{definition}\label{definition-dg-symmetric}
Let $\mathbb{K}$ be a field. A {\it differential graded (dg)  Frobenius $\mathbb{K}$-algebra} of degree $k \geq 0$ is the data $(A, d, \mu, \Delta)$ where
\begin{enumerate}
\item $(A, d, \mu)$ is a dg $\mathbb{K}$-algebra such that $A$ is non-negatively graded (i.e., $A^{<0}=0$);
\item $(A, \Delta)$ is a differential graded coassociative  coalgebra of degree $k$. That is, $\Delta: A \to A \otimes_{\mathbb{K}} A$ is a linear map of degree $k$ such that
\begin{enumerate}
\item $(\Delta\otimes \id)\Delta= (\id\otimes \Delta)\Delta$, 
\item  $\Delta d=(d\otimes \id+\id\otimes d) \Delta$,
\end{enumerate}
where the identities respect the Koszul sign rule;
\item $\Delta: A\rightarrow  A\otimes_{\mathbb{K}}A$ is a left and right differential graded $A$-module map (i.e. an $A$-$A$-bimodule map).
\end{enumerate}
We say $(A, d, \mu, \Delta)$ is \textit{counital} if it is equipped with a map $\epsilon:A \to \mathbb{K}$ of degree $-k$, which is a counit for $\Delta:A \to A \otimes A$, i.e. $(\id\otimes \epsilon) \Delta=\id = (\epsilon\otimes \id) \Delta$.  We say $(A, d, \mu, \Delta)$ is \textit{commutative} if $(A,d,\mu)$ is a commutative dg algebra and {\it cocommutative} if $T \Delta=\Delta$.  Let $(A, d, \mu, \Delta)$ be a dg Frobenius $\mathbb{K}$-algebra of degree $k$. We say $(A, d, \mu, \Delta)$ is a {\it dg symmetric Frobenius $\mathbb{K}$-algebra} if it is counital and if $T\Delta(1)=\Delta(1)$. Note that a dg symmetric Frobenius $\mathbb{K}$-algebra is not necessarily commutative or cocommutative.
 \end{definition}

\begin{remark}\label{remark-dg-symmetric}
We give some facts and remarks on a dg symmetric Frobenius algebra $(A, d, \mu, \Delta, \epsilon)$ of degree $k$ (cf. \cite{Abb}). 
\begin{enumerate}
\item We will use generalized Sweedler's notation for the coproduct $$\Delta(x)=\sum_{(x)}x' \otimes x'' = \sum x'\otimes x''.$$ Using this notation, a counit may be defined as a linear map $\epsilon: A \to \mathbb{K}$ satisfying $$x=\sum (-1)^{k\deg(x')} x'\epsilon(x'')=\sum \epsilon(x')x''.$$ We will also write $\mu(xy)=xy$ for the product. 
\item If we write $\Delta(1)=\sum_{i} e_i\otimes f_i$, then $\deg(e_i)+\deg(f_i)=k$.  Since $\Delta$ is a $A$-$A$-bimodule homomorphism, then $$\Delta(x):=\sum x'\otimes x''=\sum_i e_i\otimes f_i x=\sum_i (-1)^{k\deg(x)} xe_i\otimes f_i.$$
\item A counit $\epsilon: A \to \mathbb{K}$ defines a symmetric, invariant, non-degenerate inner product $\langle\cdot, \cdot\rangle: A\times A\rightarrow k$ by the formula $\langle x, y \rangle:= \epsilon(xy).$ In particular, $A$ is finite dimensional as a $\mathbb{K}$-vector space.
\item A commutative dg symmetric Frobenius algebra is cocommutative.
\end{enumerate}
\end{remark}
\begin{lemma}
Let $A$ be a  dg symmetric Frobenius $\mathbb{K}$-algebra of degree $k\geq 0$. Then $A^{>k}=0$.
\end{lemma}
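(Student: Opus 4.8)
The plan is to derive the bound $A^{>k}=0$ directly from the two key features of a dg symmetric Frobenius algebra recorded above: that $A$ is non-negatively graded ($A^{<0}=0$) and that the counit $\epsilon$ endows $A$ with a non-degenerate inner product $\langle x,y\rangle=\epsilon(xy)$. The whole argument is a degree count, so first I would pin down the degree in which $\epsilon$ is supported. Since $\epsilon$ has degree $-k$ and $\mathbb{K}$ is concentrated in degree $0$, the map $\epsilon$ can be nonzero only on $A^k$; equivalently, for homogeneous $x\in A^m$ and $y\in A^n$ the scalar $\langle x,y\rangle=\epsilon(xy)$ vanishes unless $m+n=k$.

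With this in hand, I would argue by contradiction. Suppose $A^{>k}\neq 0$ and pick a nonzero homogeneous element $x\in A^m$ with $m>k$. By non-degeneracy of $\langle\cdot,\cdot\rangle$ there is a homogeneous $y\in A^n$ with $\langle x,y\rangle=\epsilon(xy)\neq 0$. The degree constraint just established forces $m+n=k$, hence $n=k-m<0$. But $A^{<0}=0$, so $y=0$ and therefore $\langle x,y\rangle=0$, a contradiction. Consequently no such $x$ exists and $A^{>k}=0$.

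There is essentially no obstacle here beyond bookkeeping the degree conventions: the statement is a formal consequence of the fact that the counit yields a non-degenerate pairing of degree $-k$ together with the non-negative grading. If one prefers to avoid the contradiction format, the same computation shows that the adjoint isomorphism $A\xrightarrow{\cong}A^{\vee}$ of degree $k$ induced by the pairing sends $A^m$ isomorphically onto $(A^{k-m})^{\vee}$, and the latter vanishes for $m>k$ precisely because $A$ is non-negatively graded. The only thing to be careful about is that one genuinely uses the counital and non-degeneracy hypotheses of a \emph{symmetric} Frobenius structure, rather than just coassociativity of $\Delta$.
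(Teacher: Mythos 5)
Your proof is correct and follows essentially the same route as the paper: both arguments rest on the non-degenerate degree-$(-k)$ pairing $\langle x,y\rangle=\epsilon(xy)$ (equivalently, the induced isomorphism $A\cong\Hom_{\mathbb{K}}(A,\mathbb{K})$ identifying $A^m$ with $(A^{k-m})^{\vee}$) together with $A^{<0}=0$. Your closing reformulation via the adjoint isomorphism is precisely the paper's one-line proof, so the contradiction argument is just an elementary unwinding of the same fact.
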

\begin{proof}
By Remark \ref{remark-dg-symmetric} (3) above, there is an $A$-$A$-bimodule isomorphism of degree $-k$ between $A$ and $\Hom_{\mathbb{K}}(A, \mathbb{K})$, where the degree of $\Hom_{\mathbb{K}}(A^i, \mathbb{K})$ is $-i$. Since $A$ is finite dimensional,  we obtain that  $A^{>k}=0$ and $A^i\cong \Hom_{\mathbb{K}}(A^{k-i}, \mathbb{K})$ for $i=0, \cdots, k$.
\end{proof}



\subsection{Hochschild homology and cohomology}
Let $\mathbb{K}$ be a commutative ring. Let $(A, d)$ be a differential graded $\mathbb{K}$-algebra. Denote $\A:=A/( \mathbb{K}\cdot 1)$. For any $m\in \Z_{\geq 0}$, we define differential graded  $A$-$A$-bimodules $$\Barr_{-m}(A):= A\otimes (\sA)^{\otimes m} \otimes A.$$  For simplicity, we will drop the $s$ indicating the shift writing an element $a_0 \otimes s\overline{a_1} \otimes \cdots  \otimes s\overline{a_m} \otimes a_{m+1} \in A\otimes (\sA)^{\otimes m} \otimes A$ as $a_0 \otimes \overline{a_1}\otimes \cdots \otimes \overline{a_m} \otimes a_{m+1}$. Thus $\text{deg}(a_0 \otimes \overline{a_1} \otimes \cdots\otimes  \overline{a_m} \otimes a_{m+1})=\sum_{i=0}^{m+1} \text{deg}(\overline{a_i}) - m$. Hence, an element $\overline{a}$ should always be interpreted as $s\overline{a}$, namely, as the class in $\overline{A}$ represented by $a \in A$ with a degree shift indicated by $s$. 
\\
\\
For each $m > 0$ we have a morphism (of degree one) of dg $A$-$A$-bimodules
\begin{equation}\label{defn-external-dif}
b_{-m}: \Barr_{-m}(A)\rightarrow \Barr_{-m+1}(A)
\end{equation}
defined by sending $a_0\otimes \overline{a_1} \otimes \cdots \otimes a_{m+1}\in \Barr_{-m}(A)$ to 
\begin{equation*}
\begin{split}
&a_0a_1\otimes \overline{a_2}\otimes \cdots \otimes \overline{a_m}\otimes a_{m+1}+\\&\sum_{i=1}^{m-1} (-1)^{\epsilon_i}a_0 \otimes \overline{a_1}\otimes \cdots \otimes \overline{a_ia_{i+1}}\otimes \overline{a_{i+2}}\otimes \cdots \otimes \overline{a_m} \otimes a_{ m+1}+\\
&(-1)^{\epsilon_m}a_0\otimes \overline{a_1}\otimes \cdots \otimes \overline{a_{m-1}}\otimes a_ma_{m+1},
\end{split}
\end{equation*} 
where $\epsilon_i:=\sum_{j=0}^{i-1} \deg(a_j)-i$; and we define $b_0:\Barr_0(A)= A \otimes A \rightarrow A$ to be the multiplication $\mu$ of $A$. As pointed above, we have dropped the shift functor $s$ and we have written $a_0 \otimes \overline{a_1}\otimes \cdots \otimes \overline{a_ia_{i+1}}\otimes \overline{a_{i+2}}\otimes \cdots \otimes \overline{a_m} \otimes a_{m+1}$ for the element $a_0 \otimes s\overline{a_1} \otimes \cdots  \otimes s\overline{a_{i-1}} \otimes s(\overline{a_ia_{i+1}}) \otimes s\overline{a_{i+2} }\otimes \cdots  \otimes s\overline{a_m} \otimes a_{m+1}$. We will use this convention throughout the rest of the paper. 
\\
\\
It is straightforward to verify that we obtain a well defined map $b=b_*: \Barr_*(A) \to \Barr_{*+1}(A)$  satisfying $b\circ b=0$.  Note that $s^{-m}b_{-m}: s^{-m} \Barr_{-m}(A)\rightarrow s^{-m+1}\Barr_{-m+1}(A)$ is a degree zero morphisms of dg $A$-$A$-bimodules, hence we obtain a complex of  dg $A$-$A$-bimodules:
\begin{equation}\label{equa-bar-complex}
\xymatrix{
\Barr_*(A): \cdots\ar[r] & s^{-m}\Barr_{-m}(A) \ar[r]^-{s^{-m}b_{-m}} & s^{-m+1}\Barr_{-m+1}(A) \ar[r]& \cdots \ar[r]^-{b_0}& A\ar[r] & 0
}
\end{equation}

\begin{lemma}\label{lemma-bar-exact}
The complex $\Barr_*(A)$ is exact in the category of dg $A$-$A$-bimodules.
\end{lemma}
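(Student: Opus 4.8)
The plan is to prove exactness by exhibiting an explicit contracting homotopy for the augmented complex, following the classical ``extra degeneracy'' argument for the normalized bar resolution. First I would observe that exactness of a complex in the abelian category of dg $A$-$A$-bimodules may be tested on underlying objects: the forgetful functor to graded $\mathbb{K}$-modules is exact and faithful, so it reflects exactness, and it therefore suffices to show that the underlying complex of graded $\mathbb{K}$-modules is acyclic. In fact it is convenient to retain only the right action, working in the category of right dg $A$-modules, since the homotopy I will write down is right $A$-linear but is \emph{not} a bimodule map (it breaks left linearity by inserting a unit on the left).

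Concretely, I would define maps $h\colon s^{-m}\Barr_{-m}(A)\to s^{-m-1}\Barr_{-m-1}(A)$ by inserting the unit in front,
\[
h(a_0\otimes\overline{a_1}\otimes\cdots\otimes\overline{a_m}\otimes a_{m+1}):=1\otimes\overline{a_0}\otimes\overline{a_1}\otimes\cdots\otimes\overline{a_m}\otimes a_{m+1},
\]
together with $h(a):=1\otimes a$ on the augmentation term $A$. Two points need checking before the main computation. First, $h$ must be well defined on the \emph{normalized} complex: if $a_0\in\mathbb{K}\cdot 1$ then $\overline{a_0}=0$, so $h$ vanishes there, and one must confirm that the homotopy identity below remains valid in this degenerate case (it does, because the missing term is supplied by $h\circ b$). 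Second, $h$ commutes with the internal differential coming from the dg structure of $A$ since $d(1)=0$; thus, once the shifts $s^{-m}$ are accounted for, $h$ is a morphism of graded right dg $A$-modules of the correct homological degree.

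The core of the argument is the identity $b\circ h+h\circ b=\id$ on each $s^{-m}\Barr_{-m}(A)$, together with $\mu\circ h=\id$ at the augmentation. The mechanism is transparent: when $b_{-m-1}$ is applied to $1\otimes\overline{a_0}\otimes\cdots\otimes a_{m+1}$, the first summand multiplies the inserted $1$ with $a_0$ and returns the original element $a_0\otimes\overline{a_1}\otimes\cdots\otimes a_{m+1}$, while each remaining summand of $b_{-m-1}\circ h$ cancels against the corresponding summand of $h\circ b_{-m}$. I would verify this cancellation by matching the contractions index by index.

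I expect the only genuine obstacle to be the sign bookkeeping. The differentials $s^{-m}b_{-m}$ carry Koszul signs both from the shift functor and from the signs $\epsilon_i=\sum_{j=0}^{i-1}\deg(a_j)-i$ in the definition of $b$, so arranging that $b\circ h+h\circ b$ equals $+\id$ exactly may force a sign $(-1)^{m}$ (or similar) into the definition of $h$ on $s^{-m}\Barr_{-m}(A)$. Once the signs are organized so that the homotopy identity holds on the nose, the complex $\Barr_*(A)$ is null-homotopic as a complex of right dg $A$-modules, hence acyclic as a complex of graded $\mathbb{K}$-modules, and therefore exact in the abelian category of dg $A$-$A$-bimodules, as claimed.
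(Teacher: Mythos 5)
Your proposal is correct and is essentially the paper's own argument: an explicit contracting homotopy obtained by inserting the unit, the only difference being that you prepend $1\otimes\overline{a_0}\otimes\cdots$ (a right $A$-linear homotopy) while the paper's map $s_m$ appends $\cdots\otimes\overline{a_{m+1}}\otimes 1$ (a left $A$-linear one) — a mirror-image variant with no substantive effect. In both cases the homotopy identity $b\circ h+h\circ b=\id$ yields null-homotopy of the underlying complex, which suffices since exactness of a complex of dg bimodules is detected on underlying $\mathbb{K}$-modules.
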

\begin{proof}
 Define $s_m: s^{-m}\Barr _{-m}(A)\rightarrow s^{-m-1}\Barr _{-m-1}(A)$ to be the map which sends $$a_0\otimes \overline{a_1}\otimes \cdots \otimes \overline{a_m}  \otimes a_{m+1}\in \Barr_{-m}(A)$$ to $$a_0\otimes \overline{a_1}\otimes \cdots \otimes \overline{a_{m+1}}\otimes 1\in \Barr_{-m-1}(A).$$ We then have $s\circ b+b\circ s=id_{\Barr_*(A)}$, as desired. 
\end{proof}

\begin{remark}\label{rem-bar-construction}
Recall that the {\it two-sided bar construction} of a differential graded $\mathbb{K}$-algebra $A$ is given by the differential graded $A$-$A$-bimodule $B(A, A, A):=A\otimes T(\sA)\otimes A$ with the differential $d=d_0+d_1$, where $d_0$ is the internal differential defined by the following rule:
\begin{equation*}
\begin{split}
d_0(a_0\otimes \overline{a_1}\otimes \cdots \otimes \overline{a_m} \otimes a_{m+1})=&d(a_0)\otimes \overline{a_1}\otimes\cdots\otimes \overline{a_m}\otimes  a_{m+1}-\\
&\sum_{i=1}^m(-1)^{\epsilon_i} a_0\otimes \overline{a_1}\otimes \overline{d(a_i)} \otimes \overline{a_{i+1}}\otimes \cdots \otimes a_{m+1}+\\
& (-1)^{\epsilon_{m+1}} a_0\otimes \overline{a_{1}}\otimes\cdots \otimes d(a_{m+1})
\end{split}
\end{equation*}
 and the differential $d_1$ is the external differential given by $b$ defined in (\ref{defn-external-dif}).
In fact, the two-sided bar construction $B(A, A, A)$ above is initially constructed from the following double complex $\Barr_{*, *}(A)$ whose $(m, p)$-term is $$\Barr_{-m, p}(A): =(A\otimes (\sA)^{\otimes m} \otimes A)^p.$$ That is, $a_0\otimes \overline{a_{1}}\otimes \cdots\otimes a_{m+1}\in \Barr_{-m, p}(A)$ if  $$\sum_{i=0}^{m+1} \deg(a_i)-m=p.$$ The vertical differential $d_v: \Barr_{*, p}(A)\rightarrow \Barr_{*, p+1}(A)$ is exactly $d_0$ defined above and the horizontal differential $d_h: \Barr_{-m, *}(\sA)\rightarrow \Barr_{-m+1, *}(\sA)$ is defined to be $d_1$.  From Lemma \ref{lemma-bar-exact}, it follows that the canonical morphism $\pi: B(A, A, A)\rightarrow A$ is a quasi-isomorphism of  dg $A$-$A$-bimodules, hence $B(A, A, A)$ is a cofibrant resolution of $A$ (cf. \cite[Section 3.2]{Kel2}). In particular, if $A$ is an (ordinary) associative $\mathbb{K}$-algebra such that $A$ is projective as a $\mathbb{K}$-module, then $B(A, A, A)$ is a projective resolution of  $A$ as an $A$-$A$-bimodule.
\end{remark}

For any $n\in \Z$, the {\it Hochschild cohomology} $\HH^n(A, A)$ of degree $n$ of a differential graded algebra $A$ is defined as the Hom-space $\Hom_{\DD(A\otimes A^{\op})}(A, s^nA)$ in the derived category $\DD(A\otimes A^{\op})$. Since $B(A, A, A)$ is a cofibrant resolution of $A$ as an $A$-$A$-bimodule, we have  that 
\begin{equation*}
\begin{split}
\Hom_{\DD(A\otimes A^{\op})}(A, s^nA)\cong & \Hom_{\KK(A\otimes A^{\op})}(B(A, A, A), s^nA)\\
\cong&H_n( R\Hom_{\mathbb{K}}(T(\sA), A)).
\end{split}
\end{equation*}
Note that $R\Hom_{\mathbb{K}}(T(\sA), A)$ is the (product) total complex of the following double complex $C^{*, *}(A, A)$  located in the right half plane,
\begin{equation}
\xymatrix{
& \vdots  & \vdots & \vdots &\\
0\ar[r] & A^1 \ar[r] \ar[u] & \Hom_{\mathbb{K}}(\sA, A)^2 \ar[r] \ar[u]   & \Hom_{\mathbb{K}}((\sA)^{\otimes 2}, A)^3\ar[r]\ar[u] &\cdots\\
0\ar[r]  & A^0\ar[u] \ar[r]  & \Hom_{\mathbb{K}}(\sA, A)^1\ar[u] \ar[r] & \Hom_{\mathbb{K}}((\sA)^{\otimes 2}, A)^2\ar[u]\ar[r] &\cdots\\
& 0\ar[r]\ar[u] & \Hom_{\mathbb{K}}(\sA, A)^0\ar[u]\ar[r]  & \Hom_{\mathbb{K}}((\sA)^{\otimes 2}, A)^1\ar[u]\ar[r]& \cdots\\
&&  \vdots \ar[u] & \vdots\ar[u]
}
\end{equation}
where $C^{m, p}(A, A):=\Hom_{\mathbb{K}}((\sA)^{\otimes m}, A)^p$ is the set of morphisms $f: (\sA)^{\otimes m} \rightarrow A$ of degree $p$ and the differentials are induced from the ones of $\Barr_{*, *}(\sA)$ (cf. Remark \ref{rem-bar-construction}). More precisely, the vertical differential $\delta^v: \Hom_{\mathbb{K}}((\sA)^{\otimes m}, A)^p \rightarrow \Hom_{\mathbb{K}}((\sA)^{\otimes m}, A)^{p+1}$ is given by 
\begin{equation*}
\begin{split}
\delta^v(f)(\overline{a_{1}}\otimes \cdots\otimes \overline{a_m}):=&df(\overline{a_1}\otimes \cdots \otimes \overline{a_ m}) +\\
&\sum_{i=1}^m (-1)^{\epsilon_i} f(\overline{a_{1}}\otimes \cdots \otimes \overline{da_i} \otimes \overline{a_{i+1}}\otimes \cdots \otimes \overline{a_m}), 
\end{split}
\end{equation*}
where $\epsilon_i:= p+i-1+\sum_{j=1}^{i-1}\deg(a_j)$ and the horizontal differential 
$\delta^h: \Hom_{\mathbb{K}}((\sA)^{\otimes m}, A)^p \rightarrow \Hom_{\mathbb{K}}((\sA)^{\otimes m+1}, A)^{p+1}$ is given by 
\begin{equation*}
\begin{split}
\delta^h(f)(\overline{a_{1}} \otimes \cdots \otimes \overline{a_{m+1}}):=& (-1)^{\deg(a_1)p} a_1 f(\overline{a_{2}}\otimes \cdots \otimes\overline{a_{ m+1}})+ \\
& \sum_{i=1}^m (-1)^{\epsilon_i}f(\overline{a_{1}}\otimes \cdots \otimes \overline{a_{ i-1}}\otimes\overline{ a_ia_{i+1} }\otimes \overline{a_{i}}\otimes \cdots \otimes \overline{a_{ m+1}})-\\
&(-1)^{\epsilon_{m+1}} f(\overline{a_{1}}\otimes \cdots \otimes \overline{a_ m})a_{m+1}.
\end{split}
\end{equation*}
Define $(C^*(A,A), \delta)= \Tot^{\prod}(C^{*,*}(A,A))$, so $C^n(A,A)= \prod_{p\in\Z_{\geq 0}} C^{p,n}(A,A)$. 
We call $(C^*(A, A), \delta)$ the Hochschild cochain complex of $A$. Clearly, $H^n(C^*(A, A)) \cong \HH^n(A, A)$ for any $n\in \Z$.
\\
\\
For any $n\in \Z$, the {\it Hochschild homology} $\HH_n(A, A)$ of degree $n$ of a differential graded algebra $A$ is defined as the $n$-th homology group of the derived tensor product $A\otimes^{\LL}_{A\otimes A^{\op}} A$. Using the fact that $B(A, A, A)$ is a cofibrant resolution, we obtain that 
\begin{equation*}
\begin{split}
\HH_n(A, A)\cong &H_n(B(A, A, A)\otimes_{A\otimes A^{\op}} A )\\
\cong & H_n(T(\sA))\otimes_{\mathbb{K} }A)
\end{split}
\end{equation*}
Similarly, we note that $T(\sA))\otimes_{\mathbb{K} }A$ is the (direct sum) total complex of the following double complex $ C_{*, *}(A, A)$ located in the left half plane,
\begin{equation}
\xymatrix{
& \vdots & \vdots & \vdots& \vdots\\
\cdots \ar[r] & ( (\sA)^{\otimes 3}\otimes A)^2 \ar[r] \ar[u]& ( (\sA)^{\otimes 2}\otimes A)^3 \ar[r]\ar[u] & ( \sA\otimes A)^4 \ar[r] \ar[u]& A^5 \ar[r]\ar[u]& 0 \\
\cdots \ar[r] & ( (\sA)^{\otimes 3}\otimes A)^1\ar[r]  \ar[u]& ( (\sA)^{\otimes 2}\otimes A)^2\ar[r]\ar[u]  & ( \sA\otimes A)^3\ar[r]  \ar[u]&A^4\ar[u]\ar[r] & 0\\
\cdots \ar[r] & ( (\sA)^{\otimes 3}\otimes A)^0 \ar[r] \ar[u] & ( (\sA)^{\otimes 2}\otimes A)^1\ar[r]\ar[u]  & (\sA\otimes A)^2 \ar[r] \ar[u] & A^3\ar[u]\ar[r] &  0\\
& \vdots \ar[u]  & \vdots \ar[u] & \vdots \ar[u] & \vdots \ar[u]\\
}
\end{equation}
where $C_{-m, p}(A, A):=( (\sA)^{\otimes m}\otimes A)^p$ is the set of the elements of degree $p$ in $ (\sA)^{\otimes m}\otimes A.$ The vertical differential $\delta^v: ( (\sA)^{\otimes m}\otimes A)^p\rightarrow ( (\sA)^{\otimes m}\otimes A)^{p+1}$ is given by 
\begin{equation*}
\begin{split}
\delta^v(\overline{ a_1}\otimes \cdots \otimes \overline{a_m}\otimes a_{m+1}):=&\sum_{i=1}^m(-1)^{\epsilon_i} \overline{a_1}\otimes \cdots \otimes \overline{a_{ i-1}} \otimes
d(\overline{a_i})\otimes \overline{a_{i+1}}\otimes \cdots \otimes a_{m+1}\\
&+(-1)^{\epsilon_m} \overline{a_1}\otimes\cdots \otimes \overline{a_m} \otimes d(a_{m+1})
\end{split}
\end{equation*}
where $\epsilon_i:= |a_1| + ... + |a_i| - i $ and the horizontal differential $\delta^h:(A\otimes (\sA)^{\otimes m})^p \rightarrow (A\otimes (\sA)^{\otimes m-1})^{p-1}$ is given by 
\begin{equation*}
\begin{split}
\delta^h(\overline{a_{1}}\otimes \cdots \otimes \overline{a_m}\otimes a_{m+1}):=&\sum_{i=1}^{m-1} (-1)^{\epsilon_i} \overline{ a_1} \otimes \cdots \otimes \overline{a_{i}a_{i+1}}\otimes \overline{a_{i+2}} \otimes\cdots \otimes 
a_{m+1}-\\
&(-1)^{\epsilon_{m}} \overline{a_1}\otimes \cdots \otimes \overline{a_{m-1}}\otimes a_ma_{m+1}+\\
& (-1)^{(|a_2|+...+|a_{m+1}| - m +1)|a_1| } \overline{a_2}\otimes \cdots \otimes \overline{a_m}\otimes a_{m+1}a_1.
\end{split}
\end{equation*}
Define $(C_*(A,A),\delta) := \Tot^{\bigoplus}(C_{*,*}(A,A))$, so $C_n(A,A)= \bigoplus_{p\in\Z_{\geq 0}} C_{-p, n}(A,A)$.  We call $(C_*(A, A), \delta)$ the Hochschild chain complex of $A$. Note that $C_n(A,A)$ might be non zero even for $n$ a negative integer. Also note that in our convention $\deg{\delta}=+1$ even if we call $C_n(A,A)$ a \textit{chain} complex. Clearly, $H_n(C_*(A, A)) \cong \HH_n(A, A)$ for any $n\in \Z$.

\begin{remark}
For any dg $A$-$A$-bimodule $M$, the Hochschild cohomology $\HH^*(A, M)$ and homology $\HH_*(A, M)$ can be defined in a similar way.  They are computed by the  Hochschild cochain complex $(C^*(A, M),\delta)$  and the Hochschild chain complex $(C_*(A, M), \delta)$ with coefficients in $M$ constructed similarily as above. 
\end{remark}

\section{Singular Hochschild cohomology}

In this section we define the singular Hochschild cohomology of a dg associative algebra $A$. We continue by defining the Tate-Hochschild complex $\calD^*(A,A)$ for a dg symmetric Frobenius algebra $A$ and show that $\calD^*(A,A)$ computes the singular Hochschild cohomology of $A$. Finally, for any dg associative algebra $A$ we describe its singular Hochschild complex $\calC_{\sg}^{*}(A, A)$ and show that, if $A$ is a dg symmetric Frobenius algebra, then  $\calC_{\sg}^{*}(A, A)$ and $\calD^*(A,A)$ are chain homotopy equivalent through a homotopy retraction. 

\subsection{The definition of singular Hochschild cohomology}
Throughout this subsection we fix a field $\mathbb{K}$ and a dg $\mathbb{K}$-algebra $A$ concentrated on non-negative degrees.

\begin{definition}
 The \textit{singularity category} $\DD_{\sg}(A)$ of $A$ is defined as the Verdier quotient of triangulated categories $\DD^b(\mbox{$A$-$\modu$})/\Perf(A)$, where $\DD^b(\mbox{$A$-$\modu$})$ is the bounded derived category of finitely presented left dg $A$-modules and $\Perf(A)$ is the full sub-category of $\DD^b(\mbox{$A$-$\modu$})$ whose objects are perfect dg $A$-modules (\cite{KoSo}). 
\end{definition}

\begin{definition}
Let us  denote $A$-$\Modu_{\inj}$ the full sub-category of $A$-$\Modu$ consisting of the objects whose underlying graded $A^{\dag}$-modules are injective, where $A^{\dag}$ denotes the underlying graded algebra of $(A,d)$ obtained by forgetting the differential $d$. Let $\KK(\mbox{$A$-$\Modu_{\inj}$})$ be the homotopy category of $A$-$\Modu_{\inj}$, $\KK_{\ac}(\mbox{$A$-$\Modu_{\inj}$})$ the full sub-category of  $\KK(\mbox{$A$-$\Modu_{\inj}$})$ consisting of objects $C$ which are acyclic (i.e., $H^*(C )=0$), and $\KK_{\ac}(\mbox{$A$-$\Modu_{\inj}$})^c$ the full sub-category of compact objects. 
\end{definition}

\begin{theorem}[Corollary 5.4.\cite{Kra}, Corollary 2.2.2.\cite{Bec}]\label{theorem3.3}
 $\KK_{\ac}(\mbox{$A$-$\Modu_{\inj}$})^c$  is equivalent (up to direct summands) to the singularity category  $\DD_{\sg}(A)$.  That is, we have an equivalence of triangulated categories
$$
\xymatrix{
S: \DD_{\sg}(A)\ar[r]^-{\cong} & (\KK_{\ac}(\mbox{$A$-$\Modu_{\inj}$}))^c.
}$$
where $S$ is the stabilization functor (cf. \cite[Corollary 5.4.]{Kra}).
\end{theorem}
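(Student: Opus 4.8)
The plan is to deduce the statement from the structure of a recollement of compactly generated triangulated categories, following Krause \cite{Kra} and Becker \cite{Bec}, and then to pass to compact objects via Neeman's localization theorem. The pivotal object is the homotopy category $\KK(\mbox{$A$-$\Modu_{\inj}$})$ of unbounded dg $A$-modules that are injective over the underlying graded algebra $A^{\dag}$. First I would verify that the composite
\begin{equation*}
Q\colon \KK(\mbox{$A$-$\Modu_{\inj}$}) \hookrightarrow \KK(\mbox{$A$-$\Modu$}) \longrightarrow \DD(\mbox{$A$-$\Modu$})
\end{equation*}
is a coproduct-preserving triangulated functor whose kernel is precisely $\KK_{\ac}(\mbox{$A$-$\Modu_{\inj}$})$, since a graded-injective dg module is acyclic as a complex exactly when it vanishes in the derived category. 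Granting that $\KK(\mbox{$A$-$\Modu_{\inj}$})$ is compactly generated, Brown representability supplies the adjoints needed to promote this to a recollement relating $\KK_{\ac}(\mbox{$A$-$\Modu_{\inj}$})$, $\KK(\mbox{$A$-$\Modu_{\inj}$})$ and $\DD(\mbox{$A$-$\Modu$})$, and in particular to a localization sequence
\begin{equation*}
\DD(\mbox{$A$-$\Modu$}) \hookrightarrow \KK(\mbox{$A$-$\Modu_{\inj}$}) \twoheadrightarrow \KK_{\ac}(\mbox{$A$-$\Modu_{\inj}$})
\end{equation*}
in which the left functor is the K-injective resolution (a fully faithful section of $Q$) and $\KK_{\ac}(\mbox{$A$-$\Modu_{\inj}$})$ is realized as the Verdier quotient.

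The second and most delicate step is to identify the compact objects. Using the hypotheses on $A$ (concentrated in non-negative degrees, with enough finiteness so that $\DD^b(\mbox{$A$-$\modu$})$ is well-behaved), I would show that the composite $\DD^b(\mbox{$A$-$\modu$}) \to \DD(\mbox{$A$-$\Modu$}) \hookrightarrow \KK(\mbox{$A$-$\Modu_{\inj}$})$, sending a bounded complex of finitely presented modules to a totally injective resolution, is fully faithful, lands in compact objects, and has image generating the compacts; hence it induces an equivalence $\DD^b(\mbox{$A$-$\modu$}) \xrightarrow{\cong} \KK(\mbox{$A$-$\Modu_{\inj}$})^c$. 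Under this identification, the compact objects $\DD(\mbox{$A$-$\Modu$})^c = \Perf(A)$ of the embedded copy of $\DD(\mbox{$A$-$\Modu$})$ correspond to the perfect complexes $\Perf(A) \subseteq \DD^b(\mbox{$A$-$\modu$})$.

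Finally, I would invoke Neeman's theorem on compact objects in a localization sequence (building on Thomason--Trobaugh): for a localization sequence of compactly generated triangulated categories with coproduct-preserving functors, in which the subcategory is generated by compacts of the ambient category, the induced sequence of compact subcategories is exact up to direct summands. The subcategory here is the image of $\DD(\mbox{$A$-$\Modu$})$, which is generated by the compact object $Q$-section of $A$, so the hypothesis holds with $\Perf(A)$ as its compacts. Applying the theorem to the localization sequence above, together with the identifications $\KK(\mbox{$A$-$\Modu_{\inj}$})^c \simeq \DD^b(\mbox{$A$-$\modu$})$ and $\Perf(A) = \DD(\mbox{$A$-$\Modu$})^c$, the canonical functor
\begin{equation*}
S\colon \DD_{\sg}(A) = \DD^b(\mbox{$A$-$\modu$})/\Perf(A) \longrightarrow \KK_{\ac}(\mbox{$A$-$\Modu_{\inj}$})^c
\end{equation*}
is fully faithful and an equivalence after idempotent completion, i.e.\ an equivalence up to direct summands, which is the assertion.

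The hard part is the second step: establishing that $\KK(\mbox{$A$-$\Modu_{\inj}$})$ is compactly generated and that its compacts are exactly $\DD^b(\mbox{$A$-$\modu$})$. The subtlety is twofold. The dg (rather than ordinary) setting forces one to work with injectivity over $A^{\dag}$ and with totally injective resolutions, so one cannot simply quote the classical Noetherian case; and the identification of compacts requires a finiteness input on $A$ to guarantee both that injective resolutions of finitely presented modules are compact in $\KK(\mbox{$A$-$\Modu_{\inj}$})$ and that such objects generate. Once this is in place the recollement and the passage to compacts are formal, and the ``up to direct summands'' in the statement is precisely the idempotent-completion defect in Neeman's theorem.
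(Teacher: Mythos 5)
Your proposal is correct and follows essentially the same route as the paper: both rest on the recollement relating $\KK_{\ac}(\mbox{$A$-$\Modu_{\inj}$})$, $\KK(\mbox{$A$-$\Modu_{\inj}$})$ and $\DD(A)$ (cited from Becker), the identifications $\KK(\mbox{$A$-$\Modu_{\inj}$})^c \cong \DD^b(\mbox{$A$-$\modu$})$ and $\DD(A)^c \cong \Perf(A)$, and Neeman's localization theorem yielding an equivalence up to direct summands on compact objects. The only difference is that you sketch how the cited ingredients would be established (compact generation, the recollement, the compacts), whereas the paper simply quotes them from Krause, Becker and Neeman.
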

\begin{proof}
Recall from \cite[Corollary 2.2.2.]{Bec} that we have the following recollement of triangulated categories.
\begin{equation*}
\xymatrix{
\\
\KK_{\ac}(\mbox{$A$-$\Modu_{\inj}$})  \ar[r]^-{I}&  \ar@/_2pc/[l]_-{I_{\rho}}   \ar@/^2pc/[l]^-{I_{\lambda}} \KK(\mbox{$A$-$\Modu_{\inj}$})  \ar[r]^-{Q} & \DD(A)\ar@/_2pc/[l]_-{Q_{\rho}}  \ar@/^2pc/[l]^-{Q_{\lambda}}
\\
}
\end{equation*}
It follows from \cite[Theorem 2.1.]{Nee} that $$I_{\rho}: (\KK(\mbox{$A$-$\Modu_{\inj}$}))^c/ (\DD(A))^c\rightarrow (\KK_{\ac}(\mbox{$A$-$\Modu_{\inj}$}))^c$$ is fully-faithful and moreover the idempotent completion $ ((\KK(\mbox{$A$-$\Modu_{\inj}$}))^c/ (\DD(A))^c)^{\omega} $ is equivalent to $(\KK_{\ac}(\mbox{$A$-$\Modu_{\inj}$}))^c.$ On the other hand, we note that $$(\KK(\mbox{$A$-$\Modu_{\inj}$}))^c \cong \DD^b(\mbox{$A$-$\modu$})$$ and $$ D(A)^c\cong \Perf(A).$$ 
So $I_{\rho}: \DD_{\sg}(A) \rightarrow \KK_{\ac}(\mbox{$A$-$\Modu_{\inj}$}))^c$ is an equivalence (up to direct summands) of triangulated categories.
\end{proof}

\begin{definition}
Let $A$ be a dg $\mathbb{K}$-algebra concentrated on non-negative degrees such that $H^i(A)=0$ for 
$i\gg0$. Suppose that $A$ is projective as a $\mathbb{K}$-module.
The \textit{singular Hochschild cohomology group} of degree $i$ is defined as $$\HH_{\sg}^i(A, A):=\Hom_{\DD_{\sg}(A\otimes A^{\op})}(A, s^iA)$$ for any $i\in \Z$.
\end{definition}

\begin{remark}\label{remark3.5}
From Theorem \ref{theorem3.3}, it follows that for any $i\in \Z$, $$\HH_{\sg}^i(A, A)\cong \Hom_{\KK_{\ac}\left(\mbox{$A\otimes A^{\op}$-$\Modu_{\inj}$}\right)^c}(S(A), s^iS(A)).$$ 

\end{remark}

The rest of this section is devoted to constructing two different complexes the homologies of which  are both isomorphic to the singular Hochschild cohomology $\HH_{\sg}^*(A, A)$ in the case of a dg symmetric Frobenius $\mathbb{K}$-algebra $A$ of degree $k\geq 0.$

\subsection{The Tate-Hochschild complex of a dg symmetric Frobenius algebra}
In this subsection we fix a field $\mathbb{K}$ and a dg symmetric Frobenius $\mathbb{K}$-algebra $A$ of degree $k$.
\\

Let $(C, d, \Delta, \epsilon)$ be a counital dg coalgebra of degree zero. Denote $\overline{C}: =\Ker(\epsilon)$. Define the {\it two-sided cobar construction} $\Omega(C, C, C)$ of $C$ to be $C\otimes T(\sC)\otimes C$ with the differential $d=d_0+d_1,$ where $d_0$ is the internal differential given by 
\begin{equation*}
\begin{split}
d_0(a_0\otimes \overline{a_{1}}\otimes \cdots \otimes \overline{a_m} \otimes a_{m+1})=&d(a_0)\otimes \overline{a_{1}}\otimes \cdots\otimes \overline{a_m} \otimes  a_{m+1}-\\
&\sum_{i=1}^m(-1)^{\epsilon_i} a_0\otimes \overline{a_{1}}\otimes \cdots \otimes d(\overline{a_i}) \otimes \cdots \otimes \overline{a_m} \otimes  a_{m+1}+\\
& (-1)^{\epsilon_{m+1}} a_0\otimes \overline{a_{1}} \otimes \cdots\otimes \overline{a_m} \otimes  d(a_{m+1})
\end{split}
\end{equation*}
and the differential $d_1$ is the external differential given by 
\begin{equation*}
\begin{split}
d_1(a_0\otimes \overline{a_{1}}\otimes \cdots \otimes a_{m+1}):=&\Delta(a_0)\otimes \overline{a_{1}}\otimes \cdots \otimes \overline{a_m} \otimes a_{m+1}+\\
&\sum_{i=1}^m (-1)^{\epsilon_{i-1}}a_0\otimes \overline{a_{1}}\otimes \cdots \otimes \Delta( \overline{a_i} )\otimes \cdots \otimes \overline{a_m} \otimes a_{m+1}-\\
&(-1)^{\epsilon_m}a_0\otimes \overline{a_{1}}\otimes \cdots \otimes \overline{a_m} \otimes \Delta(a_{m+1})
\end{split}
\end{equation*}a
where $\epsilon_i:= \sum_{j=0}^i \deg(a_i)+i$. Note that the range of $\Delta$ is $C \otimes C$, however, from the following lemma it follows that $d_1$ is well-defined as a map $C \otimes T(s^{-1}\overline{C}) \otimes C \to C \otimes T(s^{-1}\overline{C}) \otimes C$. 
\begin{lemma}
Let $x:=a_0\otimes \overline{a_{1}}\otimes \cdots\otimes a_{m+1}$ be an element in  $C\otimes (\sC)^{\otimes m}\otimes C$. Then we have $d_1(x)\in C\otimes (\sC)^{\otimes m+1}\otimes C$.
\end{lemma}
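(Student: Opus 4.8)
The plan is to prove the asserted membership by testing it one internal tensor slot at a time. First I would record the elementary fact that, because $\epsilon\colon C\to\mathbb{K}$ is surjective, the subspace $(\sC)^{\otimes m+1}$ inside $C^{\otimes m+1}$ is precisely cut out by $\epsilon$-contractions: an element of $C\otimes C^{\otimes m+1}\otimes C$ lies in $C\otimes(\sC)^{\otimes m+1}\otimes C$ if and only if applying $\epsilon$ to any single one of its $m+1$ internal factors (leaving the two outer factors and all other internal factors untouched) returns $0$. Thus it suffices to fix an internal position $j\in\{1,\dots,m+1\}$ and show that the $\epsilon$-contraction of $d_1(x)$ in slot $j$ vanishes.

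I would then sort the summands of $d_1(x)$ according to which of the original entries $a_0,\overline{a_1},\dots,\overline{a_m},a_{m+1}$ was split by $\Delta$. Whenever slot $j$ is occupied by one of the \emph{unsplit} entries $\overline{a_l}$, that entry already lies in $\overline{C}=\Ker(\epsilon)$, so the contraction kills the summand outright. Consequently only two summands can contribute: the one obtained by splitting $a_{j-1}$, whose right-hand half lands in slot $j$, and the one obtained by splitting $a_j$, whose left-hand half lands in slot $j$; at the extremes $j=1$ and $j=m+1$ these are the summands splitting the boundary entries $a_0$ and $a_{m+1}$.

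The crux is then a single use of the two counit identities $(\id\otimes\epsilon)\Delta=\id$ and $(\epsilon\otimes\id)\Delta=\id$: contracting $\epsilon$ against the right half of $\Delta(a_{j-1})$ restores $a_{j-1}$ in its original position, and contracting $\epsilon$ against the left half of $\Delta(a_j)$ restores $a_j$, so both surviving summands collapse to the very same monomial, namely $x$ itself, up to the Koszul signs picked up by sliding $\epsilon$ past the intervening factors. It then remains only to see that these two copies of $x$ occur with opposite total sign and hence cancel, which gives the vanishing of the slot-$j$ contraction and, $j$ being arbitrary, the claim.

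The one genuinely delicate step—and the only place where I expect to actually compute—is this final sign comparison: I must check that, after the shift by $s^{-1}$ and with the sign exponents $\epsilon_i$ defined above, the coefficient $(-1)^{\epsilon_{j-2}}$ of the $a_{j-1}$-splitting summand and the coefficient $(-1)^{\epsilon_{j-1}}$ of the $a_j$-splitting summand, together with the Koszul signs produced by the two $\epsilon$-contractions, really are opposite for every interior $j$, and likewise for the two boundary summands, which carry the explicit $+$ and $-$ signs appearing in the definition of $d_1$. I note finally that this whole argument uses only the counitality of $(C,\Delta,\epsilon)$; coassociativity plays no role here and enters only in the separate verification that $d_1\circ d_1=0$.
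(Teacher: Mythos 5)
Your proposal is correct and follows essentially the same route as the paper: the paper also reduces membership to the vanishing of the $\epsilon$-contraction $(\id^{\otimes i}\otimes\epsilon\otimes\id^{\otimes m+1-i})(d_1(x))$ at each internal slot, kills all summands with an unsplit $\overline{a_l}$ in that slot via $\epsilon(\overline{a_l})=0$, and cancels the two remaining summands (the splittings of $a_{i-1}$ and $a_i$) using $(\epsilon\otimes\id)\Delta=\id=(\id\otimes\epsilon)\Delta$. The sign cancellation you flag as the delicate step is exactly what the paper asserts (without detailed verification) when it declares the sum of the two surviving terms, carrying $(-1)^{\epsilon_{i-2}}$ and $(-1)^{\epsilon_{i-1}}$, to be zero.
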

\begin{proof} 
It is sufficient to show that $(\id^{\otimes i} \otimes \epsilon \otimes \id^{\otimes m+1-i})(d_1(x))=0$  for any $0<i <m$. Indeed, we have
\begin{equation*}
\begin{split}
(\id^{\otimes i}\otimes \epsilon\otimes \id^{\otimes m+1-i})(d_1(x))=&
(-1)^{\epsilon_{i-1}} a_0\otimes \overline{a_{1}}\otimes \cdots \otimes  (\epsilon \otimes \id)\Delta(a_i)\otimes
\cdots \otimes a_{m+1}+\\
&(-1)^{\epsilon_{i-2}} a_0\otimes \overline{a_1}\otimes \cdots \otimes (\id\otimes \epsilon)\Delta(a_{i-1})\otimes
\cdots \otimes a_{m+1}\\
=& 0,
\end{split}
\end{equation*}
where the first identity follows from the fact that $\epsilon(\overline{a_i})=0$ for any $i=1, \cdots, m$ and the second identity holds since 
$(\epsilon\otimes \id)\Delta=\id=(\id\otimes \epsilon)\Delta$.
\end{proof}

\begin{remark}
Analogous to the two-sided bar construction, the two-sided cobar construction $\Omega(C, C, C)$ is the total complex of the  double complex whose $(m, p)$-term is $(C\otimes (\sC)^{\otimes m} \otimes C)^p$ with the horizontal differential $d_1$ and the vertical differential $d_0$. 

We have a morphism of dg $\mathbb{K}$-modules $\iota: C\rightarrow \Omega(C, C, C)$ which is induced by the coproduct $\Delta: C \rightarrow C\otimes C$.
\end{remark}

\begin{lemma}\label{lemma-cobar-exact}
$\iota: C\rightarrow \Omega(C, C, C)$ is a quasi-isomorphism of dg $\mathbb{K}$-modules.
\end{lemma}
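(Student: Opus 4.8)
The plan is to prove the lemma by exhibiting an explicit contracting homotopy on the augmented complex, in complete analogy with, and formally dual to, the proof of Lemma~\ref{lemma-bar-exact}. Viewing $\Omega(C,C,C)$ as the total complex of the double complex with $(m,p)$-term $(C\otimes(\sC)^{\otimes m}\otimes C)^p$, external differential $d_1$, and internal differential $d_0$, it suffices to show that the augmented complex
$$0\to C\xrightarrow{\iota}\Omega_0\xrightarrow{d_1}\Omega_1\xrightarrow{d_1}\cdots,\qquad \Omega_m:=C\otimes(\sC)^{\otimes m}\otimes C,$$
is contractible. Whereas the bar homotopy $s_m$ used the unit of $A$ to append a $1$ on the right and push the last bimodule factor into the tensor bar, here I would use the counit $\epsilon$ in the dual role: it removes the right-hand boundary factor and promotes the last bar entry to the boundary.

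Concretely, I would set $h_0:=\id\otimes\epsilon\colon \Omega_0=C\otimes C\to C$, and for $m\geq 1$ define $h_m\colon\Omega_m\to\Omega_{m-1}$ by
$$h_m\bigl(a_0\otimes\overline{a_1}\otimes\cdots\otimes\overline{a_m}\otimes a_{m+1}\bigr):=(-1)^{\eta_m}\,\epsilon(a_{m+1})\;a_0\otimes\overline{a_1}\otimes\cdots\otimes\overline{a_{m-1}}\otimes a_m,$$
where on the right-hand side the final bar entry $\overline{a_m}\in\overline{C}$ is reinterpreted, via the inclusion $\overline{C}\hookrightarrow C$, as the new right-hand boundary factor, and $\eta_m$ is the Koszul sign. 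First I would record the augmentation identity $h_0\circ\iota=(\id\otimes\epsilon)\Delta=\id_C$, which holds by the counit axiom of Remark~\ref{remark-dg-symmetric}(1) and accounts for exactness at the $C$-spot.

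The heart of the computation is then the contracting-homotopy identity for the external differential: $\iota\circ h_0+h_1\circ d_1=\id_{\Omega_0}$ on $\Omega_0$, and $d_1\circ h_m+h_{m+1}\circ d_1=\id_{\Omega_m}$ for $m\geq 1$. This is precisely dual to the identity $s\circ b+b\circ s=\id$ of Lemma~\ref{lemma-bar-exact}: when one expands $d_1 h+h d_1$, the terms coming from applying $\Delta$ to the interior factors cancel in adjacent pairs exactly as in the bar case, while the potentially problematic terms produced by coproducting the promoted factor $a_m$ or the boundary factor removed by $h$ are annihilated because $\epsilon$ vanishes on $\overline{C}=\Ker(\epsilon)$ — the same mechanism used in the preceding lemma to see that $d_1$ is well defined. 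I expect the main obstacle here to be purely the sign bookkeeping: tracking the Koszul signs through the shifts $s^{-1}$ and the reinterpretation $\overline{C}\hookrightarrow C$, so that the single surviving term is exactly $+\id$.

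Finally, since $\epsilon$ is a chain map and the inclusion $\overline{C}\hookrightarrow C$ is compatible with the internal differential, the operator $h$ anticommutes with $d_0$, i.e.\ $d_0 h+h d_0=0$; this is a routine factorwise check. Combining this with the external identity gives $(d_0+d_1)h+h(d_0+d_1)=\id-\iota h_0$ for the total differential, so $h$ is a genuine contracting homotopy of the augmented total complex. Because $h$ acts factorwise it is compatible with the totalization and no convergence issue arises, and I would conclude that $\iota$ is a quasi-isomorphism of dg $\mathbb{K}$-modules. Conceptually, this homotopy is nothing but the extra codegeneracy of the cosimplicial object underlying $\Omega(C,C,C)$, which is exactly what one expects to dualize the simplicial contraction behind Lemma~\ref{lemma-bar-exact}.
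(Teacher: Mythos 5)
Your proposal is correct and takes essentially the same route as the paper: the paper's own proof exhibits exactly the same contracting homotopy $s_m(a_0\otimes\overline{a_1}\otimes\cdots\otimes a_{m+1})=(-1)^{\epsilon_m}\,a_0\otimes\overline{a_1}\otimes\cdots\otimes a_m\,\epsilon(a_{m+1})$, using the counit to delete the right boundary factor and promote the last bar entry, and verifies $s\circ d_1+d_1\circ s=\id$ row by row (for each fixed internal degree $p$). Your additional check that $h$ anticommutes with $d_0$ merely repackages this row-wise contraction as a contracting homotopy of the total complex, which is a harmless (and slightly more self-contained) strengthening of the same argument.
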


\begin{proof} 
It is sufficient to show that each row of the extended double complex is exact.  Namely, for each $p\in \Z$, the following complex is exact.
$$
\xymatrix{
0\ar[r] & C^p \ar[r]^-{\Delta} & (C\otimes C)^p \ar[r]^-{d_1} &\cdots \ar[r] &  (C\otimes (\sC)^{\otimes m} \otimes C)^{p+m} \ar[r] & \cdots
}
$$
Let us construct a homotopy $s_m: (C\otimes (\sC)^{\otimes m} \otimes C)^{p+m} \rightarrow (C\otimes (\sC)^{\otimes m-1}\otimes C)^{p+m-1}$ which sends $a_0\otimes \overline{a_{1}}\otimes\cdots\otimes a_{m+1}$ to  $$ (-1)^{\epsilon_m}a_0\otimes \overline{a_1}\otimes \cdots \otimes a_m \epsilon (a_{m+1})\in (C\otimes (\sC)^{\otimes m-1}\otimes C)^{p+m-1} $$
where $\epsilon_m:=\sum_{i=1}^m \deg(a_i)+m$. It is straightforward to check that  $s\circ d_1+d_1\circ s=\id$. Therefore the mapping cone $cone(\iota)$ is acyclic and thus $\iota$ is a quasi-isomorphism.
\end{proof}

Let us go back to  the case where $A$ is a dg symmetric Frobenius $\mathbb{K}$-algebra of degree $k$. Then $(s^{k}A, s^{k}\Delta, s^{k}\epsilon)$ is a counital dg coalgebra of degree zero, where
$$
\xymatrix{
s^{k}\Delta: s^{k}A\ar[r]^-{s^{k}\Delta} & s^{2k}(A\otimes A)\ar[r]^{\cong} & s^{k}A\otimes s^{k}A
}
$$
and
$$
\xymatrix{
s^{k}\epsilon: s^{k}A\ar[r]^-{s^{k} \epsilon} & s^{k} s^{-k}\mathbb{K} \ar[r]^-{\cong} & \mathbb{K}.
}
$$
To simplify the notation, we denote $(s^{k}A, s^{k}\Delta, s^{k}\epsilon)$ by $(C, \Delta, \epsilon)$. It follows from Lemma \ref{lemma-cobar-exact} and the fact that $\Delta: C \to C \otimes C$ is a morphism of dg $A$-$A$-bimodules, that  the two-sided cobar construction $\Omega(C, C, C)$ is quasi-isomorphic to $C$ as a dg $A$-$A$-bimodule.

We have a morphism of dg $A$-$A$-bimodules $\tau: B(A, A, A) \rightarrow s^{-k}\Omega(C, C, C)$ which is, by definition,  the composition of  the following morphisms
\begin{equation}\label{equation-tau}
\xymatrix{
\tau: B(A, A, A) \ar[r]^-{\pi} & A \ar[r]^-{\cong} & s^{-k} C\ar[r] ^-{s^{-k}\iota}& s^{-k} \Omega(C, C, C)
}
\end{equation}
where $\pi$ is the composition $B(A,A,A) \twoheadrightarrow A \otimes A \xrightarrow{\mu} A$ and $\iota$ is the composition $C \xrightarrow{\Delta} C \otimes C \hookrightarrow \Omega(C,C,C)$. Since $\pi$ and $\iota$ are both quasi-isomorphisms, so is the morphism $\tau$. Thus the mapping cone $cone(\tau)$ is acyclic, so $cone(\tau)\in \KK_{\ac}(\mbox{$A\otimes A^{\op}$-$\Modu_{\inj}$})$.

\begin{lemma}\label{lemma3.9}
We have an isomorphism 
$$S(A) \cong cone(\tau)$$ in $\KK_{\ac}(\mbox{$A\otimes A^{\op}$-$\Modu_{\inj}$})^c,$
where $S$ is the stabilization functor from 
$\DD_{\sg}(A\otimes A^{\op})$ to $\KK_{\ac}(\mbox{$A\otimes A^{\op}$-$\Modu_{\inj}$})^c$ (cf. Theorem \ref{theorem3.3}).
\end{lemma}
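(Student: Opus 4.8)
The plan is to exhibit $cone(\tau)$ as a complete (Tate) resolution of $A$ as an $A\otimes A^{\op}$-bimodule and then to match it with the image of $A$ under the stabilization functor $S$ by means of the localization triangle of the recollement in Theorem \ref{theorem3.3}.

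First I would record that $cone(\tau)$ actually lies in $\KK_{\ac}(\mbox{$A\otimes A^{\op}$-$\Modu_{\inj}$})$. Acyclicity is already established, since $\pi$ and $\iota$ are quasi-isomorphisms and hence so is $\tau$. For the injectivity of the terms, note that a symmetric Frobenius algebra is self-injective and that $A\otimes A^{\op}$ is again a symmetric Frobenius $\mathbb{K}$-algebra; consequently projective and injective $A\otimes A^{\op}$-modules coincide. Each term $\Barr_{-m}(A)=A\otimes(\sA)^{\otimes m}\otimes A$ of $B(A,A,A)$ and each term $C\otimes(\sC)^{\otimes m}\otimes C$ of $\Omega(C,C,C)$ is, as an $A$-$A$-bimodule, free over $A\otimes A^{\op}$ (up to a degree shift, since $C\cong s^{k}A$), hence projective and therefore injective; the same remains true after forgetting the differential, so these objects lie in $\mbox{$A\otimes A^{\op}$-$\Modu_{\inj}$}$. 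Thus $cone(\tau)$ is an acyclic complex of injective bimodules, built by gluing the projective resolution $\pi:B(A,A,A)\to A$ (its negative half) to an injective resolution of $A$ (its non-negative half).

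Next I would observe that $\tau$ is precisely the comparison map into a K-injective resolution. Indeed, $B(A,A,A)$ represents $A$ in $\DD(A\otimes A^{\op})$, while $\Omega(C,C,C)$ is a bounded-below complex of injective bimodules and hence K-injective; therefore $s^{-k}\iota$ identifies $s^{-k}\Omega(C,C,C)$ with the K-injective resolution $Q_\rho Q(B(A,A,A))$ of $A$, and $\tau$ is the canonical map to it. The localization triangle attached to the recollement of Theorem \ref{theorem3.3}, of the form
\begin{equation*}
I I_\rho B(A,A,A)\to B(A,A,A)\xrightarrow{\ \tau\ } s^{-k}\Omega(C,C,C)\to \Sigma\, I I_\rho B(A,A,A),
\end{equation*}
then yields $cone(\tau)\cong \Sigma\, I I_\rho B(A,A,A)$ in $\KK(\mbox{$A\otimes A^{\op}$-$\Modu_{\inj}$})$, exhibiting $cone(\tau)$, up to shift, as the acyclic coreflection of a complex representing $A$.

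Finally I would identify $I I_\rho B(A,A,A)$ with $S(A)$. Since $A$ is finite dimensional it is finitely presented as an $A\otimes A^{\op}$-bimodule, so it defines a compact object of $\DD^b(A\otimes A^{\op})$; by the construction in Theorem \ref{theorem3.3}, the stabilization functor $S$ sends such an object to the acyclic part $I_\rho$ of any complex representing it, up to the shift built into $S$ and the passage to the idempotent completion. Matching these conventions gives $S(A)\cong cone(\tau)$, and in particular $cone(\tau)\in \KK_{\ac}(\mbox{$A\otimes A^{\op}$-$\Modu_{\inj}$})^c$. The main obstacle is exactly this last identification: one must verify that $\tau$ is genuinely the recollement's comparison map (not merely some quasi-isomorphism to a K-injective object), pin down the precise form of the localization triangle and the degree shift relating $I_\rho$ to $S$, and confirm compactness of the resulting complete resolution. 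The first two points are bookkeeping, whereas compactness is the conceptual heart, since it is what ties the concrete gluing of the bar and cobar resolutions to the abstract stabilization of Becker and Krause.
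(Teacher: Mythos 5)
The paper states Lemma 3.9 with no proof at all, so there is nothing to compare against; your strategy (Frobenius implies projective $=$ injective over $A\otimes A^{\op}$, so both $B(A,A,A)$ and $s^{-k}\Omega(C,C,C)$ define objects of $\KK(\mbox{$A\otimes A^{\op}$-$\Modu_{\inj}$})$, then run a recollement localization triangle and identify its second map with $\tau$) is certainly the natural, intended one. However, two load-bearing justifications fail as written. First, your K-injectivity argument is wrong: $s^{-k}\Omega(C,C,C)$ is \emph{not} bounded below once $k\geq 2$. Since $C=s^kA$ sits in degrees $[-k,0]$ and $\sC$ in degrees $[1-k,1]$, the $p$-th column $C\otimes (\sC)^{\otimes p}\otimes C$ reaches down to degree $-2k-p(k-1)$, which tends to $-\infty$ with $p$; indeed every total degree receives contributions from infinitely many columns. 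So exactly in the main cases of interest (Frobenius models of manifolds of dimension $\geq 2$) the claim "bounded below, hence K-injective" is false. K-injectivity can be rescued, but by a genuinely different argument: take the direct-product totalization, write it as the inverse limit of the brutal column truncations (each a bounded-below complex of injectives, hence K-injective) along degreewise split surjections, and use that such a limit is the fibre of a map between products of K-injectives. Note this also forces a totalization convention the paper is silent about.

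Second, the identification with $S(A)$ is mis-stated, and it is not bookkeeping — it is the content of the lemma. The assertion that $S$ sends $A$ to "the acyclic part $I_\rho$ of any complex representing it" is false: $I_\rho$ kills every K-injective complex (for K-injective $X$ the unit $X\to Q_\rho QX$ is an isomorphism, so the triangle $II_\rho X\to X\to Q_\rho QX$ forces $I_\rho X=0$), so applied to the representative $iA:=s^{-k}\Omega(C,C,C)$ it gives $0$, not $S(A)$; $I_\rho$ is a homotopy invariant but not a quasi-isomorphism invariant, so the choice of the K-projective representative $B(A,A,A)$ is essential, and relating $\Sigma I_\rho B(A,A,A)$ to the Krause--Becker stabilization $S=I_\lambda\circ Q_\rho$ of Theorem 3.3 is precisely what must be proven. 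The missing idea that closes this gap is the following: $B(A,A,A)$ is K-projective \emph{and} has injective terms, hence $\Hom_{\KK(\mbox{\scriptsize$A\otimes A^{\op}$-$\Modu_{\inj}$})}(B(A,A,A),X)\cong \Hom_{\DD(A\otimes A^{\op})}(A,QX)$ naturally in $X$, i.e. $B(A,A,A)\cong Q_\lambda(A)$. Feeding this into the other localization triangle $Q_\lambda Q(iA)\to iA\to II_\lambda(iA)\to \Sigma Q_\lambda Q(iA)$, and identifying its first map with $\tau$ (both are chain maps lifting $\id_A$ into a K-injective target, hence homotopic), yields $S(A)=I_\lambda(iA)\cong cone(\tau)$ directly, bypassing $I_\rho$ altogether. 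Finally, you have the difficulty inverted: compactness of $cone(\tau)$ is not "the conceptual heart" but is automatic, since Theorem 3.3 asserts that $S$ takes values in $\KK_{\ac}(\mbox{$A\otimes A^{\op}$-$\Modu_{\inj}$})^c$, so compactness follows the moment the isomorphism $cone(\tau)\cong S(A)$ is established.
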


\begin{remark}\label{rem-cone}
The mapping cone $cone(\tau)$ can be viewed as the total complex of the double complex $\mathcal{E}(A, A, A)$ whose $(p, q)$-term is defined to be $(s^{-k}(C\otimes (\sC)^{\otimes p-1}\otimes C))^{p+q-1}$ when $p > 0$ and $(A\otimes (\sA)^{\otimes -p}\otimes A)^{p+q} $ when $p \leq 0$. Roughly speaking, the double complex  is obtained by connecting $B(A,A, A)$ and $s^{-k}\Omega(C, C, C)$ via the morphism $\Delta \circ \mu$, namely
$$
\cdots \rightarrow A \otimes (\sA)^{\otimes 2} \otimes A \rightarrow A \otimes \sA \otimes A \rightarrow A \otimes A \xrightarrow{\Delta \circ \mu} s^{-1-k}(C \otimes C) \rightarrow  s^{-1-k}(C  \otimes \sC \otimes C) \rightarrow \cdots
$$
\end{remark}

Let us consider the differential graded Hom-space $$\mathcal Hom_{A\otimes A^{\op}}(cone(\tau), A)= \Hom_{A\otimes A^{\op}}(\mathcal{E}(A, A, A), A).$$ By base change, we obtain a differential graded $\mathbb{K}$-module $\calD^*(A, A)$, where $$\calD^n(A, A):=\prod_{p\in \Z_{\geq 0}} \Hom_{\mathbb{K}}((\sA)^{\otimes p}, A)^n \oplus \bigoplus_{p\in \Z_{\geq 0}}\Hom_{\mathbb{K}}((\sC)^{\otimes p}, A)^{n-k+1}$$ with the differential $\delta:=\delta_0+\delta_1$, 
where $\delta_0$ is the internal differential given by, for $f\in \Hom_{\mathbb{K}}((\sA)^{\otimes p}, A)^n$ or $f \in \Hom_{\mathbb{K}}((\sC)^{\otimes p}, A)^{n-k+1},$
$$
\delta_0(f)(\overline{a_{1}}\otimes \cdots \overline{a_p})=d(f(\overline{a_1}\otimes \cdots \otimes \overline{a_p}))+\sum_{i=1}^p (-1)^{\epsilon_i}f(\overline{a_1}\otimes \cdots  \otimes \overline{da_i} \otimes \cdots \otimes \overline{a_p}).
$$
$\delta_1$ is the external differential given by
\begin{enumerate}
\item for $f\in \Hom_{\mathbb{K}}((\sA)^{\otimes p}, A)^n$
\begin{equation*}
\begin{split}
\delta_1(f)(\overline{a_{1}}\otimes \cdots\otimes  \overline{a_{p+1}}):=&a_1 f(\overline{a_{2}}\otimes \cdots \otimes \overline{a_{ p+1}})+\\&\sum_{i=1}^p (-1)^{\epsilon_i}f(\overline{a_1}\otimes \cdots \otimes \overline{a_ia_{i+1}}\otimes \cdots \overline{a_{ p+1}})+\\
&(-1)^{\epsilon_{p+1}}f(\overline{a_{1}}\otimes \cdots \otimes\overline{a_p})a_{p+1},
\end{split}
\end{equation*}
\item for $f \in \Hom_{\mathbb{K}}((\sC)^{\otimes p}, A)^{n-k+1}$ and $p>0$,
\begin{equation*}
\begin{split}
\delta_1(f)(\overline{a_{1}}\otimes \cdots \otimes \overline{a_ {p-1}}) :=&\mu (\id\otimes f)(\Delta(1)\otimes \overline{a_{1}}\otimes \cdots \otimes \overline{a_{p-1}})+\\
& \sum_{i=1}^{p-2}(-1)^{\epsilon_i} f(\overline{a_{1}}\otimes \cdots \otimes \overline{\Delta(a_i)}\otimes \cdots \otimes \overline{a_{p-1}})+\\
& (-1)^{\epsilon_n}\mu (f\otimes \id) (\overline{a_{1}}\otimes \cdots \otimes \overline{a_{p-1}} \otimes  \Delta(1)),
\end{split}
\end{equation*}
\item for $f \in \Hom_{\mathbb{K}}((\sC)^{\otimes p}, A)^{n-k+1}$ and $p=0$,
$$
\delta_1(f)=\mu (\Delta(1) \otimes (f\otimes 1)).
$$
\end{enumerate}
We call $(\calD^*(A, A), \delta)$ the  {\it Tate-Hochschild complex} of $A$ and define   the {\it Tate-Hochschild cohomology group} of degree $n$, denoted by $\THH^n(A, A)$, for any $n\in \mathbb{Z}$, to be the cohomology group $H^n(\calD^*(A, A), \delta)).$  More explicitly, $(\calD^*(A, A), \delta)$ is the totalization complex of the double complex $\calD^{*,*}(A,A)$ given by
$$
\cdots \to s^{1-k}C_{-1,*}(A,A) \to s^{1-k}C_{0,*}(A,A) \xrightarrow{\gamma} C^{0,*}(A,A) \to C^{1,*}(A,A) \to \cdots
$$
where $\gamma$ is the composition 
$$
s^{1-k}C_{0,*}(A,A) \cong s^{1-k}A \xrightarrow{\Delta} s(A \otimes A) \xrightarrow{T} s(A \otimes A) \xrightarrow{\mu} sA \xrightarrow{s^{-1}} A \cong C^{0,*}(A,A),
$$ 
and by totalization we mean the direct sum totalization in the Hochschild chains direction and the direct product totalization in the Hochschild cochains direction.

\begin{proposition}\label{prop-tate-isomorphism} 
Let $A$ be a dg symmetric Frobenius algebra over a field $\mathbb{K}$. Then for any $i \in \Z$, $$\THH^i(A, A) \cong \HH_{\sg}^i(A, A).$$
\end{proposition}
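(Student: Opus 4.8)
The plan is to reduce both theories to Hom-computations in the homotopy category $\KK_{\ac}(\mbox{$A\otimes A^{\op}$-$\Modu_{\inj}$})$ and then to compare. First I would assemble the two inputs already available. On the one hand, Remark \ref{remark3.5} together with Theorem \ref{theorem3.3} gives $\HH_{\sg}^i(A,A) \cong \Hom_{\KK_{\ac}(\mbox{$A\otimes A^{\op}$-$\Modu_{\inj}$})^c}(S(A), s^iS(A))$, and Lemma \ref{lemma3.9} identifies $S(A)$ with the totally acyclic complex $cone(\tau)$; since $\KK_{\ac}(\mbox{$A\otimes A^{\op}$-$\Modu_{\inj}$})$ is a full subcategory of $\KK(A\otimes A^{\op})$ this yields $\HH_{\sg}^i(A,A) \cong \Hom_{\KK(A\otimes A^{\op})}(cone(\tau), s^i\, cone(\tau))$. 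On the other hand, $\calD^*(A,A)$ was defined by base change as the Hom-complex $\Hom_{A\otimes A^{\op}}(cone(\tau), A)$, so by the definition of morphisms in a homotopy category
\[
\THH^i(A,A) = H^i\Hom_{A\otimes A^{\op}}(cone(\tau), A) = \Hom_{\KK(A\otimes A^{\op})}(cone(\tau), s^iA).
\]
Thus the proposition becomes equivalent to the single isomorphism $\Hom_{\KK(A\otimes A^{\op})}(cone(\tau), s^i\, cone(\tau)) \cong \Hom_{\KK(A\otimes A^{\op})}(cone(\tau), s^iA)$, that is, to showing that in these homotopy morphisms the target complete resolution $cone(\tau)$ may be replaced by the diagonal bimodule $A$ itself.

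The structural input I would exploit is that $A$ is symmetric Frobenius, so $A\otimes A^{\op}$ is a (dg) self-injective algebra and every bar term $A\otimes(\sA)^{\otimes n}\otimes A$ and cobar term $s^{-k}(C\otimes(\sC)^{\otimes n}\otimes C)$ is simultaneously projective and injective. Hence $cone(\tau)$ is not merely an acyclic complex of injectives but also a complex of projectives, realizing a \emph{complete resolution} of $A$: its non-positive columns coincide with the bar resolution $B(A,A,A)$, and its positive columns $s^{-k}\Omega(C,C,C)$ extend it to a totally acyclic complex. I would then use the two quasi-isomorphisms $\pi\colon B(A,A,A)\to A$ and $s^{-k}\iota\colon A\cong s^{-k}C\to s^{-k}\Omega(C,C,C)$, whose composite is $\tau$, and the distinguished triangle they induce:
\[
s^{-k}\Omega(C,C,C) \longrightarrow cone(\tau) \longrightarrow s\,B(A,A,A) \xrightarrow{\ s\tau\ } s\cdot s^{-k}\Omega(C,C,C).
\]

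Next I would carry out the computation. Applying $\Hom_{\KK(A\otimes A^{\op})}(cone(\tau), -)$ to this triangle, I note that $s^{-k}\Omega(C,C,C)$ is a bounded-below complex of injective bimodules, hence K-injective, so $\Hom_{\KK}(cone(\tau), s^j\, s^{-k}\Omega) \cong \Hom_{\DD(A\otimes A^{\op})}(cone(\tau), s^j s^{-k}\Omega)$, and this vanishes for all $j$ because $cone(\tau)$ is acyclic. The long exact sequence then collapses to an isomorphism $\Hom_{\KK}(cone(\tau), s^i\, cone(\tau)) \cong \Hom_{\KK}(cone(\tau), s^{i+1}B(A,A,A))$, so half of the complete resolution (the cobar coresolution) is already eliminated, leaving only a comparison of the bar resolution $B(A,A,A)$ with $A$ through $\pi$, to be matched against the degree $-k$ of $\gamma$ and the shift $s^{1-k}$ built into $\calD^{*,*}(A,A)$.

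The hard part is exactly this final comparison. One cannot invoke quasi-isomorphism invariance naively: $cone(\tau)$ is an unbounded complex of projectives that is \emph{not} K-projective (being itself acyclic, it satisfies $\Hom_{\KK}(cone(\tau),cone(\tau))\cong\HH_{\sg}^0(A,A)\neq 0$, which would be forced to vanish were it K-projective), so $\pi_*$ need not be a quasi-isomorphism of Hom-complexes, and $cone(\pi)$ is not contractible as a complex of bimodules — this non-contractibility is precisely the statement that $A$ has infinite projective dimension over $A\otimes A^{\op}$, which is what makes $\HH_{\sg}^*$ nontrivial in the first place. The content at this step is the dg analogue of Buchweitz's theorem identifying morphisms in the singularity category with Tate cohomology $\widehat{\Ext}^*$ computed from a complete resolution, and the self-injectivity provided by the symmetric Frobenius pairing is exactly what guarantees $cone(\tau)$ is a complete resolution to which such a comparison applies. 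I would establish it either through the stable limit $\widehat{\Ext}^i(A,A)=\varinjlim_n \Ext^{i+n}_{A\otimes A^{\op}}(\Omega^n A, A)$ matched termwise against the truncations of $cone(\tau)$, or, more explicitly, by feeding the contracting data of Lemma \ref{lemma-bar-exact} and Lemma \ref{lemma-cobar-exact} into the Hom-complex to produce an honest homotopy inverse, in either case tracking the Koszul signs and the degree shifts $s^{1-k}$ and $-k$ with care.
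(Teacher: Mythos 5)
You follow the paper's route exactly. The paper's entire proof of Proposition \ref{prop-tate-isomorphism} is the chain of isomorphisms $\HH_{\sg}^i(A,A)\cong \Hom_{\KK_{\ac}}(S(A),s^iS(A))\cong \Hom_{\KK_{\ac}}(cone(\tau),s^i\,cone(\tau))\cong \Hom_{A\otimes A^{\op}}(cone(\tau),s^iA)\cong H^i(\calD^*(A,A))$, where $\KK_{\ac}$ abbreviates $\KK_{\ac}(\mbox{$A\otimes A^{\op}$-$\Modu_{\inj}$})$: the first two isomorphisms are Remark \ref{remark3.5} and Lemma \ref{lemma3.9}, the last is the definition of $\calD^*(A,A)$, and the third one --- precisely the step you single out as the content of the statement, i.e.\ the dg analogue of Buchweitz's identification of singularity-category Homs with Tate cohomology of a complete resolution --- is simply \emph{asserted} in the paper, with no triangle argument or any other justification. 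So where your proposal overlaps with the paper it is the same proof, and your last two paragraphs are material the paper does not contain.

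Two points about that added material. First, your vanishing $\Hom_{\KK}(cone(\tau), s^js^{-k}\Omega(C,C,C))=0$ rests on the claim that $s^{-k}\Omega(C,C,C)$ is a bounded-below complex of injectives, hence $K$-injective; the premise is false whenever $k\geq 2$, because $\sC$ is concentrated in degrees $[1-k,1]$, so the $m$-th cobar column $C\otimes(\sC)^{\otimes m}\otimes C$ contains elements of total degree $m(1-k)-2k$, which tends to $-\infty$ with $m$: the two-sided cobar construction is unbounded in both directions. The conclusion can be repaired (with the product totalization, $\Omega(C,C,C)$ is the inverse limit of its brutal column truncations, a tower of bounded complexes of injectives with degreewise split surjective transition maps, and such a limit is $K$-injective), but not for the reason you give. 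Second, the step you yourself call the hard part --- that $\pi_*:\Hom_{\KK}(cone(\tau), s^{j}B(A,A,A))\to \Hom_{\KK}(cone(\tau),s^{j}A)$ is an isomorphism, equivalently that $\Hom_{\KK}(cone(\tau), s^{j}cone(\pi))$ vanishes for all $j$ --- is never carried out; you only name two candidate strategies. (The degree bookkeeping you leave open does close up: in the paper's conventions $\mathcal{E}(A,A,A)=s^{-1}cone(\tau)$, so $\THH^i(A,A)\cong \Hom_{\KK}(cone(\tau),s^{i+1}A)$, which is exactly where your composite lands.) Since this final comparison is the only genuine mathematical content of the proposition beyond unwinding definitions, your proposal is not a complete proof; it is, however, no less complete than the paper's own argument, which asserts the corresponding isomorphism outright.
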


\begin{proof}
From Remark \ref{remark3.5} and Lemma \ref{lemma3.9}, it follows that 
\begin{equation*}
\begin{split}
\HH_{\sg}^i(A, A) &\cong \Hom_{\KK_{\ac}\left(\mbox{$A\otimes A^{\op}$-$\Modu_{\inj}$}\right)}(S(A), s^iS(A))\\
&\cong \Hom_{\KK_{\ac}\left(\mbox{$A\otimes A^{\op}$-$\Modu_{\inj}$}\right)}(cone(\tau), s^i cone(\tau))\\
&\cong \Hom_{A\otimes A^{\op}}(cone(\tau), s^iA)\\
&\cong H^i(\calD^*(A, A))\\
&= \THH^i(A, A).
\end{split}
\end{equation*}
\end{proof}

\subsection{The singular Hochschild complex of a differential graded algebra}
In this subsection we fix a commutative ring $\mathbb{K}$ and a differential graded $\mathbb{K}$-algebra $A$ (not necessarily a dg symmetric Frobenius algebra). 
\\

Recall that we have defined a morphism of degree one of dg $A$-$A$-bimodules (cf. (\ref{defn-external-dif})), $$b_{-m}: \Barr_{-m}(A)\rightarrow \Barr_{-m+1}(A),$$ which induces a morphism of degree zero $$sb_{-m}: s\Barr_{-m}(A) \rightarrow \Barr_{-m+1}(A).$$ Let us denote by $\Omega^{m+1}(A)$ the kernel of $sb_{-m}$. In particular, we denote by  $\Omega^1(A)$ the kernel of  $s\mu: s(A\otimes A)\rightarrow sA$ where $\mu$ is the multiplication of $A$,  and we write $\Omega^0(A):=A$. Obviously, $\Omega^{m}(A)$ is a dg $A$-$A$-bimodule for any $m\in \Z$. Denote by $b$ the differential of $\Barr_*(A)$ and by $\pi: A\twoheadrightarrow \sA$ the natural projection map of degree one.
 
\begin{lemma}\label{lemma-bimodule}
For each $p \in \mathbb{Z}_{\geq 0}$, there is an isomorphism of dg $A$-$A$-bimodules $$\alpha: \Omega^p(A)\xrightarrow{\cong} (\sA)^{\otimes p}\otimes A,$$ where the left $A$-module structure in $(\sA)^{\otimes p}\otimes A$ is given by $$a\blacktriangleright( \overline{x_1}\otimes\cdots \otimes \overline{x_p}\otimes x_{p+1}):=(\pi\otimes \id^{\otimes p})(b(a\otimes \overline{x_1}\otimes\cdots \otimes \overline{x_p}\otimes x_{p+1})),$$ the right $A$-module structure is given by multiplication on the right $A$ factor of $(\sA)^{\otimes p}\otimes A$, and the differential on $(\sA)^{\otimes p}\otimes A$ is given by $$d(\overline{x_1}\otimes\cdots \otimes \overline{x_p}\otimes x_{p+1})=\sum_{i=1}^{p+1} (-1)^{\epsilon_{i-1}}\overline{x_1}\otimes \cdots \otimes d(\overline{x_i})\otimes \cdots \otimes x_{p+1},$$ where $\epsilon_{i-1}=\sum_{j=1}^{i-1} \deg(x_j) - i +1$. 
\end{lemma}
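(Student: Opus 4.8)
The plan is to realize the isomorphism by an explicit pair of mutually inverse maps. Define $\alpha$ to be the restriction to $\Omega^p(A)\subseteq s\Barr_{-(p-1)}(A)$ of the map that bars the leftmost tensor factor, i.e. $\alpha$ sends (the suspension of) $a_0\otimes\overline{a_1}\otimes\cdots\otimes\overline{a_{p-1}}\otimes a_p$ to $\overline{a_0}\otimes\overline{a_1}\otimes\cdots\otimes\overline{a_{p-1}}\otimes a_p=(\pi\otimes\id^{\otimes p})(a_0\otimes\overline{a_1}\otimes\cdots\otimes a_p)$; the outer suspension and the degree of $\pi$ cancel, so $\alpha$ is degree-preserving. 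In the other direction set $\beta(\overline{x_1}\otimes\cdots\otimes\overline{x_p}\otimes x_{p+1}):=s\,b_{-p}(1\otimes\overline{x_1}\otimes\cdots\otimes\overline{x_p}\otimes x_{p+1})$. Since $b\circ b=0$ we have $sb_{-(p-1)}(\beta(-))=0$, so $\beta$ indeed lands in $\Omega^p(A)=\ker(sb_{-(p-1)})$; and because every term of $b_{-p}(1\otimes\cdots)$ except the leading one $x_1\otimes\overline{x_2}\otimes\cdots$ retains $1$ in the first slot and is killed by $\pi$, we get $\alpha\circ\beta=\id$ at once.

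The first substantive step is $\beta\circ\alpha=\id$. Writing $\omega=s\eta\in\Omega^p(A)$ with $\eta=a_0\otimes\overline{a_1}\otimes\cdots\otimes a_p\in\ker(b_{-(p-1)})$, we have $\beta(\alpha(\omega))=s\,b_{-p}(1\otimes\overline{a_0}\otimes\overline{a_1}\otimes\cdots\otimes a_p)$. The key observation is that $1\otimes\overline{a_0}\otimes\overline{a_1}\otimes\cdots\otimes a_p=h\eta$, where $h\colon\Barr_{-m}(A)\to\Barr_{-m-1}(A)$, $a_0\otimes\overline{a_1}\otimes\cdots\otimes a_{m+1}\mapsto 1\otimes\overline{a_0}\otimes\overline{a_1}\otimes\cdots\otimes a_{m+1}$, is the left-handed analogue of the contracting homotopy of Lemma \ref{lemma-bar-exact} and satisfies $bh+hb=\id$ by the same one-line verification. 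As $b\eta=0$, this gives $\beta(\alpha(\omega))=s\,b(h\eta)=s(bh+hb)\eta=s\eta=\omega$. Note that $h$ is well defined on the reduced complex precisely because $\overline 1=\pi(1)=0$, so the normalized bar construction is essential here.

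It remains to transport the bimodule structure and the differential. The right $A$-action is immediate, since neither $\alpha$ nor $\beta$ alters the rightmost factor, so it becomes multiplication on the $A$-factor of $(s\overline A)^{\otimes p}\otimes A$. For the left action I would compute the transported action $a\ast\xi:=\alpha(a\cdot\beta(\xi))$ directly: using that $b$ is a morphism of $A$-$A$-bimodules and that $a\cdot(1\otimes\xi)=a\otimes\xi$, one finds $a\cdot\beta(\xi)=s\,b_{-p}(a\otimes\overline{x_1}\otimes\cdots\otimes x_{p+1})$, whence $a\ast\xi=(\pi\otimes\id^{\otimes p})\,b_{-p}(a\otimes\overline{x_1}\otimes\cdots\otimes x_{p+1})=a\blacktriangleright\xi$, exactly the asserted formula. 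The point of using the homotopy-lift $\beta$ rather than the naive lift $1\otimes(-)$ is that it absorbs automatically the correction terms that the constraint $\omega\in\ker(sb)$ would otherwise force. Finally, since the internal differential $d_0$ anticommutes with $b$ (the defining relation of the double complex $\Barr_{*,*}(A)$) and commutes with $\pi$ up to the Koszul sign, $\alpha$ intertwines the differential inherited by $\Omega^p(A)$ with the displayed differential on $(s\overline A)^{\otimes p}\otimes A$.

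The main obstacle is not conceptual but bookkeeping: one must keep the suspension $s$, the degree of the projection $\pi$, and the Koszul signs in $b_{-p}$ mutually consistent so that $\alpha$ is honestly degree-$0$ and the identity $\alpha(a\cdot\beta(\xi))=a\blacktriangleright\xi$ holds with the correct sign. The only genuinely non-formal ingredient is the appeal to the contracting homotopy for $\beta\circ\alpha=\id$, and even there the essential input is the single fact $\overline 1=0$, which is what pins the argument to the reduced (normalized) bar complex.
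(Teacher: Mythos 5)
Your proposal is correct and takes essentially the same route as the paper: the paper's proof defines the very same $\alpha$ (the inclusion $\Omega^p(A)\hookrightarrow s\Barr_{-(p-1)}(A)$ followed by $\pi\otimes\id^{\otimes p}$) and the very same inverse $\beta$ (insert $1$ on the left, then apply $b$), asserting without detail that they are mutually inverse and that $\blacktriangleright$ is an action. You merely supply the verifications the paper leaves implicit — $\alpha\circ\beta=\id$ via $\pi(1)=0$, $\beta\circ\alpha=\id$ via the left-handed contracting homotopy, and the $\blacktriangleright$ formula obtained by transport of structure — all of which are sound.
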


\begin{proof}
It is easy to check that $\blacktriangleright$ defines a dg left $A$-module structure on $(\sA)^{\otimes p} \otimes A$, namely, for any $a_1, a_2\in A$, $$a_1\blacktriangleright (a_2\blacktriangleright (\overline{x_1}\otimes \cdots \otimes \overline{x_p}\otimes x_{p+1}))=(a_1a_2)\blacktriangleright (\overline{x_1}\otimes \cdots \otimes \overline{x_p}\otimes x_{p+1}).$$ The morphism $\alpha$ is defined as the composition $$\Omega^p(A)\hookrightarrow s(A\otimes (\sA)^{\otimes p-1}\otimes A)  \xrightarrow{\pi\otimes \id^{\otimes p}} \sA\otimes (\sA)^{\otimes p-1}\otimes A. $$ The inverse of $\alpha$ is given by the morphism $\beta$ defined by the composition  $$\beta: (\sA)^{\otimes p}\otimes A \rightarrow A\otimes (\sA)^{\otimes p}\otimes A\xrightarrow{b} \Omega^p(A),$$ where the first morphism is given by $$\overline{x_1}\otimes \cdots \otimes \overline{x_p}\otimes x_{p+1} \mapsto 1\otimes \overline{x_1}\otimes \cdots \otimes \overline{x_p}\otimes x_{p+1}.$$
\end{proof}

\begin{remark}
From now on, we identify $\Omega^p(A)$ with $(\sA)^{\otimes p}\otimes A$ via the isomorphism $\alpha$. 
\end{remark}

\begin{figure} [H]
\centering
  \includegraphics[width=125mm]{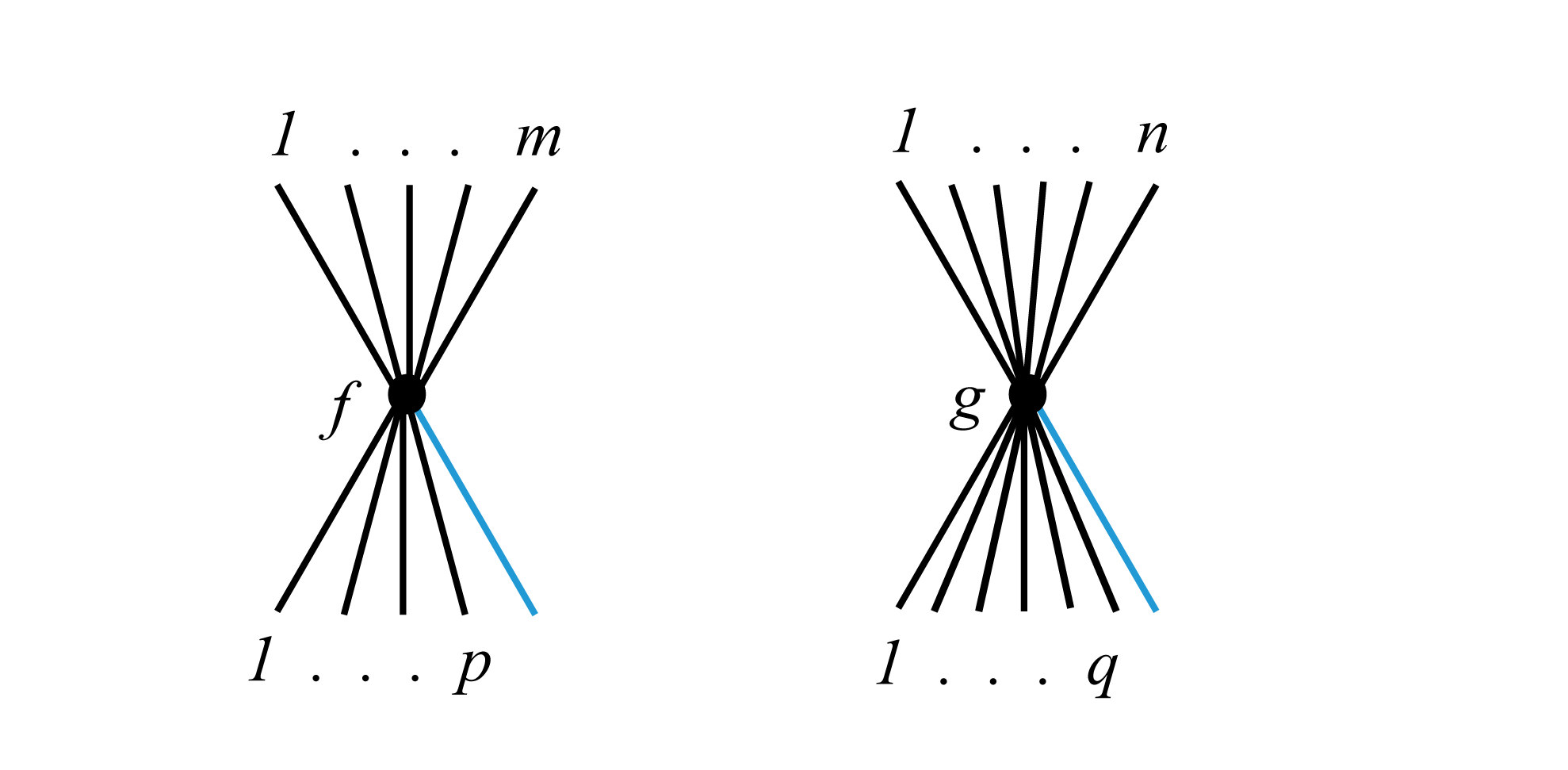}
  \caption{Using the identification of Lemma 3.12, we may represent diagrammatically elements $f \in C^{m,*}(A,\Omega^p(A))$ and $g \in C^{n,*}(A, \Omega^q(A))$ as corollas. For example $f$  has $m$ input legs at the top and $p+1$ output legs at the bottom. The rightmost output is colored blue, indicating that such a leg represents an element of $A$, while black output legs should be interpreted as elements in $s\overline{A}$. Compositions of maps will be represented by stacking trees, as usual.}
  \label{fig:corollas}
\end{figure}

Consider the Hochschild cochain complex $C^*(A, \Omega^p(A))$ with coefficients in the dg $A$-$A$-bimodule $\Omega^p(A)$. Let us define a morphism  $$\widetilde{\theta}_{ p}: C^*(A, \Omega^p(A))\rightarrow C^{*}(A, \Omega^{p+1}(A))$$ which sends an element $f\in C^*(A, (\sA)^{\otimes p} \otimes A)$ to  $\widetilde{\theta}_{m, p}(f)$ given by the following formula, 
\begin{equation}\label{equation-definition-theta}
\widetilde{\theta}_{ p}(f)(\overline{x_1}\otimes \cdots\otimes \overline{x_{k+1}})=(-1)^{\text{deg}(x_1) \text{deg}( f) } \overline{x_1} \otimes f(\overline{x_2}\otimes \cdots \otimes \overline{x_{k+1}}).
\end{equation}

\begin{lemma}\label{lemma-singular-differential}
$\widetilde{\theta}_p$ is a morphism of complexes (of degree zero) for each $p\in\Z_{\geq 0}$. Namely,  the following diagram commutes
$$
\xymatrix{
C^*(A, \Omega^p(A)) \ar[r]^-{\widetilde{\theta}_p} \ar[d]^{\delta}& C^{*}(A, \Omega^{p+1}(A))\ar[d]^-{\delta}\\
C^{*+1}(A, \Omega^p(A))\ar[r]^-{\widetilde{\theta}_p} & C^{*+1}(A, \Omega^{p+1}(A)).
}
$$
\end{lemma}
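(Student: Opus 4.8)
For a dg $A$-bimodule $M$ the Hochschild differential on $C^*(A,M)$ decomposes as $\delta=\delta^v+\delta^h$, where $\delta^v$ is built from $d_A$ and the internal differential $d_M$, and $\delta^h$ is the bar differential built from the left and right $A$-actions on $M$. Since $\widetilde{\theta}_p$ raises the tensor arity by one, $\delta^v$ preserves arity, and $\delta^h$ raises it by one, the images $\widetilde{\theta}_p\delta^v$ and $\widetilde{\theta}_p\delta^h$ land in different arities; hence $\widetilde{\theta}_p\delta=\delta\widetilde{\theta}_p$ is equivalent to the two identities $\widetilde{\theta}_p\delta^v=\delta^v\widetilde{\theta}_p$ and $\widetilde{\theta}_p\delta^h=\delta^h\widetilde{\theta}_p$, which I would verify separately. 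First I would record that $\widetilde{\theta}_p$ preserves total degree: it raises arity by one and the $\Omega$-degree of the target by one, and since the grading convention absorbs the bar shift into $\sA$, these two shifts cancel, so $\widetilde{\theta}_p(f)$ has the same internal degree as $f$.

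For the vertical part the mechanism is a cancellation. Writing $\widetilde{\theta}_p(f)$ as the prepending map $\sA\otimes\Omega^p(A)\to\Omega^{p+1}(A)$ of Lemma~\ref{lemma-bimodule} composed with $\id_{\sA}\otimes f$ (up to the sign $(-1)^{\deg(x_1)\deg f}$), and using that the internal differential of $\Omega^{p+1}(A)$ restricts to the tensor-product differential on prepended elements, one sees that the term in which $d_A$ hits the new first slot $\overline{x_1}$ occurs twice in $\delta^v(\widetilde{\theta}_p f)$ with opposite signs --- once from the output differential of $\Omega^{p+1}(A)$ and once from the $i=1$ summand of the input differential --- and so cancels. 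The surviving summands reassemble exactly into $\widetilde{\theta}_p(\delta^v f)$, the only care being that $\deg(\widetilde{\theta}_p f)=\deg f$ and that the prefactor $(-1)^{\deg(x_1)\deg f}$ interacts correctly with the Koszul sign in the Hom-complex differential.

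The horizontal part is the heart of the argument. I would evaluate both sides on $\overline{a_1}\otimes\cdots\otimes\overline{a_{m+2}}$. The decisive observation is that, by its very definition via $(\pi\otimes\id^{\otimes p+1})\circ b$, the left action $\blacktriangleright$ on $\Omega^{p+1}(A)$ is itself a bar-type operator, so with $\omega:=f(\overline{a_3}\otimes\cdots\otimes\overline{a_{m+2}})$ it splits as
\[
a_1 \blacktriangleright \bigl(\overline{a_2}\otimes\omega\bigr) = \overline{a_1 a_2}\otimes\omega \;\pm\; \overline{a_1}\otimes\bigl(a_2 \blacktriangleright \omega\bigr),
\]
separating the term in which $a_1$ merges with the slot $\overline{a_2}$ prepended by $\widetilde{\theta}_p$ from the term in which $a_1$ is carried along while $a_2$ acts on $\omega$. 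The merge term $\overline{a_1 a_2}\otimes\omega$ then cancels against the $i=1$ internal merge summand $\pm\,\widetilde{\theta}_p(f)(\overline{a_1 a_2}\otimes\overline{a_3}\otimes\cdots)=\pm\,\overline{a_1 a_2}\otimes\omega$ of $\delta^h(\widetilde{\theta}_p f)$; this cancellation is forced, since the right-hand side $\widetilde{\theta}_p(\delta^h f)=\pm\,\overline{a_1}\otimes(\delta^h f)(\overline{a_2}\otimes\cdots)$ contains no $\overline{a_1 a_2}$ term at all, $\delta^h f$ only ever merging $a_i$ with $a_{i+1}$ for $i\geq 2$. The surviving term $\overline{a_1}\otimes(a_2\blacktriangleright\omega)$ is exactly the left-action summand of $\widetilde{\theta}_p(\delta^h f)$, while the remaining internal merges (for $i\geq 2$) and the right-action term correspond on the nose under prepending $\overline{a_1}$, using that $\widetilde{\theta}_p(f)(\overline{a_1}\otimes\cdots\overline{a_i a_{i+1}}\cdots)=\pm\,\overline{a_1}\otimes f(\cdots\overline{a_i a_{i+1}}\cdots)$ and that the right $A$-action on $\Omega^{p+1}(A)$ touches only the final $A$-factor, hence commutes with prepending $\overline{a_1}$.

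The genuinely delicate point --- and the step I expect to be the main obstacle --- is the sign bookkeeping, concentrated in verifying that the two copies of $\overline{a_1 a_2}\otimes\omega$ carry opposite signs and that the bar signs $\epsilon_i$ shift consistently by $\deg(a_1)-1$ once the slot $\overline{a_1}$ is pulled to the front. I would control these by treating $\overline{a_1}$ as carrying its $\sA$-degree $\deg(a_1)-1$ in every Koszul transposition, and by comparing $\deg(\delta^h f)=\deg f+1$ against the prefactor $(-1)^{\deg(a_1)\deg f}$ produced by $\widetilde{\theta}_p$; once this is in place, every surviving term on the two sides matches identically.
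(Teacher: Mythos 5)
Your proof is correct, but it takes a genuinely different route from the paper's. The paper explicitly notes that a direct computation is possible and then avoids it: its proof observes (cf.\ Remark \ref{remark4.1}) that $\widetilde{\theta}_p$ is given by cup product with the degree-zero cocycle $d:\overline{a}\mapsto\overline{a}\otimes 1$ in $C^{1,*}(A,\Omega^1(A))$, so that commutativity with $\delta$ reduces to the single identity $\delta(d\cup f)=\pm\,d\cup\delta(f)$, which follows from $\delta(d)=0$ together with the Leibniz rule for $\cup$. Your argument is instead the direct computation: you split $\delta=\delta^v+\delta^h$ by tensor arity (a valid reduction, since the two components of each side land in different arities), and you locate the two cancellations that make it work --- the $d_A$-on-$\overline{x_1}$ terms in the vertical part, and the $\overline{a_1a_2}$-merge terms in the horizontal part, the latter via the bar-type splitting $a_1\blacktriangleright(\overline{a_2}\otimes\omega)=\overline{a_1a_2}\otimes\omega\pm\overline{a_1}\otimes(a_2\blacktriangleright\omega)$ of the $\blacktriangleright$-action of Lemma \ref{lemma-bimodule}. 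That splitting is the correct heart of the matter; it is exactly what the paper's formulation packages into the Leibniz/associativity properties of the cup product with $d$. As for what each approach buys: your computation is self-contained at the point where the lemma is stated, whereas the paper's proof forward-references the cup product machinery of Section 4 (and the compatibility of $\cup$ with $\delta$, which is itself a computation of comparable weight, left unwritten there); on the other hand, the identification $\widetilde{\theta}_p=d\cup-$ is shorter, concentrates all sign bookkeeping into one standard identity, and is reused in Remark \ref{remark4.1} to show that $\cup$ descends to the colimit $\calC_{\sg}^*(A,A)$. The only caveat on your side is the one you flag yourself: the signs (in particular that the two copies of $\overline{a_1a_2}\otimes\omega$ occur with opposite signs) are asserted rather than checked, but the mechanism you describe for controlling them --- treating $\overline{a_1}$ with its shifted degree $\deg(a_1)-1$ in every Koszul transposition --- is the right one, and no step of the argument depends on a sign coincidence that could fail.
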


\begin{proof}
This result can be proved by a straightforward computation, but we prove it using the observation that the map $\widetilde{\theta}_p$ is given by cup product with the cocycle $d: \overline{a}\mapsto \overline{a}\otimes 1$ in $C^{1, *}(A, \Omega^1(A))$ (cf. Remark \ref{remark4.1}). 
Thus to verify the commutative diagram above is equivalent to verify the identity $d\cup \delta(f)=\delta(d\cup f)$ for any $f\in C^*(A, \Omega^p(A))$. The latter follows from the fact that $d$ is a cocycle and the cup product is compatible with the differential $\delta$. So we prove this lemma. 
\end{proof}

It follows that we have an inductive system of complexes
$$
\xymatrix{
\cdots\ar[r] & C^*(A, \Omega^p(A)) \ar[r]^-{\widetilde{\theta}_{ p}} &C^{*}(A, \Omega^{p+1}(A)) \ar[r]^{\widetilde{\theta}_{p+1}} & C^{*}(A, \Omega^{p+2}(A))\ar[r] & \cdots
}
$$
and we denote the colimit by 
$$
\calC_{\sg}^{*}(A, A):= \lim_{\substack{\longrightarrow\\p \in \Z_{\geq 0}}} C^{*}(A, \Omega^{p}(A)).
$$
Since the $\widetilde{\theta_p}$ are compatible with the differentials we obtain a differential  $$\delta_{\sg}^{m}: \calC^{m}_{\sg}(A, A)\rightarrow \calC_{\sg}^{m+1}(A, A).$$ We call the complex $(\calC_{\sg}^*(A, A), \delta_{\sg})$ the {\it singular Hochschild cochain complex} of $A$.
\\

By Lemma \ref{lemma-bar-exact},  for any $p\in \Z_{\geq 0}$, we have the following exact sequence of dg $A$-$A$-bimodules
$$
0\rightarrow \Omega^{p+1}(A)\hookrightarrow s\Barr_{-p}(A) \rightarrow s\Omega^{p}(A)\rightarrow 0.
$$
Therefore, we may take the derived functor $\HH^*(A, - )$ in the derived category $\DD(\mbox{$A\otimes A^{\op}$-$\Modu$})$  to obtain a long exact sequence
$$
\xymatrix{
\cdots\ar[r]  &  \HH^m(A, s\Barr_p(A)) \ar[r] &  \HH^m(A, s\Omega^p(A)) \ar[r]^-{\theta_{m, p}} & \HH^{m+1}(A, \Omega^{p+1}(A)) \ar[r] & \cdots 
}
$$
where $\theta_{m, p}$ denotes the connecting homomorphism.  Since $\HH^m(A, s\Omega^p(A))\cong \HH^{m+1}(A, \Omega^p(A))$, we get an inductive system for any $m\in \Z$, 
$$
\xymatrix{
\cdots\ar[r] & \HH^{m+1}(A, \Omega^p(A)) \ar[r]^-{\theta_{p}} & \HH^{m+1}(A, \Omega^{p+1}(A)) \ar[r]^{\theta_{p+1}} & 
\HH^{m+1}(A, \Omega^{p+2}(A))\ar[r] & \cdots
}
$$
and denote its colimit  by $ \lim\limits_{\substack{\longrightarrow\\p \in \Z_{\geq 0}}} \HH^{m+1}(A, \Omega^{p}(A)).$

\begin{lemma}\label{lemma-commute-theta}
For any $p\in \Z_{\geq 0}$ and $m\in \Z$, we have that  $H^m(\widetilde{\theta}_p)=\theta_{ p}$. Namely, the following diagram commutes
$$
\xymatrix@C=3pc{
H^m(C^{*}(A, \Omega^p(A))) 
\ar[d]^-{\cong}\ar[r]^-{H^m(\widetilde{\theta}_p)} & H^{m}(C^*(A, \Omega^{p+1}(A))\ar[d]^-{\cong}\\
\HH^{m}(A, \Omega^p(A))\ar[r]^-{\theta_{p}}&  \HH^{m}(A, \Omega^{p+1}(A)).
}
$$
\end{lemma}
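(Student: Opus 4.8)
The plan is to identify both horizontal maps in the diagram with cup product by a single Hochschild cocycle. Recall from the proof of Lemma~\ref{lemma-singular-differential} that $\widetilde{\theta}_p$ is cup product by the total-degree-zero cocycle $\mathfrak{e}\in C^{1,*}(A,\Omega^1(A))$ given by $\overline{a}\mapsto \overline{a}\otimes 1$. Indeed, under the identification $\Omega^{p+1}(A)\cong \Omega^1(A)\otimes_A\Omega^p(A)$ — a consequence of Lemma~\ref{lemma-bimodule}, since $(\sA\otimes A)\otimes_A((\sA)^{\otimes p}\otimes A)\cong(\sA)^{\otimes p+1}\otimes A$ once one uses the twisted left action $\blacktriangleright$ — the cup product $C^*(A,\Omega^1(A))\otimes C^*(A,\Omega^p(A))\to C^*(A,\Omega^{p+1}(A))$ applied to $\mathfrak{e}\otimes f$ reproduces formula~(\ref{equation-definition-theta}), including its Koszul sign. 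Since the cup product with coefficients is a chain map and descends to $\HH^*$, this already gives $H^m(\widetilde{\theta}_p)=[\mathfrak{e}]\cup(-)$, where the vertical isomorphisms are the canonical ones identifying $H^*(C^*(A,M))$ with $\HH^*(A,M)$. It therefore remains to prove that the connecting homomorphism $\theta_p$ is also cup product by $[\mathfrak{e}]$.

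For the connecting map I would invoke the standard fact that, for the derived functor $\HH^*(A,-)=\Ext^*_{A\otimes A^{\op}}(A,-)$, the connecting homomorphism associated with a short exact sequence of bimodules in the coefficient variable is Yoneda composition with the class of that extension. The family $0\to\Omega^{p+1}(A)\to s\Barr_{-p}(A)\to s\Omega^p(A)\to 0$ is obtained by applying the functor $-\otimes_A\Omega^p(A)$ to the universal sequence $0\to\Omega^1(A)\to s(A\otimes A)\xrightarrow{s\mu} sA\to 0$; exactness is preserved because the latter splits over $A$, and the identifications $s\Barr_0(A)\otimes_A\Omega^p(A)\cong s\Barr_{-p}(A)$ and $sA\otimes_A\Omega^p(A)\cong s\Omega^p(A)$ hold as above. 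Hence the extension class of the $p$-th sequence is the image of the class of the universal sequence, and the latter is precisely $[\mathfrak{e}]$: transported through the map $\beta$ of Lemma~\ref{lemma-bimodule}, the element $\overline{a}\otimes 1$ corresponds to $a\otimes 1-1\otimes a\in\Omega^1(A)$ (up to sign), i.e. to the universal bimodule derivation representing the class of $0\to\Omega^1(A)\to A\otimes A\to A\to 0$. Compatibility of Yoneda composition with the cup product then yields $\theta_p=[\mathfrak{e}]\cup(-)=H^m(\widetilde{\theta}_p)$, as desired.

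The main obstacle is bookkeeping rather than conceptual: carrying the degree shifts $s$ and the twisted left $A$-module structure $\blacktriangleright$ of Lemma~\ref{lemma-bimodule} through the identification of the extension class with $[\mathfrak{e}]$, and reconciling the sign and ordering conventions of the cup product with those of Yoneda composition and of the connecting homomorphism. As an independent cross-check — which could also serve as a self-contained argument avoiding the Yoneda formalism — one can compute $\theta_p$ by an explicit cocycle chase: given a cocycle $g$ representing a class in $\HH^{m+1}(A,\Omega^p(A))\cong\HH^{m}(A,s\Omega^p(A))$, lift it along the surjection $s\Barr_{-p}(A)\to s\Omega^p(A)$ using the section $a_0\otimes\cdots\otimes a_{m+1}\mapsto a_0\otimes\cdots\otimes\overline{a_{m+1}}\otimes 1$ coming from the contracting homotopy of Lemma~\ref{lemma-bar-exact}, apply the total differential, and observe that the result lands in $\Omega^{p+1}(A)$ and, after using $\delta g=0$, represents the same class as $\widetilde{\theta}_p(g)$. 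Since this section attaches the new tensor factor on the side opposite to~(\ref{equation-definition-theta}), the two cochain representatives differ by an explicit coboundary; exhibiting this homotopy is the one genuinely computational point, and it is governed by the same cocycle identity already exploited in Lemma~\ref{lemma-singular-differential}.
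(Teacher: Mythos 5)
Your argument is correct, but it follows a genuinely different route from the paper's. The paper proves the lemma by a uniqueness-of-factorization argument carried out entirely inside the bar resolution: a cocycle $f\in\Hom_{\KK(A\otimes A^{\op})}(B(A,A,A),s^{m}\Omega^p(A))$ is lifted through the surjection $s^{m}\Barr_{-p}(A)\twoheadrightarrow s^{m}\Omega^p(A)$ by the specific bimodule map $1\otimes\overline f$ (insert the unit into the \emph{leftmost} input of $f$); composing with the differential of $B(A,A,A)$ lands in the kernel $s^{m-1}\Omega^{p+1}(A)$, and precisely because the unit is inserted on the left this composite is literally $\widetilde{\theta}_p(f)$ in the sense of formula (\ref{equation-definition-theta}), so injectivity of $s^{m-1}\Omega^{p+1}(A)\hookrightarrow s^{m}\Barr_{-p}(A)$ forces $\theta_{p}(f)=\widetilde{\theta}_p(f)$ on the nose, with no coboundary correction and no appeal to extension classes. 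You instead identify both horizontal maps abstractly as multiplication by a single canonical class: $\widetilde{\theta}_p=\mathfrak{e}\cup(-)$ (which is Remark \ref{remark4.1}), and $\theta_p$ equal to Yoneda composition with the extension class of $0\to\Omega^{p+1}(A)\to s\Barr_{-p}(A)\to s\Omega^p(A)\to 0$, which you compute to be $[\mathfrak{e}]$ by exhibiting this sequence as $(-)\otimes_A\Omega^p(A)$ applied to the universal sequence (exactness surviving thanks to the right $A$-linear splitting $a\mapsto 1\otimes a$) and recognizing the universal class as the class of the universal derivation $a\mapsto\overline a\otimes 1$. Each of these ingredients is standard and does hold in this dg setting, so your proof is complete modulo the sign/shift bookkeeping you explicitly flag. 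What your route buys is conceptual clarity: it explains \emph{why} the diagram commutes (both maps are multiplication by the same class) and makes the later compatibility with the cup product (Proposition \ref{proposition-dga}) transparent; what the paper's route buys is economy, since it needs neither the bimodule isomorphism $\Omega^1(A)\otimes_A\Omega^p(A)\cong\Omega^{p+1}(A)$ nor the Yoneda-equals-cup compatibility, both of which must otherwise be verified with the $\blacktriangleright$-twisted structures. Finally, note that your ``cross-check'' cocycle chase is essentially the paper's actual proof, except that your section (taken from Lemma \ref{lemma-bar-exact}) attaches the unit on the right, opposite to (\ref{equation-definition-theta}), which is exactly what creates the extra homotopy you would have to exhibit; the paper's left-handed lift $1\otimes\overline f$ eliminates that step entirely.
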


\begin{proof} 
First let us recall the construction of the connecting homomorphism $$\theta_{p}:\HH^{m}(A, \Omega^p(A))\rightarrow \HH^{m}(A, \Omega^{p+1}(A)).$$ Since the bar resolution $B(A, A, A)$ is a projective resolution of $A$-$A$-bimodule $A$,  we have the following isomorphisms 
\begin{equation*}
\begin{split}
\HH^{m}(A, \Omega^p(A))&\cong \Hom_{\DD(A\otimes A^{\op})}(A, s^{m}\Omega^p(A) )\\
&\cong \Hom_{\KK(A\otimes A^{\op})}(B(A, A, A), s^{m} \Omega^p(A) ).
\end{split}
\end{equation*}
Any $f \in  \Hom_{\KK(A\otimes A^{\op})}(B(A, A, A), s^{m} \Omega^p(A))$ may be lifted uniquely to $\theta(f)$ so that the following diagram commutes
$$
\xymatrix@R=4pc@C=4pc{
B(A, A, A) \ar[rd]^-{1\otimes \overline{f}} \ar[r]^-{f} & s^m((\sA)^{\otimes p}\otimes A)\\
B(A, A, A)\ar[u]^-{d} \ar@{.>}[rd]_-{\theta(f)}  & s^{m}\Barr_{-p}(A) \ar@{->>}[u]\\
&s^{m-1}((\sA)^{\otimes p+1}\otimes A)\ar[u]
}
$$
where $d: B(A,A,A) \to B(A,A,A)$ is the differential of the two sided bar construction of $A$, the vertical maps in the right column are the two middle maps in the short exact sequence $0\rightarrow s^{m-1}\Omega^{p+1}(A)\hookrightarrow s^m\Barr_{-p}(A) \rightarrow s^m\Omega^{p}(A)\rightarrow 0$, and $\overline{f}: T(\sA) \otimes A \rightarrow s^{m} \Omega^p(A)$ is defined by $$\overline{f}(\overline{a_1} \otimes .. \otimes \overline{a_{m+p+1}} \otimes a_{m+p+1})=f(1\otimes \overline{a_1} \otimes \cdots  \otimes \overline{a_{m+1}} \otimes a_{m+p+1}).$$ The map ${\theta}$ is precisely the connection homomorphism in the long exact sequence. But note if we place the map $H(\widetilde{\theta})$ in the dotted morphism it also makes the diagram commute, so by uniqueness it follows that $H(\widetilde{\theta})=\theta$.
\end{proof}

\begin{proposition}\label{prop-quasi-isomorphism}
For any $m\in \Z$,  we have a natural isomorphism 
$$H^m(\calC_{\sg}^*(A, A)) \cong  \lim\limits_{\substack{\longrightarrow\\p \in \Z_{\geq 0}}} \HH^{m}(A, \Omega^{p}(A)).$$
In particular, via such an isomorphism, the quotient functor  $\DD^b(A\otimes A^{\op}) \rightarrow \DD_{\sg}(A\otimes A^{\op})$ induces a natural morphism $$\chi:H^*(\calC_{\sg}^*(A, A)) \rightarrow \HH_{\sg}^*(A, A).$$
\end{proposition}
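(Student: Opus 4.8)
The plan is to treat the two assertions of the proposition in turn: the colimit description of $H^m(\calC_{\sg}^*(A,A))$, which is essentially formal, and the construction of the comparison map $\chi$, which carries the real content.

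For the first isomorphism I would begin from the fact that filtered colimits are exact in the category of $\mathbb{K}$-modules, so that taking cohomology commutes with the sequential (hence filtered) colimit defining $\calC_{\sg}^*(A,A)$. This gives
$$
H^m(\calC_{\sg}^*(A,A)) = H^m\Bigl(\varinjlim_{p} C^*(A,\Omega^p(A))\Bigr) \cong \varinjlim_{p} H^m\bigl(C^*(A,\Omega^p(A))\bigr) \cong \varinjlim_{p} \HH^m(A,\Omega^p(A)),
$$
the last identification being the definition of Hochschild cohomology with coefficients in $\Omega^p(A)$. To finish I only need the two inductive systems to be isomorphic, i.e. that the maps $H^m(\widetilde{\theta}_p)$ induced on cohomology agree with the connecting homomorphisms $\theta_p$; but this is exactly Lemma \ref{lemma-commute-theta}, so the colimits coincide and the first isomorphism follows.

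For the morphism $\chi$ the key observation is that each bimodule $\Barr_{-p}(A)=A\otimes(\sA)^{\otimes p}\otimes A$ is induced from the $\mathbb{K}$-module $(\sA)^{\otimes p}$ and is therefore free, and --- under the standing finiteness hypotheses that make $\HH_{\sg}$ well defined, so that $(\sA)^{\otimes p}$ is finitely generated and each $\Omega^p(A)$ lies in $\DD^b(A\otimes A^{\op})$ --- perfect, whence $\Barr_{-p}(A)\cong 0$ in $\DD_{\sg}(A\otimes A^{\op})$. Applying the quotient functor $\Phi\colon \DD^b(A\otimes A^{\op})\to \DD_{\sg}(A\otimes A^{\op})$ to the short exact sequence $0\to \Omega^{p+1}(A)\hookrightarrow s\Barr_{-p}(A)\to s\Omega^{p}(A)\to 0$ turns it into a distinguished triangle whose middle term vanishes, producing a canonical isomorphism $\Omega^p(A)\cong \Omega^{p+1}(A)$ in $\DD_{\sg}$; iterating down to $p=0$ gives $\Omega^p(A)\cong A$. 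Consequently $\Phi$ supplies, for each $p$, a map
$$
\chi_p\colon \HH^m(A,\Omega^p(A))=\Hom_{\DD(A\otimes A^{\op})}(A,s^m\Omega^p(A)) \longrightarrow \Hom_{\DD_{\sg}(A\otimes A^{\op})}(A,s^m\Omega^p(A)) \cong \HH_{\sg}^m(A,A).
$$
I would then check that the $\chi_p$ are compatible with the transition maps $\theta_p$, so that they descend to a map on $\varinjlim_p \HH^m(A,\Omega^p(A))$ and, through the first isomorphism, define $\chi$ on $H^m(\calC_{\sg}^*(A,A))$.

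The main obstacle is precisely this compatibility, namely the identity $\chi_{p+1}\circ\theta_p=\chi_p$. The crux is that $\theta_p$ is the connecting homomorphism of the long exact sequence attached to the short exact sequence above, whereas the isomorphism $\Omega^p(A)\cong\Omega^{p+1}(A)$ entering the definition of $\chi_p$ is the boundary map of the \emph{same} sequence after the perfect middle term has been killed. Unwinding this means tracking the shift isomorphism $\HH^m(A,s\Omega^p(A))\cong \HH^{m+1}(A,\Omega^p(A))$ and verifying that postcomposition with the boundary map (which realizes $\theta_p$ in $\DD$) becomes, under $\Phi$, exactly the canonical identification used in $\chi_p$; once the signs are matched the commutativity is formal. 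By contrast, the finiteness bookkeeping ensuring each $\Omega^p(A)$ lies in $\DD^b$ and each $\Barr_{-p}(A)$ is perfect is routine under the hypotheses on $A$, and the exactness of filtered colimits in the first step requires no care.
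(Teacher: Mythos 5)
Your proposal is correct and follows essentially the same route as the paper's proof: the first isomorphism via exactness of filtered colimits of $\mathbb{K}$-modules together with Lemma \ref{lemma-commute-theta}, and the map $\chi$ via the quotient functor $\DD^b(A\otimes A^{\op}) \to \DD_{\sg}(A\otimes A^{\op})$ combined with the isomorphisms $\Omega^p(A)\cong A$ in $\DD_{\sg}(A\otimes A^{\op})$ obtained by killing the perfect bar bimodules in the defining short exact sequences. The only difference is one of detail: where the paper simply asserts that the maps $\HH^*(A,\Omega^p(A))\to \HH_{\sg}^*(A,A)$ are compatible with the structure maps $\theta_{m,p}$, you correctly identify the reason --- the transition map is the connecting morphism of the triangle whose image under the quotient functor is precisely the isomorphism $\Omega^p(A)\cong\Omega^{p+1}(A)$ used to define $\chi_p$ --- which is exactly the content behind the paper's one-line claim.
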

\begin{proof} 
The first isomorphism  follows from Lemma \ref{lemma-commute-theta} and the fact that the homology functor commutes with colimit.  For all $p \in \mathbb{Z}_{\geq 0}$ we have a natural morphism  $$\HH^*(A, \Omega^p(A))\rightarrow \HH_{\sg}^*(A, A)$$ induced from the quotient functor $\DD^b(A\otimes A^{\op}) \rightarrow \DD_{\sg}(A\otimes A^{\op})$ and  the isomorphism $\Omega^p(A)\cong A$ in $\DD_{\sg}(A\otimes A^{\op})$ (since $\Omega^1(A)\cong A$ in $\DD_{\sg}(A\otimes A^{\op})$). These morphisms are compatible with the structure maps $\theta_{m,p}$ and thus induce a natural morphism $\chi: H^*(\calC_{\sg}^*(A, A))\rightarrow \HH_{\sg}^*(A, A)$. 
\end{proof}

\begin{remark}
It follows from \cite{Wan} that the morphism $\chi$ is an isomorphism in the case when $A$ is an ordinary associative algebra. To the best of our knowledge, it is unknown whether it is an isomorphism for any dg associative algebra.  Nevertheless, in the following we will show that the $\chi$ is an isomorphism for any dg symmetric Frobenius algebra, which is enough for the purposes of this paper. 
\end{remark}

\subsection{A homotopy retract between $\calD^*(A, A)$ and $\calC_{\sg}^*(A, A)$} \label{section-3.4}

In this subsection we assume that $(A, d, \mu, \Delta)$ is a dg symmetric Frobenius algebra of degree $k$ ($k>0$). We will construct a (strong) homotopy retract (cf. \cite[Section 10]{LoVa}) of complexes, 
\begin{equation}\label{equation-homotopy-transfer}
\xymatrix@C=4pc{
\calD^*(A, A) \ar@{^{(}->}@<2pt>[r]^-{\iota} &  \calC_{\sg}^*(A, A)\ar@{->>}@<2pt>[l]^-{\Pi}\ar@(dl, dr)_-{h}
}
\end{equation}
namely, $$ \Pi\circ \iota=\id$$ and $$ \id-\iota\circ \Pi =\delta \circ h +h\circ \delta. $$

First of all, let us construct an  injection  of dg $\mathbb{K}$-modules  $$\iota: \calD^*(A, A)\hookrightarrow \calC_{\sg}^*(A, A).$$ Recall that for any $n\in \Z$, we have $$ \calD^n(A, A):=\prod_{p\in \Z_{\geq 0}} \Hom_{\mathbb{K}}((\sA)^{\otimes p}, A)^n \oplus \bigoplus_{p\in \Z_{\geq 0}}\Hom_{\mathbb{K}}((\sC)^{\otimes p}, A)^{n-k+1}, $$ where $C:=s^{k} A$ and $\overline{C}$ is the kernel of the counit $s^k\epsilon: C\rightarrow k$.  Note that  $$ \Hom_{\mathbb K}((\sC)^{\otimes p}, A)\cong  ((\sC)^{\otimes p})^{\vee}\otimes A \cong  ((\sC)^{\vee})^{\otimes p}\otimes A\cong (\sA)^{\otimes p} \otimes A$$ where the last isomorphism follows from the isomorphism  $\sA\cong (\sC)^{\vee}$. Thus we have 
\begin{equation*}
\begin{split}
\calD^n(A, A)&\cong \prod_{p\in \Z_{\geq 0}} \Hom_{\mathbb{K}}((\sA)^{\otimes p}, A)^n \oplus \bigoplus_{p\in \Z_{\geq 0}}( (\sA)^{\otimes p}\otimes A)^{n-k+1}\\
&\cong C^n(A, A)\oplus C_{n-k+1}(A, A).
\end{split}
\end{equation*}
For any $n\in \Z$, define  $\iota_n: \calD^n(A, A) \rightarrow \calC_{\sg}^n(A, A)$ as follows:
\begin{enumerate}
\item If $f\in C^n(A, A)$, then $\iota_n(f):=f \in C^n(A, A)\subset \calC_{\sg}^n(A, A)$
\item If $f\in C_{n-k+1}(A, A)$, write $f:=\overline{a_1}\otimes \cdots \otimes \overline{a_p}\otimes a_{p+1}\in ( (\sA)^{\otimes p}\otimes A)^{n-k+1}$. Then define $\iota_n(f)\in \Hom_{\mathbb{K}}(\mathbb{K}, (\sA)^{\otimes p+1}\otimes A)^n$ by
$$\iota_n(f)(1):=\sum_{i} (-1)^{\text{deg}(f_i)\text{deg}(f)} \overline{e_i}\otimes \overline{a_1}\otimes \cdots \otimes \overline{a_p}\otimes a_{p+1}f_i. $$
\end{enumerate}
where $\sum_i e_i \otimes f_i = \Delta(1)$, as defined in Remark 2.8. It is clear that $\iota$ is indeed an injection of dg $\mathbb{K}$-modules.
\\

We now construct the morphism $\Pi: \calC_{\sg}^*(A, A)\rightarrow \calD^*(A, A)$.  If  $m,p \in \Z_{>0}$ define   $$\pi_{m, p}: C^{m, *}(A, \Omega^p(A))\rightarrow C^{m-1, *}(A, \Omega^{p-1}(A))$$ as follows: for $f\in C^{m, *}(A, (\sA)^{\otimes p}\otimes A),$ that is, $f: (\sA)^{\otimes m} \rightarrow (\sA)^{\otimes p} \otimes A,$
\begin{eqnarray*}
\lefteqn{\pi_{m,p}(f)(\overline{x_1}\otimes \cdots\otimes \overline{x_{m-1}})}\\
&:=&\sum_i (-1)^{\text{deg}(e_i)(\text{deg}(f)+1-k)} e_i \blacktriangleright(\epsilon \otimes \id^{\otimes p}) (f(\overline{f_i}\otimes \overline{x_1}\otimes \cdots \otimes \overline{x_{m-1}})))\\
&=&\sum_i (-1)^{\text{deg}(e_i)(\text{deg}(f)+1-k)} (\pi\otimes \id^{\otimes p-1 })b(e_i\otimes  (\epsilon\otimes \id^{\otimes p}) (f(\overline{f_i}\otimes \overline{x_1}\otimes \cdots \otimes\overline{x_{m-1}}))),
\end{eqnarray*}
where we recall that $\epsilon: A\rightarrow \mathbb{K}$ is the counit and the  $\blacktriangleright$-action is defined in Lemma 3.12. 
\\

Define $\pi_{m,0}=\id: C^{m,*}(A,A) \to C^{m,*}(A,A)$ for $m>0$ and for $m=0, p\in \Z_{>0}$ define  $$\pi_{0, p}: C^{0, *}(A, \Omega^p(A)) \rightarrow C_{-(p-1), *}(A, A)$$ as follows:  for any $x:=\overline{x_1}\otimes \cdots \otimes \overline{x_p}\otimes x_{p+1} \in (\sA)^{\otimes p}\otimes A$ let $$\pi_{0, p}(x):=(\epsilon(\overline{x_1})\overline{x_2}\otimes \cdots \otimes \overline{x_p}\otimes x_{p+1}).$$

\begin{remark}
Since $\epsilon(1)=0$, it follows that the counit induces a well-defined map $\epsilon: \overline{A}\rightarrow k$. In Figures \ref{fig:iota} and \ref{fig:pi} below we give a diagrammatic representation of the maps $$\iota: C_{-m,*}(A,A) \to C^0(A, (s\overline{A})^{\otimes m+1} \otimes A)$$ and $$\pi_{m,p}: C^{m,*}(A,(s\overline{A})^{\otimes p} \otimes A) \to C^{m-1,*}(A,(s\overline{A})^{\otimes p-1} \otimes A),$$ the latter in the case $m,p \in \Z_{>0}$. We represent an element of $C_{-m,*}(A,A)$ as a corolla with no inputs (this is what the solid black circle in the top leg means) and $m+1$ output legs $m$ which are colored black indicating these legs represent an element in $(s\overline{A})^{\otimes m}$ and one blue leg which represents an element of $A$. A blue circle with white interior represents the unit $1$ of the algebra $A$. We denote by $\Delta: A \to A\otimes A$ the coproduct of the dg symmetric Frobenius algebra $A$, by $\pi: A \to s\overline{A}$ the natural projection map, by $\epsilon: s\overline{A} \to\mathbb{K}$ the map induced by the counit, and by $\mu$ the product of $A$. 

\end{remark}

\begin{figure}[H]
\centering
  \includegraphics[width=125mm ]{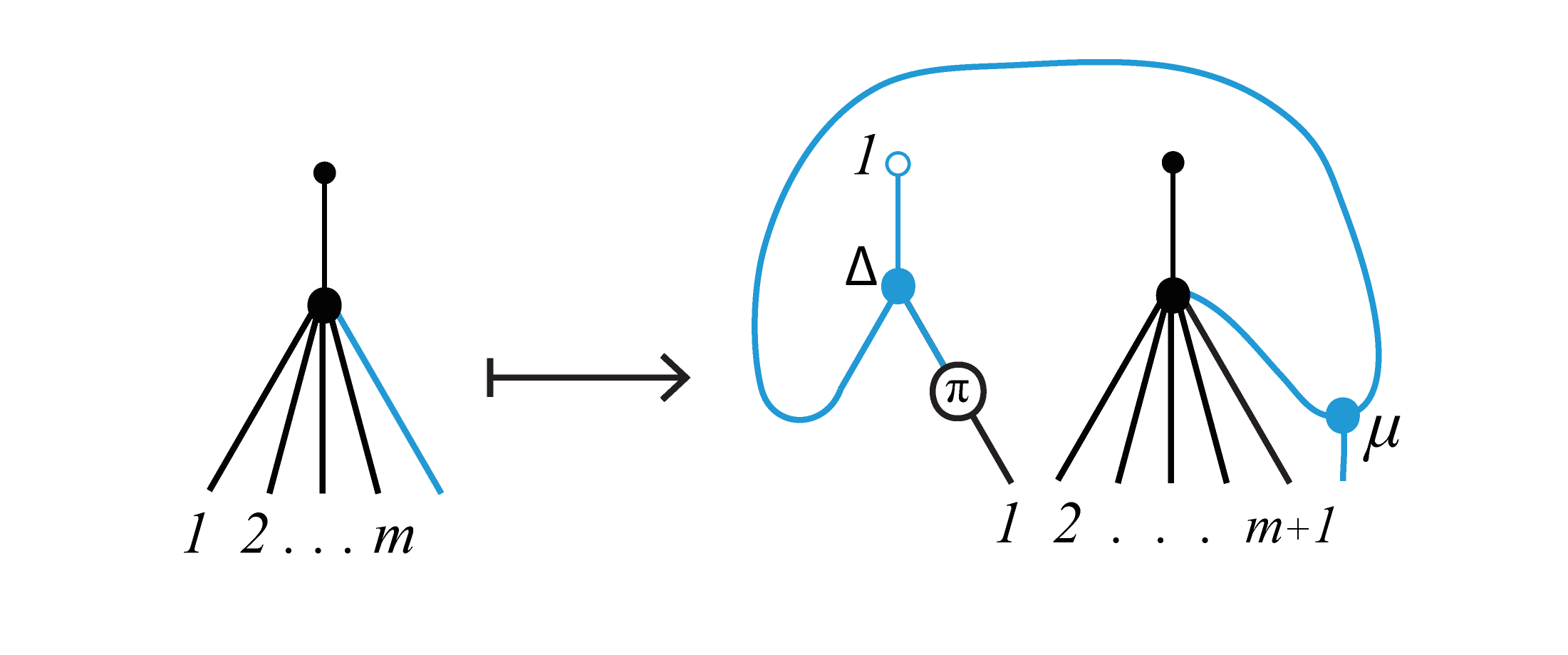}
  \caption{$\iota: C_{-m,*}(A,A) \to C^{0, *}(A, (s\overline{A})^{\otimes m+1} \otimes A) \cong \text{Hom}_{\mathbb{K}}(\mathbb{K}, (s\overline{A})^{\otimes m+1} \otimes A)$}
  \label{fig:iota} 
\end{figure}

\begin{figure}[H]
\centering
  \includegraphics[width=165mm ]{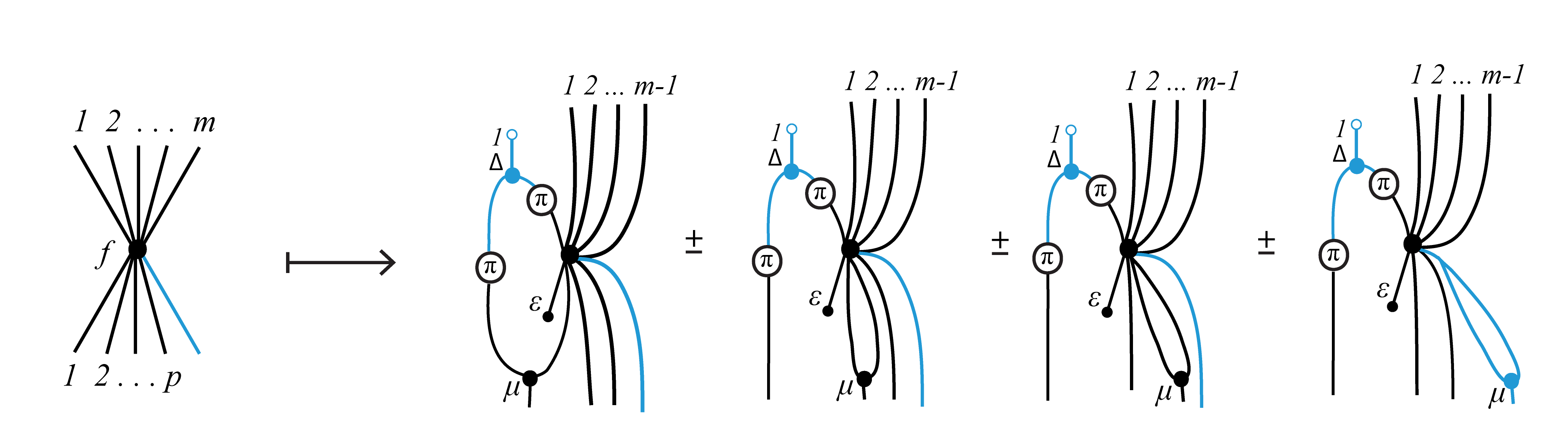}
  \caption{$\pi_{m,p}: C^{m,*}(A,(s\overline{A})^{\otimes p} \otimes A) \to C^{m-1,*}(A,(s\overline{A})^{\otimes p-1} \otimes A)$}
  \label{fig:pi} 
\end{figure}

\begin{lemma}\label{lemma5.2}
$\pi_{>0, *}$ is compatible with the differentials.
\end{lemma}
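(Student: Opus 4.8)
The plan is to show that each component $\pi_{m,p}$ with $m>0$ commutes, up to the Koszul sign, with the Hochschild differential of $C^*(A,\Omega^p(A))$. Write this differential as $\delta=\delta^v+\delta^h$, where $\delta^v$ is the internal differential induced by $d$ on $A$ and on the bimodule $\Omega^p(A)\cong(\sA)^{\otimes p}\otimes A$ of Lemma \ref{lemma-bimodule}, and $\delta^h$ is the external (bar) differential raising the number of inputs by one. Since $\delta^h$ raises the input number $m$ by one while $\delta^v$ preserves it, and $\pi_{m,p}$ lowers both $m$ and $p$ by one, the identity $\pi\circ\delta=\delta\circ\pi$ decouples by input count into
$$\pi_{m+1,p}\circ\delta^h=\delta^h\circ\pi_{m,p}\qquad\text{and}\qquad\pi_{m,p}\circ\delta^v=\delta^v\circ\pi_{m,p},$$
which I would establish separately.

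The internal compatibility is the routine half. The map $\pi_{m,p}$ is built from three operations: inserting the copairing $\Delta(1)=\sum_i e_i\otimes f_i$ in the first input slot, contracting the first output slot by $\epsilon\otimes\id^{\otimes p}$, and acting by $e_i\blacktriangleright(-)$. Each of $\epsilon$, $\mu$ and the action $\blacktriangleright$ is a morphism of complexes, and the copairing is a cycle, since $(d\otimes\id+\id\otimes d)\Delta(1)=\Delta(d1)=0$ by Definition \ref{definition-dg-symmetric}. A Leibniz argument then gives $\pi_{m,p}\circ\delta^v=\pm\,\delta^v\circ\pi_{m,p}$, the only content being the bookkeeping of signs arising from the shift $s$ and the degree $k$ of $\Delta$.

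The external compatibility is the heart of the matter. Evaluating $\pi_{m+1,p}(\delta^h f)$ on $\overline{x_1}\otimes\cdots\otimes\overline{x_m}$ amounts to feeding $\overline{f_i}$ into $\delta^h f$ as the first slot and then applying $e_i\blacktriangleright(\epsilon\otimes\id^{\otimes p})(-)$ and summing over $i$. The terms of $\delta^h f$ fall into two groups. The interior contractions $\overline{x_jx_{j+1}}$ (for $j\geq 1$) and the right-multiplication term by $x_m$ take place strictly to the right of the inserted copairing and of the output slot capped by $\epsilon$; since $\blacktriangleright$ acts on the left and right multiplication acts on the last tensor factor, these terms pass through $\pi$ unchanged and reproduce, with matching signs, the interior and right-multiplication terms of $\delta^h(\pi_{m,p}f)$. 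What remains is to match the two boundary terms produced by $\pi_{m+1,p}\circ\delta^h$, namely the leading left action $f_i\blacktriangleright f(\overline{x_1}\otimes\cdots)$ and the first contraction $f(\overline{f_ix_1}\otimes\overline{x_2}\otimes\cdots)$, against the single leading term $x_1\blacktriangleright(\pi_{m,p}f)(\overline{x_2}\otimes\cdots)$ of $\delta^h(\pi_{m,p}f)$.

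This final matching is where the symmetric Frobenius hypothesis enters, and where I expect essentially all of the difficulty to lie. The inputs are the identities recorded in Remark \ref{remark-dg-symmetric}: the bimodule property $\sum_i e_i\otimes f_i a=\sum_i(-1)^{k\deg a}ae_i\otimes f_i$, the counit relations $\sum_i\epsilon(e_i)\,f_i a=a$ and $\sum_i(-1)^{k\deg(e_i)}e_i\,\epsilon(f_i a)=a$, and the symmetry $T\Delta(1)=\Delta(1)$. Expanding $e_i\blacktriangleright(-)$ via the formula of Lemma \ref{lemma-bimodule}, the counit relation lets the $\epsilon$-contraction against the first output letter reconstitute a left action, the bimodule relation slides a factor from the $e_i$-side across to the $f_i$-side, and the symmetry exchanges the roles of the two legs of $\Delta(1)$; together these should collapse the two boundary terms of $\pi_{m+1,p}\circ\delta^h$ into the single leading term of $\delta^h\circ\pi_{m,p}$. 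The genuine obstacle is thus the careful organization of these boundary contributions and the determination of every Koszul sign (from $s$, from $\deg f$, and from $k$). I would present the cancellation in the corolla calculus of Figures \ref{fig:iota}--\ref{fig:pi}, where the three identities become local moves—a counit capping a leg, coassociativity, and sliding a leg along the bimodule action—making the term-by-term matching visually transparent.
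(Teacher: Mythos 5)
Your proposal follows the paper's proof essentially step for step: the paper likewise splits $\delta$ into external and internal compatibility, cancels the interior-contraction and right-multiplication terms directly, and resolves the three remaining boundary terms by exactly the mechanisms you name — the contraction term $e_i\blacktriangleright(\epsilon\otimes\id^{\otimes p})\bigl(f(\overline{f_ix_1}\otimes\cdots)\bigr)$ matches the leading term $x_1\blacktriangleright(\pi_{m,p}f)(\overline{x_2}\otimes\cdots)$ via the left-action property $x_1\blacktriangleright(e_i\blacktriangleright -)=(x_1e_i)\blacktriangleright -$ together with $\sum_i x_1e_i\otimes f_i=\sum_i e_i\otimes f_ix_1$, the left-action term $e_i\blacktriangleright(\epsilon\otimes\id^{\otimes p})\bigl(f_i\blacktriangleright f(\cdots)\bigr)$ vanishes outright by the counit identity, and internal compatibility comes from $d(\Delta(1))=0$. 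The only refinements worth noting are that the symmetry $T\Delta(1)=\Delta(1)$ you list is not actually needed (the bimodule property of $\Delta$ alone gives the sliding identity), and that your joint ``collapse'' of the two boundary terms is, in the paper, the sharper statement that one of them vanishes on its own while the other equals the leading term.
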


\begin{proof}
First of all, let us check that $\pi_{>0, *}$ is compatible with the external differentials. Let $f\in C^{m+1, *}(A, \Omega^p(A)), (m>-1)$. Then we have for any $\overline{x_1}\otimes \cdots \otimes \overline{x_{m+1}}\in 
(\sA)^{\otimes m+1}$
\begin{eqnarray*}
\lefteqn{\pi_{*, *}\circ \delta_1(f) (\overline{x_1}\otimes \cdots \otimes \overline{x_{m+1}})}\\
&=&\sum_i  \pm e_i \blacktriangleright(\epsilon \otimes \id^{\otimes p}) (\delta_1(f)(\overline{f_i}\otimes \overline{x_1}\otimes \cdots \otimes
\overline{x_{m+1}}))\\
&=&\sum_i \pm e_i \blacktriangleright(\epsilon \otimes \id^{\otimes p}) (f(\overline{f_i} \overline{x_1}\otimes  \cdots \otimes \overline{x_{m+1}}))\\
&&+\sum_{j=1}^m\sum_i \pm e_i \blacktriangleright(\epsilon \otimes \id^{\otimes p}) (f(\overline{f_i}\otimes \overline{x_1}\otimes \cdots \otimes\overline{x_jx_{j+1}}\otimes \cdots \otimes 
\overline{x_{m+1}}))\\
&&+\sum_i \pm e_i \blacktriangleright(\epsilon \otimes \id^{\otimes p}) (f_i\blacktriangleright f ( \overline{x_1}\otimes \cdots \otimes
\overline{x_{m+1}}))\\
&&+\sum_i \pm e_i \blacktriangleright(\epsilon \otimes \id^{\otimes p}) (f(\overline{f_i}\otimes \overline{x_1}\otimes \cdots \otimes \overline{x_m})
x_{m+1})
\end{eqnarray*}
On the other hand, we have 
\begin{eqnarray*}
\lefteqn{\delta_1\circ \pi_{*, *}(f) (\overline{x_1}\otimes \cdots \otimes \overline{x_{m+1}})}\\
&=&
\sum_{j=1}^m \pm \pi_{*, *}(f)(\overline{x_1}\otimes \cdots \otimes \overline{x_jx_{1+j}}\otimes \cdots \otimes \overline{x_{m+1}})\\
&&\pm x_1 \blacktriangleright(\pi_{*, *}(f) (\overline{x_2}\otimes \cdots \otimes \overline{x_{m+1}})) \pm (\pi_{*, *}(f) (\overline{x_1}\otimes \cdots \otimes \overline{x_m})) x_{m+1}\\
&=&
\sum_{j=1}^{m}\sum_i  \pm e_i \blacktriangleright(\epsilon \otimes \id^{\otimes p}) (f(\overline{f_i}\otimes \overline{x_1}\otimes \cdots \otimes \overline{x_jx_{j+1}}\otimes \cdots \otimes
\overline{x_{m+1}}))\\
&&+x_1\blacktriangleright(\sum_i \pm e_i \blacktriangleright(\epsilon \otimes \id^{\otimes p}) (f(\overline{f_i}\otimes \overline{x_2}\otimes \cdots \otimes
\overline{x_{m+1}})))\\
&&+(\sum_i \pm e_i \blacktriangleright(\epsilon \otimes \id^{\otimes p}) (f(\overline{f_i}\otimes \overline{x_1}\otimes \cdots \otimes
\overline{x_{m}})))x_{m+1}.\\
\end{eqnarray*}

Thus, we may cancel terms to obtain:
\begin{eqnarray*}
\lefteqn{(\pi_{*, *}\circ \delta_1-\delta_1\circ \pi_{*, *})(f)(\overline{x_1}\otimes \cdots \otimes \overline{x_{m+1}})}\\
&= &\sum_i \pm e_i \blacktriangleright(\epsilon \otimes \id^{\otimes p}) (f(\overline{f_i} \overline{x_1}\otimes  \cdots \otimes 
\overline{x_{m+1}}))\\
&&+\sum_i \pm e_i \blacktriangleright(\epsilon \otimes \id^{\otimes p}) (f_i\blacktriangleright f ( \overline{x_1}\otimes \cdots \otimes
\overline{x_{m+1}}))\\
&&+x_1\blacktriangleright(\sum_i \pm e_i \blacktriangleright(\epsilon \otimes \id^{\otimes p}) (f(\overline{f_i}\otimes \overline{x_2}\otimes \cdots \otimes\overline{x_{m+1}}))).
\end{eqnarray*}
From the fact that $\blacktriangleright$ defines a left action of $A$ on $\Omega^p(A)$, it follows that 
\begin{eqnarray*}
\lefteqn{x_1\blacktriangleright(\sum_i \pm e_i \blacktriangleright(\epsilon \otimes \id^{\otimes p}) (f(\overline{f_i}\otimes \overline{x_2}\otimes \cdots \otimes\overline{x_{m+1}})))}\\
&=&\sum_i \pm x_1 e_i \blacktriangleright(\epsilon \otimes \id^{\otimes p}) (f(\overline{f_i}\otimes \overline{x_2}\otimes \cdots \otimes\overline{x_{m+1}}))),
\end{eqnarray*}
thus the first sum cancels with the last sum since $\sum x_1e_i\otimes f_i=\sum e_i \otimes f_i x_1$. Also, it follows from $\sum_i (-1)^ { \text{deg}(e_i) k} \overline{e_i} \otimes \epsilon(\overline{f_i})= \sum_i (-1)^ { \text{deg}(e_i) k}  \overline{e_i} \epsilon(\overline{f_i})=0$ that the second sum vanishes, namely, we have 
\begin{equation*}
\begin{split}
\sum_i \pm e_i \blacktriangleright(\epsilon \otimes \id^{\otimes p}) (f_i\blacktriangleright f ( \overline{x_1}\otimes \cdots \otimes
\overline{x_{m+1}}))=0.
\end{split}
\end{equation*}
Therefore, $(\pi_{*, *}\circ \delta_1-\delta_1\circ \pi_{*, *})(f)(\overline{x_1}\otimes \cdots \otimes \overline{x_{m+1}})=0$. Similarly, we may check that $\pi_{>0, *}$ is compatible with the internal differentials. Let $f\in C^{m+1,*}(A, \Omega^p(A))$ for $m>-1$, then 
\begin{eqnarray*}
\lefteqn{\pi_{*, *}\circ \delta_0(f)(\overline{x_1}\otimes \cdots \otimes \overline{x_m})}\\
&=&\sum_i \pm e_i\blacktriangleright (\epsilon\otimes \id^{\otimes p}) (df(\overline{f_i}\otimes \overline{x_1}\otimes \cdots \otimes \overline{x_m}))\\
&&+\sum_i \pm e_i\blacktriangleright (\epsilon\otimes \id^{\otimes p})(f(d(\overline{f_i})\otimes \overline{x_1}\otimes \cdots \otimes \overline{x_m}))\\
&&+\sum_{j=1}^m\sum_i \pm e_i\blacktriangleright (\epsilon\otimes \id^{\otimes p})(f(f_i\otimes \overline{x_1}\otimes \cdots \otimes d(x_i)\otimes \cdots \otimes \overline{x_{m}}))
\end{eqnarray*}
and 
\begin{eqnarray*}
\lefteqn{\delta_0\circ \pi_{*, *}(f)(\overline{x_1}\otimes \cdots \otimes \overline{x_m})}\\
&=& d(\sum_i \pm
e_i\blacktriangleright (\epsilon\otimes \id^{\otimes p})(f(\overline{f_i}\otimes \overline{x_1}\otimes \cdots \otimes \overline{x_m})))\\
&&\sum_{j=1}^m \sum_i \pm e_i\blacktriangleright (\epsilon\otimes \id^{\otimes p})(f(\overline{f_i}\otimes 
\overline{x_1}\otimes \cdots \otimes d(\overline{x_i})\otimes \cdots \otimes \overline{x_m})),
\end{eqnarray*}
thus we may cancel terms to obtain 
\begin{eqnarray*}
\lefteqn{(\pi_{*, *}\circ \delta_0(f)-\delta_0\circ \pi_{*, *})(\overline{x_1}\otimes \cdots \otimes \overline{x_m})}\\
&=&\sum_i \pm e_i\blacktriangleright (\epsilon\otimes \id^{\otimes p}) (df(\overline{f_i}\otimes \overline{x_1}\otimes \cdots \otimes \overline{x_m}))\\
&&+\sum_i \pm e_i\blacktriangleright (\epsilon\otimes \id^{\otimes p})(f(d(\overline{f_i})\otimes \overline{x_1}\otimes \cdots \otimes \overline{x_m}))\\
&&+d(\sum_i \pm e_i\blacktriangleright (\epsilon\otimes \id^{\otimes p})(f(\overline{f_i}\otimes \overline{x_1}\otimes \cdots\otimes \overline{x_m}))).\\
\end{eqnarray*}
Using $d(\sum_i e_i \otimes f_i)=0$ we may conclude that the three sums in the above expression vanish.  Hence the maps $\pi_{>0, *}$ are compatible with the internal differentials and thus compatible with the differentials. \end{proof}

\begin{remark}
By a similar computation, we have that $\pi_{0, *}$ are compatible with the internal differentials.  Take an element $x:=\overline{x_1}\otimes \cdots \otimes \overline{x_p}\otimes x_{p+1} \in (\sA)^{\otimes p}\otimes A,$ then 
\begin{equation*}
\begin{split}
\pi_{0, p}\circ d(x)=&\pi_{0, p}( d(\overline{x_1})\otimes \overline{x_2}\otimes \cdots \otimes \overline{x_p}\otimes x_{p+1})\\
&+ \pi_{0, p}(\sum_{i=2}^{p+1} \pm \overline{x_1}\otimes \cdots \otimes d(\overline{x_i})\otimes \cdots \otimes x_{p+1})\\
=& \epsilon(d(\overline{x_1}) ) \overline{x_2} \otimes \cdots \otimes \overline{x_p}\otimes x_{p+1}\\
&+ \sum_{i=2}^{p+1} \pm \epsilon(\overline{x_1})\overline{x_2} \otimes \cdots \otimes  d(\overline{x_i})\otimes \cdots \otimes x_{p+1}\\
=&\epsilon(\overline{x_1})d(\overline{x_2})\otimes \overline{x_3} \otimes \cdots \otimes \overline{x_p}\otimes x_{p+1}\\
&+ \sum_{i=3}^{p+1} \pm \epsilon(\overline{x_1})\overline{x_2}\otimes \overline{x_3} \otimes \cdots \otimes  d(\overline{x_i})\otimes \cdots \otimes x_{p+1}\\
=&d\circ \pi_{0, p}(x).
\end{split}
\end{equation*}
Let us check that $\pi_{0, *}$ are also compatible with the external differentials. Namely, that the following diagram commutes for any $p\in \Z_{> 0}$:
\begin{equation}\label{equation-diagram2}
\xymatrix{
C^{0, *}(A, \Omega^p(A))\ar[d]^-{\delta_1} \ar[r]^-{\pi_{0, p}}  & C_{-(p-1), *}(A, A)\ar[rd]^-{\delta_1}\\
C^{1, *}(A, \Omega^p(A)) \ar[r]^-{\pi_{1, p}} & C^{0, *}(A, \Omega^{p-1}(A)) \ar[r]^{\pi_{0, p-1}}& C_{-(p-2), *}(A, A).
}
\end{equation}
The commutativity of diagram (\ref{equation-diagram2}) follows since
\begin{eqnarray*}
\lefteqn{\pi_{0, p-1}\circ \pi_{1, p}\circ \delta_1(\overline{x_1}\otimes \cdots \otimes \overline{x_p}\otimes x_{p+1})}\\
&=&\sum_i\pi_{0, p-1}((\pi\otimes \id^{\otimes p-1})(b(e_i\epsilon(\overline{x_1})\otimes \overline{x_2}\otimes \cdots \overline{x_p}\otimes x_{p+1}f_i)))\\
&=& \sum_i \epsilon(\overline{e_ix_2})\epsilon(\overline{x_1})\otimes \overline{x_3}\otimes \cdots \otimes 
\overline{x_p}\otimes x_{p+1}f_i\\
&&\pm\sum_i \sum_{j=1}^{p-1} \epsilon(\overline{e_i})\epsilon(\overline{x_1}) \overline{x_2}\otimes \cdots \otimes \overline{x_{j+1}x_{j+2}}\otimes \cdots \otimes x_{p+1}f_i\\
&=& \sum_i \epsilon(\overline{x_1})\overline{x_3}\otimes \cdots \otimes \overline{x_p}\otimes x_{p+1}x_2\\
&&+\sum_i \sum_{j=1}^{p-1} \epsilon(\overline{x_1})\overline{x_2}\otimes \cdots \otimes \overline{x_{j+1}x_{j+2}}\otimes \cdots \otimes x_{p+1}f_i\\
&=& \delta_1\circ \pi_{0, p}(\overline{x_1}\otimes \cdots \otimes \overline{x_p}\otimes x_{p+1}),
\end{eqnarray*}
where the third identity follows from the facts $\sum_i\epsilon(\overline{e_i})f_i=1$ and $\sum_ix e_i\otimes f_i=\sum_i e_i \otimes f_i x$.
\end{remark}

\begin{lemma}\label{lemma5.3}
For $m, p \in \Z_{>0}$ we have $$\pi_{m, p}\circ \theta_{m-1, p-1}=\id.$$ For $m=0, p\in \Z_{\geq 0}$, we have  $$\pi_{0, p}\circ \iota=\id.$$
\end{lemma}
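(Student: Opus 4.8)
The plan is to prove the two chain-level identities separately, in each case by unwinding the definitions of $\widetilde\theta_{m-1,p-1}$, $\iota$ and the $\pi$'s and then collapsing the resulting sums with the counit relations recorded in Remark \ref{remark-dg-symmetric}. The structural point driving both computations is that the extra tensor slot produced by $\widetilde\theta_{m-1,p-1}$ (respectively by $\iota$) is exactly the slot on which the counit $\epsilon$ built into $\pi$ is subsequently evaluated; the two operations are thus designed to annihilate one another, leaving behind only a factor of $\Delta(1)=\sum_i e_i\otimes f_i$ that the counit collapses to $1$.

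First I would treat the case $m,p\in\Z_{>0}$. Fixing $g\in C^{m-1,*}(A,\Omega^{p-1}(A))$, I would evaluate $\pi_{m,p}(\widetilde\theta_{m-1,p-1}(g))$ on $\overline{x_1}\otimes\cdots\otimes\overline{x_{m-1}}$. By the definition of $\widetilde\theta_{m-1,p-1}$ the leading argument is pulled out, so that $\widetilde\theta_{m-1,p-1}(g)(\overline{f_i}\otimes\overline{x_1}\otimes\cdots\otimes\overline{x_{m-1}})$ equals $\overline{f_i}\otimes g(\overline{x_1}\otimes\cdots\otimes\overline{x_{m-1}})$ up to a Koszul sign. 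Applying $\epsilon\otimes\id^{\otimes p}$ sends this leading $\overline{f_i}$ to the scalar $\epsilon(\overline{f_i})$, and then acting by $e_i\blacktriangleright(-)$ through the module structure of Lemma \ref{lemma-bimodule} and summing over $i$ produces $\big(\sum_i e_i\,\epsilon(\overline{f_i})\big)\blacktriangleright g(\overline{x_1}\otimes\cdots\otimes\overline{x_{m-1}})$. The counit identity $\sum_i e_i\,\epsilon(\overline{f_i})=1$ of Remark \ref{remark-dg-symmetric}, together with the unitality $1\blacktriangleright(-)=\id$ of the action in Lemma \ref{lemma-bimodule}, returns $g(\overline{x_1}\otimes\cdots\otimes\overline{x_{m-1}})$.

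Next I would handle the case $m=0$. For a Hochschild chain $f=\overline{a_1}\otimes\cdots\otimes\overline{a_p}\otimes a_{p+1}$, the map $\iota$ prepends $\overline{e_i}$ and multiplies $f_i$ into the rightmost $A$-factor, while $\pi_{0,p}$ evaluates $\epsilon$ on the first slot, which is now exactly $\overline{e_i}$. Thus $\pi_{0,p}(\iota(f))=\sum_i\epsilon(\overline{e_i})\,\overline{a_1}\otimes\cdots\otimes\overline{a_p}\otimes a_{p+1}f_i=\overline{a_1}\otimes\cdots\otimes\overline{a_p}\otimes a_{p+1}\big(\sum_i\epsilon(\overline{e_i})f_i\big)$, and the complementary counit identity $\sum_i\epsilon(\overline{e_i})f_i=1$ recovers $f$ on the nose.

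The step I expect to be the main obstacle is the sign bookkeeping. The prefactor $(-1)^{\deg(e_i)(\deg(f)+1-k)}$ in $\pi_{m,p}$, the prefactors inside $\widetilde\theta_{m-1,p-1}$ and $\iota$, the Koszul signs hidden in $\blacktriangleright$ and in commuting $\epsilon$ past tensor factors, and the signs in the two counit relations must all be shown to combine to $+1$. The key to controlling them is the degree constraint $\deg(e_i)+\deg(f_i)=k$ of Remark \ref{remark-dg-symmetric} together with $\deg(\epsilon)=-k$: evaluating $\epsilon$ forces the complementary Frobenius tensor factor ($e_i$ in the first case, $f_i$ in the second) into degree zero, which kills the sign families that depend on its degree and pins down the parity of the surviving factor. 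I would therefore organize the verification by first restricting to these surviving homogeneous components of $\Delta(1)$ and only then checking termwise that the residual Koszul and shift signs cancel, so that the whole statement reduces to the two scalar identities $\sum_i e_i\,\epsilon(\overline{f_i})=1$ and $\sum_i\epsilon(\overline{e_i})f_i=1$.
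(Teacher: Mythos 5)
Your proposal is correct and follows essentially the same route as the paper's own proof: in both cases one unwinds the definitions so that the slot created by $\widetilde\theta_{m-1,p-1}$ (respectively $\iota$) is exactly the slot consumed by the counit inside $\pi$, and the composite collapses via the identities $\sum_i e_i\,\epsilon(\overline{f_i})=1$ and $\sum_i\epsilon(\overline{e_i})f_i=1$ together with unitality of the $\blacktriangleright$-action. Your additional observation that evaluating $\epsilon$ forces the complementary Frobenius factor into degree zero, thereby trivializing the Koszul signs, is a clean way to justify the $\pm$ signs the paper leaves implicit.
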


\begin{proof}
Recall that for any $f\in C^{m-1, *}(A, (\sA)^{\otimes p-1}\otimes A)$, $$\theta_{m-1, p-1}: C^{m-1, *}(A, \Omega^{p-1}(A))\rightarrow C^{m, *}(A, \Omega^p(A))$$ is defined by $$\theta_{m-1,p-1}(f)(\overline{x_1}\otimes \cdots\otimes \overline{x_m}):= (-1)^{\text{deg}(x_1) \text{deg}( f) } \overline{x_1}\otimes f(\overline{x_2}\otimes \cdots \otimes \overline{x_m}).$$Thus
\begin{eqnarray*}
\lefteqn{\pi_{m, p}\circ \theta_{m-1, p-1}(f) (\overline{x_1}\otimes\cdots \otimes  \overline{x_{m-1}})}\\
&=&\sum_i \pm e_i \blacktriangleright(\epsilon \otimes \id^{\otimes p}) (\theta_{m-1, p-1}(f)(\overline{f_i}\otimes \overline{x_1}\otimes \cdots \otimes
\overline{x_{m-1}})))\\
&=&\sum_i \pm e_i \blacktriangleright(  \epsilon(\overline{f_i}) f(\overline{x_1}\otimes \cdots \otimes
\overline{x_{m-1}})))\\
&=&f(\overline{x_1}\otimes \cdots\otimes \overline{x_{m-1}}).
\end{eqnarray*}

Similarly, let $\overline{x_1}\otimes \cdots \otimes \overline{x_{p-1}}\otimes x_p \in C_{-(p-1), *}(A, A),$ then we have 
\begin{equation*}
\begin{split}
\pi_{0, p}\circ \iota(\overline{x_1}\otimes \cdots \otimes \overline{x_{p-1}}\otimes x_p)& =\sum_i \pm \pi_{0, p} (\overline{e_i}\otimes
\overline{x_1}\otimes \cdots \otimes \overline{x_{p-1}}\otimes x_p f_i)\\
&=\sum_i \pm \epsilon(\overline{e_i}) \overline{x_1}\otimes \cdots \overline{x_{p-1}} \otimes x_{p}f_i\\
&=\overline{x_1}\otimes \cdots \otimes \overline{x_{p-1}}\otimes x_p.
\end{split}
\end{equation*}
\end{proof}

\begin{definition}
We define $\Pi: \calC^*_{\sg}(A, A)\rightarrow \calD^*(A, A)$ as follows:  For an  element $\overline{f}\in \calC^*_{\sg}(A, A)$ represented by an element $f\in C^{m, *}(A, \Omega^p(A))$, let
\begin{equation*}
\Pi(\overline{f}) := 
\begin{cases}
\pi_{m-p, 0}\circ\pi_{m-p+1, 1} \circ \cdots\circ \pi_{m, p}(f) & \mbox{if $m-p\geq 0$},\\
\pi_{0, p-m}\circ \pi_{1, m-p+1}\circ\cdots \circ \pi_{m, p}(f) & \mbox{if $m-p<0$}.
\end{cases}
\end{equation*}
From Lemma \ref{lemma5.3}, this is indeed well-defined, namely, $\Pi$ does not depend on the representative for $\overline{f}$.  Moreover, it follows from Lemma \ref{lemma5.2} and Lemma \ref{lemma5.3} that $\Pi$ is a morphism of (degree zero) chain complexes such  that $\Pi\circ \iota=\id$. 
\end{definition}

Finally, we construct the chain homotopy $h: \calC_{\sg}^*(A, A)\rightarrow s^{-1}\calC_{\sg}^*(A, A)$. For $m\in \Z_{>0}, p\in \Z_{> 0}$, we define a linear map  $$h_{m, p}: C^{m, *}(A, \Omega^p(A))\rightarrow C^{m-1, *}(A, \Omega^p(A))$$ which sends  $f\in C^{m, *}(A, (\sA)^{\otimes p}\otimes A) $ to  $$\overline{x_1}\otimes \cdots \otimes \overline{x_{m-1}} \mapsto \sum_{i} (-1)^{\text{deg}(e_i)(\text{deg}(f)+1-k)}  \overline{e_i}\otimes (\epsilon\otimes \id^{\otimes p}) (f(\overline{f_i}\otimes  \overline{x_1}\otimes \cdots \otimes \overline{x_{m-1}})).$$
We also define $h_{m, p}:=0$ for $mp=0$. Note that the total degree of the map $h_{m,p}$ is $-1$. For $m\in\Z_{>0}$ and $p\in \Z_{\geq 0}$ define $$H_{m, p}: C^{m, *}(A, \Omega^p(A))\rightarrow C^{m-1, *}(A, \Omega^p(A))$$ as the composition $$H_{m, p}:=\sum_{i=0}^{\min\{p,m\}}\theta_{m-2, p-1} \circ \cdots\circ \theta_{m-i-1, p-i} \circ h_{m-i, p-i}\circ  \pi_{m-i+1, p-i+1}\circ \cdots \circ \pi_{m, p}.$$ Otherwise, $H_{m, p}:=0$. See Figure 4. 

\begin{figure}
\centering
  \includegraphics[width=90mm ]{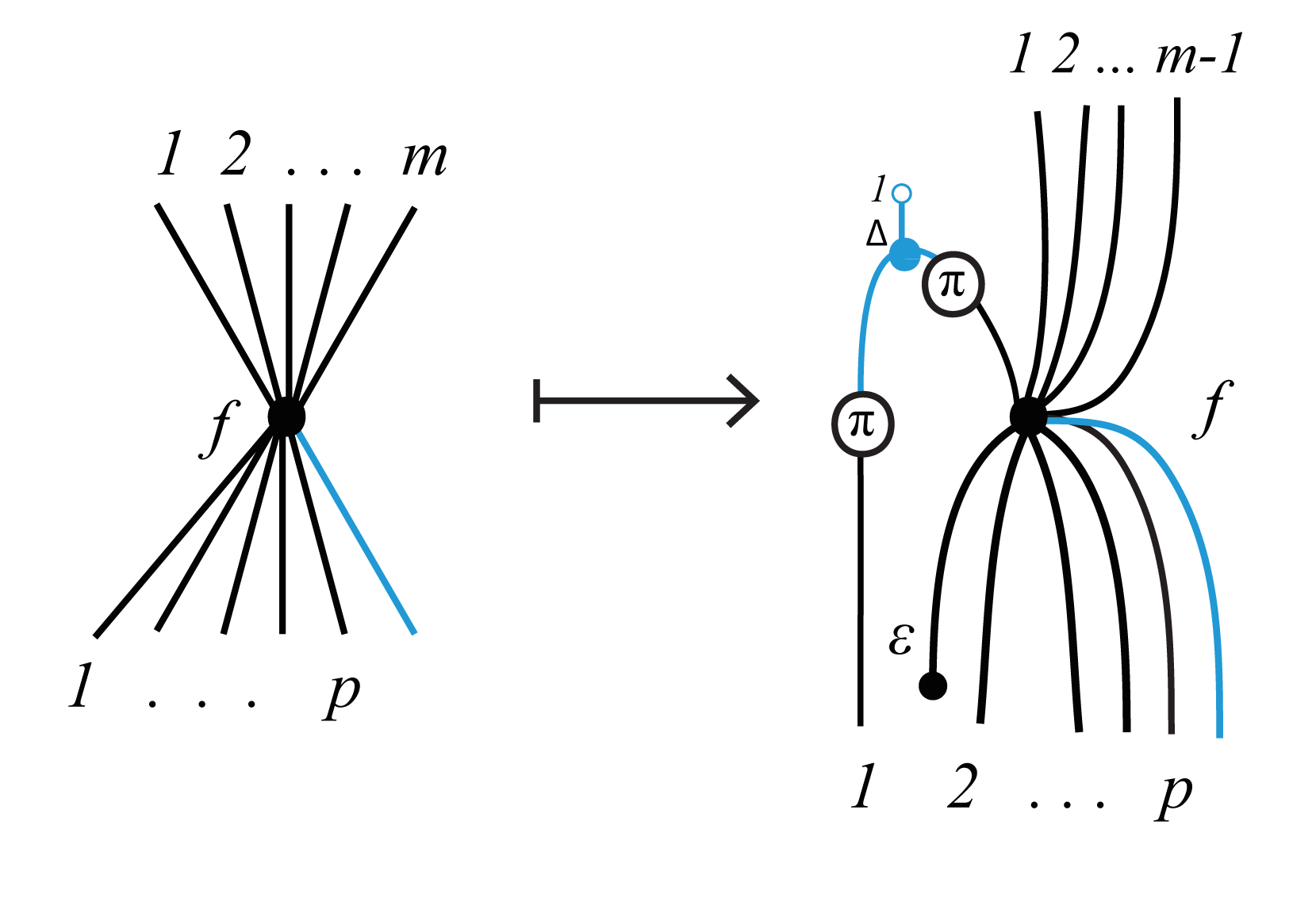}
   \caption{The above diagram represents the map $h_{m, p}: C^{m, *}(A, \Omega^p(A))\rightarrow C^{m-1, *}(A, \Omega^p(A))$. }
  \label{fig:homotopy}
\end{figure}

\begin{lemma-definition}\label{lemma-definition}
The following identity holds for $m, p\in \Z_{>0}$  $$H_{m, p}\circ \theta_{m-1, p-1}=   \theta_{m-2, p-1}\circ H_{m-1, p-1}.$$ Namely the following diagram commutes
\begin{equation*}
\xymatrix{
C^{m-1, *}(A, \Omega^{p-1}(A))\ar[d]^-{H_{m-1, p-1}}\ar[r]^-{\theta_{m-1, p-1}}  & C^{m, *}(A, \Omega^p(A))\ar[d]^-{H_{m, p}}\\
C^{m-2, *}(A, \Omega^{p-1}(A)) \ar[r]^-{\theta_{m-2, p-1}}  & C^{m-1, *}(A, \Omega^p(A)).
}
\end{equation*}
As a consequence, $H_{*, *}$ induces a well-defined morphism $$h: \calC_{\sg}^*(A, A) \rightarrow s^{-1} \calC_{\sg}^*(A, A).$$
\end{lemma-definition}

\begin{proof}
We have that
\begin{eqnarray*}
\lefteqn{H_{m, p}\circ \theta_{m-1, p-1}}\\
&=&\sum_{i=1}^{\min\{p,m\}}\theta_{m-2, p-1} \circ \cdots\circ \theta_{m-i-1, p-i} \circ h_{m-i, p-i}\circ  \pi_{m-i+1, p-i+1}\circ \cdots \circ \pi_{m, p}\circ \theta_{m-1, p-1}\\
&=&\sum_{i=1}^{\min\{p,m\}}\theta_{m-2, p-1} \circ \cdots\circ \theta_{m-i-1, p-i} \circ h_{m-i, p-i}\circ  \pi_{m-i+1, p-i+1}\circ \cdots \circ \pi_{m-1, p-1}\\
&=&\theta_{m-2, p-1}\circ H_{m-1, p-1}
\end{eqnarray*}
where the second identity follows from Lemma \ref{lemma5.3}.
\end{proof}

\begin{proposition}
The map $h$ is a chain homotopy between $\id$ and $\iota\circ \Pi$. Namely, $$\id-\iota\circ \Pi =\delta \circ h +h\circ \delta.$$
\end{proposition}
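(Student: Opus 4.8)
The plan is to recognise the data $(\iota,\Pi,h)$ as the composite of a tower of elementary deformation retracts, one for each structure map $\widetilde\theta_{p-1}\colon C^*(A,\Omega^{p-1}(A))\to C^*(A,\Omega^p(A))$ of the inductive system defining $\calC_{\sg}^*(A,A)$, and then to deduce the global homotopy identity by the standard telescoping of such composites. Since $\iota$, $\Pi$ and $h$ are all induced from maps compatible with the structure maps (this is Lemma-Definition~\ref{lemma-definition} for $h$, the definition of $\Pi$ together with Lemma~\ref{lemma5.3} for $\Pi$, and $\widetilde\theta$ is a morphism of complexes by Lemma~\ref{lemma-singular-differential}), it suffices to establish $\id-\iota\circ\Pi=\delta\circ h+h\circ\delta$ on a representative $f\in C^{m,*}(A,\Omega^p(A))$ of an arbitrary class of $\calC_{\sg}^*(A,A)$.

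The key local input is that, for each fixed $p>0$, the total operators $\pi_p:=\sum_m\pi_{m,p}$, $\theta_{p-1}:=\widetilde\theta_{p-1}$ and $h_p:=\sum_m h_{m,p}$ exhibit $C^*(A,\Omega^{p-1}(A))$ as a deformation retract of $C^*(A,\Omega^p(A))$; concretely I would prove
\begin{equation*}
\delta\circ h_p+h_p\circ\delta=\id-\theta_{p-1}\circ\pi_p
\end{equation*}
on $C^*(A,\Omega^p(A))$, which complements the already proved relation $\pi_p\circ\theta_{p-1}=\id$ (Lemma~\ref{lemma5.3}) making $\theta_{p-1}\circ\pi_p$ an idempotent. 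The proof of the displayed identity is a direct computation, organised exactly as in the proofs of Lemmas~\ref{lemma5.2} and~\ref{lemma5.3}: one splits $\delta=\delta_0+\delta_1$ into internal and external parts, uses that the action $\blacktriangleright$ occurring in $\pi_{m,p}$ is defined through the bar differential (Lemma~\ref{lemma-bimodule}), and repeatedly invokes the Frobenius identities $\sum_i\epsilon(\overline{e_i})f_i=1$, $\sum_i x\,e_i\otimes f_i=\sum_i e_i\otimes f_i\,x$ and $d\bigl(\sum_i e_i\otimes f_i\bigr)=0$ from Remark~\ref{remark-dg-symmetric}. The conceptual point is that the only difference between $\pi_{m,p}$, which prepends $e_i\blacktriangleright(-)$, and $h_{m,p}$, which prepends the free tensor factor $\overline{e_i}\otimes(-)$, is precisely a bar differential, so that $h_p$ is a contracting homotopy onto the complementary summand of the projection $\theta_{p-1}\circ\pi_p$.

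With this in hand I would substitute the local relation into the telescoping expression representing $h$, which in terms of the total operators reads $H_{m,p}=\sum_{i=0}^{\min\{p,m\}}\theta_{p-1}\cdots\theta_{p-i}\circ h_{p-i}\circ\pi_{p-i+1}\cdots\pi_p$. Because every $\theta$ and every $\pi$ is a morphism of complexes (Lemmas~\ref{lemma-singular-differential} and~\ref{lemma5.2}), the differential $\delta$ passes through the outer factors to the innermost $h_{p-i}$; replacing $\delta h_{p-i}+h_{p-i}\delta$ by $\id-\theta_{p-i-1}\pi_{p-i}$ produces, for each $i$, the difference $\theta_{p-1}\cdots\theta_{p-i}\,\pi_{p-i+1}\cdots\pi_p-\theta_{p-1}\cdots\theta_{p-i-1}\,\pi_{p-i}\cdots\pi_p$. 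Exactly as in the telescoping computation proving Lemma-Definition~\ref{lemma-definition}, the negative part of the $i$-th summand cancels the positive part of the $(i+1)$-st, so the sum collapses to $\id$ (the $i=0$ positive term) minus the terminal term reached when the descent hits the diagonal $\min\{m-i,p-i\}=0$, where $h$ vanishes. Finally I would identify this terminal term with $\iota\circ\Pi$: on the branch $m\geq p$ the descent terminates in the Hochschild cochains $C^{m-p,*}(A,A)\subset\calD^*(A,A)$, where $\Pi$ is the iterated $\pi$'s and $\iota$ is the canonical inclusion; on the branch $m<p$ it terminates in $C^{0,*}(A,\Omega^{p-m}(A))$, and using $\pi_{0,p-m}$ and the defining formula of $\iota$ (together with $\pi_{0,*}\circ\iota=\id$ from Lemma~\ref{lemma5.3}) one checks that the surviving boundary term is precisely $\iota\circ\Pi$ in the colimit.

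The main obstacle I anticipate is the bookkeeping in this last step. Because $\delta_1$ shifts the Hochschild cochain degree, ``commuting $\delta$ past the outer $\theta$'s and $\pi$'s'' is not a literal commutation but must be tracked graded-component by graded-component, and one must check that the $\id-\theta\pi$ corrections align index-for-index across consecutive summands so that the telescope actually collapses. Equally delicate is verifying that the terminal boundary term reproduces $\iota\circ\Pi$ on both branches $m-p\geq0$ and $m-p<0$ of the definition of $\Pi$, since on the chain side the role of the structure map $\theta$ is played by $\iota$ rather than by a further $\pi$, and the Koszul signs carried by $\theta$, $\pi$, $h$ and $\iota$ must conspire correctly; these signs are in fact forced by the already verified relation $\Pi\circ\iota=\id$ and by the Frobenius identities above.
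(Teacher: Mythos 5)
Your proposal is correct and follows essentially the same route as the paper: the paper's proof consists precisely of establishing your local identity $\id-\theta_{m-1,p-1}\circ\pi_{m,p}=\delta\circ h_{m,p}+h_{m+1,p}\circ\delta$ (with the $m=0$ column involving $\iota\circ\pi_{0,p}$, exactly your ``chain-side branch'') by the same direct computation with the Frobenius identities, and then concluding by induction, which is your telescoping of the sum defining $H_{m,p}$. The only difference is presentational: you make the telescoping and the identification of the terminal term with $\iota\circ\Pi$ explicit, where the paper compresses this into ``by induction we may conclude.''
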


\begin{proof}
Let us first prove the following identity for $m\in\Z_{\geq 0}$ and $p\in \Z_{>0}$
\begin{equation}\label{equation5.6}
\id-\theta_{m-1, p-1}\circ \pi_{m,p}=\delta\circ h_{m, p}+h_{m+1, p} \circ\delta.
\end{equation}
We observe that $h_{*, *}$ are compatible with the internal differentials (but not with the external differentials), so it is sufficient to prove that we have the following homotopy diagram, 
\begin{equation*}
\xymatrix{
0\ar[r] & C^{0,*}(A, \Omega^p(A)) \ar[r] \ar[d]_-{\id-\iota\pi}& C^{1, *}(A, \Omega^p(A)) \ar[dl]_-{h_{1, p}}\ar[d]^-{\id-\theta\pi}\ar[r]& C^{2, *}A,\Omega^p(A))\ar[r] \ar[dl]_-{h_{2, p}}\ar[d]^-{\id-\theta\pi}& \cdots \\
0\ar[r] & C^{0,*}(A, \Omega^p(A)) \ar[r] & C^{1,*}(A, \Omega^p(A))\ar[r] & C^{2,*}(A,\Omega^p(A))\ar[r] & \cdots \\
}
\end{equation*}
Take an element $x:=\overline{x_1}\otimes\cdots \otimes \overline{x_p}\otimes x_{p+1}\in C^{0,*}(A, (\sA)^{\otimes p}\otimes A),$ then
\begin{equation*}
\begin{split}
(\id-\iota\circ \Pi)(x)=&x-\sum_{i} \pm \overline{e_i}\otimes\epsilon(\overline{x_1})\overline{x_2}\otimes \cdots\otimes \overline{x_p}\otimes x_{p+1}f_i
\end{split}
\end{equation*}
and 
\begin{equation*}
\begin{split}
h_{1, p}\circ \delta(x)&=\sum_i \pm \overline{e_i}\otimes (\epsilon\otimes \id^{\otimes p})(\delta(x)(\overline{f_i}))\\
&=\sum_i \pm \overline{e_i}\otimes (\epsilon\otimes \id^{\otimes p}) (\overline{f_i}\blacktriangleright (x))+\overline{e_i}\otimes 
(\epsilon\otimes \id^{\otimes p})x\overline{f_i}\\
&=x-\sum_{i} \pm \overline{e_i}\otimes\epsilon(\overline{x_1})\overline{x_2}\otimes \cdots\otimes \overline{x_p}\otimes x_{p+1}f_i
\\
&=x-\iota\circ \Pi(x).
\end{split}
\end{equation*}
Similarly, for $m>0$ we have  
\begin{equation*}
\begin{split}
&(\id-\theta_{m-1,p-1}\circ \pi_{m, p})(f)(\overline{x_1}\otimes \cdots \otimes \overline{x_m})\\
=&f(\overline{x_1}\otimes \cdots \otimes \overline{x_m})-\sum_i \pm \overline{x_1}\otimes e_i\blacktriangleright
(\epsilon\otimes \id )(f(\overline{f_i}\otimes\overline{x_2}\otimes \cdots \otimes \overline{x_m})).\\
\end{split}
\end{equation*}
On the other hand, we have 
\begin{equation*}
\begin{split}
&(\delta\circ h_{m,p}+h_{m+1, p}\circ \delta)(f)(\overline{x_1}\otimes \cdots \otimes \overline{x_m})\\
=&\sum_i \pm \overline{x_1}\blacktriangleright (\overline{e_i}\otimes (\epsilon\otimes \id^{\otimes p} )(f(\overline{f_i}\otimes \overline{x_2}\otimes \cdots \otimes \overline{x_m})))\\
&+\sum_j\sum_{i=1}^{m-1} \pm \overline{e_j}\otimes (\epsilon\otimes \id^{\otimes p} )(f(\overline{f_i}\otimes\overline{x_1}\otimes \cdots \otimes \overline{x_ix_{i+1}}\otimes \cdots \otimes \overline{x_m}))\\
&+\sum_i \pm \overline{e_i}\otimes (\epsilon\otimes \id^{\otimes p} )(f(\overline{f_i}\otimes \overline{x_1}\otimes \cdots \otimes \overline{x_{m-1}})\overline{x_m})\\
&+\sum_i \pm \overline{e_i}\otimes (\epsilon\otimes \id^{\otimes p})(\overline{f_i}\blacktriangleright f( \overline{x_1}\otimes \cdots \otimes
\overline{x_{m}}))\\
&+\sum_i \pm \overline{e_i}\otimes (\epsilon\otimes \id^{\otimes p})(f(\overline{f_ix_1}\otimes \overline{x_2}\otimes \cdots \otimes \overline{x_{m}}))\\
&+\sum_{j=1}^{m-1}\sum_i \pm \overline{e_i}\otimes (\epsilon\otimes \id^{\otimes p})(f(\overline{f_i}\otimes \overline{x_1}\otimes \cdots \otimes\overline{x_jx_{j+1}}\otimes \cdots \otimes \overline{x_{m}}))\\
&+\sum_i \pm \overline{e_i}\otimes (\epsilon\otimes \id^{\otimes p})(f(\overline{f_i}\otimes \overline{x_1}\otimes \cdots \otimes \overline{x_{m-1}})\overline{x_m})\\
=&\sum_i \pm \overline{x_1}\blacktriangleright (\overline{e_i}\otimes (\epsilon\otimes \id^{\otimes p} )(f(\overline{f_i}\otimes \overline{x_2}\otimes \cdots \otimes \overline{x_m})))\\
&+\sum_i \pm \overline{e_i}\otimes (\epsilon\otimes \id ^{\otimes p})(\overline{f_i}\blacktriangleright f( \overline{x_1}\otimes \cdots \otimes
\overline{x_{m}}))\\
&+\sum_i \pm \overline{e_i}\otimes (\epsilon\otimes \id ^{\otimes p})(f(\overline{f_ix_1}\otimes \overline{x_2}\otimes \cdots \otimes \overline{x_{m}}))\\
=&f(\overline{x_1}\otimes \cdots \otimes \overline{x_m})-\sum_i \pm \overline{x_1}\otimes e_i\blacktriangleright
(\epsilon\otimes \id^{\otimes p})(f(\overline{f_i}\otimes\overline{x_2}\otimes \cdots \otimes \overline{x_m})\\
=&(\id-\theta_{m-1,p-1}\circ \pi_{m, p})(f)(\overline{x_1}\otimes \cdots \otimes \overline{x_m})\\
\end{split}
\end{equation*} 
verifying identity (\ref{equation5.6}). By induction we may conclude that $ \id-\iota\circ \Pi =\delta \circ h +h\circ \delta.$
\end{proof}

\begin{corollary}\label{corollary3.26}
Let $A$ be a dg symmetric Frobenius algebra over a field $\mathbb{K}$. Then the morphism (cf. Proposition \ref{prop-quasi-isomorphism}) $$\chi: H^*(\calC^*_{\sg}(A, A)) \rightarrow \HH_{\sg}^*(A, A)$$ is an isomorphism.
\end{corollary}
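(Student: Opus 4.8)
The plan is to deduce the statement from the homotopy retraction constructed in Subsection~\ref{section-3.4} together with Proposition~\ref{prop-tate-isomorphism}, reducing everything to a single compatibility check. First I would note that the triple $(\iota,\Pi,h)$ of \eqref{equation-homotopy-transfer} exhibits $\iota\colon \calD^*(A,A)\to \calC_{\sg}^*(A,A)$ as a chain homotopy equivalence: the identities $\Pi\circ\iota=\id$ and $\id-\iota\circ\Pi=\delta\circ h+h\circ\delta$ make $\Pi$ a two-sided homotopy inverse of $\iota$. Hence the induced map $H^*(\iota)\colon H^*(\calD^*(A,A))\to H^*(\calC_{\sg}^*(A,A))$ is an isomorphism with inverse $H^*(\Pi)$. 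Combining this with the isomorphism $H^*(\calD^*(A,A))=\THH^*(A,A)\cong \HH_{\sg}^*(A,A)$ of Proposition~\ref{prop-tate-isomorphism} produces an isomorphism $H^*(\calC_{\sg}^*(A,A))\cong \HH_{\sg}^*(A,A)$. The corollary then amounts to identifying this composite with the natural morphism $\chi$ of Proposition~\ref{prop-quasi-isomorphism}: once I show that $\chi\circ H^*(\iota)$ equals the isomorphism of Proposition~\ref{prop-tate-isomorphism}, it follows that $\chi$ is an isomorphism, being obtained from two isomorphisms.

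The main obstacle is precisely this identification, because $\chi$ and the isomorphism of Proposition~\ref{prop-tate-isomorphism} are built from visibly different machinery. To reconcile them I would work inside $\DD_{\sg}(A\otimes A^{\op})$ and use that both maps are ultimately induced by the Verdier quotient functor $\DD^b(A\otimes A^{\op})\to \DD_{\sg}(A\otimes A^{\op})$. On the singular side, Proposition~\ref{prop-quasi-isomorphism} presents $H^*(\calC_{\sg}^*(A,A))$ as $\varinjlim_{p} \HH^*(A,\Omega^p(A))$ and realizes $\chi$ as the colimit of the maps $\HH^*(A,\Omega^p(A))\to \HH_{\sg}^*(A,A)$ coming from the isomorphisms $\Omega^p(A)\cong A$ in $\DD_{\sg}(A\otimes A^{\op})$; these isomorphisms hold because the bar bimodules $\Barr_{-p}(A)$ are perfect and hence vanish in $\DD_{\sg}(A\otimes A^{\op})$. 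On the Tate side, Lemma~\ref{lemma3.9} identifies the stabilization $S(A)$ with $cone(\tau)$, and thereby $\HH_{\sg}^*(A,A)$ with $H^*(\calD^*(A,A))$.

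What remains is a naturality diagram chase relating these two presentations through $\iota$. Concretely, $\iota$ is assembled from the Frobenius coproduct via $\Delta(1)=\sum_i e_i\otimes f_i$, and I would verify that under the identifications above it implements exactly the transition from the $p$-th stage of the colimit to the stable object $cone(\tau)$, so that the connecting maps $\theta_p$ governing the colimit correspond to the differential $\gamma$ of the double complex $\calD^{*,*}(A,A)$ that splices the Hochschild chains to the Hochschild cochains. I expect this to be the only delicate point; the exactness statements of Lemma~\ref{lemma-bar-exact} and Lemma~\ref{lemma-cobar-exact}, together with the perfectness of the bar bimodules, should supply all the ingredients needed to complete the chase and conclude that $\chi$ is an isomorphism.
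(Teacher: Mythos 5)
Your proposal is correct and follows essentially the same route as the paper: the paper's proof also reduces the corollary to the commutativity of the triangle $\chi\circ H^*(\iota)=\chi'$, where $H^*(\iota)$ is an isomorphism by the homotopy retraction of Subsection \ref{section-3.4} and $\chi'$ is the isomorphism of Proposition \ref{prop-tate-isomorphism}, and then checks that triangle by chasing a class from the $p$-th stage $\HH^{m+p}(A,\Omega^p(A))$ of the colimit through the stabilization functor and the identification $S(\Omega^p(A))\cong s^{-p}\,cone(\tau)$. Your verification plan (naturality of the Verdier quotient plus the isomorphisms $\Omega^p(A)\cong A$ in $\DD_{\sg}(A\otimes A^{\op})$, justified by perfectness of the bar bimodules) is the same compatibility check phrased in $\DD_{\sg}$ rather than in $\KK_{\ac}(\mbox{$A\otimes A^{\op}$-$\Modu_{\inj}$})$, which are equivalent by Theorem \ref{theorem3.3}.
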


\begin{proof}
We claim that  the following diagram commutes
\begin{equation}\label{equation-diagram}
\xymatrix{
H^*(\calC_{\sg}^*(A, A))  \ar[rr]^-{\chi} & &\HH_{\sg}^*(A, A) \\
&  H^*(\calD^*(A, A))\ar[ru]^-{\cong}_-{\chi'}\ar[lu]^-{H^*(\iota)}_-{\cong}
}
\end{equation}
where the isomorphism $\chi': H^*(\calD^*(A, A)) \rightarrow \HH_{\sg}^*(A, A)$ is given in Proposition \ref{prop-tate-isomorphism}. Indeed, recall that for any $m\in \Z$,  $\chi'$ can be written as the composition of the following morphisms $$H^m(\calD^*(A, A)) \rightarrow \Hom_{\KK_{\ac}}(cone(\tau), s^mcone(\tau))\xrightarrow{S^{-1}} \Hom_{\DD_{\sg}(A\otimes A^{\op})}(A, s^mA)$$ where for simplicity $\KK_{\ac}$ denotes $\KK_{\ac}(\mbox{$A\otimes A^{\op}$-$\Modu_{\inj}$}). $  If  $\alpha\in H^m(\calD^*(A, A))$ is any homogeneous element, then $H^*(\iota)(\alpha)$ is represented by an element $\alpha'\in \HH^{m+p}(A, \Omega^p(A))$ for large enough $p>0$.  Denote  by $\beta$ the image of $\alpha'$ via the following composition $$\HH^{m+p}(A, \Omega^p(A)) \rightarrow  \Hom_{\DD_{\sg}(A\otimes A^{\op})}(A, s^{m+p}\Omega^p(A)) \xrightarrow{S}\Hom_{\KK_{\ac}}(S(A), s^{m+p}S(\Omega^p(A))).$$ Note that for any $i\in \Z$, $S^i(\Omega^p(A))\cong s^{i-p}S(A)\cong s^{i-p}cone(\tau)$ in $\KK_{\ac}(\mbox{$A\otimes A^{\op}$-$\Modu_{\inj}$}),$ hence we have the following isomorphism $$\Hom_{\KK_{\ac}}(S(A), s^{m+p}S(\Omega^p(A))) \cong \Hom_{\KK_{\ac}}(cone (\tau), s^mcone(\tau)).$$ Let $\beta' \in \Hom_{\KK_{\ac}}(cone (\tau), s^mcone(\tau))$ denote the image of $\beta$ via the isomorphism above. It is easy to check that  $\beta'$ is also the image of $\alpha$ via the isomorphism $S\circ \chi'$, so the triangle diagram commutes. On the other hand, from the chain homotopy retraction described above it follows that $H^*(\iota)$ is an isomorphism, thus $\chi$ is an isomorphism as well. 
 \end{proof}

\section{DGA and DGLA structures on the Singular Hochschild complex $\calC_{\sg}^*(A,A)$}
In this section we recall briefly natural dga and dgla structures on the singular Hochschild complex $\calC_{\sg}^*(A, A)$ for a differential graded algebra $A$ over a commutative ring $\mathbb{K}$. All the constructions for dg algebras in this section are  the dg generalization of the ones  for (ordinary) associative algebras in \cite{Wan}. For more details, we refer to \cite{Wan}. Throughout this section, we assume that $A$ is  a differential graded associative algebra (not necessary symmetric Frobenius) over a commutative ring $\mathbb{K}$.
\subsection{DGA structure on $\calC_{\sg}^*(A, A)$}\label{subsection-dga}

Let $f\in C^{m, *}(A, \Omega^{p}(A))$ and $g\in C^{n, *}(A,\Omega^q(A))$, we define the cup product $f\cup g \in C^{m+n, *}(A, \Omega^{p+q}(A))$ by
$$
f\cup g:=(\id^{\otimes p+q}\otimes \mu) (\id^{\otimes q}\otimes f\otimes \id)  (\id^{\otimes m}\otimes g);
$$
where we identify $\Omega^p(A)$ with $s\overline{A}^{\otimes p}\otimes A$ as in Lemma \ref{lemma-bimodule} and denote by $\mu: A \otimes A \to A$ the multiplication in the algebra $A$. Here the formulae respect the Koszul sign rule when acting on elements.  In particular, when $p=q=0$ the cup product coincides with the usual cup product on Hochschild cochain complex $\calC^*(A, A)$.  

\begin{remark}\label{remark4.1}
We observe that the structure maps $\tilde{\theta}_p:  C^*(A, \Omega^p(A)) \to C^*(A, \Omega^{p+1}(A))$ defined in the previous section are given by cup product with the cocycle (of degree zero) $d: \overline{a} \mapsto \overline{a} \otimes 1$ in $C^{1, *}(A, \Omega^1(A))$. That is, we have $\widetilde{\theta}_p=d\cup -$.  Therefore the cup product $\cup$ is clearly compatible with the structure maps, and thus it  induces a well-defined product $$\cup: \calC_{\sg}^*(A, A) \otimes \calC_{\sg}^*(A, A) \to \calC_{\sg}^*(A, A).$$
\end{remark}

\begin{proposition}\label{proposition-dga}
The cup product $\cup$ defined above gives a (unital) dg algebra structure on the singular Hochschild complex $\calC_{\sg}^*(A, A)$. Moreover, this cup product $\cup$ is compatible with the Yoneda product of $\HH_{\sg}^*(A,A)$ via the canonical  morphism $\chi: H^*(\calC_{\sg}^*(A, A))\rightarrow \HH_{\sg}^*(A, A)$ (cf. Proposition \ref{prop-quasi-isomorphism}).
\end{proposition}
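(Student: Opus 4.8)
The plan is to verify the differential graded algebra axioms for $\cup$ directly on the coefficient complexes $C^{*,*}(A,\Omega^{p}(A))$, pass to the colimit $\calC_{\sg}^{*}(A,A)$, and then identify the induced product on cohomology with the Yoneda product through the factorization of $\chi$ supplied by Proposition~\ref{prop-quasi-isomorphism}. Since the authors state that these are the dg analogues of the constructions in \cite{Wan}, the strategy is to reproduce those computations while keeping track of the differential and the Koszul signs.

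First I would establish the dg algebra structure. Associativity of $\cup$ on $\bigoplus_{p}C^{*,*}(A,\Omega^{p}(A))$ reduces, after the identification $\Omega^{p}(A)\cong(s\overline{A})^{\otimes p}\otimes A$ of Lemma~\ref{lemma-bimodule}, to the associativity of $\mu$: both $(f\cup g)\cup h$ and $f\cup(g\cup h)$ unwind to the same iterated composite of $f,g,h$ and a single multiplication, with signs forced by the Koszul rule. The Leibniz identity $\delta_{\sg}(f\cup g)=\delta_{\sg}(f)\cup g+(-1)^{|f|}f\cup\delta_{\sg}(g)$ splits into the statements that the internal differential (induced by $d$) and the external Hochschild differential are each derivations of $\cup$; the former because $d$ is a derivation of $\mu$ and commutes with the bar differential, the latter being the coefficient-twisted form of the classical derivation property of the Hochschild cup product. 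The class of $1\in A^{0}=C^{0,0}(A,\Omega^{0}(A))$ is then a two-sided unit, cupping with it merely inserting a multiplication by $1$. Finally, to descend $\cup$ to $\calC_{\sg}^{*}(A,A)=\varinjlim_{p}C^{*}(A,\Omega^{p}(A))$ I would invoke Remark~\ref{remark4.1}, which identifies the transition map $\widetilde{\theta}_{p}$ with $d\cup-$: compatibility with the colimit in the first variable, $(d\cup f)\cup g=d\cup(f\cup g)$, is immediate from associativity, while compatibility in the second variable must be checked as an equality of classes in the filtered colimit rather than on the nose.

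For the multiplicativity of $\chi$ I would use that $\chi$ factors through $\varinjlim_{p}\HH^{*}(A,\Omega^{p}(A))$ together with the natural maps $\HH^{*}(A,\Omega^{p}(A))\to\HH_{\sg}^{*}(A,A)$ coming from the quotient functor and the singularity-category isomorphisms $\Omega^{p}(A)\cong A$. The key is to reinterpret $\cup$ categorically: a class in $\HH^{m}(A,\Omega^{p}(A))=\Hom_{\DD(A\otimes A^{\op})}(A,s^{m}\Omega^{p}(A))$ is a morphism $A\to s^{m}\Omega^{p}(A)$, and the formula $f\cup g=(\id^{\otimes p+q}\otimes\mu)(\id^{\otimes q}\otimes f\otimes\id)(\id^{\otimes m}\otimes g)$ reads, from right to left, as ``apply $g$, then apply $f$.'' Concretely, using the concatenation $\Omega^{q}(A)\otimes_{A}\Omega^{p}(A)\cong\Omega^{p+q}(A)$, I would show that $f\cup g$ represents the composite $A\xrightarrow{\,g\,}s^{n}\Omega^{q}(A)\xrightarrow{\,s^{n}\widehat f\,}s^{m+n}\Omega^{p+q}(A)$, where $\widehat f\colon\Omega^{q}(A)\to s^{m}\Omega^{p+q}(A)$ is $f$ acting on the trailing $A$-factor of $\Omega^{q}(A)$. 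Passing to $\DD_{\sg}(A\otimes A^{\op})$ and using $\Omega^{q}(A)\cong A$ and $\Omega^{p+q}(A)\cong A$, the map $\widehat f$ becomes, up to the shift $s^{n}$, the image $\overline{f}\in\HH_{\sg}^{*}(A,A)$ of $f$ under the natural map, so that the composite becomes the Yoneda product $\overline{f}\cdot\overline{g}$; hence $\chi$ intertwines $\cup$ with the Yoneda product.

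The main obstacle is this last identification: proving that the combinatorially defined $\cup$ models categorical composition requires showing that the isomorphisms $\Omega^{p}(A)\cong A$ in the singularity category, which arise from the syzygy short exact sequences and the connecting maps $\theta_{p}$ of Lemma~\ref{lemma-commute-theta}, are compatible with the concatenation $\Omega^{q}(A)\otimes_{A}\Omega^{p}(A)\cong\Omega^{p+q}(A)$ underlying the cup product, and that they intertwine $\cup$ with Yoneda composition with all signs accounted for. A secondary but genuinely delicate point is the descent of $\cup$ to the colimit in the second variable, since $f\cup\widetilde{\theta}_{q}(g)$ and $\widetilde{\theta}_{p+q}(f\cup g)$ need not coincide before further application of the structure maps and must be compared within the filtered colimit.
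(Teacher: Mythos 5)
Your proposal is correct and follows essentially the same route as the paper: the dga axioms are verified directly (with descent to the colimit via $\widetilde{\theta}_p = d\cup -$), and compatibility with the Yoneda product is obtained by identifying $\cup$, up to homotopy, with the composition product built from the concatenation $\Omega^p(A)\otimes_A\Omega^q(A)\cong\Omega^{p+q}(A)$ and then invoking that $\chi$ is induced by the triangulated quotient functor $\DD^b(A\otimes A^{\op})\rightarrow\DD_{\sg}(A\otimes A^{\op})$. Your categorical reinterpretation of $f\cup g$ as the composite $s^{n}\widehat{f}\circ g$ is exactly the paper's assertion that $\cup$ agrees with the classical bimodule-coefficient cup product $\cup'$ (the chain-level representative of Yoneda composition), so the two arguments coincide in substance.
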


\begin{proof}
It is straightforward to verify that the cup product is associative and compatible with the differential. Note that since $\Omega^p(A)$ is an $A$-$A$-bimodule for all $p \in \mathbb{Z}_{\geq 0}$ we have the Yoneda product $$\cup': \HH^m(A, \Omega^p(A))\otimes \HH^n(A, \Omega^q(A))\rightarrow \HH^{m+n} (A, \Omega^p(A) \otimes_A \Omega^q(A)) \cong \HH^{m+n}(A, \Omega^{p+q}(A))$$ defined through the classical Hochschild cup product construction. More  precisely, take elements $\overline{f}\in \HH^m(A, \Omega^p(A))$ and  $\overline{g} \in \HH^n(A, \Omega^q(A))$, which are represented by $f\in C^{m', *}(A, \Omega^p(A))$ and $g \in C^{n', *}(A, \Omega^q(A))$ respectively, then $\overline{f} \cup' \overline{g}$ is represented by $$f\cup'g(\overline{a_1}\otimes \cdots \otimes \overline{a_{m'+n'}}):= f(\overline{a_1}\otimes \cdots \otimes \overline{a_{m'}})\otimes_A g(\overline{a_{m'+1}}\otimes \cdots \otimes \overline{a_{m'+n'}}).$$ From a direct computation, it follows that $\cup$ equals $\cup'$ up to homotopy, hence we have  $$\cup'=\cup: \HH^m(A, \Omega^p(A))\otimes \HH^n(A, \Omega^q(A))\rightarrow \HH^{m+n}(A, \Omega^{p+q}(A))$$ for any $m, n\in \Z$ and $p, q\in \Z_{\geq 0}.$   Therefore, the product $\cup'$ defines a product on the colimit $\lim\limits_{\substack{\longrightarrow\\p \in \Z_{\geq 0}}} \HH^{*}(A, \Omega^{p}(A))$, which corresponds to the cup product $\cup$ on $H^*(\calC_{\sg}^*(A, A))$ under the canonical isomorphism $$H^*(\calC_{\sg}^*(A, A))\cong \lim_{\substack{\longrightarrow\\p \in \Z_{\geq 0}}} \HH^{*}(A, \Omega^{p}(A)).$$ On the other hand, it is clear that  the Yoneda products are compatible with the morphism $\chi: H^*(\calC^*_{\sg}(A, A)) \rightarrow \HH_{\sg}^*(A, A)$ since $\chi$ is induced from the triangulated functor $\DD^b(A\otimes A^{\op})\rightarrow \DD_{\sg}(A\otimes A^{\op})$. 
 \end{proof}

\begin{figure}
\centering
  \includegraphics[width=125mm ]{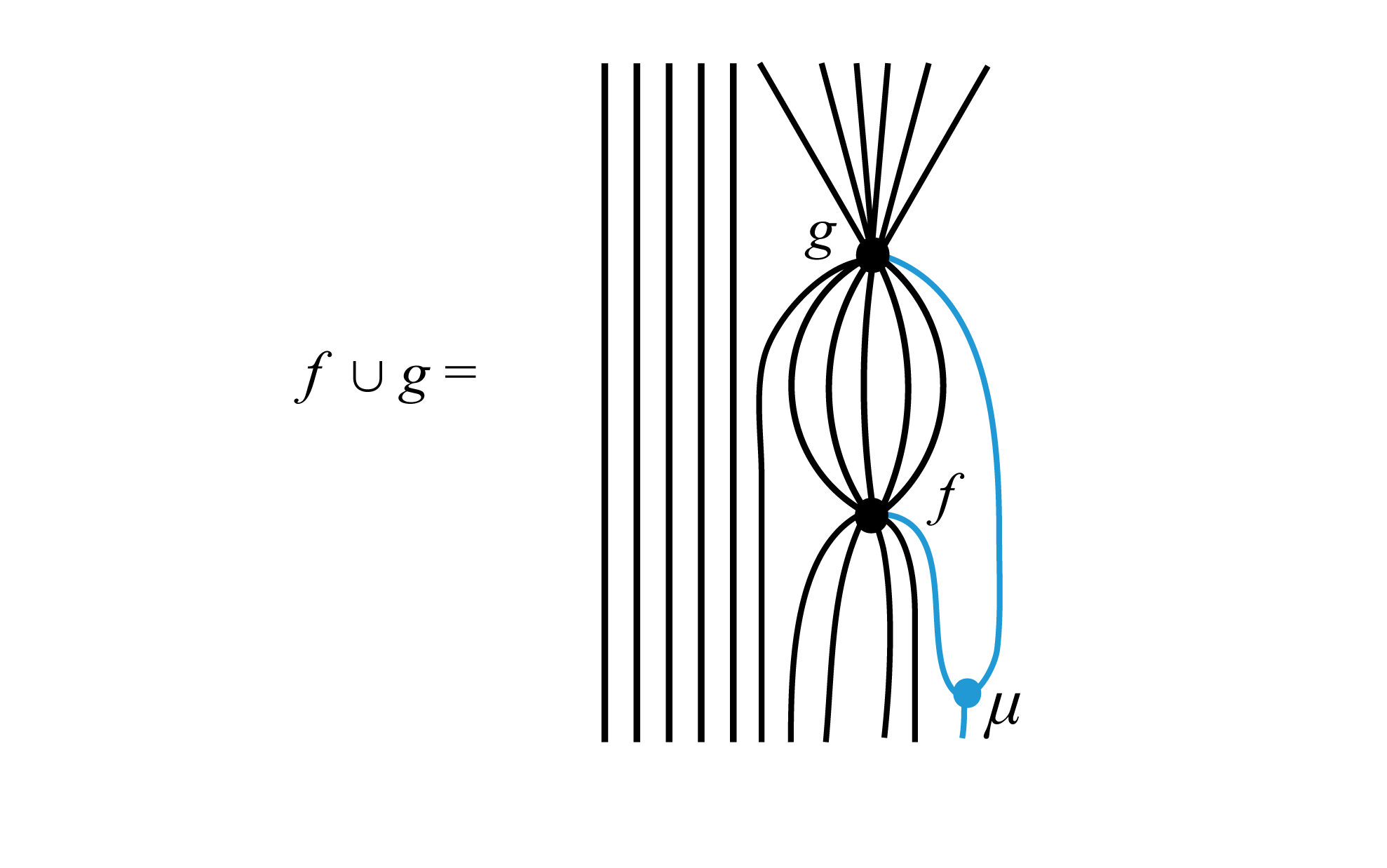}
  \caption{The diagram above represents the cup product  $\cup: \calC_{\sg}^*(A,A) \otimes \calC_{\sg}^*(A,A) \to \calC_{\sg}^*(A,A)$. The blue edges denote elements in $A$ while black edges denote elements in $s\overline{A}$. We have represented composition by stacking corollas, where $f$ and $g$ are represented as corollas as in Figure 1.}
  \label{fig:cup}
\end{figure}

A tree diagram for the cup product $\cup$ is given in Figure 5.

\subsection{DGLA structure on $\calC_{\sg}^*(A, A)$}\label{subsection-dgla}
Let $f\in C^{m, *}(A, \Omega^p(A))$ and $g\in C^{n, *}(A, \Omega^q(A))$, we define a bracket $\{f, g\}\in C^{m+n-1, *}(A, \Omega^{p+q}(A))$ as 
$$
\{f, g\}=f\bullet g-(-1)^{(\text{deg}(f)+1)(\text{deg}(g) +1)} g\bullet f
$$
where we denote  $$f\bullet g=\sum_{i=1}^m f\bullet_i g + \sum_{i=1}^p f\bullet_{-i} g$$ and $f\bullet_i g $ is defined as 
\begin{equation*}
f\bullet_i g:=
\begin{cases}
(\id^{\otimes q}\otimes f) (\id^{i-1} \otimes (\id^{\otimes q}\otimes \pi) g \otimes \id^{\otimes m-i}) & \mbox{for $i\geq 1$},\\
(\id^{\otimes p+i} \otimes (\id^{\otimes q}\otimes \pi) g \otimes \id^{\otimes -i}) (\id^{\otimes n-1}\otimes f)  & \mbox{for $i\leq -1$},
\end{cases}
\end{equation*}
where $\pi: A\twoheadrightarrow \sA$ is the natural projection map of degree $-1$, and we identify 
$\Omega^p(A)$ with $(s\overline{A})^{\otimes p}\otimes A$ as in Lemma \ref{lemma-bimodule}. Here the formulae respect the Koszul sign rule when acting on elements. In particular, when $p=q=0$ we recover the classical Gerstenhaber bracket on 
$C^*(A, A)$. It follows from a direct calculation that the bracket is compatible with the colimit construction, thus the bracket is well-defined on $\calC_{\sg}^*(A, A)$. 

\begin{proposition}
The singular Hochschild complex $\calC_{\sg}^*(A, A)$, equipped with the Lie bracket $\{\cdot, \cdot\}$ is a DGLA. 
\end{proposition}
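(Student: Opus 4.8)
The plan is to verify the two axioms of a DGLA that are not already built into the definition: the graded Jacobi identity for $\{\cdot,\cdot\}$ and the compatibility of the differential $\delta_{\sg}$ with the bracket (the graded Leibniz rule). Graded antisymmetry is immediate, since by construction $\{f,g\}$ is exactly the graded commutator $f\bullet g - (-1)^{(\deg(f)+1)(\deg(g)+1)} g\bullet f$ of the operation $\bullet$, so that $\{g,f\} = -(-1)^{(\deg(f)+1)(\deg(g)+1)}\{f,g\}$. Since the preceding text already asserts that $\bullet$, and hence $\{\cdot,\cdot\}$, is compatible with the structure maps $\widetilde{\theta}_p = d\cup-$, it suffices to check both remaining identities on representatives $f\in C^{m,*}(A,\Omega^p(A))$, $g\in C^{n,*}(A,\Omega^q(A))$, $h\in C^{\ell,*}(A,\Omega^r(A))$ and then pass to the colimit $\calC_{\sg}^*(A,A)$.

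For the Jacobi identity the key step is to establish that $\bullet$ is a (right) graded pre-Lie product, i.e.\ that its associator $(f\bullet g)\bullet h - f\bullet(g\bullet h)$ is graded symmetric in $g$ and $h$; the graded Jacobi identity for the commutator then follows by the standard formal argument. First I would expand $(f\bullet g)\bullet h$ and $f\bullet(g\bullet h)$ into sums over pairs of insertion slots, recalling that $f\bullet g = \sum_{i\geq 1} f\bullet_i g + \sum_{i\leq -1} f\bullet_i g$ has both \emph{positive} slots, which insert into the $m$ inputs of $f$, and \emph{negative} slots, which insert into the $p$ output legs of $f$ under the identification $\Omega^p(A)\cong(\sA)^{\otimes p}\otimes A$ of Lemma \ref{lemma-bimodule}. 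The positive--positive interactions reproduce the classical Gerstenhaber computation carried out for associative algebras in \cite{Wan}, where the nested terms (inserting $h$ into $g$ after $g$ into $f$) cancel and the disjoint terms are manifestly symmetric in $g$ and $h$.

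The hard part will be the bookkeeping for the new interactions involving the negative slots $\bullet_{-i}$: one must check that insertions into the output legs, which carry the projection $\pi$ and the multiplication $\mu$ on the distinguished $A$-factor, combine with the input insertions so that the total associator remains symmetric in $g,h$ once the Koszul signs and the sign prefactors in the definition of $\bullet_{\pm i}$ are accounted for; the mixed positive/negative and negative/negative cases are where sign errors are most likely to occur. With the pre-Lie identity in hand, it remains to prove the Leibniz rule for $\delta_{\sg} = \delta_0 + \delta_1$. The internal part $\delta_0$, induced by the differential of $A$, is a graded derivation of $\bullet$ because $d_A$ is a derivation of $\mu$ and commutes with $\pi$ and the identity maps appearing in $\bullet_{\pm i}$, so it derives the commutator. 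For the external part I would observe, as in the classical case, that $\delta_1$ equals (up to sign) the operator $\{\mu,-\}$, where the multiplication $\mu\in C^{2,*}(A,\Omega^0(A))$ is regarded as an element: the positive slots $\mu\bullet_i f$ produce the outer bimodule terms $a_1 f(\cdots)$ and $f(\cdots)a_{m+1}$, while $f\bullet_{-i}\mu$ produces the terms obtained by multiplying adjacent output legs. Granting $\delta_1 = \{\mu,-\}$, the Leibniz rule $\delta_1\{f,g\} = \{\delta_1 f,g\} \pm \{f,\delta_1 g\}$ is an immediate consequence of the graded Jacobi identity just established. Combining the two parts shows that $\delta_{\sg}$ is a derivation of $\{\cdot,\cdot\}$, completing the proof that $(\calC_{\sg}^*(A,A),\delta_{\sg},\{\cdot,\cdot\})$ is a DGLA.
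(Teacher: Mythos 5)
Your high-level architecture — antisymmetry for free, reduction to representatives via the compatibility of $\bullet$ with the structure maps $\widetilde{\theta}_p$, and the Jacobi identity via graded right pre-Lie symmetry of the associator of $\bullet$ — is in the same spirit as what the paper actually does: its entire proof is the citation ``analogous to \cite[Proposition 4.6]{Wan}'', and that proof is a direct, case-by-case verification of exactly the insertion identities you describe (your warning that the negative-slot bookkeeping is the delicate part is well placed; note in particular that some nested terms of $(f\bullet g)\bullet h$ are not cancelled inside that associator but must be matched against nested terms of the associator with $g$ and $h$ interchanged, via the new output leg created by a negative insertion, so the cancellation pattern is genuinely different from the classical one). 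The gap is in your treatment of the Leibniz rule.

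The identity $\delta_1=\pm\{\mu,-\}$ is \textbf{false} on $\calC_{\sg}^*(A,A)$, and for a structural reason: $\delta_1$ is built from the \emph{twisted} left action $\blacktriangleright$ of Lemma \ref{lemma-bimodule} on $\Omega^p(A)\cong(\sA)^{\otimes p}\otimes A$ (this twist is what makes $\Omega^p(A)$ a syzygy of the bar resolution), and the $\blacktriangleright$-action multiplies pieces of the \emph{output} of $f$ with each other, which no term of $\{\mu,-\}$ ever does. Concretely, take $A$ ordinary with zero differential and $f=\overline{u}\otimes v\in C^{0,*}(A,\Omega^1(A))$. Since $x\blacktriangleright(\overline{u}\otimes v)=\overline{xu}\otimes v-\overline{x}\otimes uv$, one has
\[
\delta_{\sg}(f)(\overline{x})=x\blacktriangleright(\overline{u}\otimes v)\pm(\overline{u}\otimes v)\cdot x=\overline{xu}\otimes v-\overline{x}\otimes uv\pm\overline{u}\otimes vx .
\]
On the other hand, every summand of $\{\mu,f\}$ applies the multiplication to a pair of elements of which at most one comes from the output of $f$: in $\mu\bullet_i f$ the two inputs of $\mu$ receive the projected $A$-output of $f$ and an external input, and in the unique negative insertion $f\bullet_{-1}\mu$ they receive the leg $\overline{u}$ and an external input. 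Hence, independently of any ordering or sign conventions, $\{\mu,f\}(\overline{x})$ lies in the span of $\overline{u}\otimes vx$, $\overline{u}\otimes xv$, $\overline{xu}\otimes v$, $\overline{ux}\otimes v$, and never contains the tensor $\overline{x}\otimes uv$ in which $u$ and $v$ have been multiplied together. For $A=\mathbb{K}[y]/(y^4)$, $u=v=y$, $x=y^2$, the tensor $\overline{y^2}\otimes y^2$ occurs in $\delta_{\sg}(f)(\overline{y^2})$ with coefficient $-1$ but in $\{\mu,f\}(\overline{y^2})$ with coefficient $0$. Since each $\widetilde{\theta}_p$ is injective, this discrepancy survives in the colimit, so $\delta_{\sg}$ is genuinely not an inner derivation for (the class of) $\mu$; the same phenomenon also makes your identification of ``the outer bimodule terms'' incorrect, because the first term of $\delta_1 f$ is $a_1\blacktriangleright f(\cdots)$, not $a_1$ times the $A$-factor of $f(\cdots)$. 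Consequently the Leibniz rule cannot be deduced from the Jacobi identity by your argument; it has to be verified by a direct term-by-term computation of the same kind as the Jacobi identity — which is precisely what the verification in \cite{Wan}, invoked by the paper, does. (Your claims about antisymmetry, the reduction to representatives, and the fact that the internal part $\delta_0$ is a derivation of $\bullet$ are all correct.)
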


\begin{proof}
The proof is analogous to the one of \cite[Proposition 4.6]{Wan}.
\end{proof}

\begin{remark}
The cup product $\cup$ on $\calC_{\sg}^*(A, A)$ is graded commutative up to homotopy, namely, for $f\in \calC_{\sg}^m(A, A)$ and $g\in \calC_{\sg}^n(A, A)$, $$f\cup g-(-1)^{mn} g\cup f=\delta(f)\bullet g \pm \delta(f\bullet g) \pm f\bullet \delta(g).$$ Hence, $\cup$ defines a graded commutative associative algebra structure on the cohomology $H^*(\calC_{\sg}^*(A, A))$. Moreover, we have the following result.
\end{remark}

\begin{theorem}\label{theorem-ger}
$(H^*(\calC_{\sg}^*(A, A)), \cup, \{\cdot, \cdot\})$ is a Gerstenhaber algebra.
\end{theorem}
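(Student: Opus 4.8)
We already have two of the three ingredients of a Gerstenhaber structure on $H^*(\calC_{\sg}^*(A,A))$. By Proposition~\ref{proposition-dga} together with the preceding Remark, the cup product $\cup$ descends to a graded commutative and associative product on cohomology; and by the preceding Proposition the bracket $\{\cdot,\cdot\}$ makes $\calC_{\sg}^*(A,A)$ a DGLA, hence induces a graded Lie bracket (of cohomological degree $-1$) on $H^*(\calC_{\sg}^*(A,A))$ satisfying graded antisymmetry and the graded Jacobi identity. Thus the only remaining axiom to verify is the Poisson (Leibniz) identity: for cohomology classes represented by cocycles $f,g,h$,
\begin{equation*}
\{f,\, g\cup h\} = \{f,g\}\cup h + (-1)^{(\deg f+1)\deg g}\, g\cup\{f,h\},
\end{equation*}
with all signs dictated by the Koszul rule.

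The plan is to establish this identity at the chain level, up to an explicit coboundary, following the strategy used for ordinary associative algebras in \cite{Wan} and the homotopy $G$-algebra formalism of Gerstenhaber--Voronov. First I would record that the product $f\bullet g=\sum_i f\bullet_i g$ is the leading term of a family of brace operations $f\{g_1,\dots,g_r\}$ on $\calC_{\sg}^*(A,A)$, obtained by inserting the arguments $g_j$ into the $m$ top inputs and the $p$ bottom outputs of $f\in C^{m,*}(A,\Omega^p(A))$ in all order-preserving ways, exactly as $\bullet_i$ (for $i\ge 1$) and $\bullet_{-i}$ (for $i\le -1$) do for a single insertion. The core of the argument is the \emph{distributivity} relation expressing $(f\cup g)\{h_1,\dots,h_r\}$ as a signed sum of terms $f\{h_1,\dots,h_s\}\cup g\{h_{s+1},\dots,h_r\}$, together with the brace (pre-Lie) relations among the insertions; these are precisely the homotopy $G$-algebra axioms. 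Granting them, a direct manipulation with at most two arguments shows that
\begin{equation*}
\{f,\,g\cup h\}-\{f,g\}\cup h-(-1)^{(\deg f+1)\deg g}\,g\cup\{f,h\}
\end{equation*}
equals $\delta\circ\Phi \pm \Phi\circ\delta$ evaluated at $(f,g,h)$, where $\Phi$ is built from the two-argument brace $f\{g,h\}$. Hence the left-hand side vanishes in cohomology when $f,g,h$ are cocycles, which is the Poisson identity. Throughout, one checks that every operation is compatible with the structure maps $\widetilde{\theta}_p = d\cup -$ of Remark~\ref{remark4.1}, so that everything is well defined on the colimit $\calC_{\sg}^*(A,A)$; this follows from the associativity and Leibniz properties of the constituent maps.

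The main obstacle, relative to the classical Hochschild computation, is the presence of the negative-index insertions $\bullet_{-i}$, which place arguments into the $\Omega^p(A)\cong (s\overline{A})^{\otimes p}\otimes A$ outputs rather than the inputs of $f$. These contributions have no counterpart in Gerstenhaber's original argument, so the brace relations and the distributivity law must be re-derived to confirm that the extra terms either cancel in pairs or reassemble into the coboundary $\delta\circ\Phi\pm\Phi\circ\delta$. Controlling the Koszul signs, now complicated by the degree $-1$ projection $\pi:A\twoheadrightarrow s\overline{A}$ and by the shifts implicit in the identification of Lemma~\ref{lemma-bimodule}, is the principal bookkeeping burden; but no idea beyond the homotopy $G$-algebra argument is needed, and the verification runs in parallel with \cite{Wan}.
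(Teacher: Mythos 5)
Your proposal is correct and follows essentially the same route as the paper: the paper's proof is a one-line citation declaring the argument "analogous to [Wan, Proposition 4.9]", i.e.\ the chain-level Leibniz-up-to-coboundary argument via brace-type insertions that you sketch, with the new $\bullet_{-i}$ insertions into the $\Omega^p(A)$ outputs being exactly the feature that must be tracked beyond the classical Hochschild case. Your write-up simply makes explicit the homotopy-$G$-algebra bookkeeping that the paper (and [Wan]) leave to the reader.
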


\begin{proof}
The proof is analogous to the one of \cite[Proposition 4.9]{Wan}.
\end{proof}

\section{Products and BV operator on the Tate-Hochschild complex $\calD^*(A,A)$}\label{section5}
We now give explicit formulae for three product structures, $\star$, $\bullet$, and $[ \cdot, \cdot ]$, and an operator $\widetilde{\Delta}$ on the Tate-Hochschild complex $\calD^*(A, A)$. These operations extend some of the algebraic string operations described in [Abb] and [TrZe].  In Section 6, we relate these structures to the dga and dgla structures of $\calC_{\sg}^*(A,A)$ defined in Section 4. Diagrams representing the formulae defining $\star$ may be found in the Appendix. 

\subsection{$\star$-product on $\calD^*(A, A)$} 
We will define a product of degree zero $$ \star: \calD^*(A, A)\otimes \calD^*(A, A)\rightarrow \calD^*(A, A) $$ for a dg symmetric Frobenius algebra $A$. This $\star$-product extends the Hochschild cup product $\cup$ in $C^*(A, A)$, and the cap product $\cap$ between $C^*(A, A)$ and $C_*(A, A)$, to a product on the chain complex $(\calD^*(A, A), \delta)$ which is compatible with $\delta$, namely, $\delta$ is a derivation of $\star$. Recall that we denote $\Hom_{k}((\sA)^{\otimes m}, A)^{m+p}$ by $C^{m, p}(A, A)$ and $(A\otimes (\sA)^{\otimes m})^{-m+p}$ by $C_{-m, p}(A, A)$. Define the $\star$-product on $\calD^*(A,A)$ by the following formulae.
\begin{enumerate}
\item For any $f\in C^{m, *}(A, A)$ and $g\in C^{n, *}(A, A)$,
$$f\star g:= f\cup g \in C^{m+n,*}(A,A)$$
where $$f \cup g (\overline{a_1} \otimes \cdots  \otimes \overline{a_{m+n}})= (-1)^{\text{deg}(g)\epsilon_m}f(\overline{a_1} \otimes \cdots  \otimes \overline{a_m})g(\overline{a_{m+1}} \otimes \cdots  \otimes \overline{a_{m+n}}),$$ and $\epsilon_m=\sum_{i=1}^m \text{deg}(a_i)-m$. Namely,  this is the usual cup product in $C^*(A, A)$. 
\item For any $f\in C^{m, *}(A, A)$ and $\alpha=\overline{a_1} \otimes \cdots  \otimes \overline{a_n} \otimes a_{n+1}  \in C_{-n, *}(A, A)$
\begin{enumerate}
\item if $m-n>0$, we define $\alpha \star f , f \star \alpha \in C^{m-n-1,*}(A,A)$ as follows: For  any $\overline {b_1}\otimes \cdots\otimes \overline{b_{m-n-1}}\in (\sA)^{\otimes m-n-1}$, 
\begin{equation*}
\begin{split}
  \alpha\star f( \overline {b_1}\otimes \cdots\otimes \overline{b_{m-n-1}})&:=\sum_{i} (-1)^{\kappa_i} e_if(\overline{a_1}\otimes \cdots \otimes \overline{a_n}\otimes \overline{a_{n+1}f_i} \otimes \overline {b_1}\otimes \cdots\otimes \overline{b_{m-n-1}})\\
  f\star \alpha( \overline {b_1}\otimes \cdots\otimes \overline{b_{m-n-1}})&:= \sum_i (-1)^{\lambda_i} f(\overline{b_1}\otimes \cdots 
\otimes \overline{b_{m-n-1}}\otimes \overline{e_i} \otimes\overline{a_1}\otimes \cdots \otimes \overline{a_n})a_{n+1}f_i,
\end{split}
\end{equation*}
where the signs are given by 
$$\kappa_i= \text{deg}(f_i)\text{deg}(\alpha)+ \text{deg}(e_i)+ \text{deg}(\alpha)- \text{deg}(a_{n+1}) + (\text{deg}(\alpha)+\text{deg}(f_i)-1) \text{deg}(f),$$ and
$$\lambda_i= \text{deg}(\alpha) \text{deg}(f_i) + (\text{deg}(\alpha)+k-1)(\sum_{j=1}^{m-n-1}\text{deg}(b_j)-m+n+1)$$.
\item if $m-n\leq 0$ define $\alpha \star f, f \star \alpha \in C_{-m-n,*}(A,A)$ by
\begin{equation*}
\begin{split} 
 \alpha\star f&:=(-1)^{\epsilon_m(\text{deg}(\alpha)-\epsilon_m +\text{deg}(f))}  \overline{a_{m+1}}\otimes \cdots \otimes \overline{a_n}\otimes a_{n+1}f(\overline{a_1}\otimes \cdots \otimes \overline{a_m})\\
f\star\alpha& :=(-1)^{\epsilon_{n-m} \text{deg}(f) } \overline{a_1}\otimes \cdots \otimes \overline{a_{n-m}}\otimes f(\overline{a_{n-m+1}}\otimes \cdots \otimes \overline{a_{n}})a_{n+1},
\end{split}
\end{equation*}
where $\epsilon_l= \sum_{j=1}^l \text{deg}(a_j)- l$ as before. 
\end{enumerate}
\item For any $\alpha= \overline{a_1} \otimes \cdots  \otimes \overline{a_n} \otimes a_{n+1} \in C_{-n, *}(A, A)$ and $\beta = \overline{b_1} \otimes \cdots  \otimes \overline{b_m} \otimes b_{m+1} \in C_{-m,*}(A, A)$ define $\beta \star \alpha \in C_{-m-n-1,*}(A,A)$ by
$$\beta\star \alpha:=\sum_i (-1)^{\gamma_i} \overline{a_1}\otimes \cdots \otimes \overline{a_{n+1}e_i}\otimes \overline{b_1}\otimes \cdots \otimes \overline{b_m} \otimes b_{m+1}f_i,$$
\end{enumerate}
where $\gamma_i=\text{deg}(\beta) \text{deg}(\alpha)+(\text{deg}(\alpha) + \text{deg}(\beta))\text{deg}(f_i) + \text{deg}(\alpha)\text{deg}(e_i)+ \text{deg}(\alpha) - \text{deg}(a_{n+1})$.

\begin{remark} When $A$ is commutative the formula given in (3) for $\beta \star \alpha$ may be written as
\begin{eqnarray}\label{equation-abb}
\sum_{i} \pm \overline{a_1} \otimes \cdots  \otimes \overline{a_n} \otimes \overline{(a_{n+1}b_{m+1})'} \otimes \overline{b_1} \otimes \cdots  \otimes \overline{b_m} \otimes (a_{n+1}b_{m+1})'',
\end{eqnarray}
where we have written $\Delta: A \to A \otimes A$ as $\Delta(x)= \sum x' \otimes x''$. Formula (\ref{equation-abb}) agrees with a product of degree $k-1$ described in \cite{Abb} and \cite{Kla}. This operation does \textit{not} define a chain map on the Hochschild chain complex. In fact, this formula yields a chain map between two products $*_0$ and $*_1$, as described in \cite{Abb}. When $A$ is commutative, the product induces a chain map on the subcomplex $\bigoplus_{i\geq 1}^{\infty} (s\overline{A})^{\otimes i} \otimes A$ of $C_{*,*}(A,A)$ as proposed by \cite{Abb}. Another way to obtain a chain map in the commutative case is to modify $\star$ by defining a new product $\alpha \tilde{\star} \beta := (\alpha - p(\beta))  \star (\beta - p(\beta))$, where $p: C_{*,*}(A,A) \to \mathbb{K} \otimes A$ is the natural projection map, as proposed by \cite{Kla}. However, we do not assume commutativity in our setting. 
\end{remark}

\begin{remark}
The associativity relation holds strictly for three elements in $C_{*, *}(A, A)$ or three elements in $C^{*, *}(A, A)$. However, in general, for three mixed elements associativity holds up to homotopy, as we will see later. Furthermore, we will show that there is an $A_{\infty}$-algebra structure on $\calD^*(A,A)$ extending the differential $\delta$ and the product $\star$ (cf. Theorem \ref{theorem5.11} below). 
\end{remark}

We have a non-degenerated pairing $\langle\cdot, \cdot\rangle$ between $C^{m,*}(A, A)$ and $C_{-m, *}(A, A)$ for any $m\in \Z_{\geq 0}$. Note that when $m=0$, it is exactly  defined by the inner product of $A$. For $m>0$, $\alpha= \overline{a_1}\otimes \cdots \otimes \overline{a_m} \otimes a_{m+1} \in C_{-m, *}(A, A)$ and $f\in C^{m, *}(A, A)$, define $$\langle f, \alpha\rangle := \langle f(\overline{a_1} \otimes \cdots  \otimes \overline{a_m}), a_{m+1} \rangle$$ and
$$\langle \alpha, f \rangle:= (-1)^{\epsilon_m(\deg(a_{m+1})+\deg (f))}\langle a_{m+1},f(\overline{a_1} \otimes \cdots  \otimes \overline{a_m}) \rangle$$ where $\epsilon_m= \sum_{i=1}^m \deg(a_i)-m$. Note that with these definitions we have $\langle f, \alpha \rangle = (-1)^{\deg (\alpha) \deg (f) } \langle \alpha, f \rangle$. We may extend this pairing to $\calD^*(A,A)$ by defining $\langle f,g \rangle =0=\langle \alpha,\beta \rangle$ for any $f,g \in C^{*,*}(A,A)$ and $\alpha, \beta \in C_{*,*}(A,A)$ and  $\langle \alpha, h \rangle= 0 = \langle h, \alpha \rangle$ for any $\alpha \in C_{-m,*}(A,A)$ and $h \in C^{n,*}(A,A)$ with $m \neq n$. 

\begin{lemma}\label{lemma4.2}
The pairing is compatible with the differential in $\calD^*(A, A)$, namely, we have  $\langle \delta x, y\rangle =\langle x, \delta y \rangle$ for any $x, y \in \calD^*(A, A)$.
\end{lemma}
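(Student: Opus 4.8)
The plan is to reduce the identity to a short list of cases governed by the structure of the pairing and the total differential, and then to verify each case using invariance of the inner product together with the symmetric Frobenius conditions on $A$. First I would record that the pairing couples $C^{m,*}(A,A)$ only with $C_{-m,*}(A,A)$ for the same monomial length $m$, and that it vanishes on a pair of two cochains, on a pair of two chains, and on a cochain/chain pair of mismatched length. Writing the total differential as $\delta=\delta_0+\delta_1$, where $\delta_0$ is the internal (column-preserving) differential coming from $d$ and $\delta_1$ is the external differential — the Hochschild cochain differential on $C^{m,*}(A,A)$, Connes' boundary $b$ on $C_{-m,*}(A,A)$ for $m\geq 1$, and the connecting map $\gamma$ on $C_{0,*}(A,A)$ — I would observe that, for homogeneous $x,y$, most components of $\langle\delta x,y\rangle$ and $\langle x,\delta y\rangle$ vanish for degree reasons. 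This leaves exactly three families of nontrivial pairs.

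The first family is $x\in C^{m,*}(A,A)$, $y\in C_{-m,*}(A,A)$, where only the internal pieces survive and the claim becomes $\langle\delta_0 x,y\rangle=\langle x,\delta_0 y\rangle$. By the defining formula $\langle f,\alpha\rangle=\langle f(\overline{a_1}\otimes\cdots\otimes\overline{a_m}),a_{m+1}\rangle=\epsilon(f(\overline{a_1}\otimes\cdots\otimes\overline{a_m})a_{m+1})$ this reduces to compatibility of the Frobenius inner product with $d$, i.e. $\langle dx,y\rangle=\pm\langle x,dy\rangle$, which is built into the hypothesis that the Frobenius pairing is a morphism of dg bimodules (equivalently $\epsilon d=0$). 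The second family is $x\in C^{m,*}(A,A)$, $y\in C_{-m-1,*}(A,A)$ together with the symmetric pair obtained by exchanging the roles of chain and cochain via $\langle f,\alpha\rangle=(-1)^{\deg(\alpha)\deg(f)}\langle\alpha,f\rangle$; here the identity becomes the adjointness $\langle\delta_1 x,y\rangle=\langle x,\delta_1 y\rangle$ between the Hochschild cochain differential and $b$. I would match terms one by one: the inner-multiplication terms $f(\cdots\overline{a_ia_{i+1}}\cdots)$ pair directly against the corresponding contractions in $b$; the term $f(\overline{a_1}\otimes\cdots\otimes\overline{a_m})a_{m+1}$ pairs against $\overline{a_1}\otimes\cdots\otimes a_ma_{m+1}$; and, the delicate point, the leftmost term $a_1 f(\overline{a_2}\otimes\cdots)$ pairs against the cyclic term of $b$ precisely by invoking cyclicity of the inner product, $\epsilon(ab)=\pm\epsilon(ba)$, which holds because $A$ is symmetric Frobenius.

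The third family concerns the connecting map: $x,y\in C_{0,*}(A,A)=A$, where $\langle\delta x,y\rangle=\langle\gamma x,y\rangle$ and $\langle x,\delta y\rangle=\langle x,\gamma y\rangle$ (the internal and $b$-pieces pair two chains and hence vanish). Using $\gamma=\mu\circ T\circ\Delta$ together with the Sweedler and bimodule identities of Remark \ref{remark-dg-symmetric}, I would expand both sides via $\Delta(1)=\sum_i e_i\otimes f_i$ and reduce the desired equality $\epsilon(\gamma(x)\,y)=\pm\epsilon(x\,\gamma(y))$ to a relation that holds exactly because of the symmetric condition $T\Delta(1)=\Delta(1)$, namely $\sum_i e_i\otimes f_i=\sum_i\pm f_i\otimes e_i$, combined once more with invariance of the pairing. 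The remaining pairs pair to zero on both sides, completing the verification.

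The main obstacle I anticipate is bookkeeping rather than conceptual: keeping the Koszul signs consistent through the degree shift $s^{1-k}$ on the chain columns, so that the term-by-term matching in the second family and the collapse in the third come out with the correct signs and with no spurious factor, yielding the clean identity $\langle\delta x,y\rangle=\langle x,\delta y\rangle$ with no sign. The two structural inputs that make all three cases close, $\epsilon(ab)=\pm\epsilon(ba)$ and $T\Delta(1)=\Delta(1)$, are exactly the defining properties of a symmetric Frobenius algebra, so beyond careful sign analysis no further ingredient is needed.
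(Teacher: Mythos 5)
Your proposal is correct and takes essentially the same route as the paper: the paper's own proof is literally the one-line assertion that this is a straightforward computation, and your case decomposition (matched-length internal terms reducing to $\epsilon\circ d=0$, the adjointness of the Hochschild cochain differential and the Hochschild boundary via cyclicity of $\epsilon$, and the $\gamma$-case handled by $T\Delta(1)=\Delta(1)$ together with invariance of the pairing) is exactly that computation with the details filled in correctly. One minor slip: the external differential on the chain columns of $\calD^*(A,A)$ is the Hochschild boundary $b$, not Connes' boundary (that is the operator $B$ used later for the BV structure), though your term-by-term matching makes clear you mean the former.
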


\begin{proof} 
This is a straightforward computation.
\end{proof}

\begin{lemma}\label{lemma-pairing}
The $\star$-product is compatible with the pairing $\langle\cdot, \cdot\rangle$. Namely, for any $x, y, z \in \calD^*(A, A)$ we have 
$$\langle x\star y, z\rangle =\langle x, y\star z\rangle.$$
\end{lemma}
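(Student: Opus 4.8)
The plan is to verify $\langle x\star y, z\rangle = \langle x, y\star z\rangle$ by a case analysis on the types of the homogeneous elements $x,y,z$, each of which is either a Hochschild cochain in $C^{*,*}(A,A)$ or a Hochschild chain in $C_{*,*}(A,A)$. The first reduction uses the vanishing built into the pairing: $\langle\cdot,\cdot\rangle$ is zero on a pair of cochains, on a pair of chains, and on a cochain--chain pair whose monomial lengths disagree. Hence $\langle x\star y, z\rangle$ can be nonzero only when exactly one of $x\star y$ and $z$ is a cochain and the other a chain of equal length, and likewise for the right-hand side. Writing $C$ for cochain and $H$ for chain, and recalling that $C\star C=C$, $H\star H=H$, while the type of $C\star H$ and $H\star C$ depends on the comparison of monomial lengths, the configurations $(C,C,C)$ and $(H,H,H)$ make both sides vanish identically. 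This leaves the six mixed cases $(C,C,H),(C,H,C),(C,H,H),(H,C,C),(H,C,H),(H,H,C)$, each of which splits further according to the length inequalities that decide whether an intermediate product $C\star H$ or $H\star C$ lands among cochains or among chains.

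In each surviving case I would substitute the explicit formulas for $\star$ from cases (1)--(3) of its definition together with the formulas for $\langle\cdot,\cdot\rangle$, and reduce both sides to the Frobenius inner product $\langle a,b\rangle=\epsilon(ab)$ on $A$. The algebraic inputs that do all the work are the invariance of the pairing $\langle ab,c\rangle=\langle a,bc\rangle$; its graded symmetry $\langle a,b\rangle=(-1)^{\deg(a)\deg(b)}\langle b,a\rangle$; and the bimodule identity for the element $\Delta(1)=\sum_i e_i\otimes f_i$ recorded in Remark \ref{remark-dg-symmetric}(2), namely $\sum_i e_i\otimes f_i x=\sum_i (-1)^{k\deg(x)} x e_i\otimes f_i$, together with the symmetric condition $T\Delta(1)=\Delta(1)$. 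These let me slide the coproduct factors $e_i,f_i$ past the algebra elements so that, after reindexing the sum over $i$, the two sides present the same string of multiplications capped by a single $\epsilon$.

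Conceptually, the cleanest way to organize the bookkeeping is to regard $\langle x\star y,z\rangle$ as a trilinear form $\langle x,y,z\rangle$ obtained by arranging $x,y,z$ around a circle, concatenating their tensor words, gluing outputs to inputs through $\Delta(1)$, and closing up with $\mu$ and $\epsilon$; from this description the form is manifestly cyclically invariant up to a Koszul sign. Since $\langle x, y\star z\rangle=(-1)^{\deg(x)(\deg(y)+\deg(z))}\langle y\star z, x\rangle$ by graded symmetry of the pairing, the desired equality is precisely this cyclic invariance, so it suffices to check the rotation $\langle x,y,z\rangle=\pm\langle y,z,x\rangle$ in each type configuration. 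This viewpoint predicts the correct matching and indicates exactly which $\Delta(1)$-sliding to perform in each case.

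I expect the single genuine obstacle to be the sign bookkeeping. The formulas defining $\star$ carry the intricate exponents $\kappa_i,\lambda_i,\gamma_i$ and the shifts $\epsilon_m$, and the nondegenerate pairing contributes its own signs, so the real content of the argument is confirming that the accumulated Koszul signs on the two sides agree in every case and sub-case. The underlying algebra---invariance of $\epsilon$ and the sliding identities for $\Delta(1)$---is routine once the signs are pinned down, which is why a fully explicit treatment is lengthy but not conceptually difficult.
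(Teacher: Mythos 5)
Your proposal is correct and follows essentially the same route as the paper's proof: a type-by-type case analysis in which one substitutes the explicit formulas for $\star$ and reduces both sides to the Frobenius form via the invariance $\langle ab,c\rangle=\langle a,bc\rangle$ and the sliding identity $\sum_i e_i\otimes f_i x=\sum_i \pm\, x e_i\otimes f_i$ for $\Delta(1)$ — exactly the two ingredients the paper uses in its computations of $\langle \alpha\star f,g\rangle=\langle\alpha,f\star g\rangle$ and $\langle f\star\alpha,\beta\rangle=\langle f,\alpha\star\beta\rangle$. Your cyclic trilinear-form reformulation is a tidy way to organize the remaining cases (and foreshadows the cyclic $A_\infty$ compatibility proved later), but it does not change the substance of the argument.
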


\begin{proof} 
Let $f\in C^{m, *}(A, A)$, $g\in C^{n, *}(A, A)$ and $\alpha \in C_{-m-n, *}(A, A)$,  then
\begin{equation*}
\begin{split}
\langle \alpha \star f, g \rangle&= \pm \langle \overline{a_{m+1}} \otimes\cdots  \otimes a_{m+1+n} f(\overline{a_1} \otimes \cdot \otimes \overline{a_m}), g \rangle \\
&= \pm \langle a_{m+1+n}f(\overline{a_1} \otimes \cdots \otimes \overline{a_m}), g(\overline{a_{m+1}} \otimes \cdots\otimes \overline{a_{m+1}} ) \rangle \\
&= \pm \langle a_{m+1+n}, f(\overline{a_1} \otimes \cdots \otimes \overline{a_m})g(\overline{a_{m+1}} \otimes \cdots \otimes \overline{a_{m+1}} ) \rangle \\
&=\pm \langle a_{m+1+n}, f \star g(\overline{a_1} \otimes \cdots \otimes \overline{a_{m+n}} \rangle\\
&= \langle \alpha, f \star g \rangle.
\end{split}
\end{equation*}
A similar calculation yields $\langle f \star \alpha, g \rangle= \langle f, \alpha \star g \rangle$. \\

Now suppose $\alpha = \overline{a_1} \otimes \cdots\otimes \overline{a_m} \otimes a_{m+1} \in C_{-m, *}(A, A)$, $\beta= \overline{b_1} \otimes \cdots \otimes \overline{b_n} \otimes b_{n+1} \in C_{-n, *}(A, A)$ and $f\in C^{m+n+1, *}(A, A)$. Then
\begin{equation*}
\begin{split}
\langle f \star \alpha, \beta \rangle&= \langle f \star\alpha (\overline{b_1} \otimes \cdots \otimes \overline{b_n}),b_{n+1} \rangle\\
&= \sum_i  \pm \langle f(\overline{b_1} \otimes\cdots \otimes \overline{b_n} \otimes \overline{e_i} \otimes \overline{a_1} \otimes \cdots  \otimes \overline{a_m})a_{m+1}f_i,b_{n+1} \rangle\\
&= \sum_i  \pm \langle f(\overline{b_1} \otimes\cdots \otimes \overline{b_n} \otimes \overline{e_i} \otimes \overline{a_1} \otimes \cdots  \otimes \overline{a_m}), a_{m+1}f_ib_{n+1} \rangle\\
&= \sum_i  \pm \langle f(\overline{b_1} \otimes\cdots  \otimes \overline{b_n} \otimes \overline{b_{n+1} e_i} \otimes \overline{a_1} \otimes \cdots  \otimes \overline{a_m}), a_{m+1}f_i \rangle\\
&= \langle f, \alpha \star \beta \rangle.
\end{split}
\end{equation*}
A similar computation yields $\langle \alpha \star f, \beta \rangle= \langle \alpha, f \star \beta \rangle$. 

\end{proof}

\subsection{$\bullet$-product and $[ \cdot, \cdot]$-bracket on $\calD^*(A, A)$}
We will define a product $$\bullet: \calD^*(A, A)\otimes \calD^*(A, A)\rightarrow \calD^*(A, A)$$ of degree $-1$ for any dg symmetric Frobenius algebra $A$. This product generalizes the Gerstenhaber $\circ$-product in $C^{*, *}(A, A)$ and  an analogous product constructed in $C_{*, *}(A, A)$ (cf. \cite{Abb, Wan}). We then define a bracket $$ [\cdot,\cdot]: \calD^*(A, A)\otimes \calD^*(A, A)\rightarrow \calD^*(A, A)$$ of degree $-1$ as the commutator of the $\bullet$-product. 
\\         
 
Define the $\bullet$-product case by case:
\begin{enumerate}
\item For $f\in C^{m, *}(A, A), g\in C^{n, *}(A, A)$,  we define $ f \bullet g \in C^{m+n-1,*}(A,A)$ by
\begin{equation*}
\begin{split}
&f\bullet g(\overline{a_1}\otimes \cdots \otimes \overline{a_{m+n-1}})\\
=& \sum_{i=1}^m (-1)^{(\deg(g)+1) \epsilon_{i-1} + \deg(f)} f(\overline{a_1} \otimes \cdots\otimes \overline{a_{i-1}} \otimes \overline{ g(\overline{a_i}\otimes
 \cdots \otimes \overline{a_{i+n-1}})} \otimes \overline{a_{i+n}} \otimes \cdots \otimes \overline{a_{m+n-1}}),
 \end{split}
 \end{equation*}
 where $\epsilon_l=\sum_{i=1}^l \deg(a_i)-l$. We will use the same definition for $\epsilon_l$ below.
 
 \item For $\alpha =\overline{a_1}\otimes \cdots \otimes \overline{a_r}\otimes a_{r+1} \in C_{-r, *}(A, A)$ and $\beta =\overline{b_1}\otimes\cdots \otimes \overline{b_s}\otimes b_{s+1} \in C_{-s, *}(A, A)$ define $\alpha \bullet \beta \in C_{-r-s-2,*}*(A,A)$ by
 $$\alpha\bullet \beta=\sum_i\sum_{j=1}^{r+1} (-1)^{\gamma_{ij}} \overline{a_1}\otimes \cdots \otimes\overline{ a_{j-1}}\otimes \overline{e_i}\otimes\overline{b_1}\otimes\cdots \otimes \overline{b_{s+1} f_i}\otimes \overline{a_j}\otimes\cdots \otimes a_{r+1},$$
 where $\gamma_{ij}= \deg(\beta)(\deg(\alpha) - \epsilon_{j-1} ) + (\deg(f_i)-1)(\epsilon_{j-1}+\deg(\beta))+ \epsilon_{j-1}\deg(e_i) -1 + \deg(\beta)-\deg(b_{s+1})$.
 
 \item Let $f\in C^{m, *}(A, A)$ and $\alpha\in C_{-r, *}(A, A)$. 
\begin{enumerate}
\item If $m\geq r+2$, 
define $f\bullet\alpha\in C^{m-r-2, *}(A, A)$ by
\begin{equation*}
\begin{split}
f\bullet \alpha:=\sum_{i, j}
\sum_{l=0}^r (-1)^{\gamma_{ijl}} \langle f(\overline{a_1}\otimes \cdots \otimes \overline{a_l}\otimes\overline{e_i}\otimes \id^{\otimes m-r-2} \otimes \overline{e_j}\otimes \overline{a_{l+1}}\otimes \cdots \otimes \overline{a_r}), a_{r+1}\rangle f_if_j,
\end{split}
\end{equation*}
where $\gamma_{ijl}=k+ (\deg(f)+\deg(\alpha))\deg(f_j)+(\deg(e_j)-1+\deg(f)+\deg(\alpha))\deg(f_i)+  (\deg(f)+ \epsilon_l) (\deg(e_i)+\deg(e_j) -2)$ and we interpret the symbol $\id^{\otimes m-r-2}$ denotes an empty slot where we plug in an element of $(s\overline{A})^{\otimes m-r-2}$. 
Similarly, define
\begin{equation*}
\begin{split}
\alpha\bullet f:=
\sum_i\sum_{j=1}^{m-r-1} (-1)^{\eta_i} f(\id^{\otimes j-1}\otimes \overline{e_i}\otimes \overline{a_1}\otimes \cdots \otimes \overline{a_{r+1}f_i}\otimes 
\id^{\otimes m-r-j-1})
\end{split}
\end{equation*}
where $\eta_{i}= \deg(f)(k-2+\deg(\alpha))+(\deg(f_i)-1)\deg(\alpha)+\deg(f) - 1 +\deg(\alpha)-\deg(a_{r+1})$. 
\item If $m< r+2$, 
define $f\bullet \alpha\in C_{-(r-m+1), *}(A, A)$ by
\begin{equation*}
\begin{split}
f\bullet \alpha:=
\sum_{i=0}^{r-m} (-1)^{ \epsilon_i(\deg(f)+1)}   \overline{a_1}\otimes \cdots \otimes \overline{f(\overline{a_{i+1}}\otimes \cdots \otimes \overline{a_{i+m}})}\otimes \overline{a_{i+m+1}}\otimes \cdots \otimes \overline{a_r}\otimes a_{r+1},
\end{split}
\end{equation*}
and similarly 
\begin{equation*}
\begin{split}
&\alpha\bullet f:=\sum_j\sum_{i=1}^m\\
&(-1)^{\rho_{ij}} \overline{a_i}\otimes \cdots \otimes \overline{a_{i+r-m}}\otimes e_j\langle f(\overline{a_1}\otimes \cdots \otimes \overline{a_{i-1}} \otimes \overline{f_j}\otimes  \overline{a_{i+r-m+1}} \otimes \cdots \otimes \overline{a_r}), a_{r+1}\rangle,
\end{split}
\end{equation*}
where $\rho_{ij}= \deg(e_j) + (\sum_{l=i}^{i+r-m} \deg(a_l) - r+ m -1)\epsilon_{i-1} + \epsilon_{i+r-m}( \deg(f_j) -1) + (\sum_{l=i}^{i+r-m} \deg(a_l) - r+ m -1)\deg(e_j) + \deg(f)(\epsilon_{i-1} + \deg(f_i)-1 + \deg(\alpha)- \epsilon_{i+r-n})$.
\end{enumerate}
\end{enumerate}

\begin{lemma}\label{lemma4.7}
For any $\alpha, \beta, \gamma\in \calD^*(A, A)$, we have $$\langle \alpha\bullet \beta, \gamma\rangle=\langle \alpha, \beta\bullet \gamma\rangle.$$
\end{lemma}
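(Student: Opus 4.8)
The plan is to prove Lemma~\ref{lemma4.7} by mirroring the proof of Lemma~\ref{lemma-pairing}: reduce the asserted identity to a short list of cases and check each one by direct manipulation of the defining formulae for $\bullet$ and for $\langle\cdot,\cdot\rangle$. The only inputs needed are the invariance and graded symmetry of the Frobenius pairing of $A$, namely $\langle xy,z\rangle=\langle x,yz\rangle$ and $\langle x,y\rangle=(-1)^{\deg(x)\deg(y)}\langle y,x\rangle$ (Remark~\ref{remark-dg-symmetric}(3) and the symmetry noted after the definition of the pairing), together with the two identities $\sum_i xe_i\otimes f_i=\sum_i e_i\otimes f_i x$ and $\sum_i e_i\otimes f_i=\sum_i(-1)^{\deg(e_i)\deg(f_i)}f_i\otimes e_i$ satisfied by the coproduct of the unit $\Delta(1)=\sum_i e_i\otimes f_i$ (Remark~\ref{remark-dg-symmetric}(2) and the symmetric condition $T\Delta(1)=\Delta(1)$). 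First I would use that the extended pairing on $\calD^*(A,A)$ is supported only on matching pairs $\bigl(C^{m,*}(A,A),\,C_{-m,*}(A,A)\bigr)$: it vanishes on two cochains, on two chains, and on a cochain and a chain of unequal length. Hence $\langle\alpha\bullet\beta,\gamma\rangle$ can be nonzero only when exactly one of $\alpha\bullet\beta,\gamma$ is a cochain and the other a chain of equal length, and likewise for the right-hand side. Writing $U$ for a cochain and $D$ for a chain, this leaves the six mixed type-profiles $(U,U,D),(U,D,U),(D,U,U),(U,D,D),(D,U,D),(D,D,U)$ for $(\alpha,\beta,\gamma)$; when all three are cochains or all three are chains both sides vanish by the degree bookkeeping $C^{m,p}=\Hom_{\mathbb K}((\sA)^{\otimes m},A)^{m+p}$ and $C_{-m,p}=(A\otimes(\sA)^{\otimes m})^{-m+p}$, which also pins down which branch (3a) versus (3b) of the $\bullet$-product occurs. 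I would also record that, by the graded symmetry of the pairing, the claim is equivalent to cyclic invariance $\langle\alpha\bullet\beta,\gamma\rangle=\pm\langle\beta\bullet\gamma,\alpha\rangle$ of the trilinear form $\Phi(\alpha,\beta,\gamma):=\langle\alpha\bullet\beta,\gamma\rangle$; this groups the six profiles into two cyclic orbits and reduces the bookkeeping to a few genuinely different computations.

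The heart of the argument is one family of rewritings, which I would illustrate on the profile where $\alpha=f\in C^{m,*}(A,A)$ is a cochain and $\beta\in C_{-r,*}(A,A)$, $\gamma\in C_{-s,*}(A,A)$ are chains with $m=r+s+2$, so that $f\bullet\beta$ lands in $C^{s,*}(A,A)$ (branch (3a)) and $\beta\bullet\gamma$ lands in $C_{-m,*}(A,A)$ (case (2)). Expanding $\langle f\bullet\beta,\gamma\rangle$ by the definitions inserts the Casimir factors $e_i\otimes f_i$ and $e_j\otimes f_j$ into the arguments of $f$ and then evaluates $f$ against the last $A$-tensor slot through $\langle\cdot,\cdot\rangle$; expanding $\langle f,\beta\bullet\gamma\rangle$ inserts the single Casimir that appears in case (2) of $\beta\bullet\gamma$ and evaluates $f$ on the resulting word. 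Using $\langle xy,z\rangle=\langle x,yz\rangle$ to move the trailing products (the tails $a_{r+1},b_{s+1}$ of the chains and the factors $f_i,f_j$) across the pairing, and then $\sum_i xe_i\otimes f_i=\sum_i e_i\otimes f_i x$ to slide those factors onto the correct tensor slot, both expressions collapse to the same evaluation of $f$ on a single word built from the letters of $\beta$, $\gamma$ and the inserted Casimir element(s), exactly as in the $\star$-computations of Lemma~\ref{lemma-pairing}. The remaining profiles are treated by the same two moves: the profiles $(U,U,D),(U,D,U),(D,U,U)$ (two cochains, one chain) unwind from case (1) together with branch (3b) and a single application of $\langle xy,z\rangle=\langle x,yz\rangle$, while $(U,D,D),(D,U,D),(D,D,U)$ (one cochain, two chains) additionally require the symmetry $\sum_i e_i\otimes f_i=\pm\sum_i f_i\otimes e_i$ to reconcile the two Casimir insertions. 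In every case both sides reduce to a common evaluation of the cochain argument on one word, so the identity holds.

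The one genuinely laborious point, and the step I expect to be the main obstacle, is checking that the Koszul signs agree. The exponents $\gamma_{ij}$, $\gamma_{ijl}$, $\eta_i$ and $\rho_{ij}$ built into the definition of $\bullet$, together with the sign $(-1)^{\epsilon_m(\deg(a_{m+1})+\deg(f))}$ carried by the pairing $\langle\alpha,f\rangle$, are designed precisely so that these identities hold on the nose; confirming this means tracking the sign emitted each time a homogeneous factor is commuted past another while applying invariance, the graded symmetry of the pairing, and the bimodule relation for $\Delta(1)$. I would organize this by fixing, in each profile, a normal form for the output word and computing the total sign on each side against that normal form, rather than comparing the two running sign conventions term by term. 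This is tedious but entirely mechanical once the combinatorial matching of slots from the previous paragraph is in place, and it can be cross-checked using the cyclic-invariance reformulation, which forces the signs in the three profiles of each orbit to be mutually consistent.
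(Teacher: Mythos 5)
Your plan is sound and would get there, but it misses the observation that makes the paper's proof essentially two lines long. The paper does not verify the adjunction identity for the complicated formulas at all: the formulas for $f\bullet\alpha$ in case (3a) and for $\alpha\bullet f$ in case (3b) were \emph{obtained} by solving $\langle \alpha\bullet\beta,\gamma\rangle=\langle\alpha,\beta\bullet\gamma\rangle$ against the canonically defined cases (1), (2) and the Gerstenhaber-type halves of case (3), using the non-degeneracy of the pairing between $C^{m,*}(A,A)$ and $C_{-m,*}(A,A)$ (each bidegree is finite-dimensional since $A$ is, so non-degeneracy determines the element uniquely). Consequently, in every type profile where one side of the identity involves a duality-defined formula there is nothing to check --- the identity \emph{is} the definition --- and only the profiles in which both sides are canonical insertions need a direct computation. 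Notably, the profile you single out as the heart of your argument ($f\in C^{m,*}(A,A)$, $\beta\in C_{-r,*}(A,A)$, $\gamma\in C_{-s,*}(A,A)$ with $m=r+s+2$) is precisely one of the tautological ones: expanding both sides there just re-derives the definition of $f\bullet\beta$. You half-notice this when you remark that the exponents $\gamma_{ijl}$, $\eta_i$, $\rho_{ij}$ ``are designed precisely so that these identities hold''; promoting that remark from an expectation about signs to the actual definition of those cases collapses most of your case analysis and eliminates the sign-verification problem you flag as the main obstacle. What your more laborious route buys in exchange is a genuine consistency check that the explicit formulas and signs printed in the paper really do solve the adjunction equations, which the paper's by-construction argument simply asserts; your reduction to the six mixed profiles and the vanishing in the pure-cochain and pure-chain cases is correct and matches the implicit structure of the paper's argument.
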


\begin{proof}
This is clear by direct computation. In fact, the definitions of  $f \bullet \alpha$ in case (3a) above and $\alpha \bullet f$ in case (3b) above are determined by the definition of $\bullet$ in the other cases, the non-degeneracy of the pairing, and the equation $\langle \alpha\bullet \beta, \gamma\rangle=\langle \alpha, \beta\bullet \gamma\rangle$. This is how the formula for $\bullet$ in these two cases was obtained. For the other cases the formula is given canonically by generalizing the Gerstenhaber's classical $\circ$-product. 
\end{proof}

\begin{definition}
The bracket of degree $-1$
$$[\cdot,\cdot]: \calD^*(A, A)\otimes \calD^*(A, A)\rightarrow \calD^*(A, A)$$
is defined as 
$$[x, y]:=x\bullet y-(-1)^{(|x|-1)(|y|-1)} y\bullet x.$$
Thus it follows from Lemma \ref{lemma4.7} that 
$$\langle [x, y], z\rangle=\langle x, [y, z]\rangle.$$
\end{definition}

\begin{lemma}
The bracket $[\cdot, \cdot]$ is compatible with the differential $\delta$ in $\calD^*(A, A)$. Namely, we have 
\begin{equation}\label{equation-jacobi}
\delta([x, y])=[\delta(x), y]+(-1)^{|x|-1} [x, \delta(y)] 
\end{equation}
for any $x, y \in \calD^*(A, A).$  As a consequence, the bracket $[\cdot, \cdot]$ is well-defined on $H^*(\calD^*(A, A))$. 
\end{lemma}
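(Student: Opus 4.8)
The plan is to prove the Leibniz rule (\ref{equation-jacobi}) by exploiting the cyclic symmetry of the pairing to cut down the case analysis, and then to verify the few remaining representative cases directly from the explicit formulas for $\bullet$. Conceptually, the identity is the infinitesimal form of the statement that $\delta$ is the adjoint action $[\pi,-]$ of a Maurer--Cartan type element $\pi$ assembled from the structure maps of $A$: the internal differential $d$ and the multiplication $\mu\in C^{2,*}(A,A)$ account for $\delta_0$ and the external cochain differential on $C^{*,*}(A,A)$ (as in the classical Gerstenhaber calculus, where the Hochschild differential is $\pm[\mu,-]$), while the Frobenius element $\Delta(1)=\sum_i e_i\otimes f_i$ of Remark \ref{remark-dg-symmetric} accounts for the external chain differential and the connecting map $\gamma$. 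This viewpoint is only motivational, since a formal derivation from graded Jacobi is not yet available (the bracket in general underlies merely an $L_\infty$ structure); the argument below avoids it.

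The efficient route uses the compatibilities already at hand. By Lemma \ref{lemma4.2} the differential is self-adjoint for the pairing, and the Definition following Lemma \ref{lemma4.7} gives the cyclic invariance $\langle[x,y],z\rangle=\langle x,[y,z]\rangle$. Since the Frobenius pairing pairs $C^{m,*}(A,A)$ non-degenerately with $C_{-m,*}(A,A)$, the total pairing on $\calD^*(A,A)$ is non-degenerate, so it suffices to show, for every $z\in\calD^*(A,A)$,
\[
\big\langle\, \delta[x,y] - [\delta x,y] - (-1)^{|x|-1}[x,\delta y],\ z\,\big\rangle = 0 .
\]
Transporting $\delta$ onto $z$ by Lemma \ref{lemma4.2} and then moving $x$ outside via cyclic invariance rewrites each of the three terms in the form $\langle x,-\rangle$ applied to an expression in $y$, $z$ and their differentials; peeling off $x$ by non-degeneracy, the vanishing for the configuration $(x,y,z)$ becomes a Leibniz-type identity for the pair $(y,z)$. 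Thus the trilinear identity is invariant under cyclically rotating the three arguments, and together with the graded antisymmetry $[x,y]=\pm[y,x]$ this reduces the verification to one representative of each orbit of the four "type patterns'': all cochains, two cochains and one chain, one cochain and two chains, and all chains.

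It then remains to check these representatives by direct computation. The all-cochain case is the classical dg Gerstenhaber identity, valid on the subcomplex $C^{*,*}(A,A)\subset\calD^*(A,A)$ on which $\bullet$ and $[\cdot,\cdot]$ restrict to the ordinary $\circ$-product and Gerstenhaber bracket. The mixed cases are handled by expanding $\delta=\delta_0+\delta_1$ and the relevant branches of the $\bullet$-product, and repeatedly invoking the Frobenius relations $\sum_i x e_i\otimes f_i=\sum_i e_i\otimes f_i x$, $\sum_i e_i\,\epsilon(\overline{f_i})=1$, and $d\big(\sum_i e_i\otimes f_i\big)=0$ from Remark \ref{remark-dg-symmetric}, exactly as in the proofs of Lemmas \ref{lemma5.2}, \ref{lemma5.3} and \ref{lemma-pairing}. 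The concluding assertion, that $[\cdot,\cdot]$ descends to $H^*(\calD^*(A,A))$, is then immediate: by the derivation property the bracket of two $\delta$-cocycles is a cocycle, and the bracket of a cocycle with a coboundary is a coboundary.

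The main obstacle is the sign and bookkeeping in the mixed cases, and specifically the case boundary $m\geq r+2$ versus $m<r+2$ in the definition of $f\bullet\alpha$. Applying $\delta_1$ changes the monomial lengths and can move a term across this boundary, so that the two formula branches, and the connecting-map contribution $\gamma$, must be matched against one another with compatible Koszul signs. The technical heart is to verify that the cup-product-type defect terms produced by $\delta\circ\bullet$ — the failure of $\delta$ to be a derivation of the \emph{non}-symmetric product $\bullet$ — cancel upon antisymmetrizing into the commutator $[x,y]=x\bullet y-(-1)^{(|x|-1)(|y|-1)}y\bullet x$, precisely as the analogous defect cancels in the classical passage from the $\circ$-product to the Gerstenhaber bracket.
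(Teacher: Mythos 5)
Your proposal is correct and is essentially the paper's own argument: the paper's one-sentence proof invokes precisely Lemma \ref{lemma4.2} together with the validity of (\ref{equation-jacobi}) in the two pure cases ($x,y\in C^{\geq 0,*}(A,A)$ or $x,y\in C_{<0,*}(A,A)$), the mixed cases following by exactly the duality mechanism you make explicit (non-degeneracy of the pairing, the adjunction $\langle[x,y],z\rangle=\langle x,[y,z]\rangle$, and cyclic rotation of the resulting trilinear identity). The only difference is bookkeeping: once the cyclic reduction is in place, the orbit representatives can be chosen so that the two pure bilinear cases are the \emph{only} direct computations required, so your planned separate expansion of the mixed cases is redundant, though harmless.
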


\begin{proof}
This follows from Lemma \ref{lemma4.2} and the fact that Identity (\ref{equation-jacobi}) holds for  either $x, y\in C^{\geq 0, *}(A, A)$ or $x, y \in C_{< 0, *}(A, A)$. 
\end{proof}

\begin{remark}
In general, the bracket $[\cdot, \cdot]$ on $\calD^*(A, A)$ does not satisfy the Jacobi identity. However, in Section \ref{section6} we will see that it does on cohomology $H^*(\calD^*(A, A))$.
\end{remark}

\subsection{BV operator $\widetilde{\Delta}$ on $\calD^*(A, A)$}\label{subsection-BV}
In this subsection we extend the Connes' boundary operator  $B$ in the Hochschild chain complex $C_*(A, A)$ to  the Tate-Hochschild complex $\calD^*(A, A)$.  Recall that Connes' boundary operator $B: C_{m, *}(A, A)\rightarrow C_{m+1, *}(A, A)$  sends a monomial element $x:=\overline{a_1}\otimes \cdots \otimes \overline{a_m}\otimes a_{m+1}\in C_{m, *}(A, A)$ to 
$$B(x):=\sum_{i=1}^{m+1}(-1)^{\epsilon_m + \epsilon_{i-1}\deg(\alpha)} \overline{a_i}\otimes \cdots \otimes \overline{a_m} \otimes \overline{a_{m+1}}\otimes \overline{a_1}\otimes
\cdots \otimes \overline{a_{i-1}}\otimes 1,$$
where $\epsilon_l=\sum_{i=1}^l \deg{a_i} - l$.  It is well-known that $B\circ B=0$ and  $B$ is compatible with the differential of $C_*(A, A)$ (cf. e.g. \cite{Lod}). Thus  $B$ induces a differential in Hochschild homology $\HH_*(A, A)$
$$B:\HH_{*}(A, A)\rightarrow \HH_{*+1}(A, A).$$

The pairing $\langle\cdot, \cdot\rangle$ of $A$ induces a linear isomorphism  $C_{m, *}(A, A)^{\vee} \cong C^{m, *}(A, A)$ for each $m\in \Z_{\geq 0}$, thus there is an isomorphism between  $C_*(A, A)^{\vee}$. We may use this isomorphism, to define
$$\Delta: C^*(A, A)\rightarrow C^{*-1}(A, A)$$
by
$$\langle\Delta(f), \overline{a_1}\otimes \cdots\otimes \overline{a_{m-1}}\otimes a_{m}\rangle:=
(-1)^{\deg(f)} \langle f, B(\overline{a_1}\otimes \cdots \otimes\overline{a_{m-1}}\otimes a_{m})\rangle.$$
\\

From $B\circ B=0$ and $B\circ d+d\circ B=0$,  it follows that $\Delta\circ \Delta=0$ and 
$\Delta\circ \delta+\delta\circ \Delta=0$, thus we have an induced map  $$\Delta: \HH^*(A, A)\rightarrow \HH^{*+1}(A, A)$$ with $\Delta\circ \Delta=0$. 
\\

We may combine the Connes' boundary operator  $B$ and its dual $\Delta$ to obtain an operator 
$$\widetilde{\Delta}: \calD^*(A, A)\rightarrow \calD^{*-1}(A, A)$$
defined by 
\begin{equation*}
\widetilde{\Delta}(x):=
\begin{cases}
\Delta(x) & \mbox{if $x\in C^{>0, *}(A, A)$},\\
0 & \mbox{if $x\in C^{0, *}(A, A)$},\\
B(x) & \mbox{if $x\in C_{\leq 0, *}(A, A)$}.
\end{cases}
\end{equation*}

\begin{remark}\label{remark-BV-operator}
$\widetilde{\Delta}$ is compatible with the pairing $\langle\cdot, \cdot\rangle$ on $\calD^*(A, A)$, that is to say,
$$\langle\widetilde{\Delta}(x), y\rangle=(-1)^{\deg(x)} \langle x, \widetilde{\Delta}(y)\rangle$$ 
for any $x, y \in \calD^*(A, A)$.  It is clear that  $\widetilde{\Delta}\circ \widetilde{\Delta}=0$ and $\widetilde{\Delta}\circ \delta+\delta\circ \widetilde{\Delta}=0,$ where  $\delta$ represents the differential of $\calD^*(A, A)$. So $\widetilde{\Delta}$ induces a differential in the cohomology  of $\calD^*(A, A)$
$$\widetilde{\Delta}:H^*(\calD^*(A, A))\rightarrow H^{*-1}(\calD^*(A, A)).$$ 
\end{remark}

\begin{proposition}\label{prop-BV-identity}
We have the following identity on $H^*(\calD^*(A, A))$
$$[x, y]=\widetilde{\Delta}(x) \star y \pm x\star \widetilde{\Delta}(y)\pm \widetilde{\Delta}(x\star y),$$
for any $x, y \in  H^*( \calD^*(A, A))$. 
\end{proposition}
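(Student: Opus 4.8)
The plan is to show that the square-zero, degree $-1$ operator $\widetilde{\Delta}$ is a Batalin--Vilkovisky operator for the Gerstenhaber algebra $(H^*(\calD^*(A,A)),\star,[\cdot,\cdot])$, the latter being the Gerstenhaber structure of Theorem \ref{theorem-ger} carried to $\calD^*(A,A)$ along the isomorphism of Corollary \ref{corollary3.26}. Since $\star$, $[\cdot,\cdot]$ and $\widetilde{\Delta}$ all descend to cohomology, I would work throughout with classes. The key structural input is that the invariant pairing makes the desired relation cyclically symmetric in its three arguments. Writing $F(x,y):=[x,y]-\widetilde{\Delta}(x)\star y\mp x\star\widetilde{\Delta}(y)\mp\widetilde{\Delta}(x\star y)$ for the discrepancy (with signs as in the statement), I would pair $F(x,y)$ with an arbitrary class $z$ and move each operator off the first slot using the three adjunction formulae already in place: $\langle x\star y,z\rangle=\langle x,y\star z\rangle$ (Lemma \ref{lemma-pairing}), $\langle[x,y],z\rangle=\langle x,[y,z]\rangle$ (Lemma \ref{lemma4.7}), and $\langle\widetilde{\Delta}(x),y\rangle=(-1)^{\deg(x)}\langle x,\widetilde{\Delta}(y)\rangle$ (Remark \ref{remark-BV-operator}). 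Matching Koszul signs then gives $\langle F(x,y),z\rangle=\pm\langle x,F(y,z)\rangle$; hence the vanishing of $F$ (after pairing) for one assignment of chain/cochain types to $(x,y,z)$ is equivalent to its vanishing for the cyclic rotation.

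Because the pairing is identically zero between two Hochschild cochains and between two Hochschild chains, this cyclic symmetry collapses the verification to one representative in each of the two nontrivial orbits of input types. For the orbit containing the all-cochain configuration I would take $x,y\in C^{*,*}(A,A)$: there $\star$ is the Hochschild cup product, $[\cdot,\cdot]$ is the Gerstenhaber bracket, and $\widetilde{\Delta}=\Delta$ is the dual of Connes' operator $B$ under the Frobenius pairing, so the statement is precisely the classical Batalin--Vilkovisky identity for the Hochschild cohomology of a symmetric Frobenius algebra, i.e. Tradler's theorem \cite{TrZe}. This settles that orbit as an honest identity of classes.

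For the remaining orbit I would take $x,y\in C_{*,*}(A,A)$, where $\widetilde{\Delta}=B$ is Connes' boundary operator and $\star$ specializes to the chain-level product of \cite{Abb} and \cite{Kla} (the algebraic analogue of the Goresky--Hingston product) together with its commutator bracket. Here I see no shortcut: I would verify the identity by direct computation on monomials, expanding $\star$, $\bullet$ and $B$ through their explicit formulae in Section \ref{section5} and repeatedly using the symmetric Frobenius relations $\sum_i x e_i\otimes f_i=\sum_i e_i\otimes f_i x$ and $T\Delta(1)=\Delta(1)$ of Remark \ref{remark-dg-symmetric}, where $\Delta(1)=\sum_i e_i\otimes f_i$. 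Finally, to upgrade the merely paired identities produced by cyclic rotation in the mixed-type cases into equalities of classes, I would use that $\langle\cdot,\cdot\rangle$ induces a non-degenerate pairing on $H^*(\calD^*(A,A))$ between the classes of cochains and of chains, a manifestation of Poincar\'e duality for $A$ and of the Calabi--Yau property of the $A_\infty$-structure. I expect the main obstacle to be exactly this second orbit: the cochain case is classical, but the chain-level computation, carried out without any commutativity hypothesis, demands careful control of the Koszul signs and of the insertions of $\Delta(1)$ at the concatenation points, and one must also confirm the non-degeneracy of the induced pairing on cohomology before the cyclic-symmetry reduction can be invoked in the mixed cases.
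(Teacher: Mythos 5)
Your proposal follows the same three-step decomposition as the paper's own proof: the all-cochain case is delegated to the known BV structure on Hochschild cohomology of a symmetric Frobenius algebra (the paper cites \cite{Kau, Men, Tra}, you cite Tradler--Zeinalian), the all-chain case is treated as a separate computation, and the mixed cases are deduced from these two by moving operators across the pairing using Lemma \ref{lemma-pairing}, Lemma \ref{lemma4.7} and Remark \ref{remark-BV-operator} --- which is exactly what the paper does when it pairs $[x,y]$ against an arbitrary chain $z$ and rewrites $\langle x,[y,z]\rangle$ via the already-established chain-chain identity. Your packaging of this step as cyclic symmetry of the discrepancy $F(x,y)$ is a clean reformulation, and your insistence on verifying non-degeneracy of the induced pairing on cohomology makes explicit something the paper uses silently (it does hold: the chain-level pairing between $C^{m,*}(A,A)$ and $C_{-m,*}(A,A)$ is perfect since $A$ is finite dimensional over a field, and it commutes with the differentials by Lemma \ref{lemma4.2}).

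The one place you diverge --- and underestimate the task --- is the all-chain case. You say you ``see no shortcut'' and would verify the identity by expanding the formulae for $\star$, $\bullet$ and $B$ on monomials, using the Frobenius relations. But for Hochschild chains the BV relation does \emph{not} hold on the nose at the chain level: the difference $[x,y]-\bigl(\widetilde{\Delta}(x)\star y\pm x\star\widetilde{\Delta}(y)\pm\widetilde{\Delta}(x\star y)\bigr)$ is in general a nonzero chain which is merely a coboundary, so a term-by-term cancellation will not close up, and the computation must produce an explicit bounding homotopy. The paper's shortcut is precisely this: it invokes Abbaspour's homotopy term $H(x,y)$ from \cite{Abb} (page 738 there), observing that although Abbaspour assumes graded commutativity, his formula works for any dg symmetric Frobenius algebra after switching $a_0'$ and $a_0''$. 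Your plan is salvageable --- the honest computation you describe would eventually force you to construct such an $H(x,y)$ --- but as written it omits the key ingredient that makes the chain-chain case, and hence (via the pairing argument) all the mixed cases, go through.
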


\begin{proof}

It is well-known that such an identity holds for  $[x], [y] \in H^*(\calD^*(A, A))$, where $[x]$ and $[y]$ are represented by elements $x, y \in \calD^{\geq 0, *}(A, A)$; this follows from the fact that $\HH^*(A, A)$ is a BV algebra proved in \cite{Kau, Men, Tra}. The identity also holds for  $[x], [y]\in H^*(\calD^{*, *}(A, A))$ where $[x]$ and $[y]$ are represented by elements $x, y\in \calD^{<0, *}(\calD^*(A, A))$. This follows from a computation in \cite{Abb}. Note that Abbaspour's proof  of the fact that the \textit{reduced} (shifted by $1-k$) Hochschild homology  $\widetilde{HH_*}(A,A)$ is a BV-algebra uses the assumption of graded commutativity of $A$. However, the homotopy term $H(x, y)$ (\cite[Page 738]{Abb}) also works here without the commutativity hypothesis if we switch $a_0'$ and $a_0''$ in the formula of $H(x, y)$. 
It remains to check the following cases.
\begin{enumerate}
\item If $x\in C^{m, *}(A, A)$ and $y\in C_{n, *}(A, A)$ such that  $m+n\geq 2$,  then, by definition, $[\cdot, \cdot]$ is determined by the following identity
$$
\langle[x, y], z\rangle =\langle x, [y, z]\rangle
$$
for any $z\in C_{*, *}(A, A)$. From the previous argument, it follows that on $H^*(\calD^*(A, A))$,
$$
[y, z]=\widetilde{\Delta}(x) \star y \pm x\star \widetilde{\Delta}(y)\pm \widetilde{\Delta}(x\star y).
$$
So we have that  for any $z\in C_{*, *}(A, A)$,
\begin{equation*}
\begin{split}
\langle[x, y], z\rangle &=\langle x, [y, z]\rangle\\
&=\langle x, \widetilde{\Delta}(y) \star z \pm y\star \widetilde{\Delta}(z)\pm \widetilde{\Delta}(y\star z)\rangle\\
&=\langle x\star \widetilde{\Delta}(y)\pm \widetilde{\Delta}(x\star y)\pm \widetilde{\Delta}(x)\star y, z\rangle
\end{split}
\end{equation*}
where the third identity follows from the compatibility of $\widetilde{\Delta}$ with the pairing $\langle\cdot, \cdot\rangle$ (cf. Remark \ref{remark-BV-operator}). It follows that $$[x, y]=\widetilde{\Delta}(x) \star y \pm x\star \widetilde{\Delta}(y)\pm \widetilde{\Delta}(x\star y).$$
\item For the remaining cases we may use the same argument as above to check the desired identity. 
\end{enumerate}


\end{proof}

\section{$A_{\infty}$-algebra and $L_{\infty}$-algebra structures on $\calD^*(A, A)$}\label{section6}

\subsection{Homotopy transfer theorem}
We recall the Homotopy Transfer Theorem (cf. Theorem \ref{theorem-homotopy}) and use it to obtain $A_{\infty}$-algebra and  $L_{\infty}$-algebra structures on $\calD^*(A, A)$. Then we compare these transferred structures on $\calD^*(A, A)$ to the $\star$ and $[\cdot, \cdot]$ operations defined in Section \ref{section5} above. 
\begin{theorem}[Homotopy Transfer Theorem]\label{theorem-homotopy}
Let $(V, d_V)$ be a (strong) homotopy retract of $(W, d_W)$, namely we have 
\begin{equation}\label{equation-homotopy-transfer}
\xymatrix@C=4pc{
(V, d_V ) \ar@{^{(}->}@<2pt>[r]^-{\iota} &  (W, d_W)\ar@{->>}@<2pt>[l]^-{\Pi}\ar@(dl, dr)_-{h}
}
\end{equation} 
such that $$\id_W-\iota\circ \Pi=d_Wh+hd_W$$ and $$\Pi\circ \iota=\id_V.$$  Let $\mathscr{P}$ be a Koszul operad. Then any $\mathscr{P}_{\infty}$-algebra structure on $W$ can be transferred into a $\mathscr{P}_{\infty}$-algebra structure on $V$ such that $\iota$ extends to an $\infty$-quasi-isomorphism. 
\end{theorem}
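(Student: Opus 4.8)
The plan is to recognize this as an instance of the operadic homotopy transfer theorem and to prove it via the Homological Perturbation Lemma. Since $\mathscr{P}$ is a Koszul operad, we have $\mathscr{P}_{\infty} = \Omega\mathscr{P}^{\text{\textexclamdown}}$, the cobar construction on the Koszul dual cooperad $\mathscr{P}^{\text{\textexclamdown}}$; consequently a $\mathscr{P}_{\infty}$-algebra structure on a dg module $(U, d_U)$ is the same datum as a square-zero coderivation $D_U$ on the cofree $\mathscr{P}^{\text{\textexclamdown}}$-coalgebra $\mathscr{P}^{\text{\textexclamdown}}(U)$ whose weight-one part is the coderivation induced by $d_U$. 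Equivalently, it is a Maurer--Cartan element in the convolution dg Lie algebra $\mathfrak{g}_U = \Hom_{\mathbb{S}}(\mathscr{P}^{\text{\textexclamdown}}, \End_U)$. With this dictionary the goal becomes: given such a $D_W$, produce a square-zero coderivation $D_V$ on $\mathscr{P}^{\text{\textexclamdown}}(V)$ together with a morphism of dg $\mathscr{P}^{\text{\textexclamdown}}$-coalgebras $\iota_{\infty}\colon (\mathscr{P}^{\text{\textexclamdown}}(V), D_V) \to (\mathscr{P}^{\text{\textexclamdown}}(W), D_W)$ whose weight-one part is $\iota$.

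First I would upgrade the homotopy retract (\ref{equation-homotopy-transfer}) to a special deformation retract, i.e. arrange the side conditions $h^2 = 0$, $h\iota = 0$, and $\Pi h = 0$; this is achieved by the standard two-step normalization of $h$ (projecting by $\id - \iota\Pi$ to force $h\iota = 0 = \Pi h$, then replacing $h$ by $h\,d_W\,h$ to force $h^2 = 0$), none of which disturbs $\iota$, $\Pi$, or the homotopy relation. Next I would extend the three maps to the cofree coalgebras, obtaining $\mathscr{P}^{\text{\textexclamdown}}(\iota)$, $\mathscr{P}^{\text{\textexclamdown}}(\Pi)$ and a homotopy $H$ built from $h$ by the usual tensor-trick formula summed over the slots into which $h$ is inserted. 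Equipping both cofree coalgebras with only the linear coderivations induced by $d_V$ and $d_W$, one checks that these assemble into a deformation retract of the underlying dg modules still satisfying the side conditions.

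The main step is then to apply the Homological Perturbation Lemma with perturbation $t := D_W - (\text{linear part})$, the sum of the higher structure maps $m_{\geq 2}$ on $W$ viewed as a coderivation. The key point making the lemma applicable is convergence: $t$ strictly raises the arity (weight) filtration on $\mathscr{P}^{\text{\textexclamdown}}(W)$, so the series $\sum_{n\geq 0} (tH)^n$ reduces on every element to a finite sum---indexed precisely by the trees with internal vertices decorated by the $m_{\geq 2}$, internal edges by $h$, leaves by $\iota$, and root by $\Pi$. The lemma then outputs a perturbed differential $D_V = \mathscr{P}^{\text{\textexclamdown}}(\Pi)\, t\, (\id - Ht)^{-1}\, \mathscr{P}^{\text{\textexclamdown}}(\iota) + (\text{linear part})$ on $\mathscr{P}^{\text{\textexclamdown}}(V)$ and a perturbed map $\iota_{\infty} = (\id - Ht)^{-1}\,\mathscr{P}^{\text{\textexclamdown}}(\iota)$, automatically satisfying $D_V^2 = 0$ and intertwining $D_V$ with $D_W$.

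The remaining, and genuinely delicate, verification is that $D_V$ is a \emph{coderivation} (hence a bona fide $\mathscr{P}_{\infty}$-structure) and that $\iota_{\infty}$ is a \emph{morphism of $\mathscr{P}^{\text{\textexclamdown}}$-coalgebras} (hence an $\infty$-morphism), rather than merely a square-zero differential and a chain map. This is where Koszulness is used: it guarantees that conilpotent coderivations of, and coalgebra morphisms into, a cofree coalgebra are freely determined by their corestriction to the cogenerators, so compatibility with the cooperadic decomposition can be reduced to a check on cogenerators. Since the weight-one part of $\iota_{\infty}$ is $\iota$, which is a quasi-isomorphism by the retract hypothesis, $\iota_{\infty}$ is then an $\infty$-quasi-isomorphism. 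I expect this coalgebra-compatibility bookkeeping to be the main obstacle: matching the Koszul signs produced by the partial cooperadic comultiplications against those produced by tree contractions. An alternative route avoiding perturbation theory is to define the transferred operations directly by the tree sum above and verify the $\mathscr{P}_{\infty}$-relations by a sign-careful induction on trees; the combinatorial heart is identical, so the principal difficulty is unchanged.
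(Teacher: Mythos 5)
Your proposal is correct and follows essentially the same route as the source the paper relies on: the paper states this theorem without proof, deferring to Kadeishvili and Loday--Vallette, and the perturbation-lemma argument you sketch (Maurer--Cartan/coderivation dictionary via $\Omega\mathscr{P}^{\text{\textexclamdown}}$, side-condition normalization, weight-filtration convergence, tree-shaped formulas with $\iota$ on leaves, $h$ on internal edges, $\Pi$ at the root) is precisely the Loday--Vallette proof, whose output is the tree formula the paper recalls in Remark 6.2. The only slight inaccuracy is attributional: the fact that coderivations of, and morphisms into, a cofree conilpotent $\mathscr{P}^{\text{\textexclamdown}}$-coalgebra are determined by corestriction to cogenerators holds for any conilpotent cooperad, while Koszulness is what makes $\Omega\mathscr{P}^{\text{\textexclamdown}}$ a resolution of $\mathscr{P}$ (and defines $\mathscr{P}^{\text{\textexclamdown}}$ in the first place); this does not affect the validity of your argument.
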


\begin{remark}
We denote the operad encoding associative algebras by $Ass$ and the operad encoding Lie algebras  by $Lie$. It is well-known that these two operads are both Koszul (cf. e.g. \cite{GiKa, LoVa}). Recall that, in the case when $W$ has a dga structure then the transferred $A_{\infty}$-algebra structure on $V$ consists of maps $m_k: V^{\otimes k} \to V$ for $k=1,2,3,\cdots $ where each $m_k$ is given by the sum over all possible trivalent planar rooted trees with $k$ leaves and each tree is labeled by placing $\iota$ on the leaves, $\pi$ on the root, $h$ on the internal edges, and the product of $W$ in the (internal) vertices. For more details on the Homotopy Transfer Theorem, we refer to \cite{Kad, LoVa}.
\end{remark}

\begin{theorem}\label{theorem5.11}
There is an $A_{\infty}$-algebra structure $(m_1, m_2, \cdots)$  and an $L_{\infty}$-algebra structure $(l_1, l_2, \cdots)$ on $\calD^*(A, A)$ such that 
\begin{enumerate}
\item $m_1=\delta,  m_2=\star$, on $\calD^*(A,A)$
\item $l_1=\delta$, and $l_2=[\cdot, \cdot]$ on $H^*(\calD^*(A, A))$
\end{enumerate}
Furthermore,  $(\calD^*(A, A), m_1, m_2, \cdots )$ and $(\calD^*(A, A), l_1, l_2, \cdots )$ are  $A_{\infty}$-quasi-isomorphic and $L_{\infty}$-quasi-isomorphic to the dga and dgla structures on $\calC_{\sg}^*(A,A)$, respectively. 
\end{theorem}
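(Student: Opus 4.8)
The plan is to produce both structures by transporting the algebraic structures already available on $\calC_{\sg}^*(A,A)$ across the homotopy retract constructed in Section \ref{section-3.4}. Recall that Section \ref{section-3.4} exhibits $(\calD^*(A,A),\delta)$ as a strong homotopy retract of $(\calC_{\sg}^*(A,A),\delta_{\sg})$ with structure maps $\iota,\Pi,h$ satisfying $\Pi\circ\iota=\id$ and $\id-\iota\circ\Pi=\delta_{\sg}h+h\delta_{\sg}$. By Proposition \ref{proposition-dga} the complex $\calC_{\sg}^*(A,A)$ carries a dg associative algebra structure given by the cup product $\cup$, and by the results of Section \ref{subsection-dgla} it carries a dg Lie algebra structure given by the bracket $\{\cdot,\cdot\}$. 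Since $Ass$ and $Lie$ are Koszul operads, the Homotopy Transfer Theorem (Theorem \ref{theorem-homotopy}) applies to each of these structures and yields an $A_\infty$-algebra structure $(m_1,m_2,\dots)$ and an $L_\infty$-algebra structure $(l_1,l_2,\dots)$ on $\calD^*(A,A)$, together with an $A_\infty$- and an $L_\infty$-quasi-isomorphism extending $\iota$. This immediately gives the final assertion about $\infty$-quasi-isomorphisms.

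It remains to identify the low arities. In either transferred structure the unary operation is the differential of the retract, so $m_1=\delta=l_1$. For the binary operations only the single-vertex labelled tree contributes, whence
\[ m_2=\Pi\circ\cup\circ(\iota\otimes\iota),\qquad l_2=\Pi\circ\{\cdot,\cdot\}\circ(\iota\otimes\iota). \]
Thus the content of part (1) is the chain-level identity $\Pi(\iota(x)\cup\iota(y))=x\star y$ for all $x,y\in\calD^*(A,A)$. I would verify this by a direct computation, splitting into the four cases according to whether each argument lies in the cochain part $C^{*,*}(A,A)$ or the chain part $C_{*,*}(A,A)$, and matching against the case-by-case definition of $\star$ in Section \ref{section5}. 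Concretely, one inserts the explicit formula for $\iota$ (which pairs a chain against $\Delta(1)=\sum_i e_i\otimes f_i$), applies $\cup$, and then collapses the extra tensor factors through the composite of the projections $\pi_{m,p}$ defining $\Pi$; the counit relation $\sum_i\epsilon(\overline{e_i})f_i=1$ together with $\sum_i xe_i\otimes f_i=\sum_i e_i\otimes f_ix$ is what forces the result to collapse onto the stated formulas for $\alpha\star f$, $f\star\alpha$ and $\beta\star\alpha$. Since part (1) claims equality on the nose, this identity must be exact rather than up to homotopy.

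For part (2), the same argument gives $l_2=\Pi\circ\{\cdot,\cdot\}\circ(\iota\otimes\iota)$ at the chain level. Passing to cohomology, $H^*(\iota)$ and $H^*(\Pi)$ are mutually inverse isomorphisms, and one checks that the induced bracket agrees with the commutator $[\cdot,\cdot]$ of the $\bullet$-product. Here I only claim the identification on $H^*(\calD^*(A,A))$: the transferred $l_2$ satisfies the $L_\infty$ relations (in particular Jacobi up to homotopy), whereas the explicit $[\cdot,\cdot]$ need not satisfy Jacobi strictly, so the two brackets may differ by a $\delta$-homotopy and coincide only after passing to cohomology. To reduce the bookkeeping I would invoke the pairing compatibilities (Lemma \ref{lemma4.2}, Lemma \ref{lemma-pairing}, and $\langle\alpha\bullet\beta,\gamma\rangle=\langle\alpha,\beta\bullet\gamma\rangle$ of Lemma \ref{lemma4.7}): once the mixed cochain--chain cases are settled, nondegeneracy of the pairing forces the remaining cases.

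The main obstacle is the Koszul sign-bookkeeping in the identity $\Pi(\iota(x)\cup\iota(y))=x\star y$. The tree combinatorics are trivial—only the corolla contributes in arity two, so no higher homotopies enter the verification of $m_2$ or $l_2$—and the entire difficulty lies in tracking the signs through $\iota$, $\cup$, and the chain of $\pi_{m,p}$'s, and confirming that they reproduce the signs $\kappa_i,\lambda_i,\gamma_i$ appearing in the definition of $\star$. This is a finite, mechanical check once the conventions of Section \ref{section-3.4} and Section \ref{section5} are aligned.
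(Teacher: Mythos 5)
Your proposal follows essentially the same route as the paper: transfer the dga and dgla structures on $\calC_{\sg}^*(A,A)$ through the homotopy retract of Section \ref{section-3.4} via the Homotopy Transfer Theorem, read off $m_2=\Pi(\iota(\cdot)\cup\iota(\cdot))$ and $l_2=\Pi\{\iota(\cdot),\iota(\cdot)\}$ from the unique arity-two tree, and then identify these case by case with $\star$ (strictly, at chain level) and with $[\cdot,\cdot]$ (only on cohomology), which is exactly what the paper does --- except that for $l_2$ the paper exhibits explicit homotopies $H$, $H'$ realizing $l_2-[\cdot,\cdot]=\delta\circ H+H\circ\delta$ in the mixed chain--cochain cases. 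One caution about your proposed shortcut: deducing the remaining cases of $l_2=[\cdot,\cdot]$ from nondegeneracy of the pairing requires knowing that the transferred bracket $l_2$ is itself adjoint for $\langle\cdot,\cdot\rangle$ (Lemma \ref{lemma4.7} gives this only for $\bullet$, hence for $[\cdot,\cdot]$), a cyclicity property that is not automatic from the Homotopy Transfer Theorem and that the paper establishes only for the $m_k$'s, by hand, in Proposition \ref{prop-infinity-algebra}; if you drop that shortcut, your fallback of direct verification is precisely the paper's argument.
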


\begin{proof}
It follows from the Homotopy Transfer Theorem that  we may transfer the dga structure on $\calC_{\sg}^*(A,A)$, along the homotopy retract discussed in Subsection \ref{section-3.4}, to an $A_{\infty}$-quasi-isomorphic $A_{\infty}$-algebra structure $(\delta, m_2, \cdots)$ on $\calD^*(A,A)$. Similarly, we may transfer the dgla structure on $\calC_{\sg}^*(A,A)$ to an $L_{\infty}$-quasi-isomorphic $L_{\infty}$-algebra structure $(\delta,  l_2, \cdots)$ on $\calD^*(A,A)$. We will verify that $m_2=\star$ at the chain level and $l_2=[\cdot, \cdot]$ on cohomology. We check this case by case. 
\begin{enumerate}
\item if $f,g \in C^{*,*}(A,A)$ then it is straightforward from the definitions that $m_2(f,g)= \Pi(\iota(f) \cup \iota (g))=f \cup g$. 
\item Let $\alpha=\overline{a_1}\otimes \cdots \overline{a_m}\otimes a_{m+1} \in C_{-m, *}(A, A)$ and   $\beta=\overline{b_1}\otimes \cdots \overline{b_n}\otimes b_{n+1} \in C_{-n, *}(A, A)$, where $m,n\in \Z_{\geq 0},$ then by the formulae for the transferred structure given in the proof of the Homotopy Transfer Theorem as recalled in Remark 6.2, we have  that 
\begin{equation*}
\begin{split}
m_2(\beta, \alpha)&=\Pi(\iota(\beta)\cup \iota(\alpha))\\
&=\pm \sum_{i, j}(\epsilon\otimes \id)(\overline{e_i}\otimes \overline{a_1}\otimes \cdots \otimes \overline{a_m} \otimes \overline{e_j}
\otimes \overline{b_1}\otimes \cdots \otimes b_{n+1}f_ja_{m+1}f_i)\\
&=\pm \sum_j\overline{a_1}\otimes \cdots \otimes \overline{a_m}\otimes \overline{a_{m+1}e_j}\otimes \overline{b_1}
\otimes \overline{b_n}\otimes b_{n+1}f_j\\
&= \beta\star \alpha.
\end{split}
\end{equation*}
\item If $\alpha\in C_{-m, *}(A, A)$ and $f \in C^{n, *}(A, A)$ where $m, n\in \Z_{\geq 0},$ then 
\begin{enumerate}
\item if $m<n$, then we have that $m_2(\alpha, f)\in C^{n-m-1, *}(A, A)$, moreover
\begin{equation*}
\begin{split}
m_2(\alpha, f)&= \Pi(\iota(\alpha)\cup \iota(f))\\
&= \sum_i \pm (\pi_{n-m-1, 0}\circ  \cdots \circ \pi_{n, m+1})(\overline{e_i}\otimes\overline{a_1}\otimes\cdots \otimes a_{m+1}f_i f(?))\\
&=\sum_i \pm (\pi_{n-m-1, 0}\circ  \cdots \circ \pi_{n-1, m})(\overline{e_ia_1}\otimes \overline{a_2}\otimes \cdots \otimes a_{m+1}f(\overline{f_i}\otimes ?))\\
&= \sum_i \pm (\pi_{n-m-1, 0}\circ  \cdots \circ \pi_{n-2, m-1})(\overline{e_ia_2}\otimes \overline{a_3}\otimes \cdots \otimes a_{m+1}f(\overline{a_1}\otimes \overline{f_i}\otimes ?))\\
&=\cdots\\
& \sum_{i} \pm e_i f(\overline{a_1}\otimes \cdots \otimes \overline{a_{m+1}f_i}\otimes ?)\\
&=\alpha\star f
\end{split}
\end{equation*}
where we wrote $\alpha:=\overline{a_1}\otimes \cdots \otimes \overline{a_m}\otimes a_{m+1}$ and the question mark $?$ just means a slot to plug in any monomial of length determined by the fact that $f\in C^{n,*}(A,A)$. Similarly, we have
\begin{equation*}
\begin{split}
m_2(f, \alpha)&=\Pi(\iota(f)\cup \iota(\alpha))\\
&= \sum_i \pm \Pi(f (?\otimes \overline{e_i}\otimes \overline{a_1}\otimes \cdots \otimes 
\overline{a_m})a_{m+1}f_i)\\
&=f \star \alpha
\end{split}
\end{equation*}
\item if $m\geq n$, then we have that $m_2(\alpha, f)\in C_{-(m-n), *}(A, A)$, thus 
\begin{equation*}
\begin{split}
m_2(\alpha, f)&=\Pi(\iota(\alpha)\cup \iota( f))\\
&= \sum_i \pm (\pi_{0, m-n+1}\circ \cdots \circ \pi_{n, m+1})(\overline{e_i}\otimes \overline{a_1}\otimes \cdots \otimes a_{m+1}f_if(? ))\\
&= \sum_i \pm (\pi_{0, m-n+1}\circ \cdots \circ \pi_{n-1, m})(\overline{e_ia_1}\otimes \cdots\otimes a_{m+1}f(f_i\otimes ? ))\\
&=\sum_i \pm (\pi_{0, m-n+1}\circ \cdots \circ \pi_{n-2, m-1})(\overline{e_ia_2}\otimes \cdots\otimes a_{m+1}f(\overline{a_1}\otimes \overline{f_i}\otimes? ))\\
&=\cdots\\
&=\sum_i \pm \pi_{0, m-n-1}(\overline{e_ia_n}\otimes \overline{a_{n+1}}\otimes \cdots \otimes a_{m+1}f(\overline{a_1}\otimes \cdots \otimes 
\overline{a_{n-1}}\otimes \overline{f_i}))\\
&=\pm \overline{a_{n+1}}\otimes \cdots \otimes a_{m+1}f(\overline{a_1}\otimes \cdots\otimes \overline{a_n})\\
&=\alpha\star f.
\end{split}
\end{equation*}
Similarly, we have
\begin{equation*}
\begin{split}
m_2(f, \alpha)&=\Pi(\iota(f)\cup \iota( \alpha))\\
&= \sum_i \pm (\pi_{0, m-n+1}\circ \cdots \circ \pi_{n, m+1})(\overline{e_i}\otimes \overline{a_1}\otimes \cdots \otimes a_{m+1}f_if(? ))\\
&=\pm \overline{a_1}\otimes \cdots \otimes \overline{a_{m-n}}\otimes f(\overline{a_{m-n+1}}\otimes \cdots \otimes \overline{a_m})a_{m+1}\\
&=f \star \alpha.
\end{split}
\end{equation*}
\end{enumerate}
\end{enumerate}
Therefore, we have verified that $m_2=\star$. 
\\
\\
It remains to verify $l_2=[\cdot, \cdot]$ on $H^*(\calD^*(A, A))$. 
Again, it follows from the proof of the Homotopy Transfer Theorem that we have the following formulae for $l_2$.
\begin{enumerate}
\item If either $\alpha, \beta \in C_{*}(A, A)$ or $\alpha, \beta\in C^*(A, A)$, 
then 
$$l_2(\alpha, \beta)=[\alpha, \beta].$$
\item If $\alpha\in C_{-r, *}(A, A)$ and $f\in C^{m, *}(A, A)$ such that $m\geq r+2$, then 
\begin{equation}
\begin{split}
&l_2(\alpha, f)(\overline{b_1}\otimes \cdots \otimes \overline{b_n})=\\
&\sum_{i=1}^{r+1} \pm e_jf(\overline{a_{i}}\otimes \cdots \otimes \overline{a_{r+1}}\otimes \overline{a_1}\otimes \cdots \otimes \overline{a_{i-1}} \otimes   \overline{f_j}\otimes  \overline{b_1}\otimes \cdots \otimes \overline{b_n})+\\
&\sum_{i=1}^n \pm f(\overline{b_1}\otimes \cdots \otimes \overline{b_{i-1}}\otimes \overline{e_j}\otimes \overline{a_1}\otimes \cdots \otimes 
\overline{a_{r+1}f_j}\otimes \overline{b_{i}}\otimes \cdots \otimes \overline{b_n})+\\
&\sum_{i=1}^{r+1} \pm \langle f(\overline{a_i}\otimes \cdots \otimes \overline{a_r}\otimes \overline{e_j} \otimes\overline{b_1}\otimes \cdots \otimes \overline{b_n}\otimes \overline{e_k}\otimes \overline{a_1}\otimes \cdots \otimes \overline{a_{i-1}}), 1\rangle f_ja_{r+1}f_k,
\end{split}
\end{equation}
where $n=m-r-2$ and we wrote $\alpha:=\overline{a_1}\otimes \cdots \otimes \overline{a_r}\otimes a_{r+1}$.
Define $H(\alpha, f)\in C^{m-r-2, *}(A, A)$ as follows,
\begin{equation*}
\begin{split}
&H(\alpha, f)(\overline{b_1}\otimes \cdots \otimes \overline{b_{m-r-3}})=\sum_{i=1}^{r+1}\sum_{j=0}^{i-1} \sum_{k, l}\\
&\pm \langle f(\overline{a_i}\otimes \cdots \otimes \overline{a_{r+1}}\otimes 
\cdots\otimes \overline{ a_j}\otimes \overline{e_k} \otimes \overline{b_1}\otimes \cdots \otimes \overline{b_{m-r-3}}\otimes \overline{e_l} \otimes \overline{a_{j+1}}\otimes 
\cdots \otimes \overline{a_{i-1}}), 1\rangle f_kf_l.
\end{split}
\end{equation*}
Then by direct computation, 
we have the following identity,
$$
l_2(\alpha, f)-[\alpha, f]=\delta\circ H+H\circ \delta,
$$
thus, we have that on cohomology, 
$$
l_2(\alpha, f)=[\alpha, f].
$$ 
\item If $\alpha\in C_{-r, *}(A, A)$ and $f\in C^{m, *}(A, A)$ such that $m<r+2$, then 
\begin{equation}
\begin{split}
l_2(\alpha, f)
=&\sum_{i=1}^{m} \pm \langle f(\overline{a_i}\otimes \cdots \otimes \overline{a_{m-1}}\otimes \overline{e_j}\otimes \overline{a_1}\otimes\cdots \otimes \overline{a_{i-1}}), 1\rangle\overline{a_m}\otimes \cdots \otimes a_{r+1}f_j+\\
&\sum_{i=0}^{r-m} \pm \overline{a_1}\otimes \cdots \otimes \overline{a_i}\otimes \overline{ f(\overline{a_{i+1}}\otimes \cdots \otimes \overline{a_{i+m}})}\otimes\overline{a_{i+m+1}}\otimes \cdots \otimes a_{r+1}+\\
&\sum_{i=1}^m \pm \overline{a_i}\otimes\cdots\otimes \overline{a_{r-m+i}}\otimes f(\overline{a_{r-m+i+1}}\otimes \cdots \otimes \overline{a_{r+1}}\otimes \cdots \otimes \overline{a_{i-1}}).
\end{split}
\end{equation}
Define 
$H'(\alpha, f)\in C_{-(r-m), *}(A, A)$ as follows,
\begin{equation*}
\begin{split}
&H'(\alpha, f)=\sum_{i=1}^{m-1}\sum_{j=1}^i \sum_k\pm\\
& \overline{a_i}\otimes \cdots \otimes \overline{a_{i+r-m+1}}
\otimes e_k \langle f(\overline{a_j}\otimes \cdots \otimes \overline{a_{i-1}}\otimes \overline{f_k}
\otimes \overline{a_{i+r-m+2}}\otimes \cdots \otimes \overline{a_{r+1}}\otimes \overline{a_1}\otimes 
\cdots \overline{a_{j-1}}), 1\rangle.
\end{split}
\end{equation*}
Then by direct computation, we have 
$$
l_2(\alpha, f)-[\alpha, f]=\delta\circ H'+H'\circ \delta,
$$
thus we have that on cohomology
$$l_2(\alpha, f)=[\alpha, f].$$
\end{enumerate}
Therefore, we have verified $l_2=[\cdot, \cdot]$ on $H^*(\calD^*(A, A))$.

\end{proof}


\begin{remark}
The homotopy $m_3$ for the associativity of $\star=m_2$ is determined by the following explicit formulae which may be obtained from the recipe for transferring a dga structure along a a homotopy retraction. 
\begin{enumerate}
\item If $f, g, h\in C^{*, *}(A, A)$, 
then $m_3(f, g, h)=0$.
\item If $\alpha, \beta, \gamma\in C_{*, *}(A, A)$, then $m_3(\alpha, \beta, \gamma)=0$.
\item If $\alpha, \beta \in C_{*,*}(A,A)$ and $f \in C^{*,*}(A,A)$, then $m_3(\alpha, \beta, f)=0=m_3(f, \alpha, \beta)$. 
\item If $\alpha \in C_{*,*}(A,A)$ and $f,g \in C^{*,*}(A,A)$, then $m_3(f,g,\alpha)=0=m_3(\alpha, f,g)$. 
\item For $f\in C^{m, *}(A, A), g\in C^{n, *}(A, A)$ and $\alpha=\overline{a_1}\otimes\cdots \otimes \overline{a_r}\otimes a_{r+1} \in C_{-r, *}(A, A)$  such that $r\leq m+n$, then $m_3(f,\alpha,g) \in C^{m-r+n,*}(A,A)$ is defined by
\begin{eqnarray*}
\lefteqn{
m_3(f, \alpha, g)=\sum_i\sum_{j=1}^{\min\{m, n, r\}}}\\
&&(-1)^{\kappa_{ij}} f(\id^{\otimes m-r+j }\otimes\overline{e_i}\otimes \overline{a_{j}}\otimes \cdots \otimes \overline{a_r})a_{r+1} 
g(\overline{a_1}\otimes \cdots \otimes \overline{a_{j-1}}\otimes \overline{f_i}\otimes \id^{\otimes n-j}), \\
\end{eqnarray*}
where $\kappa_{ij}= \deg(e_i)+ \epsilon_{j-1}(\deg(\alpha)-\epsilon_{j-1})+ \deg(f_i)(\deg(\alpha) + \deg(g) + \deg(f))+ \deg(f)\deg(e_i)+ \deg(g)\epsilon_{j-1}$ for $\epsilon_l=\sum_{k=1}^l \deg(a_k)-l$. 

\item For $\alpha=\overline{a_1}\otimes\cdots \otimes \overline{a_r}\otimes a_{r+1} \in C_{-r, *}(A,A), \beta=\overline{b_1}\otimes \cdots \otimes \overline{b_s}\otimes b_{s+1}\in C_{-s, *}(A, A)$ and $f\in C^{m, *}(A, A)$ such that $m-1\leq r+s$, then 
\begin{eqnarray*}
\lefteqn{
m_3(\alpha, f, \beta)=\sum_i\sum_{j=0}^s(-1)^{\lambda_{ij}}}\\
&&  \overline{b_1}\otimes \cdots \otimes \overline{b_j}\otimes \overline{e_i}\otimes \overline{
a_{w+1}}\otimes \cdots \otimes \overline{a_r} \otimes a_{r+1}f(\overline{a_1}\otimes \cdots \otimes \overline{a_w}\otimes
 \overline{f_i}\otimes \overline{b_{j+1}} \otimes \cdots \otimes \overline{b_s})b_{s+1}
\end{eqnarray*}
where the symbol $w$ means $m-s+j-1$ and $\lambda_{ij}= \deg(e_i)+(\sum_{l=1}^j\deg(b_l)-j)(\deg(\alpha) + \deg(f))+ \deg(f_i)(\sum_{l=1}^j\deg(b_l)-j + \deg(\alpha) + \deg(f))+ 
\\(\sum_{l=1}^w\deg(a_l) - w)\deg(\alpha) + \deg(e_i)(\sum_{l=1}^jb_l-j)+ \deg(f)(\sum_{l=1}^w\deg(a_l)-w)$.
\end{enumerate}
In case (5) if $r> m+n$ then $m_3(f, \alpha, g)=0$. Similarly, in case (6) if $m-1>r+s$ we have $m_3(\alpha,f, \beta)=0$. 
\end{remark}

\begin{proposition}\label{prop-infinity-algebra}
$(\calD^*(A, A), (m_1, m_2, \cdots), \langle\cdot, \cdot\rangle)$ is a strictly unital (cf. \cite[Definition 4.1]{KoSo}) cyclic $A_{\infty}$-algebra with $m_k=0$ for $k\geq 4$, namely,  
$$
\langle m_p(\alpha_0\otimes\cdots \otimes \alpha_{p-1}), \alpha_p\rangle=(-1)^{\deg(\alpha_0)(\deg(\alpha_1)+ \cdots  + \deg(\alpha_p))} \langle m_p(\alpha_1\otimes\cdots\otimes \alpha_p), \alpha_0\rangle
$$
for any $\alpha_0, \cdots, \alpha_p\in \calD^*(A, A)$, where $\langle\cdot, \cdot\rangle$ is the pairing on $\calD^*(A, A)$ (defined in Section \ref{section5}) induced by the pairing of the dg symmetric Frobenius algebra $A$. 
\end{proposition}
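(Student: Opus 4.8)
The plan is to build on Theorem \ref{theorem5.11}, which already provides an $A_{\infty}$-algebra structure $(m_1,m_2,\dots)$ on $\calD^*(A,A)$ by transferring the dga structure of $\calC_{\sg}^*(A,A)$ along the homotopy retract of Subsection \ref{section-3.4}, with $m_1=\delta$ and $m_2=\star$. Thus three things remain: (i) the vanishing $m_k=0$ for $k\geq 4$; (ii) strict unitality; and (iii) the cyclic (Calabi--Yau) identity. I would first record that the pairing $\langle\cdot,\cdot\rangle$ of Section \ref{section5} is graded symmetric (it vanishes on pairs of the same type and satisfies $\langle f,\alpha\rangle=(-1)^{\deg\alpha\deg f}\langle\alpha,f\rangle$ on mixed pairs) and is supported in a single total degree, since it pairs $C^{m,*}(A,A)$ with $C_{-m,*}(A,A)$ through the counit. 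Granting (i), the cyclic identity becomes a finite check for $p\in\{1,2,3\}$.

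For (i) I would use the tree description of the transferred products recalled after Theorem \ref{theorem-homotopy}: $m_k$ is the signed sum over planar binary trees with $k$ leaves, with $\iota$ on the leaves, $\cup$ at the internal vertices, $h$ on the internal edges, and $\Pi$ at the root. For $k\geq 4$ every such tree has at least two internal edges, hence at least two occurrences of $h$. The heart of the argument is to establish the higher side conditions
\[
h\circ\cup\circ(h\otimes\id)=0,\qquad h\circ\cup\circ(\id\otimes h)=0,\qquad \Pi\circ\cup\circ(h\otimes h)=0,
\]
which I would verify directly from the explicit descriptions of $h$ (through the maps $h_{m,p}$), of $\cup$, and of $\Pi$ (through the maps $\pi_{m,p}$), using the Frobenius identities $\sum_i\overline{e_i}\,\epsilon(\overline{f_i})=0$ and $\sum_i x e_i\otimes f_i=\sum_i e_i\otimes f_i x$ that already drove Lemmas \ref{lemma5.2} and \ref{lemma5.3}. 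Given these relations, a short structural induction kills every tree with $k\geq 4$ leaves: inspecting the root vertex, if both of its inputs are internal edges the contribution vanishes by the third relation; otherwise one of its two subtrees has at least three leaves, so the product at that subtree's root feeds an $h$-image into the edge directly above it and the contribution vanishes by one of the first two relations. This step is the main obstacle, because the vanishing of the higher transferred products is genuinely special to this retract rather than formal.

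For (iii) I would treat the three surviving arities separately. For $p=1$, Lemma \ref{lemma4.2} gives $\langle m_1(x),y\rangle=\langle\delta x,y\rangle=\langle x,\delta y\rangle$, and graded symmetry of the pairing rewrites this as $\pm\langle m_1(y),x\rangle$; for $p=2$, Lemma \ref{lemma-pairing} gives $\langle m_2(x,y),z\rangle=\langle x\star y,z\rangle=\langle x,y\star z\rangle=\pm\langle m_2(y,z),x\rangle$. Because $\star$ and $\delta$ have degrees $0$ and $1$ and the pairing is supported in complementary degrees, the resulting signs match the stated convention. For $p=3$, only the genuinely mixed cases survive, and I would substitute the explicit formulas for $m_3(f,\alpha,g)$ and $m_3(\alpha,f,\beta)$ recorded after Theorem \ref{theorem5.11} into both sides of the cyclic identity and match them termwise, using the invariance $\langle ab,c\rangle=\langle a,bc\rangle$ and symmetry of the Frobenius pairing together with the bimodule property of $\Delta$; this is a finite, if lengthy, sign bookkeeping.

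Finally, for (ii), the unit is $1\in A^0=C^{0,0}(A,A)\subset\calD^*(A,A)$, the image under $\iota$ of the unit of the dga $\calC_{\sg}^*(A,A)$ from Proposition \ref{proposition-dga}. Since $h$ annihilates $1$ and $\Pi\iota(1)=1$, the retract is unital, so the transferred structure is strictly unital in the sense of \cite[Definition 4.1]{KoSo}: the $\star$-formulas give $1\star x=x=\pm\, x\star 1$, the explicit $m_3$-formulas vanish as soon as one input equals $1$, and $m_{\geq 4}=0$ by (i). Assembling (i)--(iii) together with unitality yields the Proposition, exhibiting $(\calD^*(A,A),(m_k),\langle\cdot,\cdot\rangle)$ as a strictly unital cyclic $A_{\infty}$-algebra with $m_k=0$ for $k\geq 4$.
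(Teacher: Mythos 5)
Your treatment of cyclicity and unitality follows the paper's proof: arities $1$ and $2$ come from Lemmas \ref{lemma4.2} and \ref{lemma-pairing}, arity $3$ from substituting the explicit $m_3$-formulas, and strict unitality from the vanishing of $m_3$ on the unit together with $m_{\geq 4}=0$. The genuine gap is in the key step $m_k=0$ for $k\geq 4$. You assert the three ``higher side conditions'' as operator identities on all of $\calC^*_{\sg}(A,A)$, and in that generality they are false. Take $A=\Lambda(a,b)\cong H^*(T^2;\mathbb{Q})$ with $\deg a=\deg b=1$, so that $\Delta(1)=1\otimes ab+a\otimes b-b\otimes a+ab\otimes 1$ contains terms $e_i\otimes f_i$ with \emph{both} $\overline{e_i}\neq 0$ and $\overline{f_i}\neq 0$. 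Choose mixed elements $x,y\in C^{2,*}(A,\Omega^1(A))$ by $x(\overline{u}\otimes\overline{v})=ab\otimes\rho(u,v)$ and $y(\overline{u}\otimes\overline{v})=ab\otimes\eta(u,v)$, where $\rho,\eta\colon \overline{A}^{\otimes 2}\to A$ are supported on a single basis pair, say $\rho(\overline{b},\overline{a})=1$ and $\eta(\overline{b},\overline{ab})=1$. Then $h(y)=h_{2,1}(y)$, and in $h\bigl(x\cup h(y)\bigr)$ the only potentially nonzero contribution of $H_{3,2}$ is its middle term $\theta_{1,1}\circ h_{2,1}\circ\pi_{3,2}$ (the $i=0$ term dies by $\sum_l\epsilon(f_l)\overline{e_l}=\overline{1}=0$, and the $i=2$ term contains $h_{1,0}=0$). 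Unwinding the definitions, the $\pi_{3,2}$-contraction is absorbed via $\sum_l\epsilon(f_l)e_l=1$ and $1\blacktriangleright(-)=\id$, and $h_{2,1}$ then produces $\sum_{l,k}\pm\,\overline{e_l}\otimes\rho(\overline{f_l},\overline{e_k})\,\eta(\overline{f_k},\cdot)$, which evaluated at $\overline{ab}$ has the single surviving monomial $\pm\,\overline{a}\otimes 1\neq 0$ (a lone monomial, so no sign convention can cancel it). The structural reason for the failure: your cancellations $\sum_i\epsilon(e_i)\overline{f_i}=\overline{1}=0$ and $\sum_i\epsilon(f_i)\overline{e_i}=\overline{1}=0$ only operate when the Frobenius partner survives \emph{linearly} in an $\sA$-slot; for a mixed first factor ($m>0$ and $p>0$ simultaneously) the cup product feeds the outputs of $h(y)$ into the arbitrary multilinear map $x$, destroying exactly that pairing. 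Since your structural induction invokes these identities at internal vertices whose subtrees are arbitrary, it cannot be run.

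This is precisely why the paper's Claim \ref{claim} is formulated only for inputs of the form $\iota(\alpha_i)$, i.e.\ for the configurations that actually occur in the transfer trees, where every leaf is ``pure'' (a Hochschild chain, with no inputs, or a cochain, with no $\sA$-outputs). Moreover, the proof of the Claim is not a formal verification: parts (1) and (2) first show that $h(\iota(\alpha_1)\cup\iota(\alpha_2))$ vanishes outright except when $\alpha_1$ is a chain and $\alpha_2$ a cochain (using $h_{m,p}=0$ whenever $mp=0$), and in the single surviving case the explicit formula (\ref{equation-17}) shows that every monomial has first output $\overline{e_{k_i}}$ with its partner $\overline{f_{k_i}}$ sitting linearly in an $\sA$-input slot of the cochain; it is this special shape, propagated through the trees, that kills all further applications of $h$ and of $\Pi$. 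Beyond part (3) of the Claim, the induction also needs the two configurations $h\bigl(h(\iota\cup\iota)\cup h(\iota\cup\iota)\bigr)$ and $\Pi\bigl(h(\iota\cup\iota)\cup h(\iota\cup\iota)\bigr)$, which the paper treats separately using (\ref{equation-17}) for both factors. To repair your argument, replace the three universal identities by these $\iota$-restricted statements, prove the chain/cochain case analysis and formula (\ref{equation-17}) first, and then your induction (and the rest of your proposal, which is correct as written) goes through exactly as in the paper.
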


\begin{proof}
To prove that $m_k=0$ for $k\geq 4$ we first prove the following claim.
\begin{Claim}\label{claim}
We have that the following expressions vanish:
\begin{enumerate}
\item If $\alpha_1\in C^*(A,A)$ then $h(\iota(\alpha_1) \cup \iota(\alpha_2))=0$ for any $\alpha_2\in \calD^{*}(A, A).$
\item If $\alpha_1, \alpha_2 \in C_*(A,A)$ then $h( \iota(\alpha_1) \cup \iota(\alpha_2) )=0.$
\item For any  $\alpha_1, \alpha_2, \alpha_3 \in \calD^*(A,A)$ we have  
        $$h(h(\iota(\alpha_1)\cup \iota(\alpha_2))\cup \iota(\alpha_3))=0$$ and 
         $$h(\iota(\alpha_1) \cup h(\iota(\alpha_2) \cup \iota(\alpha_3)))=0.$$
\end{enumerate}
\end{Claim}
\begin{proof}[Proof of Claim]
Identities (1) and (2) are easy to check. Let us check the first identity in (3), namely $h(h(\iota(\alpha_1)\cup \iota(\alpha_2))\cup \iota(\alpha_3))=0$. By (1) this identity holds if $\alpha_1 \in C^*(A, A)$ and by (2) it holds if $\alpha_1,\alpha_2 \in C_*(A,A)$. It remains to check it holds when $\alpha_1\in C_*(A, A)$ and $\alpha_2\in C^*(A, A)$. In this case,
\begin{equation}\label{equation-17}
\begin{split}
&h(\iota(\alpha_1)\cup \iota(\alpha_2))= \sum_{i=1}^{\min\{m, r\}} \sum_{k_1} \cdots \sum_{k_i} \\
&\pm \epsilon(\overline{e_{k_1}x_1})\cdots \epsilon(\overline{e_{k_{i-1}}x_{i-1}}) \overline{e_{k_{i}}}\otimes \overline{x_{i}}\otimes
\cdots \otimes x_{m+1} \alpha_2(\overline{f_{k_1}}
\otimes \cdots \otimes \overline{f_{k_{i}}}\otimes \id^{\otimes {m-i}})
\end{split}
\end{equation}
where we wrote $$\alpha_1:=\overline{x_1}\otimes \cdots \otimes \overline{x_m}\otimes x_{m+1}.$$ Then for any $\alpha_3\in \calD^*(A, A)$, we have that  $$h( h(\iota(\alpha_1)\cup \iota(\alpha_2))\cup \iota(\alpha_3)))=0.$$  Indeed, we note that  the left most tensor element in each monomial in the sum given by $h(\iota(\alpha_1)\cup \iota(\alpha_2))$ is $e_{k_i}$, so  $h(\iota(\alpha_1)\cup \iota(\alpha_2))\cup \iota(\alpha_3) \in \calC_{\sg}^*(A,A)$ is a sum of terms each of which either has $e_{k_i}$ as the first output (leg) or lies in $\iota ( C_{*}(A, A) )$ otherwise. Note that the action of $h$ on those terms with  $e_{k_i}$  as the first output is zero because of the fact that $\sum \epsilon(e_{k_i}) \overline{f_{k_i}}=0$ in $\overline A$,  and the action of $h$ on $\iota( C_{*}(A, A) ) $ is zero as well because of dimension reasons. Hence, $h( h(\iota(\alpha_1)\cup \iota(\alpha_2))\cup \iota(\alpha_3)))=0.$
The identity $$h( h( \iota(\alpha_1)\cup \iota(\alpha_2))\cup \iota(\alpha_3)))=0$$ for any $\alpha_i \in \calD^*(A, A)$ ($i=1, 2, 3$) follows from a similar argument. 
\end{proof}
From Claim \ref{claim} above and the Homotopy Transfer Theorem, we have 
$$
m_4(\alpha_1, \alpha_2, \alpha_3, \alpha_4)=\pm \Pi(h(\iota(\alpha_1)\cup \iota(\alpha_2))\cup h(\iota(\alpha_3)\cup \iota(\alpha_4))).
$$
The right hand side of the above equation also vanishes. Indeed, by Claim \ref{claim}, we may assume that $\alpha_1, \alpha_3\in C_{*}(A, A)$ and $\alpha_2,\alpha_4\in C^*(A, A)$. Then from the identity (\ref{equation-17}) above it follows that 
$h(\iota(\alpha_1)\cup \iota(\alpha_2))\cup h(\iota(\alpha_3)\cup \iota(\alpha_4))$ is the sum of terms with the first output  $e_i$, where 
$\sum_{i} e_i \otimes f_i =\Delta(1)$. Hence, $ \Pi(h(\iota(\alpha_1)\cup \iota(\alpha_2))\cup h(\iota(\alpha_3)\cup \iota(\alpha_4)))$ is 
a sum of terms containing $\sum_{i} \epsilon(e_i)\overline{f_i}=0$, and so $m_4(\alpha_1, \alpha_2,\alpha_3, \alpha_4)=0$. 

 In order to check $m_k=0$ for $k>4$, because of Claim \ref{claim} it is sufficient to check the following identity $$h(h(\iota(\alpha_1)\cup \iota(\alpha_2))\cup h(\iota(\alpha_3)\cup \iota(\alpha_4)))=0,$$ for any $\alpha_i\in \calD^*(A, A)$ ($i=1, \cdots, 5$) and the proof of this identity is completely analogous to the one of $m_4=0$ above.
 \\
 \\
We now check the cyclic compatibilities (or Calabi-Yau conditions). The cyclic compatibility for $m_2$ was verified in Lemma 5.4. We proceed to verify the identity $$\langle m_3(\alpha_0, \alpha_1, \alpha_2), \alpha_3\rangle = \pm \langle m_3(\alpha_1,  \alpha_2, \alpha_3), \alpha_0 \rangle$$ for any $\alpha_i\in \calD^*(A, A)$ ($i=0, \cdots, 3$).  Based on the formulae for $m_3$ given in Remark 6.4, it is sufficient to check the case where  $\alpha_0= \overline{a_1} \otimes \cdots  \otimes \overline{a_r} \otimes a_{r+1} \in C_{-r, *}(A, A), \alpha_2=  \overline{b_1} \otimes \cdots  \otimes \overline{b_s} \otimes b_{s+1}\in C_{-s, *}(A, A)$ and $\alpha_1\in C^{m, *}(A, A)$, $\alpha_3\in C^{r+s-m+1, *}(A,A)$, and $r+s\geq m$. We have
\begin{eqnarray*}
\lefteqn{\langle m_3(\alpha_0, \alpha_1, \alpha_2), \alpha_3\rangle=\sum_i\sum_{j=1}^{\min\{m, n, s\}}\pm}\\
&&  \langle \alpha_3( \overline{b_1}\otimes \cdots \otimes \overline{b_{j-1}}\otimes \overline{e_i}\otimes \overline{
a_{m-s+j+1}}\otimes \cdots \otimes \overline{a_r}),  a_{r+1}\alpha_1(\overline{a_1}\otimes \cdots \otimes 
 \overline{f_i}\otimes  \cdots \otimes \overline{b_s})b_{s+1}\rangle.
\end{eqnarray*}
Similarly, we also have
\begin{eqnarray*}
\lefteqn{\langle \alpha_0, m_3(\alpha_1,  \alpha_2, \alpha_3)\rangle=\sum_i\sum_{j=1}^{\min\{m, n, s\}}\pm}\\
&& \langle a_{r+1}, \alpha_1(\overline{a_1}\otimes \cdots\otimes \overline{a_{m-s+j}}\otimes\overline{e_i}\otimes \overline{b_{j}}\otimes \cdots \otimes \overline{b_r})b_{s+1} 
\alpha_3(\overline{b_1}\otimes \cdots \otimes \overline{b_{j-1}}\otimes \overline{f_i}\otimes
\cdots \otimes \overline{a_r})\rangle.\\
\end{eqnarray*}
Therefore, by the compatibility between the product and pairing of $A$ and by the symmetry of the pairing, we have
$$
\langle m_3(\alpha_0, \alpha_1, \alpha_2), \alpha_3\rangle=\pm \langle \alpha_0, m_3(\alpha_1,  \alpha_2, \alpha_3)\rangle,
$$ 
so the cyclic compatibilities hold. On the other hand, the strictly unital condition holds since we have $m_3(\alpha_1, \alpha_2, \alpha_3)=0$ 
if one of the three elements $\alpha_1, \alpha_2$ and $\alpha_3$ is $1\in C^0(A, A)$. Therefore we obtain a strictly unital cyclic $A_{\infty}$-algebra 
structure on $\calD^*(A, A)$. 
\end{proof}



\begin{corollary}
Let  $A$ be a dg symmetric Frobenius algebra then  $(\HH_{\sg}^*(A,A), \cup, \{\cdot, \cdot\}, \widetilde{\Delta})$ is a BV-algebra, where 
$\widetilde{\Delta}$ is defined in Section \ref{subsection-BV} above.
\end{corollary}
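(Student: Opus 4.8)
The plan is to recognize this corollary as the assembly of results already established and to transport every structure onto the single graded vector space $\HH_{\sg}^*(A, A)$. Recall that a BV-algebra is a Gerstenhaber algebra $(\mathcal{A}, \cup, \{\cdot, \cdot\})$ together with a degree $-1$ operator $\widetilde{\Delta}$ satisfying $\widetilde{\Delta}\circ\widetilde{\Delta}=0$ and the identity $\{x, y\}=\widetilde{\Delta}(x\cup y)\pm\widetilde{\Delta}(x)\cup y\pm x\cup\widetilde{\Delta}(y)$. Thus I would need exactly three inputs: a Gerstenhaber algebra structure on $\HH_{\sg}^*(A,A)$, the fact that $\widetilde{\Delta}$ descends to a square-zero degree $-1$ operator on cohomology, and the BV identity relating the bracket, the product, and $\widetilde{\Delta}$.

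First I would fix the identifications. By Corollary \ref{corollary3.26} the map $\chi\colon H^*(\calC_{\sg}^*(A, A))\to \HH_{\sg}^*(A, A)$ is an isomorphism, and by Proposition \ref{prop-tate-isomorphism} together with the commutative diagram in the proof of Corollary \ref{corollary3.26}, the induced map $H^*(\iota)\colon H^*(\calD^*(A, A))\to H^*(\calC_{\sg}^*(A, A))$ is an isomorphism compatible with $\chi$ and $\chi'$. Theorem \ref{theorem5.11} provides an $A_{\infty}$- and an $L_{\infty}$-quasi-isomorphism realizing $\iota$, with $m_2=\star$ and $l_2=[\cdot,\cdot]$ on cohomology; consequently $H^*(\iota)$ carries the product $\star$ to the cup product $\cup$ and the bracket $[\cdot,\cdot]$ to $\{\cdot,\cdot\}$ on $H^*(\calC_{\sg}^*(A,A))$. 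This lets me move freely between the three models.

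Next I would extract the Gerstenhaber structure. By Theorem \ref{theorem-ger}, $(H^*(\calC_{\sg}^*(A, A)), \cup, \{\cdot,\cdot\})$ is a Gerstenhaber algebra, and Proposition \ref{proposition-dga} shows $\cup$ agrees with the Yoneda product of $\HH_{\sg}^*(A, A)$. Transporting along the isomorphisms of the previous step, $(\HH_{\sg}^*(A, A), \cup, \{\cdot,\cdot\})$ is a Gerstenhaber algebra, equivalently so is $(H^*(\calD^*(A, A)), \star, [\cdot,\cdot])$. For the operator, Remark \ref{remark-BV-operator} shows that $\widetilde{\Delta}$ is a degree $-1$ map on $\calD^*(A, A)$ with $\widetilde{\Delta}\circ\widetilde{\Delta}=0$ and $\widetilde{\Delta}\circ\delta+\delta\circ\widetilde{\Delta}=0$, hence it descends to a square-zero degree $-1$ operator on $H^*(\calD^*(A, A))\cong \HH_{\sg}^*(A, A)$. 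Finally, Proposition \ref{prop-BV-identity} gives precisely the BV identity $[x, y]=\widetilde{\Delta}(x)\star y\pm x\star\widetilde{\Delta}(y)\pm\widetilde{\Delta}(x\star y)$ on $H^*(\calD^*(A, A))$; transporting this relation through $H^*(\iota)$ yields the corresponding identity between $\{\cdot,\cdot\}$, $\cup$, and $\widetilde{\Delta}$ on $\HH_{\sg}^*(A, A)$. Assembling these three facts verifies the BV-algebra axioms.

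The main obstacle is not a single new computation but the bookkeeping of the transport of structures: one must be sure that $\star$ and $[\cdot,\cdot]$ on $H^*(\calD^*(A,A))$ really correspond under $H^*(\iota)$ to the Gerstenhaber operations $\cup$ and $\{\cdot,\cdot\}$ on $H^*(\calC_{\sg}^*(A,A))$, which rests on the fact that the maps of Theorem \ref{theorem5.11} are genuine $A_{\infty}$- and $L_{\infty}$-quasi-isomorphisms and hence induce isomorphisms of the respective graded algebra and graded Lie algebra on cohomology. The other delicate point, already absorbed into Proposition \ref{prop-BV-identity}, is that the BV identity for the Hochschild-chain part required adapting Abbaspour's homotopy so as to remove the graded-commutativity hypothesis; granting that proposition, the remaining argument is formal.
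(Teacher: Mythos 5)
Your proposal is correct and follows essentially the same route as the paper's proof: it assembles Theorem \ref{theorem-ger} for the Gerstenhaber structure, Remark \ref{remark-BV-operator} for the descent of $\widetilde{\Delta}$ to cohomology, and Proposition \ref{prop-BV-identity} for the BV identity on $H^*(\calD^*(A,A))$, then transports everything through the canonical isomorphism $H^*(\calD^*(A,A))\cong \HH_{\sg}^*(A,A)$ under which $\star=\cup$ and $[\cdot,\cdot]=\{\cdot,\cdot\}$ by Theorem \ref{theorem5.11}. Your treatment is in fact slightly more careful than the paper's, which cites Proposition \ref{prop-infinity-algebra} for this last identification where Theorem \ref{theorem5.11} is the more precise reference.
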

 
\begin{proof}
From Theorem \ref{theorem-ger} it follows that 
$(\HH_{\sg}^*(A, A), \cup, \{\cdot, \cdot\})$ is a Gerstenhaber algebra. It remains to verify (\ref{equation-bv-identity})
\begin{equation}\label{equation-bv-identity}
\{x, y\}=\widetilde{\Delta}(x)\cup y \pm \widetilde{\Delta}(x\cup y)\pm x\cup \widetilde{\Delta}(y).
\end{equation}
On the other hand,  from Proposition \ref{prop-BV-identity} it follows that such an identity holds for the Lie bracket $[\cdot, \cdot]$ on $H^*(\calD^*(A, A))$, namely,  $$[x, y]=\widetilde{\Delta}(x)\star y \pm \widetilde{\Delta}(x\star y)\pm x\star\widetilde{\Delta}(y).$$ Under the canonical isomorphism $H^*(\calD^*(A, A))\cong \HH_{\sg}^*(A, A)$, we have that  $[\cdot, \cdot]=\{\cdot, \cdot\}$ and $\cup =\star$ from Proposition \ref{prop-infinity-algebra}, thus Identity (\ref{equation-bv-identity}) holds.
\end{proof}
\begin{remark}
This result was obtained in \cite{EuSc} in the case where $A$ is an ordinary periodic (i.e. $A \cong \Omega^n(A)$ in $\DD_{\sg}(A\otimes A^{\op})$ for some $n\in\Z_{>0}$) symmetric Frobenius algebra
and then was generalized to any (ordinary) symmetric Frobenius algebra in \cite{Wan}.
\end{remark}

\section{An application to string topology}
Let $M$ be a simply-connected closed manifold of dimension $k$. Let $C^*(M, \mathbb{K})$ be the singular  cochain complex over a field $\mathbb{K}$, which is a differential graded associative $\mathbb{K}$-algebra.  We also denote $C^*(M, \mathbb{K})$ by $C^*(M)$ for short, if the base field $\mathbb{K}$ is fixed. By the main result in \cite{LaSt}, there is a cdga $(A, d)$ and a zig-zag of quasi-isomorphisms between   $(A, d)$ and $C^*(M)$.  $$(A, d) \xleftarrow{\cong} \cdots \xrightarrow{\cong} C^*(M)$$ such that $(A, d)$ is a simply-connected commutative differential graded Frobenius algebra of degree $k$ and, moreover, the induced isomorphism $H^*(A) \cong H^*(M;\mathbb{K})$ is an isomorphism of Frobenius algebras.  We call such $(A, d)$ a Frobenius cdga-model of $M$.  Denote by $LM$  the free loop space of $M$, namely, $LM:=\Map(S^1, M)$.  In this  section we prove an invariance result for singular Hochschild cohomology and apply it to compute the singular Hochschild cohomology $\HH_{\sg}^*(C^*(M), C^*(M))$. More precisely, we will prove the following.

\begin{theorem}\label{theorem-main}
Let $M$ be a simply-connected closed manifold of dimension $k$. Let $\mathbb{K}$ be a field of characteristic zero. Then we have
\begin{enumerate}
\item if the Euler characteristic $\chi(M)=0$, then 
\begin{equation*}
\HH_{\sg}^i(C^*(M), C^*(M))=
\begin{cases}
H_{k-i}(LM)& \mbox{if $i<k-1$,}\\
H_1(LM)\oplus H^0(LM) & \mbox{if $i=k-1$,}\\
H_0(LM)\oplus H^1(LM) & \mbox{if $i=k$,}\\
H^{i-k+1}(LM)  & \mbox{if $i>k$;}
\end{cases}
\end{equation*}
\item if  the Euler characteristic $\chi(M)\neq 0$, then 
\begin{equation*}
\HH_{\sg}^i(C^*(M), C^*(M))=
\begin{cases}
H_{k-i}(LM)& \mbox{if $i\leq k-1$,}\\
H^{i-k+1}(LM)  & \mbox{if $i\geq k$.}
\end{cases}
\end{equation*}\end{enumerate}
\end{theorem}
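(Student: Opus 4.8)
The plan is to reduce everything to the Tate--Hochschild complex of a Frobenius model and then extract the answer from the long exact sequence of the defining double complex. First I would invoke the invariance of singular Hochschild cohomology under dga quasi-isomorphisms together with the zig-zag $(A,d) \xleftarrow{\cong} \cdots \xrightarrow{\cong} C^*(M)$ of \cite{LaSt}, where $(A,d)$ is a commutative dg symmetric Frobenius algebra of degree $k$ with $H^*(A) \cong H^*(M;\mathbb{K})$ as Frobenius algebras. This yields $\HH_{\sg}^i(C^*(M),C^*(M)) \cong \HH_{\sg}^i(A,A)$, and by Proposition \ref{prop-tate-isomorphism} the latter equals $\THH^i(A,A) = H^i(\calD^*(A,A))$. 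Thus it suffices to compute the cohomology of the Tate--Hochschild complex of the model $A$.

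Next I would exploit the fact that the non-negative (Hochschild cochain) columns $C^*(A,A)$ form a subcomplex of $\calD^*(A,A)$, the only differential leaving them being the one that increases cochain length, while the negative (Hochschild chain) columns form the quotient, the connecting map $\gamma$ being absorbed into it. This produces a short exact sequence of complexes
\begin{equation*}
0 \to C^*(A,A) \to \calD^*(A,A) \to s^{1-k}C_*(A,A) \to 0
\end{equation*}
and hence a long exact sequence
\begin{equation*}
\cdots \to H^i(C^*(A,A)) \to H^i(\calD^*(A,A)) \to H^i(s^{1-k}C_*(A,A)) \xrightarrow{\partial} H^{i+1}(C^*(A,A)) \to \cdots,
\end{equation*}
whose connecting map $\partial$ is induced by $\gamma$. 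I would then identify the two outer terms: $H^i(C^*(A,A)) = \HH^i(A,A)$, which by \cite{FeThVi} and quasi-isomorphism invariance of Hochschild cohomology is $H_{k-i}(LM)$ (nonzero only for $i \leq k$), and $H^i(s^{1-k}C_*(A,A)) = \HH_{i-k+1}(A,A)$, which by Jones' theorem and invariance of Hochschild homology is $H^{i-k+1}(LM)$ (nonzero only for $i \geq k-1$). Away from the overlap $i \in \{k-1,k\}$ exactly one outer term survives, and since the relevant connecting maps have vanishing source or target it contributes directly, yielding $H_{k-i}(LM)$ for $i<k-1$ and $H^{i-k+1}(LM)$ for $i>k$.

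It remains to analyze the single overlap, where the six-term exact sequence reads
\begin{equation*}
0 \to H_1(LM) \to H^{k-1}(\calD^*(A,A)) \to H^0(LM) \xrightarrow{\partial} H_0(LM) \to H^k(\calD^*(A,A)) \to H^1(LM) \to 0,
\end{equation*}
since $M$ simply connected forces $LM$ connected, so $H^0(LM) \cong H_0(LM) \cong \mathbb{K}$. The crux is to show that $\partial$ is multiplication by a nonzero scalar multiple of $\chi(M)$: the generator of $\HH_0(A,A)$ is the class of the unit $1 \in C_{0,*}(A,A)=A$, and $\partial$ sends it to $[\gamma(1)] = [\sum_i \pm f_i e_i] \in \HH^k(A,A)$, whose coefficient with respect to the generator is computed, after identifying $\HH^k(A,A)\cong\mathbb{K}$ through the Frobenius pairing, by the counit as a nonzero multiple of $\sum_j (-1)^j \dim_{\mathbb{K}} A^j = \chi(M)$. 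When $\chi(M)=0$ the map $\partial$ vanishes and over a field the two short exact sequences split, giving $H_1(LM)\oplus H^0(LM)$ in degree $k-1$ and $H_0(LM)\oplus H^1(LM)$ in degree $k$; when $\chi(M)\neq 0$, working over a field of characteristic zero makes $\partial$ an isomorphism, so $H^{k-1}(\calD^*(A,A)) \cong H_1(LM) = H_{k-(k-1)}(LM)$ and $H^k(\calD^*(A,A)) \cong H^1(LM) = H^{k-k+1}(LM)$, matching the stated formulas. The \emph{main obstacle} is precisely this last step: identifying the one-dimensional spaces $\HH_0(A,A)$ and $\HH^k(A,A)$ with their loop-space counterparts compatibly with the F\'elix--Thomas--Vigu\'e and Jones isomorphisms, and tracking the Koszul signs so that the scalar really is $\chi(M)$ (equivalently, that $\sum_i f_i e_i$ represents the Euler class of $M$ under $H^*(A)\cong H^*(M)$).
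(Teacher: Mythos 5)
Your proposal is correct, and its reduction steps are exactly the paper's: quasi-isomorphism invariance of $\HH_{\sg}^*$ (Remark \ref{remark6.4}) applied to the Lambrechts--Stanley Frobenius model, followed by Proposition \ref{prop-tate-isomorphism} to pass to the Tate--Hochschild complex, and Remarks \ref{remark6.8} and \ref{remark6.9} for the Euler characteristic and the loop-space identifications. Where you genuinely diverge is in computing $H^*(\calD^*(A,A))$. The paper (Lemma \ref{lemma6.7}) argues directly on the double complex: simple-connectivity of $A$ forces every cochain entry into total degree $\leq k$ with only $A^k$ in degree $k$, and every chain entry into total degree $\geq k-1$ with only $A^0$ in degree $k-1$, so the two halves interact only through $\chi\colon A^0 \to A^k$; one then either cancels the acyclic two-term subcomplex $A^0 \xrightarrow{\chi} A^k$ (case $\chi(M)\neq 0$) or notes the differential splits (case $\chi(M)=0$). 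You instead package the same interaction into the long exact sequence of $0\to C^*(A,A)\to \calD^*(A,A)\to s^{1-k}C_*(A,A)\to 0$ and identify the connecting map as multiplication by $\chi(M)$. Both are legitimate: your route is more formal and needs no diagram analysis, but it imports the topological identifications (Jones, F\'elix--Thomas--Vigu\'e) into the computation itself in order to kill the outer terms, whereas Lemma \ref{lemma6.7} is a purely algebraic statement about simply-connected dg symmetric Frobenius algebras, with topology entering only at the very end. One further remark: the ``main obstacle'' you flag is easier than you fear, and no compatibility of your generators with the Jones/FTV isomorphisms is needed. By simple-connectivity the degree-$0$ chains are exactly $A^0=\mathbb{K}\cdot 1$ with no incoming boundaries, so $[1]$ generates $\HH_0(A,A)$; and the cochains of total degree $k$ are exactly $A^k$, whose only possible incoming coboundaries form $d(A^{k-1})$, which vanishes because $H^k(A)\cong H^k(M)\cong \mathbb{K}$ is already one-dimensional. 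Hence $[\omega]$ generates $\HH^k(A,A)$, and $\partial[1]=[\gamma(1)]=\chi(M)\,[\omega]$ since symmetry gives $\mu T\Delta(1)=\mu\Delta(1)=\chi(A)$ and $\chi(A)=\chi(M)$ by Remark \ref{remark6.8}. It is worth noting that this small verification ($d(A^{k-1})=0$, equivalently $\HH^k(A,A)\neq 0$) is also silently required in the paper's own case (2), where ``killing $A^0$ and $A^k$'' returns $\HH^{k-1}(A,A)$ in degree $k-1$ only because no element of $A^k$ is a Hochschild coboundary; your write-up has the advantage of making this point explicit.
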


\subsection{Pull-back and push-forward}
Let $(A, d_1)$ and $(B, d_2)$ be two differential (non-negatively) graded associative algebras over a field $\mathbb{K}$.  Let $f: (A, d_1)\rightarrow (B, d_2)$ be a morphism of dg $\mathbb{K}$-algebras. There is a pull-back functor $$f^*: \DD(\mbox{$B$-$\Modu$}) \rightarrow \DD(\mbox{$A$-$\Modu$})$$ induced from the forgetful functor (i.e. considering a left dg $B$-module as a left dg $A$-module via $f$) and a push-forward functor  $$f_*: \DD(\mbox{$A$-$\Modu$}) \rightarrow \DD(\mbox{$B$-$\Modu$})
$$
given by $ f_*(M):=B\otimes^{\LL}_A M$.
\begin{proposition}\label{proposition6.3}
If $f: (A, d_1)\rightarrow (B, d_2)$ is a quasi-isomorphism of dg algebras, then the functors  $f^*$ and ${f_*}$ are inverse quasi-equivalences between  $\DD(\mbox{$A$-$\Modu$})$ and $\DD( \mbox{$B$-$\Modu$})$. In particular, we obtain an equivalence $$\overline{f^*}: \DD_{\sg}(B)\xrightarrow{\cong} \DD_{\sg}(A).$$
\end{proposition}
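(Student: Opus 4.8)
The plan is to prove the statement in two stages: first establish that $f^*$ and $f_*$ are mutually inverse equivalences on the \emph{full} derived categories, and then show that this equivalence descends to the Verdier quotients defining the singularity categories. To begin I would recall that extension and restriction of scalars form an adjoint pair $(f_*, f^*)$, with $f_* = B\otimes^{\LL}_A(-)$ left adjoint to the restriction functor $f^*$; denote the unit by $\eta_M\colon M\to f^*f_*M$ and the counit by $\varepsilon_N\colon f_*f^*N\to N$. To show $\eta_M$ is an isomorphism in $\DD(\mbox{$A$-$\Modu$})$ for every $M$, I would argue by d\'evissage: for the free module $M=A$ the unit is precisely the map $f\colon A\to B\otimes^{\LL}_A A\cong B$, a quasi-isomorphism by hypothesis. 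Since both $f^*f_*$ and the identity are triangulated functors commuting with arbitrary coproducts, the full subcategory of objects on which $\eta$ is an isomorphism is localizing, and $A$ generates $\DD(\mbox{$A$-$\Modu$})$ as a localizing subcategory, so $\eta_M$ is an isomorphism for all $M$.

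Next I would observe that $f^*$ is conservative: it leaves the underlying complex of $\mathbb{K}$-modules unchanged and only reinterprets the action, so a morphism of $B$-modules is a quasi-isomorphism if and only if its restriction is. Combining this with the triangle identity $f^*(\varepsilon_N)\circ\eta_{f^*N}=\id_{f^*N}$ and the fact that $\eta_{f^*N}$ is an isomorphism, I conclude that $f^*(\varepsilon_N)$ is an isomorphism, whence $\varepsilon_N$ is an isomorphism by conservativity. Therefore $\eta$ and $\varepsilon$ are both natural isomorphisms and $f^*,f_*$ are mutually inverse equivalences between $\DD(\mbox{$A$-$\Modu$})$ and $\DD(\mbox{$B$-$\Modu$})$.

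To obtain $\overline{f^*}\colon \DD_{\sg}(B)\xrightarrow{\cong}\DD_{\sg}(A)$, I would check that this equivalence restricts to the two subcategories appearing in the Verdier quotient. The subcategory $\Perf(A)$ is exactly the subcategory of compact objects $\DD(\mbox{$A$-$\Modu$})^c$, and any equivalence of triangulated categories preserves compactness, so $f^*$ and $f_*$ restrict to inverse equivalences $\Perf(B)\simeq\Perf(A)$. For the bounded derived categories of finitely presented modules, restriction $f^*$ preserves the underlying complex, hence boundedness and finite-dimensionality of total cohomology, while the quasi-isomorphism $f$ induces an isomorphism $H^*(A)\cong H^*(B)$ of graded algebras matching the finiteness conditions on either side; thus $f^*$ and $f_*$ restrict to inverse equivalences $\DD^b(\mbox{$B$-$\modu$})\simeq\DD^b(\mbox{$A$-$\modu$})$. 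An exact functor carrying one triangulated subcategory isomorphically onto another descends to the Verdier quotients, giving the desired $\overline{f^*}$.

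The hard part will be this last step: making precise that the global equivalence restricts to $\DD^b$ of finitely presented modules, since this subcategory is \emph{not} simply the compact objects of $\DD(\mbox{$A$-$\Modu$})$ and its intrinsic characterization must be shown to be preserved. One clean route is to transport the equivalence along the homotopy categories of injectives, using the identification $(\KK(\mbox{$A$-$\Modu_{\inj}$}))^c\cong\DD^b(\mbox{$A$-$\modu$})$ recalled earlier (Theorem \ref{theorem3.3} and its proof), so that $\Perf$ and $\DD^b(\text{f.p.})$ are matched simultaneously and the passage to the Verdier quotient $\DD_{\sg}$ is automatic.
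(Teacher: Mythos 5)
Your proof is correct and follows the same two-stage skeleton as the paper's (equivalence of the unbounded derived categories, then descent to the Verdier quotients), but the first stage is argued by a genuinely different method. The paper computes directly that $f^*\circ f_*(X)\cong B\otimes^{\LL}_A X\cong X$ and $f_*\circ f^*(Y)\cong Y$, i.e.\ it invokes the fact that derived tensoring along the quasi-isomorphism $f$ is naturally isomorphic to the identity (on a cofibrant resolution $P\to X$ the map $P=A\otimes_A P\to B\otimes_A P$ is a quasi-isomorphism); you instead run the formal adjunction argument: the unit is invertible on the compact generator $A$, d\'evissage through the localizing subcategory on which $\eta$ is invertible, then conservativity of $f^*$ together with a triangle identity to invert the counit. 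The two arguments buy different things: the paper's is shorter but leaves its middle term $A\otimes_A B\otimes_A X$ informal, while yours avoids resolutions entirely and is the standard way to make such base-change statements robust. In the second stage you are in fact more careful than the paper, which merely asserts that $f^*$ restricts to an equivalence on compact objects (i.e.\ on $\Perf$) and on $\DD^b(\mbox{$A$-$\modu$})$: you correctly isolate the point that $\DD^b$ of finitely presented modules is \emph{not} the subcategory of compacts of $\DD(\mbox{$A$-$\Modu$})$, and you justify the restriction either through the isomorphism $H^*(A)\cong H^*(B)$ matching the finiteness conditions on cohomology or through the identification $(\KK(\mbox{$A$-$\Modu_{\inj}$}))^c\cong\DD^b(\mbox{$A$-$\modu$})$ recalled in Theorem \ref{theorem3.3}; either route fills in a step the paper leaves implicit.
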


\begin{proof}
Note that for $X\in \DD( \mbox{$A$-$\Modu$})$, we have 
\begin{equation*}
\begin{split}
f^*\circ {f_*}(X)&\cong B\otimes^{\LL}_A X,\\
&\cong A\otimes_A B\otimes_A X\\
&\cong X.
\end{split}
\end{equation*}
Similarly, for $Y\in  \DD( \mbox{$B$-$\Modu$})$
\begin{equation*}
\begin{split}
{f_*}\circ f^*(Y)&\cong B\otimes^{\LL}_A Y\\
& \cong Y.
\end{split}
\end{equation*}
It follows that the functors $f^*$ and ${f_*}$ are inverse quasi-equivalences. Thus $f^*$ restricts to an equivalence between the subcategories of compact objects and also induces an equivalence between $\DD^b(\mbox{$A$-$\modu$})$ and $\DD^b(\mbox{$B$-$\modu$})$, the bounded derived category of finite generated  $B$-modules,   so we have an induced equivalence of Verdier quotients $$\overline{f^*}: \DD_{\sg}(B)\xrightarrow{\cong} \DD_{\sg}(A).$$
\end{proof}

\begin{remark}\label{remark6.4}
A morphism $f: (A, d_1)\rightarrow (B, d_2)$ of dg algebras induces a morphism of dg algebras $f\otimes f: A\otimes A^{\op}\rightarrow B\otimes B^{\op}$.  If $f$ is a quasi-isomorphism, then so is $f\otimes f$. From Proposition \ref{proposition6.3} it follows that  $$(\overline{f\otimes f})^*_{\proj}: \DD_{\sg}(B\otimes B^{\op})\rightarrow \DD_{\sg}(A\otimes A^{\op})$$ is an equivalence and sends $B$ to $A$. Therefore, we have an isomorphism $$(\overline{f\otimes f})^*_{\proj}: \HH_{\sg}^*(B, B)\rightarrow \HH_{\sg}^*(A, A).$$
\end{remark}

\subsection{Quasi-isomorphisms between singular Hochschild cochain complexes}
Let $f: (A, d_1)\rightarrow (B, d_2)$ be a morphism of dg algebras. Recall that $\Omega^p(A)$ and $\Omega^p(B)$ are the non-commutative $p$-differential forms of $A$ and $B$, respectively. As before, we use the identification provided by Lemma 3.12. Then $f$ induces a morphism $\Omega^p(f): \Omega^p(A)\rightarrow \Omega^p(B)$ for $p\in \Z_{\geq 0}$ given by 
$$
\Omega^p(f)(\overline{a_1}\otimes \cdots \otimes \overline{a_p}\otimes a_{p+1}):=
\overline{f(a_1)}\otimes \cdots \otimes \overline{f(a_{p})}\otimes f(a_{p+1})
$$
where we use Lemma \ref{lemma-bimodule} to identity $\Omega^p(A)$ with $(\sA)^{\otimes p}\otimes A$. It is clear that  $\Omega^p(f)$ is a morphism of $A$-$A$-bimodules. Moreover, if $f$ is a quasi-isomorphism, then so is $\Omega^p(f)$. 
\\

Now let us construct a singular Hochschild cochain complex $\calC_{\sg}^*(A, B)$ with coefficients in $B$. Consider the Hochschild cochain complex $C^*(A, \Omega^p(B))$ with coefficients in the $A$-$A$-bimodule $\Omega^p(B)$. We define a morphism of cochain complexes $$\theta^p_{A, B}: C^*(A, \Omega^p(B))\rightarrow C^*(A, \Omega^{p+1}(B))$$ which sends $\alpha\in C^{m, *}(A, \Omega^p(B))$ to the element  
$$
\overline{a_1}\otimes \cdots \otimes \overline{a_{m+1}}\otimes a_{m+2}\mapsto \overline{f(a_1)}\otimes \alpha(\overline{a_2}\otimes
\cdots \overline{a_{m+1}}\otimes a_{m+2}).
$$
Then we define the singular Hochschild cochain complex of $A$ with coefficients in $B$ as 
$$
\calC_{\sg}^*(A, B):= \lim_{\substack{\longrightarrow\\p \in \Z_{\geq 0}}} C^{*}(A, \Omega^{p}(B)).
$$
with the induced differential. 
\\

Observe that, for each $p\in \Z_{\geq 0}$, there is a zig-zag of morphisms of cochain complexes  
\begin{equation}\label{equation-zig-zag}
\xymatrix@C=4pc{
C^*(A, \Omega^p(A)) \ar[r] ^-{C^*(A,\Omega^p( f))}& C^*(A, \Omega^p(B)) & C^*(B, \Omega^p(B))\ar[l]_-{C^*(f, \Omega^p(B))}.
}
\end{equation}
These zig-zags are compatible with the inductive systems, thus we obtain a zig-zag of morphisms  between singular Hochschild cochain complexes
$$
\xymatrix@C=4pc{
\calC_{\sg}^*(A, A) \ar[r] ^-{\calC_{\sg}^*(A, f)}& \calC_{\sg}^*(A, B) & \calC^*_{\sg}(B, B)\ar[l]_-{\calC_{\sg}^*(f, B)}.
}
$$

\begin{proposition}\label{proposition-7.4}
Let $f: (A, d_1)\rightarrow (B, d_2)$ be a quasi-isomorphism of dg algebras. Then $\calC_{\sg}^*(A, f)$ and $\calC_{\sg}^*(f, B)$ are both quasi-isomorphisms, namely, the zig-zag above is one of quasi-isomorphisms. 
\end{proposition}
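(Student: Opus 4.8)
The plan is to reduce both statements to the assertion that, for every fixed $p\in\Z_{\geq 0}$, the two comparison maps $C^*(A,\Omega^p(f))\colon C^*(A,\Omega^p(A))\to C^*(A,\Omega^p(B))$ and $C^*(f,\Omega^p(B))\colon C^*(B,\Omega^p(B))\to C^*(A,\Omega^p(B))$ are quasi-isomorphisms. First I would check that these maps commute with the structure maps of the respective inductive systems — $\widetilde{\theta}_p$ on the source and $\theta^p_{A,B}$ on the target — which is immediate once one notices that on either route through each square the inserted first tensor factor becomes $\overline{f(a_1)}$, using that $\Omega^{p+1}(f)$ is a bimodule map and that $f$ is an algebra morphism. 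Because $\calC_{\sg}^*(A,f)$ and $\calC_{\sg}^*(f,B)$ are the colimits of these systems and we are working over a field, filtered colimits are exact and $H^*$ commutes with them; hence it suffices to treat each level $p$ separately.

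For $\calC_{\sg}^*(A,f)$ this is the invariance of Hochschild cohomology in the coefficient variable. Since $f$ is a quasi-isomorphism, so is the $A$-$A$-bimodule map $\Omega^p(f)\colon\Omega^p(A)\to\Omega^p(B)$, as already observed before the statement. Writing $C^*(A,M)\cong\Hom_{A\otimes A^{\op}}(B(A,A,A),M)$ and recalling that $B(A,A,A)$ is a cofibrant resolution of $A$ (Remark~\ref{rem-bar-construction}), the functor $\Hom_{A\otimes A^{\op}}(B(A,A,A),-)$ carries quasi-isomorphisms of dg bimodules to quasi-isomorphisms; applied to $\Omega^p(f)$ it shows $C^*(A,\Omega^p(f))$ is a quasi-isomorphism, and the colimit remark above completes this half.

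For $\calC_{\sg}^*(f,B)$ I would pass to normalized cochains, identifying $C^*(A,M)\cong\Hom_{\mathbb{K}}(T(s\overline{A}),M)$ and $C^*(B,M)\cong\Hom_{\mathbb{K}}(T(s\overline{B}),M)$, under which $C^*(f,M)$ is precomposition with the map $T(s\overline{f})\colon T(s\overline{A})\to T(s\overline{B})$ induced by $f$ (the two Hochschild differentials corresponding since $f$ is multiplicative). Filtering by tensor length splits the differential into an arity-preserving internal part and an arity-raising part, and on the associated graded $C^*(f,M)$ induces $\Hom_{\mathbb{K}}\bigl((s\overline{f})^{\otimes m},M\bigr)$. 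A unital quasi-isomorphism $f$ forces $\overline{f}\colon\overline{A}\to\overline{B}$ to be a quasi-isomorphism (compare the long exact sequences of $0\to\mathbb{K}\cdot 1\to A\to\overline{A}\to 0$ for $A$ and $B$ and apply the five lemma), so each $(s\overline{f})^{\otimes m}$ is a quasi-isomorphism by the Künneth theorem over $\mathbb{K}$; and $\Hom_{\mathbb{K}}(-,M)$ preserves quasi-isomorphisms of complexes over a field, because an acyclic complex over a field is contractible and $\Hom_{\mathbb{K}}(-,M)$ sends contractible complexes to contractible complexes. Thus the associated-graded map is a quasi-isomorphism.

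The main obstacle is to upgrade this graded statement to a genuine quasi-isomorphism of total complexes: the arity filtration on the product total complex $C^*(-,M)$ is complete and Hausdorff but unbounded above, so the naive (bounded) comparison theorem does not apply and one must instead invoke a convergence theorem for the spectral sequences of complete filtered complexes. To avoid this analytic point I would prefer the derived-categorical route: $f\otimes f\colon A\otimes A^{\op}\to B\otimes B^{\op}$ is a quasi-isomorphism, so by Proposition~\ref{proposition6.3} pull-back along it is an equivalence, and resolving $A$ by $B(A,A,A)$ gives $(B\otimes B^{\op})\otimes^{\LL}_{A\otimes A^{\op}}A\simeq B$ (again because $T(s\overline{f})$ is a quasi-isomorphism). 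This yields $\HH^*(B,\Omega^p(B))\cong\HH^*(A,\Omega^p(B))$, and one verifies that this isomorphism is precisely the one induced by $C^*(f,\Omega^p(B))$ via functoriality of the bar construction. Either way, the exact colimit over $p$ then shows that both $\calC_{\sg}^*(A,f)$ and $\calC_{\sg}^*(f,B)$ are quasi-isomorphisms.
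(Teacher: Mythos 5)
Your proof is correct, but at the decisive step it takes a genuinely different route from the paper's. The paper handles both arrows of the zig-zag (\ref{equation-zig-zag}) by one uniform argument: each of the three complexes carries the complete decreasing filtration by arity, the induced maps on associated graded pieces $\Hom(\overline{A}^{\otimes m},\Omega^p(A))\to\Hom(\overline{A}^{\otimes m},\Omega^p(B))\leftarrow\Hom(\overline{B}^{\otimes m},\Omega^p(B))$ are quasi-isomorphisms (this is exactly your five-lemma/K\"unneth/contractibility argument over a field), and then ``the usual spectral sequence argument'' --- i.e.\ precisely the comparison theorem for complete, exhaustive but unbounded filtrations whose delicacy you flag --- yields the level-$p$ statement, after which the colimit step is left implicit. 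You instead bypass that convergence point entirely: for $\calC_{\sg}^*(A,f)$ you invoke cofibrancy of the bar resolution (Remark \ref{rem-bar-construction}) so that $\Hom_{A\otimes A^{\op}}(B(A,A,A),-)$ preserves quasi-isomorphisms and apply it to the bimodule quasi-isomorphism $\Omega^p(f)$; for $\calC_{\sg}^*(f,B)$ you route through the derived equivalence along $f\otimes f$ (Proposition \ref{proposition6.3} and Remark \ref{remark6.4}) and then identify the abstract isomorphism $\HH^*(B,\Omega^p(B))\cong\HH^*(A,\Omega^p(B))$ with the map induced by $C^*(f,\Omega^p(B))$. What the paper's route buys is brevity and uniformity --- one filtration argument covers both maps, with no need to match derived-category isomorphisms against explicit cochain maps; what your route buys is that the only homotopical input is cofibrancy of $B(A,A,A)$, already asserted in the paper, so no unbounded-filtration convergence theorem is needed. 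The price is your ``one verifies'' step, which does go through: the bimodule map $B(f,f,f)\colon B(A,A,A)\to f^*B(B,B,B)$ strictly commutes with the augmentations to $A\to f^*B$, so precomposition with it represents the derived-category isomorphism, and this precomposition is exactly $C^*(f,\Omega^p(B))$. Both arguments conclude identically, via compatibility with the structure maps and exactness of filtered colimits, which the paper records just before the proposition and you verify explicitly.
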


\begin{proof}
Note that all three complexes in the zig-zag (\ref{equation-zig-zag}) have complete decreasing filtrations with  the associated quotients 
$$
\xymatrix{
\Hom(\overline{A}^{\otimes m}, \Omega^p(A))\ar[r] & \Hom(\overline{A}^{\otimes m}, \Omega^p(B)) & 
\Hom(\overline{B}^{\otimes m}, \Omega^p(B))\ar[l]
}
$$
It is clear that these associated quotients are quasi-isomorphic, and thus by the usual spectral sequence argument we may conclude that the morphisms in the zig-zag in (\ref{equation-zig-zag}) are quasi-isomorphisms for each $p\in \Z_{\geq 0}$. Therefore, it follows that the morphisms $\calC_{\sg}^*(A, f)$ and $\calC_{\sg}^*(f, B)$ are quasi-isomorphisms.
\end{proof}
\begin{remark}
The zig-zag of chain complex quasi-isomorphisms $$
\xymatrix@C=4pc{
\calC_{\sg}^*(A, A) \ar[r] ^-{\calC_{\sg}^*(A, f)}& \calC_{\sg}^*(A, B) & \calC^*_{\sg}(B, B)\ar[l]_-{\calC_{\sg}^*(f, B)}.
}
$$
induces an isomorphism in cohomology
$$
\xymatrix@C=4pc{
\HH_{\sg}^*(f): H^*(\calC_{\sg}^*(A, A)) \ar[r]^-{H^*(\calC_{\sg}^*(A, f))}  & H^*(\calC_{\sg}^*(A, B)) \ar[r]^-{H^*(\calC_{\sg}^*(f, B))^{-1}}  & H^*(\calC_{\sg}^*(B, B))
}
$$
which are, in fact, isomorphisms of Gerstenhaber algebras. The algebra structure in the middle object is induced by the composition
\begin{eqnarray*}
\HH^m(A, \Omega^p(B))\otimes \HH^n(A, \Omega^q(B))\rightarrow \HH^{m+n} (A, \Omega^p(B) \otimes_A \Omega^q(B))
\\ \cong \HH^{m+n}(A, \Omega^p(A) \otimes_A \Omega^q(A) ) \cong  \HH^{m+n}(A, \Omega^{p+q} (A) ) \cong \HH^{m+n}(A, \Omega^{p+q}(B))
\end{eqnarray*}
where the first map is given by the classical Hochschild cup product construction using the $A$-$A$-bimodule structure on $\Omega^i(B)$ for $i=p,q$ via $f: A \to B$, and the first and last isomorphisms are induced by the fact that $f: A \to B$ is a quasi-isomorphism.  The algebra structure on $H^*(\calC_{\sg}^*(A, A))$ and $H^*(\calC_{\sg}^*(B, B))$ is the cup product $\cup$ defined in Section \ref{subsection-dga}, which agrees with the classical Yoneda product $\cup'$ as remarked in the proof of Proposition \ref{proposition-dga}. On the other hand, it follows from \cite{Wan1} that $\HH_{\sg}^*(f)$ is an isomorphism of  Lie algebras since the derived tensor functor $B\otimes^{\mathbb{L}}_A-$ induces an equivalence between $\DD(A)$ and $\DD(B)$ as triangulated categories. 
\end{remark}

\subsection{The Frobenius cdga-model}
As we mentioned before, by a result of \cite{LaSt}, for any simply-connected closed manifold $M$ of dimension $k$ there is a dg symmetric Frobenius algebra $(A,d)$ of degree $k$ which is simply-connected (i.e. $A^0=\mathbb{K}$ and $A^1=0$) (cf. Definition \ref{definition-dg-symmetric}) together with a zig-zag of dga quasi-isomorphisms 
\begin{equation}\label{equation-zig-zag1}
(A, d) \xleftarrow{\cong} \cdots \xrightarrow{\cong} C^*(M)
\end{equation}
such that the induced isomorphism $H^*(A,d) \cong H^*(M)$ is one of Forbenius algebras.
\\

When $A$ is simply-connected, the  Tate-Hochschild complex $\calD^{*,*}(A, A)$ becomes quite simple to analyze, and thus we may compute its cohomology in terms of the  Hochschild homology and cohomology of $A$. 
\begin{lemma}\label{lemma6.7}
Let $A$ be a simply-connected dg symmetric Frobenius algebra of degree $k$ over a field $\mathbb{K}$. Then we have 
\begin{enumerate}
\item If the Euler characteristic $\chi(A):= \mu \circ   \Delta(1)=0$, then 
\begin{equation*}
\HH_{\sg}^i(A, A)=
\begin{cases}
\HH^i(A, A)& \mbox{if $i<k-1$,}\\
\HH^{k-1}(A, A)\oplus \HH_0(A, A) & \mbox{if $i=k-1$,}\\
\HH_1(A, A)\oplus \HH^k(A, A) & \mbox{if $i=k$,}\\
\HH_{i-k+1}(A, A)  & \mbox{if $i>k$;}
\end{cases}
\end{equation*}
\item if  the Euler characteristic $\chi(A)\neq 0$, then 
\begin{equation*}
\HH_{\sg}^i(A, A)=
\begin{cases}
\HH^i(A, A)& \mbox{if $i\leq k-1$,}\\
\HH_{i-k+1}(A, A)  & \mbox{if $i\geq k$.}
\end{cases}
\end{equation*}\end{enumerate}

\end{lemma}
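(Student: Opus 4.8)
The plan is to compute $\HH_{\sg}^i(A,A)$ as the cohomology of the Tate-Hochschild complex, using the isomorphism $\HH_{\sg}^i(A,A)\cong\THH^i(A,A)=H^i(\calD^*(A,A))$ of Proposition~\ref{prop-tate-isomorphism}, and then to extract $H^i(\calD^*(A,A))$ from the two-column shape of the double complex. Since the total differential on $\calD^{*,*}(A,A)$ moves strictly to the right (raising total degree), the non-negative columns $C^*(A,A)$ form a subcomplex and the shifted negative columns form the quotient, so there is a short exact sequence of complexes
$$0\to C^*(A,A)\to\calD^*(A,A)\to s^{1-k}C_*(A,A)\to 0,$$
whose degree-$n$ quotient piece is $C_{n-k+1}(A,A)$. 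The associated long exact sequence reads
$$\cdots\to\HH_{n-k}(A,A)\xrightarrow{\partial}\HH^n(A,A)\to\THH^n(A,A)\to\HH_{n-k+1}(A,A)\xrightarrow{\partial}\HH^{n+1}(A,A)\to\cdots,$$
and the connecting homomorphism $\partial$ is induced by the gluing map $\gamma$ that joins the two columns.

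The next step is to pin down the relevant degree ranges from simple-connectivity. Since $A^0=\mathbb{K}$, $A^1=0$ and $A^{>k}=0$, the reduced space $\overline{A}$ is concentrated in degrees $2,\dots,k$, so $s\overline{A}$ lives in degrees $1,\dots,k-1$. A length-$m$ Hochschild chain therefore has total degree at least $m$, which forces $\HH_j(A,A)=0$ for $j<0$ and $\HH_0(A,A)=\mathbb{K}$ (only the length-zero summand $A^0$ survives in degree $0$). Dually, a cochain in $C^{p,n}(A,A)=\Hom_{\mathbb{K}}((s\overline{A})^{\otimes p},A)^{p+n}$ can be non-zero only when $2p+n\leq k$; hence $\HH^n(A,A)=0$ for $n>k$, and in degree $n=k$ only $C^{0,k}(A,A)=A^k$ contributes, so $\HH^k(A,A)=H^k(A)$. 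Because the Frobenius pairing descends to a non-degenerate pairing on $H^*(A)$ (Poincaré duality), $H^*(A)$ is a graded Frobenius algebra of degree $k$ and $\HH^k(A,A)=H^k(A)\cong\mathbb{K}$; in particular $d(A^{k-1})\cap A^k=0$, so a non-zero element of $A^k$ represents a non-zero class.

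With these bounds, the source $\HH_{n-k+1}(A,A)$ of $\partial$ is non-zero only for $n\geq k-1$, while the target $\HH^{n+1}(A,A)$ is non-zero only for $n\leq k-1$; thus the only possibly non-trivial connecting map is $\partial\colon\HH_0(A,A)\to\HH^k(A,A)$, a map $\mathbb{K}\to\mathbb{K}$. I would evaluate it on the generator by lifting $[1]\in\HH_0(A,A)$ to $1\in A=C_{0,0}(A,A)\subset\calD^{k-1}(A,A)$ and applying $\delta$: the only surviving contribution is $\gamma(1)=\mu\,T\,\Delta(1)=\mu\,\Delta(1)=\chi(A)$, using $T\Delta(1)=\Delta(1)$ for a symmetric Frobenius algebra. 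Hence $\partial[1]=[\chi(A)]\in\HH^k(A,A)$, and by the previous paragraph this class is zero precisely when $\chi(A)=0$ and is the generator of $\HH^k(A,A)\cong\mathbb{K}$ when $\chi(A)\neq 0$; that is, $\partial$ is either $0$ or an isomorphism according to the two cases.

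Finally I would run the long exact sequence case by case. When $\chi(A)=0$ every connecting map vanishes, the sequence breaks into short exact sequences $0\to\HH^n(A,A)\to\THH^n(A,A)\to\HH_{n-k+1}(A,A)\to 0$ which split over the field $\mathbb{K}$; combining this with the vanishing ranges $\HH_{<0}=0$ and $\HH^{>k}=0$ gives exactly the four cases of part~(1). When $\chi(A)\neq 0$ the isomorphism $\partial\colon\HH_0(A,A)\xrightarrow{\cong}\HH^k(A,A)$ forces the image of $\HH^k(A,A)$ in $\THH^k(A,A)$ to vanish and the image of $\THH^{k-1}(A,A)$ in $\HH_0(A,A)$ to vanish, while all other terms are unaffected; chasing the sequence then yields the two cases of part~(2). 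I expect the main obstacle to be the explicit identification of $\partial$ with the class $[\chi(A)]$, and in particular the verification that $\HH^k(A,A)\cong\mathbb{K}$ is generated by a class hit by $\mu\Delta(1)$ exactly when $\chi(A)\neq 0$, which is where the Frobenius (Poincaré duality) structure on $H^*(A)$ is essential; the remainder is degree bookkeeping permitted by simple-connectivity.
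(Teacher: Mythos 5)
Your proof is correct and follows essentially the same route as the paper: both exploit simple-connectivity (together with $A^{>k}=0$) to reduce the two-column Tate--Hochschild complex to its cochain part, its chain part, and the single corner map $\chi\colon A^0\to A^k$, and then split into the cases $\chi=0$ and $\chi$ an isomorphism. The only difference is presentational: the paper reads the answer off the shape of the double complex directly, whereas you package the same observation as the short exact sequence $0\to C^*(A,A)\to\calD^*(A,A)\to s^{1-k}C_*(A,A)\to 0$ and identify the connecting homomorphism with $[\chi(A)]\in\HH^k(A,A)$, which is a rigorous formalization of the paper's ``immediate observation.''
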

\begin{proof}
This is an immediate observation from the shape of the complex $\calD^{*,*}(A, A)$:
\begin{equation*}
\xymatrix@R=1.2pc{
& &0& \\
&& A^k \ar[r] \ar[u]  &  0\\
\ar@{.}[rd]&\vdots& \vdots  \ar[u] & \vdots \ar[u]  \\
0\ar[r]&\overline{A}^2\otimes A^0\ar[r] \ar@{.}[rd]\ar[u]& A^2  \ar[r]\ar[u] & 0\ar[u] \\
&0\ar[u]\ar[r]& A^1     \ar[u] \ar[r]    \ar@{.}[rd]             &  0\ar[u]  \\
&0\ar[r] \ar[u]& A^0\ar[u] \ar[r]^-{\chi}     &  A^k \ar[u]\ar@{.}[rd] \ar[r] & 0  \\
&& 0 \ar[u]\ar[r] & A^{k-1} \ar[u] \ar[r] & 0\ar[u] \ar@{.}[rd] &\\
& &0 \ar[u]\ar[r] & A^{k-2} \ar[u]\ar[r] & \Hom(\overline{A}^2, A^k)\ar[u] \ar[r]  &0 \\
 &&\vdots \ar[u] & \vdots\ar[u] & \vdots \ar[u] &   \\
}
\end{equation*}
We note that the non-zero terms (except $A^k$) of $\calD^{\geq 0, *}(A, A)$ are located below  the diagonal line (dotted line)   and similarly the non-zero terms of $\calD^{<0, *}(A,A)$ are located on or above the diagonal line. The elements on the diagonal line have the total degree $k$. It is clear from the diagram above that $\HH_{\sg}^i(A, A) $ is isomorphic to $\HH^i(A, A)$ for $i<k-1$ and isomorphic to $\HH_{i-k+1}(A, A)$ for $i>k$. Let us compute $\HH_{\sg}^i(A, A)$ for $i=k-1, k$ in the following two cases.
\begin{enumerate}
\item If the map $\chi$ is an isomorphism (equivalently, the Euler characteristic $\chi(A)\neq 0$) then both $A^0$ and $A^k$ are killed in the cohomology, hence $\HH_{\sg}^k(A, A) \cong \HH_1(A, A)$ and $\HH_{\sg}^{k-1}(A, A)\cong \HH^{k-1}(A, A)$. 
\item If the map $\chi$ is not an isomorphism, then $\chi$ is zero since $A^0$ and $A^k$ are one dimensional. Then we have $\HH_{\sg}^k(A, A) \cong \HH_1(A, A)\oplus \HH^k(A, A)$ and $\HH_{\sg}^{k-1}(A, A)\cong  \HH^{k-1}(A, A)\oplus \HH_0(A, A)$.
\end{enumerate}
\end{proof}

\begin{remark}\label{remark6.8}
If $A$ is a Frobenius cdga model for $M$, then $\chi(A)=\chi(M)$. Indeed, the zig-zag (\ref{equation-zig-zag1}) of quasi-isomorphisms of dg algebras induces an isomorphism of graded algebras between $H^*(A)$ and $H^*(M)$ and the pairings are compatible. Therefore $\chi(A)=\sum_i e_i f_i=\chi(M)$. 
\end{remark}

\begin{remark}\label{remark6.9}
From the zig-zag (\ref{equation-zig-zag1}) it follows that $$\HH_*(A, A)\cong \HH_*(C^*(M),C^*(M))$$ and $$\HH^*(A, A)\cong \HH^*(C^*(M), C^*(M)).$$ Recall that as shown in \cite{Jon}, there exists  linear isomorphisms$$H_*(LM)\cong \HH^{k-*}(C^*(M), C^*(M))$$ and $$H^*(LM) \cong \HH_*(C^*(M), C^*(M)).$$ Now let us prove the main Theorem \ref{theorem-main} of this section. 
\end{remark}

\begin{proof}[Proof of Theorem \ref{theorem-main}]
From Remark \ref{remark6.4},   the zig-zag (\ref{equation-zig-zag1}) above implies an  isomorphism $$\HH_{\sg}^*(C^*(M), C^*(M))  \cong \HH_{\sg}^*(A, A).$$ Then the result follows from Lemma \ref{lemma6.7},  Remark \ref{remark6.8} and \ref{remark6.9}. 
\end{proof}

\begin{remark}
For a differential graded algebra $(B, d)$, we do not know whether there is an isomorphism between $H^*(\calC_{\sg}(B, B))$ and $\HH_{\sg}^*(B, B)$. But for $C^*(M)$, it holds by using the Frobenius cdga-model. Namely, we have the following result.
\end{remark}

\begin{proposition}
If $M$ is a simply-connected closed manifold, then $$H^*(\calC_{\sg}(C^*(M), C^*(M))\cong \HH_{\sg}^*(C^*(M), C^*(M)).$$
\end{proposition}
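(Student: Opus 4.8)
The proof combines two isomorphisms that have already been established in the paper. The plan is to factor the desired isomorphism through the Frobenius cdga-model $(A,d)$ provided by \cite{LaSt}. Let me lay out the chain of identifications. First, recall from Proposition \ref{prop-quasi-isomorphism} that for any dg algebra $B$ there is a canonical morphism $\chi: H^*(\calC_{\sg}^*(B,B)) \rightarrow \HH_{\sg}^*(B,B)$; the content of this proposition is to show that $\chi$ is an isomorphism when $B = C^*(M)$. By Corollary \ref{corollary3.26}, we already know $\chi$ is an isomorphism whenever the algebra in question is a dg symmetric Frobenius algebra. The model $(A,d)$ from \cite{LaSt} is precisely such an algebra, so $\chi_A: H^*(\calC_{\sg}^*(A,A)) \rightarrow \HH_{\sg}^*(A,A)$ is an isomorphism.

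First I would invoke the zig-zag of dga quasi-isomorphisms (\ref{equation-zig-zag1}) relating $(A,d)$ and $C^*(M)$. Since all the intermediate algebras are connected by quasi-isomorphisms, Proposition \ref{proposition-7.4} (applied to each arrow in the zig-zag) yields that the singular Hochschild cochain complexes $\calC_{\sg}^*(A,A)$ and $\calC_{\sg}^*(C^*(M),C^*(M))$ have isomorphic cohomology; concretely, the induced map $\HH_{\sg}^*(f): H^*(\calC_{\sg}^*(A,A)) \xrightarrow{\cong} H^*(\calC_{\sg}^*(C^*(M),C^*(M)))$ is an isomorphism. On the other side, Remark \ref{remark6.4} supplies an isomorphism $\HH_{\sg}^*(C^*(M),C^*(M)) \cong \HH_{\sg}^*(A,A)$ coming from the equivalence of singularity categories induced by the quasi-isomorphism $f \otimes f$. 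The key point to verify is that these various maps are compatible with the canonical morphisms $\chi_A$ and $\chi_{C^*(M)}$, so that the square relating them commutes.

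The essential step, then, is to assemble the commutative diagram
\begin{equation*}
\xymatrix@C=4pc{
H^*(\calC_{\sg}^*(A,A)) \ar[r]^-{\HH_{\sg}^*(f)}_-{\cong} \ar[d]_-{\chi_A}^-{\cong} & H^*(\calC_{\sg}^*(C^*(M),C^*(M))) \ar[d]^-{\chi_{C^*(M)}} \\
\HH_{\sg}^*(A,A) & \HH_{\sg}^*(C^*(M),C^*(M)) \ar[l]^-{\cong}
}
\end{equation*}
where the bottom arrow is the isomorphism of Remark \ref{remark6.4}. Commutativity follows because every map in sight is induced functorially from the quotient functor $\DD^b(-\otimes -^{\op}) \rightarrow \DD_{\sg}(-\otimes -^{\op})$ and from the bimodule morphisms $\Omega^p(f)$; the morphism $\chi$ is by construction (Proposition \ref{prop-quasi-isomorphism}) induced by this same quotient functor, and the structure maps $\theta_{A,B}^p$ used to define $\calC_{\sg}^*(A,B)$ are compatible with the $\theta_p$ on each side. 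Once the diagram commutes, the left vertical arrow $\chi_A$ is an isomorphism by Corollary \ref{corollary3.26}, the top arrow is an isomorphism by Proposition \ref{proposition-7.4}, and the bottom arrow is an isomorphism by Remark \ref{remark6.4}; therefore the remaining arrow $\chi_{C^*(M)}$ is forced to be an isomorphism, which is exactly the claim.

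The main obstacle I anticipate is precisely establishing the commutativity of this square at the level of the intermediate complex $\calC_{\sg}^*(A, C^*(M))$, since $\chi$ is defined through the colimit identification $H^*(\calC_{\sg}^*(B,B)) \cong \varinjlim_p \HH^{*}(B, \Omega^p(B))$ and I must check that the two ways of passing between the $A$-coefficient and $C^*(M)$-coefficient descriptions—one via $\Omega^p(f)$ on coefficients and one via the equivalence of singularity categories—agree after passing to the colimit. This is a matter of tracing through the definitions of the connecting homomorphisms $\theta_{m,p}$ and the naturality of the quotient functor rather than a hard computation, but it is the one place where genuine care is required; everything else is a formal consequence of results already proved.
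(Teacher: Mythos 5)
Your three ingredients---Corollary \ref{corollary3.26} applied to the Frobenius model $(A,d)$, Proposition \ref{proposition-7.4} applied across the zig-zag, and Remark \ref{remark6.4}---are exactly the ones the paper uses, and simply composing them already proves the proposition:
$H^*(\calC_{\sg}^*(C^*(M),C^*(M))) \cong H^*(\calC_{\sg}^*(A,A)) \cong \HH_{\sg}^*(A,A) \cong \HH_{\sg}^*(C^*(M),C^*(M))$.
That composition is the paper's entire proof. Where you diverge is that you aim at a strictly stronger conclusion, namely that the canonical morphism $\chi_{C^*(M)}$ of Proposition \ref{prop-quasi-isomorphism} is itself an isomorphism, and you make your argument hinge on the commutativity of the square relating $\chi_A$, $\chi_{C^*(M)}$, $\HH_{\sg}^*(f)$, and the singularity-category isomorphism of Remark \ref{remark6.4}. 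The statement being proved does not ask for this: it asserts only the existence of an isomorphism of graded vector spaces (the remark preceding the proposition is explicit that the open question for a general dga is the existence of \emph{an} isomorphism). So the commutativity claim---which you correctly single out as the only delicate point and then do not actually verify, and which is genuinely nontrivial since the zig-zag forces you through the mixed-coefficient complexes $\calC_{\sg}^*(A,C^*(M))$ and two a priori different identifications of the singular Hochschild cohomology---is extra work that your proof's stated conclusion depends on but the proposition does not. Either drop the diagram and conclude by composition, as the paper does, or, if you want the stronger statement about $\chi_{C^*(M)}$, the naturality of the square must be written out in full; as it stands that step is asserted rather than proved.
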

\begin{proof}
Let $(A, d)$ be a Frobenius cdga-model for $M$. From Proposition \ref{proposition-7.4} it follows that $$H^*(\calC_{\sg}^*(C^*(M), C^*(M)))\cong H^*(\calC_{\sg}^*(A, A)).$$ From Remark \ref{remark6.4} we have $\HH_{\sg}^*(C^*(M), C^*(M))\cong \HH_{\sg}^*(A, A)$. So the isomorphism $$H^*(\calC_{\sg}^*(C^*(M), C^*(M)))\cong \HH_{\sg}^*(C^*(M), C^*(M))$$ holds since $\HH_{\sg}^*(A, A)\cong H^*(\calC_{\sg}^*(A, A))$ from Corollary \ref{corollary3.26}.
\end{proof}

\begin{example}
Let $S^n$ be the $n$-dimensional sphere for $n >1$. It is well-known that $S^n$ is a {\it formal} manifold (cf. \cite{DGMS}), namely, the singular cochain complex $C^*(S^n;\mathbb{K})$ and the singular cohomology $H^*(S^n;\mathbb{K})$ are quasi-isomorphic as dg algebras, where the latter is equipped with the trivial differential and $\mathbb{K}$ is a field of characteristic zero.  Therefore, there is an isomorphism between $\HH_{\sg}^*(C^*(S^n), C^*(S^n))$ and $\HH_{\sg}^*(H^*(S^n), H^*(S^n))$ from Remark \ref{remark6.4}. As a dg algebra, $H^*(S^n)\cong \mathbb{K}[\epsilon_n]/(\epsilon_n^2),$ the graded ring of  dual numbers  with the generator $\epsilon_n$ in degree $n$. It is clear that $A_n:=\mathbb{K}[\epsilon_n]/(\epsilon_n^2)$ is a dg symmetric Frobenius algebra (with the trivial differential). Now let us compute the singular Hochschild cohomology $\HH_{\sg}^*(A_n, A_n)$ via the Tate-Hochschild complex. We have the following two cases.

Let $n$ be odd. The double complex $\calD^{*,*}(A_n, A_n)$ associated to the Tate-Hochschild complex is as follows:
\begin{equation*}
\xymatrix@C=1pc{
\mbox{$3n$-row:}&0\ar[r] & \epsilon_n^{\otimes 2}\otimes \mathbb{K}\ar[r] &  \epsilon_n\otimes \epsilon_n\ar[r]  &  0\\
\mbox{$2n$-row:}&&0\ar[r]&\epsilon_n\otimes \mathbb{K}\ar[r] &\epsilon_n \ar[r] &      0                &  &    \\
\mbox{$n$-row:}&&&0\ar[r] &       \mathbb{K}  \ar[r]^-{\chi}               &\epsilon_n\ar[r] & 0\\
\mbox{$0$-row:}&&&&         0 \ar[r]              &\mathbb{K} \ar[r]               & \Hom(\epsilon_n, \epsilon_n)\ar[r] &0\\
\mbox{$-n$-row:}&&& &                       &                 0\ar[r]        & \Hom(\epsilon_n, \mathbb{K}) \ar[r]               &  \Hom(\epsilon_n^{\otimes 2}, \epsilon_n)  \ar[r]& 0,
}
\end{equation*}
where $\chi=0$ since the Euler characteristic $\chi(S^n)$ is zero. Notice that the vector space in each position is of dimension $1$. We claim that all the horizontal differentials vanish.  Indeed, for the positive row-degrees, the differentials are given by the Hochschild chains differential. Thus we have
$$
d(\epsilon_n^{\otimes p}\otimes \lambda)=\epsilon_n^{\otimes p-1}\otimes \epsilon_n\lambda-(-1)^{(n-1)(n-1)(p-1)} \epsilon_n^{\otimes p-1}\otimes
\lambda\epsilon_n=0.
$$
for $d: \epsilon_n^{\otimes p}\otimes \mathbb{K}\rightarrow \epsilon_n^{\otimes p}\otimes \epsilon_n$.  Similarly, we have that  the differentials for non-positive row degrees vanish.  Therefore,  we have the  isomorphism of graded algebras $\HH_{\sg}^*(A_n, A_n)\cong \mathbb{K}[x,x^{-1}] \otimes \Lambda[t],$ where $|x|=1-n$ and  $|t|=n$. So we have that  $$\HH_{\sg}^*(C^*(S^n), C^*(S^n))\cong  \mathbb{K}[x,x^{-1}] \otimes \Lambda[t]$$ where $|x|=1-n$, $|x|=n-1$, $|t|=n$, and $\Lambda[t]$ denotes the exterior algebra on the generator $t$. Moreover,  the BV-algebra structure is determined by
\begin{equation*}
\begin{split}
\widetilde{\Delta}(x^p\otimes t)&=p(x^{p-1}\otimes 1),\\
\widetilde{\Delta}(x^p\otimes 1)&=0.
\end{split}
\end{equation*}

Let $n$ be even. We have the analogous double complex $\calD^{*, *}(A_n, A_n)$, but now  $\chi$ is an isomorphism since the Euler characteristic $\chi(S^n)=2$ is non-zero in $\mathbb{K}$. We claim that for the $(2p+1)n$-row ($p\in \Z$), the (non-trivial) horizontal differential is an isomorphism. Indeed, we have 
$$
d(\epsilon_n^{2p}\otimes \lambda)=\epsilon_n^{\otimes 2p}\otimes \epsilon_n\lambda-(-1)^{(n-1)(n-1)(2p-1)} \epsilon_n^{\otimes 2p}\otimes
\lambda\epsilon_n=2 \epsilon_n^{\otimes 2p}\otimes \lambda \epsilon_n,
$$
thus the (non-trivial) differential is an isomorphism for $p>0$ and by the same argument we have the analogous result for $p< 0$. Similarly, for  the $2pn$-row degree ($p\in \Z$), the horizontal differentials vanish. Therefore,  we have the isomorphism of graded algebras $\HH_{\sg}^*(A_n, A_n)\cong \mathbb{K}[x,x^{-1}] \otimes \Lambda[t]$ with $|x|=2(n-1)$ and $|t|=1$. Therefore,  $$\HH_{\sg}^*(C^*(S^n), C^*(S^n))\cong \mathbb{K}[x,x^{-1}] \otimes \Lambda[t]$$ with $|x|=2(n-1)$ and $|t|=1$.     The BV-algebra structure is determined by
\begin{equation*}
\begin{split}
\widetilde{\Delta}(x^p\otimes t)&=(2p+1)x^p\otimes 1,\\
\widetilde{\Delta}(x^p\otimes 1)&=0.
\end{split}
\end{equation*}
\begin{remark}
Our computation of the BV-algebra $$\HH_{\sg}^{*<n}(C^*(S^n), C^*(S^n))\cong H_{n-*}(LS^n)$$ agrees with the corresponding result \cite{Men2} for $n>1$. A BV-algebra structure was constructed in \cite{GoHi} on $H^*(LM,M)$ for any closed manifold $M$. One should be able to lift this BV-algebra structure to $H^{>0}(LM)$ and we conjecture that it coincides with  $\HH_{\sg}^{\geq n}(C^*(M), C^*(M)) \cong H^{>0}(LM)$ in the case $M$ is simply-connected. 
\end{remark}
\end{example}
\newpage

\section*{Appendix}
The following diagrams describe the product $\star: \calD^*(A,A) \otimes \calD^*(A,A) \to \calD^*(A,A)$ for all possible cases. Blue colored output legs always represent elements of $A$, while black colored output legs represent elements of $s\overline{A}$. The degree $0$ product of $A$ is denoted by $\mu: A \otimes A \to A$, while the degree $k$ coproduct is denoted by $\Delta: A \to A\otimes A$. A blue circle with white interior denotes the unit of the algebra $A$ and the map $\pi: A \to s\overline{A}$ is the natural projection. A solid black circle on the top of a single input leg in a corolla means that such an input leg is ``blocked'', namely, that it cannot received any elements and thus may be ignored and the corolla may be interpreted as an element of $(s\overline{A})^{\otimes m} \otimes A$ for some $m$.
\begin{figure}[h]
\centering
  \includegraphics[width=70mm,height=50mm]{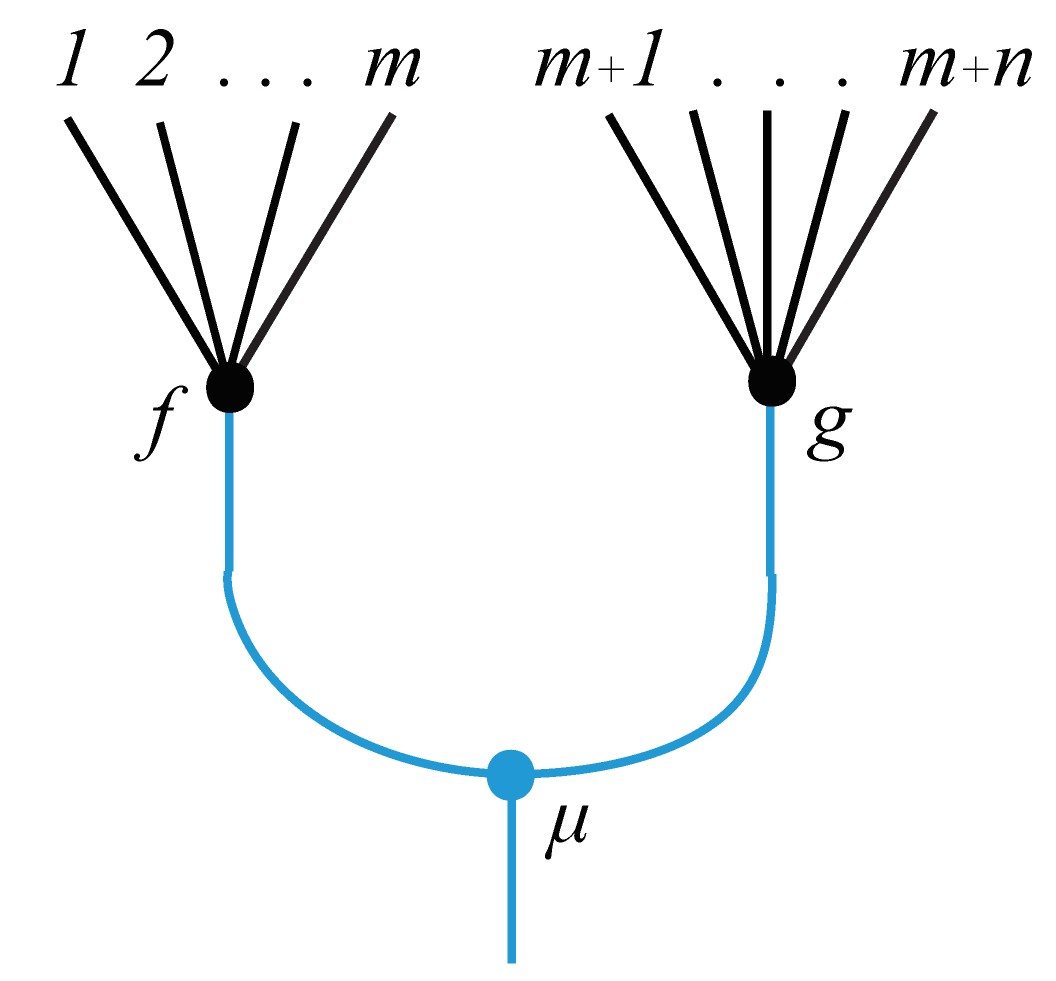}
  \caption{ For a pair of Hochschild cochains $f,g \in C^{*,*}(A,A)$ we have that $f \star g$ is defined to be the classical cup product $f \cup g$. In the above particular example of the above picture $m=4$,  $n=5$, and $f$ and $g$ are each represented by corollas with input legs colored black and with one output leg colored blue indicating that these outputs are elements in $A$.}
  \label{fig:hochcup}
\end{figure}

\begin{figure}[h]
\centering
  \includegraphics[width=70mm, height=55mm]{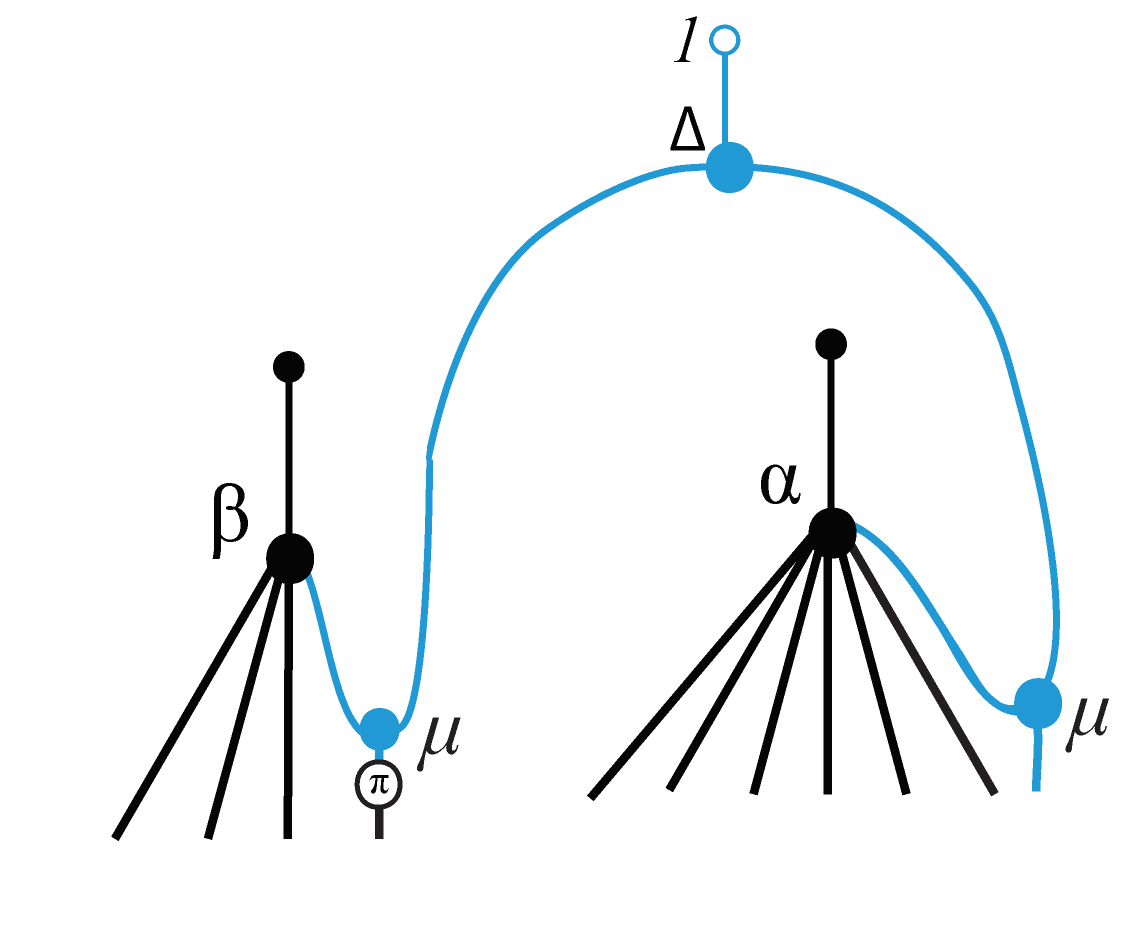}
  \caption{For a pair of Hochschild chains $\alpha, \beta \in C_{*,*}(A,A)$ we have that $\alpha \star \beta$ is defined by the diagram above. In the particular example of the above picture $m=6$,  $n=3$, and $\alpha \in (s\overline{A})^{\otimes m} \otimes A $ and $\beta \in (s\overline{A})^{\otimes n} \otimes A$ are represented by corollas with $m+1$ and $n+1$ output legs respectively and no input legs. In the literature, Hochschild chains are also represented by vertices in a circle and one of these vertices is marked; the marking corresponds to the color blue in our diagram.}
  \label{fig:starproduct}
\end{figure}

\begin{figure}[h]
\centering
  \includegraphics[width=60mm,height=40mm]{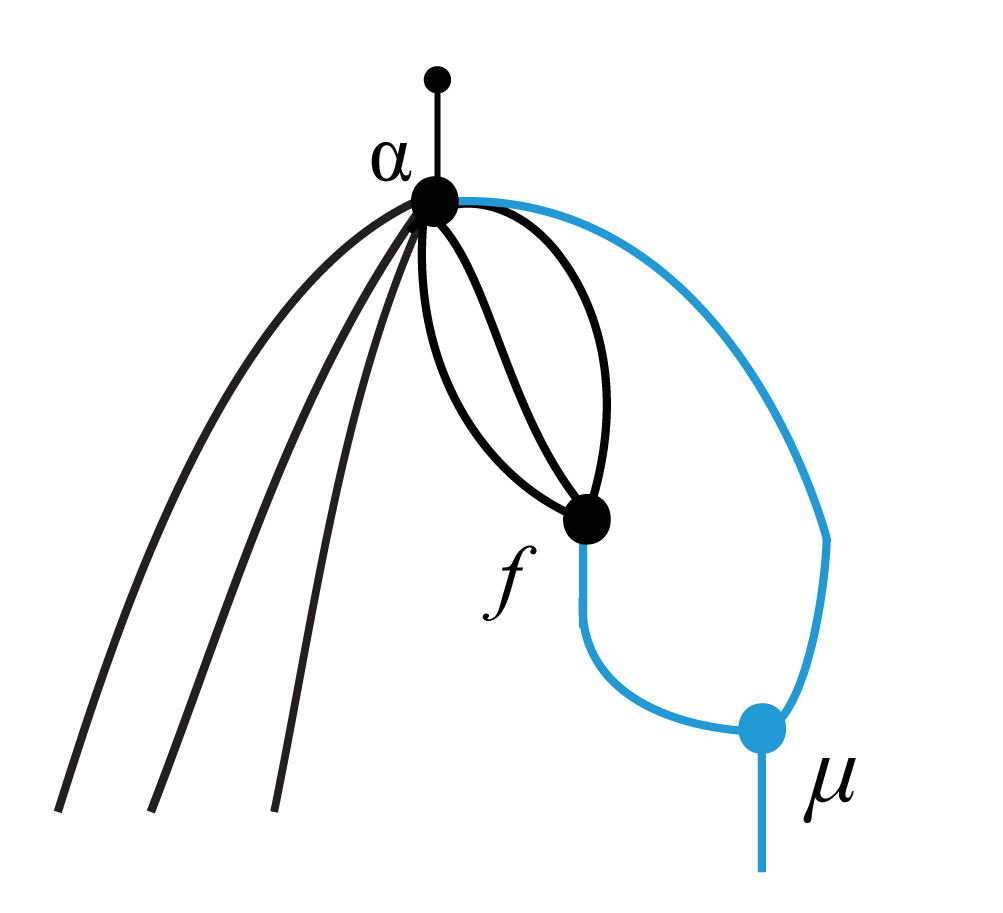}
  \caption{For $\alpha \in C_{-m,*}(A,A)$ and $f \in C^{n,*}(A,A)$ such that $m - n \leq 0$ we have that $ f \star \alpha$ is defined to be $ f\cap \alpha$, the classical cap product of a Hochschild cochain and a Hochschild chain. The above diagram describes such an operation. In the particular example of the above picture $m=6$ and $n=3$. The picture for $\alpha \star f$ is similar.}
  \label{fig:classicalcap}
\end{figure}

\begin{figure}[h]
\centering
  \includegraphics[width=70mm,height=60mm]{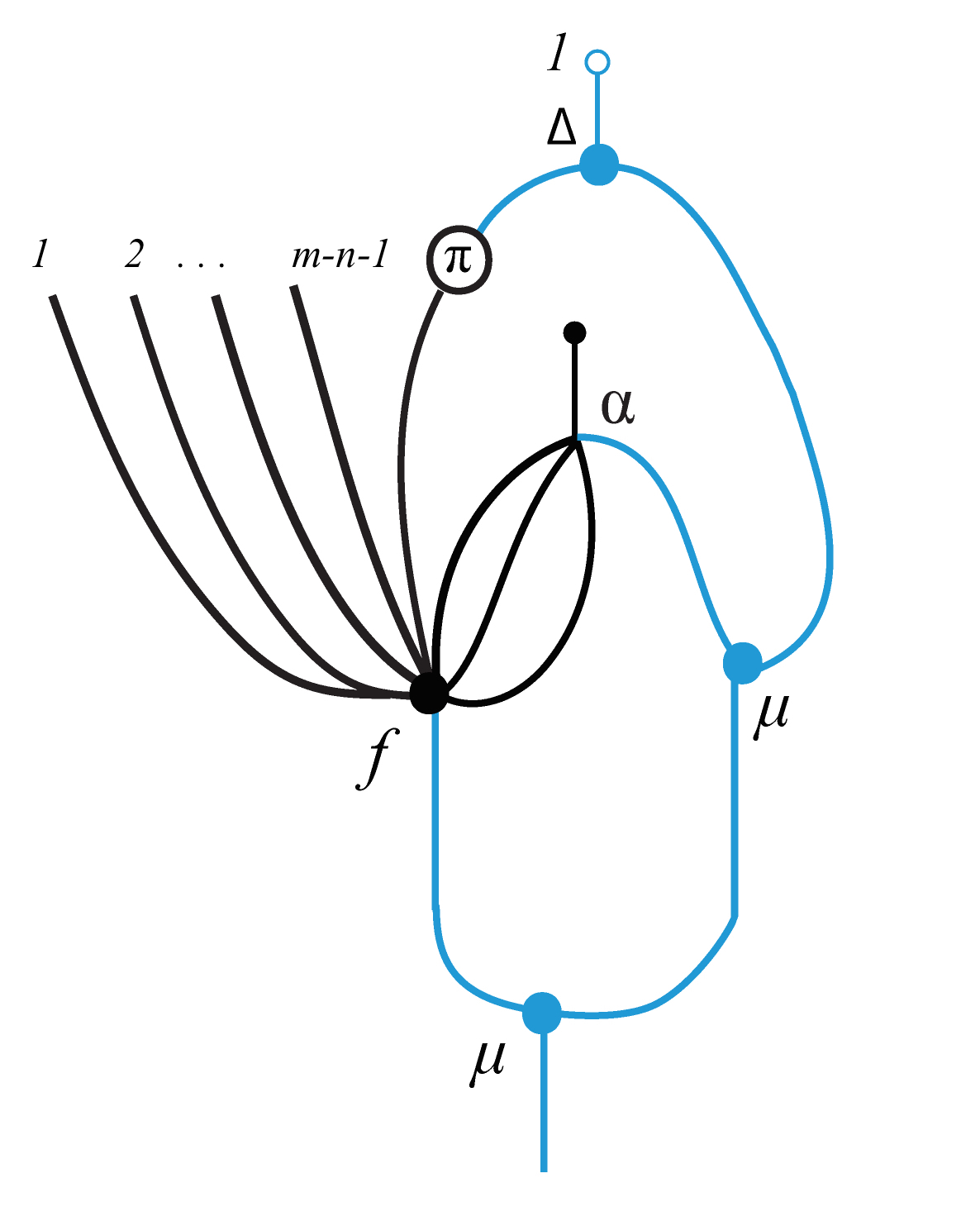}
  \caption{For $\alpha \in C_{-m,*}(A,A)$ and $f \in C^{n,*}(A,A)$ such that $m - n >0$ we have that $ f \star \alpha$ is defined by the diagram above. In the particular example of the above picture $m=3$ and $n=8$. Notice that, in this case, the product $\star$ uses the coproduct $\Delta: A \to A\otimes A$ of degree $k$ while the classical cap product does not use the data of the Frobenius structure of $A$. The diagram for $\alpha \star f$ is similar.}
  \label{fig:fcapa}
\end{figure}

\newpage

 \bibliographystyle{plain}

\end{document}